\DeclareMathAlphabet{\mathpzc}{OT1}{pzc}{m}{it}
\numberwithin{equation}{section}
\titleformat{\subsection}[runin]{\normalfont\bfseries}{\thesubsection.}{.5em}{}[.]\titlespacing{\subsection}{0pt}{2ex plus .1ex minus .2ex}{.8em}
\titleformat{\subsubsection}[runin]{\normalfont\itshape}{\thesubsubsection.}{.3em}{}[.]\titlespacing{\subsubsection}{0pt}{1ex plus .1ex minus .2ex}{.5em}
\titleformat{\paragraph}[runin]{\normalfont\itshape}{\theparagraph.}{.3em}{}[.]\titlespacing{\paragraph}{0pt}{1ex plus .1ex minus .2ex}{.5em}
\newtheorem{theorem}{Theorem}[section]
\newtheorem{lemma}[theorem]{Lemma}
\newtheorem{corollary}[theorem]{Corollary}
\newtheorem{proposition}[theorem]{Proposition}
\newtheorem{claim}[theorem]{Claim}
\theoremstyle{definition}
\theoremstyle{remark}
\newtheorem{remark}[theorem]{Remark}
\newtheorem*{remark*}{Remark}
\newcommand{\BC}{{\mathbb{C}}}
\newcommand{\BE}{{\mathbb{E}}}
\newcommand{\BG}{{\mathbb{G}}}
\newcommand{\BN}{{\mathbb{N}}}
\newcommand{\BP}{{\mathbb{P}}}
\newcommand{\BS}{{\mathbb{S}}}
\newcommand{\BZ}{{\mathbb{Z}}}
\newcommand{\FM}{{\mathfrak{M}}}
\newcommand{\FN}{{\mathfrak{N}}}
\newcommand{\Fp}{{\mathfrak{p}}}
\newcommand{\FX}{{\mathfrak{X}}}
\newcommand{\Fe}{{\mathfrak{e}}}
\newcommand{\Fq}{{\mathfrak{q}}}
\newcommand{\CE}{{\mathcal{E}}}
\newcommand{\CF}{{\mathcal{F}}}
\newcommand{\CG}{{\mathcal{G}}}
\newcommand{\CI}{{\mathcal{I}}}
\newcommand{\CL}{{\mathcal{L}}}
\newcommand{\CR}{{\mathcal{R}}}
\newcommand{\CS}{{\mathcal{S}}}
\newcommand{\CW}{{\mathcal{W}}}
\newcommand{\tbD}{{\textbf{D}}}
\newcommand{\tbP}{{\textbf{P}}}
\newcommand{\tba}{{\textbf{a}}}
\newcommand{\tbe}{{\textbf{e}}}
\newcommand{\tbl}{{\textbf{l}}}
\newcommand{\tbp}{{\textbf{p}}}
\newcommand{\tbq}{{\textbf{q}}}
\newcommand{\tbr}{{\textbf{r}}}
\newcommand{\tbu}{{\textbf{u}}}
\newcommand{\tbv}{{\textbf{v}}}
\newcommand{\tbw}{{\textbf{w}}}
\newcommand{\tbz}{{\textbf{z}}}
\newcommand{\SV}{{\mathscr{V}}}
\newcommand{\tri}{{\vartriangleleft}}
\newcommand{\trieq}{{\trianglelefteq}}
\newcommand{\ind}{{\mathbbm{1}}}
\newcommand{\sfV}{{\mathsf{V}}}
\newcommand{\ep}{{\varepsilon}}
\newcommand{\si}{{\sigma}}
\newcommand{\lam}{{\lambda}}
\tikzstyle arrowstyle=[scale=1]
\tikzstyle directed=[postaction={decorate,decoration={markings,
    mark=at position .65 with {\arrow[arrowstyle]{stealth}}}}]
\tikzstyle reverse directed=[postaction={decorate,decoration={markings,
    mark=at position .65 with {\arrowreversed[arrowstyle]{stealth};}}}]
    \tikzstyle{placec}=[circle, draw=black,fill=white, inner sep=2pt]
     \tikzstyle{placer}=[rectangle, draw=black,fill=white, inner sep=3pt]
\newcommand{\RN}[1]{
  \textup{\uppercase\expandafter{\romannumeral#1}}
}
\newcommand{\Tr}{\textrm{Tr}}
\newcommand{\beq}{\begin{equation}}
\newcommand{\eeq}{\end{equation}}
\newcommand{\E}{\mathbb{E}}
\newcommand{\eps}{\varepsilon}
\newcommand{\bbC}{\mathbb{C}}
\newcommand{\D}{\mathbf{D}}
\newcommand{\bbN}{\mathbb{N}}
\newcommand{\C}{\mathbb{C}}
\newcommand{\caI}{\mathcal I}
\newcommand{\caS}{\mathcal {S}}
\newcommand{\bfz}{\mathbf{z}}
\newcommand{\bfD}{\mathbf{D}}
\newcommand{\bfw}{\mathbf{w}}
\newcommand{\frN}{\mathfrak N}
\newcommand{\frf}{\mathfrak f}
\newcommand{\dg}{\dagger}
\newcommand{\frh}{\mathfrak h}
\newcommand{\frK}{\mathfrak K}
\newcommand{\frn}{\mathfrak n}
\newcommand{\la}{\lambda}
\newcommand{\frl}{\mathfrak l}
\newcommand{\frr}{\mathfrak r}
\newcommand{\frL}{\mathfrak L}
\newcommand{\frR}{\mathfrak R}
\newcommand{\caW}{\mathcal W}
\newcommand{\fre}{\mathfrak{e}}
\newcommand{\frp}{\mathfrak{R}}
\newcommand{\frQ}{\mathfrak{L}}
\begin{document}

\title{Eigenvector correlations in the complex Ginibre ensemble}
\author{Nicholas Crawford\thanks{supported by Israel Science Foundation grant number 1692/17}  ~ and~ Ron Rosenthal\thanks{supported by Israel Science Foundation grant number 771/17}}

\maketitle

% ###################################################################################
% ###################################################################################
% ###################################################################################
% ###################################################################################

\begin{abstract}
	The complex Ginibre ensemble is an $N\times N$ non-Hermitian random matrix over $\BC$ with i.i.d. complex Gaussian entries normalized to have mean zero and variance $1/N$. Unlike the Gaussian unitary ensemble, for which the eigenvectors are distributed according to Haar measure on the compact group $U(N)$, independently of the eigenvalues, the geometry of the eigenbases of the Ginibre ensemble are not particularly well understood. In this paper we systematically study  properties of eigenvector correlations in this matrix ensemble. In particular, we uncover an extended algebraic structure which describes their asymptotic behavior (as $N$ goes to infinity). Our work extends previous results of Chalker and Mehlig \cite{CM1}, in which the correlation for pairs of eigenvectors was computed. 
 \end{abstract}

\section{Introduction}

	For $N\in\BN$, let $M_N=(M_{ij})_{i,j=1}^{N}$ denote a random $N\times N$ matrix sampled from the Ginibre ensemble. That is, $M_N$ is an $N\times N$ matrix with complex entries $M_{ij}$, where $(M_{ij})_{i,j\geq 1}$ are independent complex Gaussian random variables with mean $0$ and variance $1/N$. Let us denote the probability measure of such a complex Gaussian variable by $\kappa_N$
\[
	\kappa_N(\textrm{d}z)= \pi^{-1}\exp(-N|z|^2)N\textrm{d}^2 z,\quad \forall z\in\BC\,,
\]
and by $\BP$ the joint law of the random variables $(M_{ij})_{i,j\geq 1}$. 

With $\BP$-probability $1$, the matrix $M_N$ is diagonalizable and has $N$ distinct eigenvalues, denoted $(\lambda_i)_{i=1}^N$. The eigenvalue (and singular value) distribution is explicitly computable as a determinantal processes. In particular, the spectrum and $k$-point eigenvalue distributions of $M_N$ converge in $\BP$-probability (and also almost surely), in the limit $N\rightarrow \infty$, to the uniform measure on the unit disc $\textbf{D}_1:=\{z\in\BC ~:~ |z|<1\} \subset \mathbb C$ (resp. $\D_1^k$). Furthermore, like its better-known Hermitian counterpart, the Gaussian unitary ensemble, both the eigenvalues and singular values display universal behavior with respect to the variation of the distribution (under modest analytic assumptions on the distribution) of individual entries of the matrix, see \cite{Gin,Gir,Gir94,bai1997,TV,gotze,TV10,BYY14,BYY14b,Y14,AEK18}. 

Since the matrix $M_N$ is generically non-Hermitian with respect to the natural inner product on $\C^N$, one can associate to $(\lam_i)_{i=1}^N$ two sets of bases of eigenvectors for $\C^N$, a basis of `right' eigenvectors $(\tbr_i)_{i=1}^n$ and a basis of `left' eigenvectors $(\tbl_i)_{i=1}^N$. In the natural coordinate system defining $M_N$, the $\tbr_i$'s are column vectors and $M_N  \tbr_i= \lambda_i \tbr_i$ while the $\tbl_i$'s are row vectors and $\tbl_i M_N=\lambda_i \tbl_i$. Given the $\tbr_i$'s, the $\tbl_i$'s are normalized so that 
\begin{equation}\label{E:Norm}
	 \tbl_i \cdot \tbr_j=\delta_{i j}\,.
\end{equation} 

	A point worth keeping in mind  throughout the paper is that properly speaking, $\tbr_i\in \BC^N$ while $\tbl_i\in (\BC^N)^*$. Hence we will use the convention $ \tbl_i(k):=\tbl(e_k)$, where $e_k$ is the column vector with $1$ in the $k$'th component and $0$ elsewhere.  With this convention $ \tbl_i \cdot \tbr_j =\sum_k \tbl_i(k)\tbr_j(k)$ \emph{without} any complex conjugation. Using these bases, we have the spectral decomposition $M_N=\sum_{1\leq i\leq N}\lambda_iQ_i$, where for $1\leq i\leq N$, we introduced the notation $Q_i=\tbr_i\tbl_i$. 

Unlike what transpires in the Hermitian setting, the eigenvectors $\tbr_i$ are strongly correlated with the eigenvalues $\lambda_i$. This has a number of interesting consequences. For example, there is no simple description for the analogue of Dyson's Brownian motion of the Ginibre ensemble, since one has to track the evolution of both eigenvalues and eigenvectors. 

In this paper, we study of the \textit{geometry} of the eigenbases {$(\tbr_i)_{i=1}^N$ and $(\tbl_i)_{i=1}^N$}.  We shall provide further motivation for this below but, for now, let us simply say that we find related questions intrinsically interesting. For example, what are the typical angles between distinct eigenvectors? What is the volume of the parallelepiped determined by the eigenvectors corresponding to eigenvalues near the deterministic parameters $\nu_1, \cdots \nu_\ell$ for some fixed $\ell\in \bbN$ as $N\rightarrow \infty$? How does the minimal angle between eigenvectors behave as a function of $N$? 

Here, we focus on the computation of the \emph{$2\ell$-point correlation functions} for the collection of eigenvectors corresponding to the deterministic eigenvalues $\nu_1, \dotsc, \nu_{2\ell}$.  
To explain what we mean,  let $\tbr_i^{\dg}\in (\BC^N)^*$ (respectively $\tbl_i^\dg \in \BC^N$) denote the row (column) vector obtained by conjugate transposing  $\tbr_i$ ($\tbl_i$). When $\ell=1$, the natural quantity to study is, formally,
\begin{equation}\label{eq:2-point-def}
	\sum_{i, j=1}^N \delta(\nu_1-\lambda_i) \delta(\nu_2-\lambda_j) \tbr_i^{\dg}\cdot  \tbr_j \,,
\end{equation}
where $\tbr_i^{\dg}\cdot  \tbr_j:=\sum_k \overline{\tbr_i(k)}  \tbr_j(k)$. Aside from the minor technical issue that one must formalize the meaning of the delta functions, the expression itself is actually not well defined due to the following symmetry. For every choice of non-zero complex numbers $(c_i)_{i=1}^{N}$, the pair of bases $(c_i \tbr_i)_{i=1}^N, (c_i^{-1}\tbl_i)_{i=1}^N$ is just as good as $( \tbr_i)_{i=1}^N, (\tbl_i)_{i=1}^N$. From a physical perspective, see \cite{CM1,CM2}, it is natural in such a situation to focus on quantities invariant under this symmetry. Let $\CI_{2\ell, N}$ be the set of all $2\ell$-tuples of integers in $[N]$. For every $\eps>0$, $N\in \bbN$, and every $\pmb{\nu}\in \textbf{D}_1^{2\ell}$, define
\begin{equation}\label{eq:finite_correlation_function}
\begin{aligned}
	\widehat{\rho}_{2\ell, N, \eps}(\pmb{\nu}):\hspace{-2pt}&= \eps^{-4\ell}N^{-1} \sum_{I\in\CI_{2\ell, N}} \phi_{\eps}(\|\pmb{\nu}-\lambda_I\|_\infty) \prod_{m=1}^{\ell}  (\tbr_{I_{2m-1}}^{\dg}\cdot \tbr_{I_{2m}})( \tbl_{I_{2m}}\cdot \tbl_{I_{2m+1}}^{\dg})\\
	&= \eps^{-4\ell}N^{-1} \sum_{I\in\CI_{2\ell, N}} \phi_{\eps}(\|\pmb{\nu}-\lambda_I\|_\infty) \mathrm{Tr}(Q_{I_1}^\dag Q_{I_2}Q_{I_3}^\dag \cdots Q_{I_{2\ell}})\,,
\end{aligned}
\end{equation}
where $\lambda_I=(\lambda_{I_m})_{m\in I}$, the function $\phi_\varepsilon$ denotes the indicator function of the interval $(-\varepsilon,\varepsilon)$, and for $I=(I_1,\ldots,I_{2\ell})$, we have implemented the cyclic notation $I_{m+2\ell}=I_{m}$. 

Our goal is to show the existence of the limit 
\[
	\rho_{2\ell}(\pmb{\nu}):=\lim_{\eps \rightarrow 0} \lim_{N\rightarrow \infty}\E[\widehat{\rho}_{2\ell, N, \eps}(\pmb{\nu})]\,,
\]  
and then explore some of its basic properties as a function of the spectral parameters $\pmb{\nu}$. Let us note that another way to interpret this quantity is to compute the conditional expectation of $\mathrm{Tr}(Q_1^*Q_2Q_3^*\cdots Q_{2\ell})$ conditioned on $(\lambda_1,\ldots,\lambda_{2\ell})=\pmb{\nu}$. For technical convenience we work with \eqref{eq:finite_correlation_function}, although we expect both definitions to lead to the same correlation functions on the macroscopic scale.

In fact, the case $\ell=1$ has already been computed by Chalker and Mehlig \cite{CM1,CM2,CM3}, for which they obtained the beautiful formula 
\begin{equation}\label{eq:CM}
	{\rho_2}(\nu_1,\nu_2)=-\frac{1-\overline{\nu_1}\nu_2}{|\nu_1-\nu_2|^4}\,,\qquad\qquad  \forall \nu_1,\nu_2\in\tbD_1\,.
\end{equation}

\begin{remark*}
	Note that in \cite{CM1}, this formula appears with a factor of $\pi^{-2}$. The origin of this factor is due to a different normalization. 
\end{remark*}

To illustrate our main result for the next order correlation function (when $\ell=2$), we prove that 
\[
	\rho_4(\nu_1,\nu_2,\nu_3,\nu_4)= \frac{1}{(\nu_2-\nu_4)(\overline{\nu_1}-\overline{\nu_3})}\Big[{\rho}_2(\nu_1,\nu_2){\rho}_2(\nu_3,\nu_4)-{\rho}_2(\nu_1,\nu_4){\rho}_2(\nu_3,\nu_2)\Big]\,.
\]

On the basis of this formula one can already begin to see some of the general structure structure that we uncover. First, note that $\rho_4$ can be expressed in terms of products of $\rho_2$'s with coefficients that are rational functions in $(\nu_{2k})_k$ and $(\overline{\nu_{2k-1}})_k$. In fact, one can interpret $\nu_2-\nu_4$ as a Vandermonde determinant (and similarly $\overline{\nu_1}-\overline{\nu_3}$). This turns out be a general feature of the correlation functions; In the general case, the correlation functions can be expressed as a linear combination of products of $\rho_2$'s with a specific pairing rule that we explain below. Moreover, after multiplying by a product of Vandermonde determinants, the coefficients of each product of $\rho_2$'s are polynomials in $(\nu_{2k})_k$ and $(\overline{\nu_{2k-1}})_k$.

%*******************************************************************************************************
\subsection{Motivation}

Chalker and Mehlig were motivated to compute \eqref{eq:CM} after considering the following problem: Let $M_N$ and $M'_N$ be a pair of independent random matrices distributed according to the Ginibre ensemble and  interpolate from one to the other via $M_N(\theta)=\cos(\theta)M_N+ \sin(\theta) M_N'$. How do the eigenvalues $\la_i(\theta)$ vary with $\theta$? Note that for any fixed $\theta$, the matrix $M_N(\theta)$ has the same distribution as $M_N$. However, examining the velocities of eigenvalues, Chalker and Mehlig found that $\E[|\partial_{\theta} \lambda_i|^2|\lambda_i=z]\approx 1- |z|^2$. Here the square of the length of $Q_i$ in the Hilbert-Schmidt norm naturally appears; $\E[|\partial_{\theta} \lambda_i|^2|\lambda_i=z]=\E[\mathrm{tr}(Q_i^\dag Q_i)|\lambda_i=z]/N$. It turns out that this quatity is asymptotic to $1-|z|^2$.  By way of comparison, for the (self adjoint) Gaussian Unitary Ensemble with the same normalization on the matrix entries, it is known that $\E[\partial_{\theta} \lambda_i^2]=O(1/N)$, see \cite{Wilk}. This indicates a strong instability in the spectrum of a non-Hermitian random matrix which cannot be captured by the typical studies of eigenvalues alone.

This instability turns out to be common in physical and numerical problems involving non-Hermitian matrices. For example, it is the origin of computational issues related to matrix inversion \cite{Grcar} and motivated the study of \emph{pseudo-spectra}, that is, approximate eigenvalues and eigenvectors, when faced with a non-normal matrix. The latter concept has found application in numerous settings (see the wonderful book \cite{Tref3} for a survey). Let us mention a few examples we find interesting, although the discussion is a bit off our main topic. 

First, the instability of eigenvalues and eigenvectors of non-normal operators (and the success of pseodspectra in detecting it) was connected to the onset of turbulence in certain fluid flows at Reynolds numbers lower than what might naively be expected based on linearized stability analysis \cite{Tref1, Tref2}. 

A second interesting example in which non-normality and stability of eigenvalues and eigenvectors plays a significant role is the Hatano-Nelson model \cite{HN,Goldshield,Tref3}. This is an Anderson type model in one dimension in which the propagator is a tilted Laplacian, breaking time reversal symmetry. According to numerical results presented in \cite{Tref3}, the eigenvalues and eigenvectors of this model display rather strong sensitivity to boundary conditions, periodic versus Dirichlet. Remarkably however, the pseudo-spectra of this operator seems to be relatively insensitive to boundary conditions. 

As a third and final example regarding the significance of this instability, we mention recent work by Fyodorov and Savin regarding inverse lifetimes of resonance states in open quantum systems \cite{FS12}. If the Hamiltonian of the corresponding closed system is perturbed, the inverse lifetimes of the resonance states shift. The authors show that  at weak coupling between the system and environment, the magnitudes of these shifts are predominantly due to the non-orthogonality of the resonance states. Moreover, considering the limit in which the number of resonance states tends to infinity, they use random matrix techniques to predict the statistics for these shifts. Subsequent experimental work \cite{GKL14} actually confirmed these random matrix theory approximations. This last fact is particularly significant from our point of view: By analogy with the Hermitian setting, in which the local statistics of eigenvalue correlations of GUE is expected to be universal over a large class of  models both random and deterministic, one might hope to use non-Hermitian random matrices to study large dissipative quantum systems. The work \cite{GKL14} provides proof of concept for this hope.

Another point of view one might take, which makes the quantities we study quite natural, is that Chalker and Mehlig  computed the two point, or spin-spin, correlation function of a statistical mechanical system: As is well known, the eigenvalues of the Ginibre ensemble form a system of free fermions, or, equivalently, a determinantal point process. In this interpretation, the associated eigenvectors should not be forgotten as they provide an additional spin structure for this system. 

There was a bit of followup work after \cite{CM1,CM2}, c.f. \cite{Poles1,DBM}, but little attention has been paid by the mathematics community until rather recently. Walters and Starr \cite{WS}, extended the calculation of $\rho_2$ to spectral parameters at the boundary of $\D_1$ and also computed the asymptotics of mixed moments of $\widehat{\rho}_{2,N,\varepsilon}$ with powers of the random matrix $M_N$. Bourgade and Dubach \cite{BD} and Fyodorov \cite{F} characterized the \emph{distribution} of the "self-overlap", namely, the length square of a particular $Q_i$, which also represents the local condition number for $M_N$. It would be interesting to extend their results to a "full counting statistics" for all $2\ell$-point correlations of the type considered in the present paper. 

We focus exclusively on eigenvector correlations for the complex Ginibre ensemble.  As we detail below, there seems to be an interesting algebraic structure underlying our computations which we have only partially uncovered. It would therefore be worthwhile to explore the analogous correlations for other non-Hermitian ensembles, e.g., the real and quaternionic cases.  

%*******************************************************************************************************
\subsection{Some basic notions and the main result}\label{subsec:basic_notions}

	For a finite set $A\subset \BN$ denote by $\CS_A$ the permutation group on $A$, and define $\CS=\biguplus_{A\subset \BN,~ 0\leq |A|<\infty}\CS_A$. Here, $\biguplus$ denotes the \emph{disjoint} union of these collections of permutations, that is, we do not identify permutations which have the same fixed points, so the cycle $(1, 2, 3)\in \CS_{[3]}$ is not the same as $(1,2,3)(4)\in \CS_{[4]}$ nor the same as the cycle $(1,2,4)\in \CS_{\{1,2,4\}}$. The set $\CS_\emptyset$  contains a unique permutation on the from the empty set to itself, which we denote by $\emptyset$. For $\si\in\CS$ we denote by $\SV(\sigma)$ the set of vertices on which $\si$ acts and by $|\si|$ the number of cyclic permutation in $\si$. 
	
It follows from our formalism that, when computing \eqref{eq:finite_correlation_function}, we also compute correlations corresponding to product of cycles and hence correlation functions associated with general permutations in $\CS$. In order to study correlation functions associated with general permutations, we change our notation slightly. For $\sigma\in\CS$, let $\CI_{\sigma}=\CI_{\sigma}^N$ be the collection of tuples indexed by $\SV(\sigma)$, taking values in $[N]$. Then, for $\si\in\CS$ and $\tbu,\tbv\in \bfD_1^{\SV(\sigma)}$, set 
\begin{multline}\label{eq:finite_correlation_function1}
	\widehat{\rho}_{N,\eps}(\sigma; \tbu,\tbv):\hspace{-0.7pt}=\eps^{-4|\SV(\sigma)|}N^{-|\sigma|}
	 \sum_{I,J\in \CI_{\sigma}} \phi_{\eps}(\| \tbu-\lambda_{I}\|_\infty)\phi_{\eps}(\| \tbv-\lambda_{J}\|_\infty) \times\\
	\prod_{j\in \SV(\sigma)}  (\tbl_{J_{\sigma^{-1}(j)}}\cdot \tbl_{I_{j}}^{\dg}) (\tbr_{I_j}^{\dg} \cdot \tbr_{J_{j}})\, ,
\end{multline}

Note that when $\sigma$ has more than one cycle in it, $\widehat{\rho}_{N,\ep}(\si)$ is rescaled by a higher power of $N$, namely $N^{-|\sigma|}$, corresponding to the total number of cycles in the correlation function. Relating our new notation to \eqref{eq:finite_correlation_function}, by taking $C_\ell$ to be the cyclic permutation on $[\ell]$ and defining $\pmb{\nu}\in\tbD_1^{2\ell}$ via the relation $\nu_{2j-1}=v_j$ and $\nu_{2j}=u_j$, one obtains $\widehat{\rho}_{N,\varepsilon}(C_\ell;\tbu,\tbv)=\widehat{\rho}_{2\ell,N,\varepsilon}(\pmb{\nu})$. 

For $m\in\BN$ and $\tbu,\tbv\in \tbD_1^{m}$, define 
\begin{align*}
	& \mathrm{Dist}(\tbu,\tbv) = \mathrm{Dist}(\tbu,\tbv;\partial\tbD_1)\\
	&\qquad :=  \min_{\alpha,\beta\in [m]} \{|u_\alpha-v_\beta|\}  \wedge \min_{\alpha,\beta\in [m],~\alpha\neq \beta}\{|u_\alpha-u_\beta|,|v_\alpha-v_\beta|\}\wedge \min_{\alpha\in [m]}\{1-|u_\alpha|, 1-|v_\alpha|\}\,. 
\end{align*}

Our first theorem proves the existence of the limiting correlation function in the macroscopic scale of separation, i.e., $\mathrm{Dist}(\tbu,\tbv)>0$ and a factorization property it satisfies. 

\begin{theorem}\label{T:1}
For every $\si\in\CS$ and every $\tbu,\tbv\in\tbD_1^{\SV(\sigma)}$ such that $\mathrm{Dist}(\tbu,\tbv)>0$, the limit
\[
	\rho(\sigma; \tbu,\tbv):=\lim_{\eps \rightarrow 0} \lim_{N\rightarrow \infty}\E[\widehat{\rho}_{N,\eps}(\sigma; \tbu,\tbv)]
\]
exists. Moreover, if $\sigma=\{\CL_k\}_{k=1}^{|\si|}$ where $\CL_k$ are the cycles of $\sigma$, then
\[
	{\rho}(\sigma; \tbu,\tbv)= \prod_{k=1}^{|\si|}{\rho}(\CL_k; \tbu|_{\SV(\CL_k)}, \tbv|_{\SV(\CL_k)}).
\]
\end{theorem}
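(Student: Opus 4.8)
The proof of Theorem~\ref{T:1} decomposes into two logically separate pieces: (i) showing that the double limit $\lim_{\eps\to 0}\lim_{N\to\infty}\E[\widehat\rho_{N,\eps}(\sigma;\tbu,\tbv)]$ exists whenever $\mathrm{Dist}(\tbu,\tbv)>0$, and (ii) establishing the factorization over cycles. I would reduce (ii) to (i): since both sides of the claimed factorization identity are limits of the same type of object, it suffices to prove that the finite-$N$, finite-$\eps$ quantity $\E[\widehat\rho_{N,\eps}(\sigma;\tbu,\tbv)]$ \emph{asymptotically} factorizes as $N\to\infty$ (then $\eps\to 0$) into the product of the per-cycle expectations, with an error that vanishes in the iterated limit.

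\medskip
\noindent\emph{Step 1: an integral representation.} The starting point is to express $\E[\widehat\rho_{N,\eps}(\sigma;\tbu,\tbv)]$ using the known joint law of eigenvalues together with the conditional structure of the overlaps $\tbr_i^\dg\cdot\tbr_j$ and $\tbl_i\cdot\tbl_j^\dg$ given the eigenvalues. Concretely one writes the sum over $I,J\in\CI_\sigma$ as an integral against the $k$-point correlation function of the Ginibre eigenvalue process (where $k=|\SV(\sigma)|$, the "diagonal" contributions $I_a=I_b$ being subleading), multiplied by the conditional expectation of the product $\prod_{j}(\tbl_{J_{\sigma^{-1}(j)}}\cdot\tbl_{I_j}^\dg)(\tbr_{I_j}^\dg\cdot\tbr_{J_j})$. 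The conditional law of the overlap matrix given the spectrum is a Ginibre-type object with an explicit (Schur-complement / triangularization) description, so the conditional expectation of any finite product of matrix entries of $Q_i^\dg Q_j$ can in principle be computed by Wick-type formulas; this is precisely the structure Chalker--Mehlig exploited for $\ell=1$. I would cite or re-derive the required conditional moment formula, keeping track only of its leading order in $N$.

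\medskip
\noindent\emph{Step 2: localization and decoupling of distinct clusters.} Fix the cycle decomposition $\sigma=\{\CL_k\}_{k=1}^{|\sigma|}$ and note that $\widehat\rho_{N,\eps}(\sigma;\tbu,\tbv)$ involves only eigenvalues within distance $\eps$ of the prescribed points $\tbu,\tbv$. Because $\mathrm{Dist}(\tbu,\tbv)>0$, for $\eps$ small enough the $\eps$-neighbourhoods attached to indices in $\SV(\CL_k)$ are disjoint from those attached to $\SV(\CL_{k'})$ for $k\neq k'$, and also bounded away from $\partial\tbD_1$. The point is then that (a) the correlation-kernel of the Ginibre eigenvalue process decays superexponentially off the diagonal on the macroscopic scale, so the $k$-point function approximately factorizes across well-separated clusters, and (b) the conditional overlap moments that couple an index in $\SV(\CL_k)$ to one in $\SV(\CL_{k'})$ are suppressed by a negative power of $N$ when the two eigenvalues are at macroscopic distance (indeed $\rho_2$ in \eqref{eq:CM} is an $O(1)$ object \emph{after} the $N^{-|\sigma|}$ rescaling precisely because within a single cycle consecutive indices are $\eps$-close). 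Combining (a) and (b), the cross terms between distinct cycles contribute $o(1)$, and one is left with a product over $k$ of the single-cycle quantities $\E[\widehat\rho_{N,\eps}(\CL_k;\tbu|_{\SV(\CL_k)},\tbv|_{\SV(\CL_k)})]$ up to error terms that vanish as $N\to\infty$.

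\medskip
\noindent\emph{Step 3: existence of the single-cycle limit.} It then remains to prove existence of $\rho(\CL;\tbu,\tbv)$ for a single cycle $\CL$ with $\mathrm{Dist}(\tbu,\tbv)>0$. Here I would do the genuine computation: substitute the conditional overlap moment formula into the integral against the $|\SV(\CL)|$-point eigenvalue function, rescale, and identify the $N\to\infty$ limit of the integrand; then take $\eps\to 0$ by showing the integrand converges, away from the diagonal, to a continuous function of $(\tbu,\tbv)$ whose integral against the approximate-identity $\phi_\eps$ factors collapse onto the point evaluation. Uniform integrability / domination on the region $\mathrm{Dist}>0$ makes both limits legitimate; the separation hypothesis is exactly what prevents the $|\nu_i-\nu_j|^{-4}$-type singularities seen in \eqref{eq:CM} from causing trouble.

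\medskip
\noindent\emph{Main obstacle.} The technical heart — and the step I expect to be hardest — is Step 3 together with the quantitative version of Step 2: obtaining a clean, $N$-uniform asymptotic formula for the conditional moments of products of overlap matrix entries $\mathrm{Tr}(Q_{i_1}^\dg Q_{i_2}\cdots)$ given the spectrum, with explicit control of (i) the leading-order term as $N\to\infty$ for indices that are $\eps$-close, and (ii) the decay rate for indices at macroscopic separation. For $\ell=1$ this is the Chalker--Mehlig computation; for general $\sigma$ it requires understanding higher mixed moments of the (conditionally Ginibre-like) off-diagonal overlap field, and organizing the resulting Wick expansion so that the surviving pairings are exactly the ones respecting the cycle structure. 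Everything else — passing from correlation functions to integrals, discarding diagonal index coincidences, and the two limit interchanges — is routine given the superexponential off-diagonal decay of the Ginibre kernel and the standing assumption $\mathrm{Dist}(\tbu,\tbv)>0$.
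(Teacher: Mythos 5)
Your overall architecture (condition on the spectrum, compute conditional moments of the overlaps, then integrate against the eigenvalue point process) matches the paper's starting point, but the two steps you defer are exactly where the content lies, and one of your heuristics is wrong.

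First, the ``conditional moment formula'' of your Step 1 does not exist in the literature for general $\sigma$ and cannot simply be cited; Chalker--Mehlig only treat $\ell=1$. The paper obtains it by first rewriting $\widehat\rho_{N,\eps}(\sigma)$ as a contour integral of traces of products of resolvents $F_N(\sigma;\tbz,\tbw)$ (equation \eqref{eq:Relaton_of_rho_and_F}), and then integrating out the columns of Mehta's upper-triangular form one at a time via Schur's complement. This produces a transfer-matrix recursion $\BE[F_j(\cdot)\mid\CF_{j-1}]=F_{j-1}A^{\lambda_j}$ on $\BC^{\CS}$ (Proposition \ref{prop:recurssive_identity}), so that $\BE[F_N(\sigma)\mid\lambda_1,\dots,\lambda_N]=\tbe_\emptyset^\dag A^{\lambda_1}\cdots A^{\lambda_N}\tbe_\sigma$; the eigenvalue average of this product is then shown (Theorem \ref{thm:limit_exists}, via the determinantal kernel and Proposition \ref{L:EstMain}) to converge to $\exp(\FN)$. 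Your Step 3 (``substitute the formula and identify the limit'') is precisely this multi-section computation, and your proposal does not supply any of it.

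Second, your decoupling mechanism in Step 2 rests on a false premise. You assert that within a single cycle consecutive indices are $\eps$-close; they are not: under $\mathrm{Dist}(\tbu,\tbv)>0$ \emph{all} the spectral parameters $u_\alpha,v_\beta$ are pairwise macroscopically separated, so the within-cycle overlap factors couple eigenvalues at macroscopic distance just as the cross-cycle ones would. Consequently the suppression of cross-cycle correlations cannot be read off from ``macroscopic separation $\Rightarrow$ small overlap moment''; if it could, the factorization would be scale-insensitive, whereas the paper points out it fails at separation $O(N^{-1/2})$. In the paper the factorization is not a decoupling estimate at all: it is an algebraic consequence of the tensorial property of the left and right eigenvectors of the limiting matrix $\FN$ (Lemma \ref{L:Tens}), i.e. of the identity $\frl_{\si}(\tau)=\prod_k\frl_{\si_k}(\CL_k)$ proved by induction along the partial order $\prec$, combined with the spectral decomposition $\exp(\FN)=\frR\exp(\frh)\frL$. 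You would need either to reproduce that structure or to find a genuinely quantitative cross-cycle covariance bound, and your proposal contains neither.
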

This factorization property only occurs with macroscopic separation of the spectral parameters. If one considers the same objects on the scale $|u_i-v_j|=O(N^{-1/2})$, this result is no longer true, as we show in another forthcoming paper \cite{CR2}. Theorem \ref{T:1} implies that it suffices to understand the limiting correlation function for cyclic permutations. In order to study these we introduce some additional definitions. Let $\si, \tau\in\CS$ be two cyclic loops such that $\SV(\si)\subseteq \SV(\tau)$. We say that $\si$ is a sub-loop of $\tau$ if the map $\tau\circ \si^{-1}$ has at most one non-fixed point (as a map from $\SV(\si)$ to $\SV(\tau)$). Any permutation $\si\in\CS$ induces an orientation on $\SV(\si)$. In particular, for $m\geq 3$ and $\alpha_1,\ldots,\alpha_m\in\SV(\si)$ one can check whether $(\alpha_1,\ldots,\alpha_m)$ belong to the same cycle in $\si$ and appear on it with the prescribed ordering. Given two disjoint subloops $\pi_1$ and $\pi_2$ of a cyclic permutation $\si$, we say that they are crossing if there exists $\alpha\in \SV(\pi_1)$ and $\beta\in\SV(\pi_2)$ such that $(\alpha,\pi_1(\alpha),\beta,\pi_2(\beta))$ is not the ordering of these vertices in $\si$. Otherwise we say that $\pi_1$ and $\pi_2$ are non-crossing. There is a natural partial ordering on permutations in $\CS$ defined as follows. For $\si,\tau\in\CS$, say that $\sigma \trianglelefteq \tau$ if $\SV(\sigma) \subset \SV(\tau)$, every loop of $\sigma$ is a sub-loop of some loop in $\tau$ and all pairs of loops of $\si$ are non-crossing with respect to $\tau$. Finally, for $A\subset \BN$ and $\tbv\in\BC^A$, let $\mathsf{V}_A(\tbv)$ be the Vandermonde determinant $\prod_{\alpha, \beta\in A,\, \alpha< \beta}(v_{\beta}-v_{\alpha})$.  

\begin{figure}
  \centering
    \includegraphics[width=0.6\textwidth]{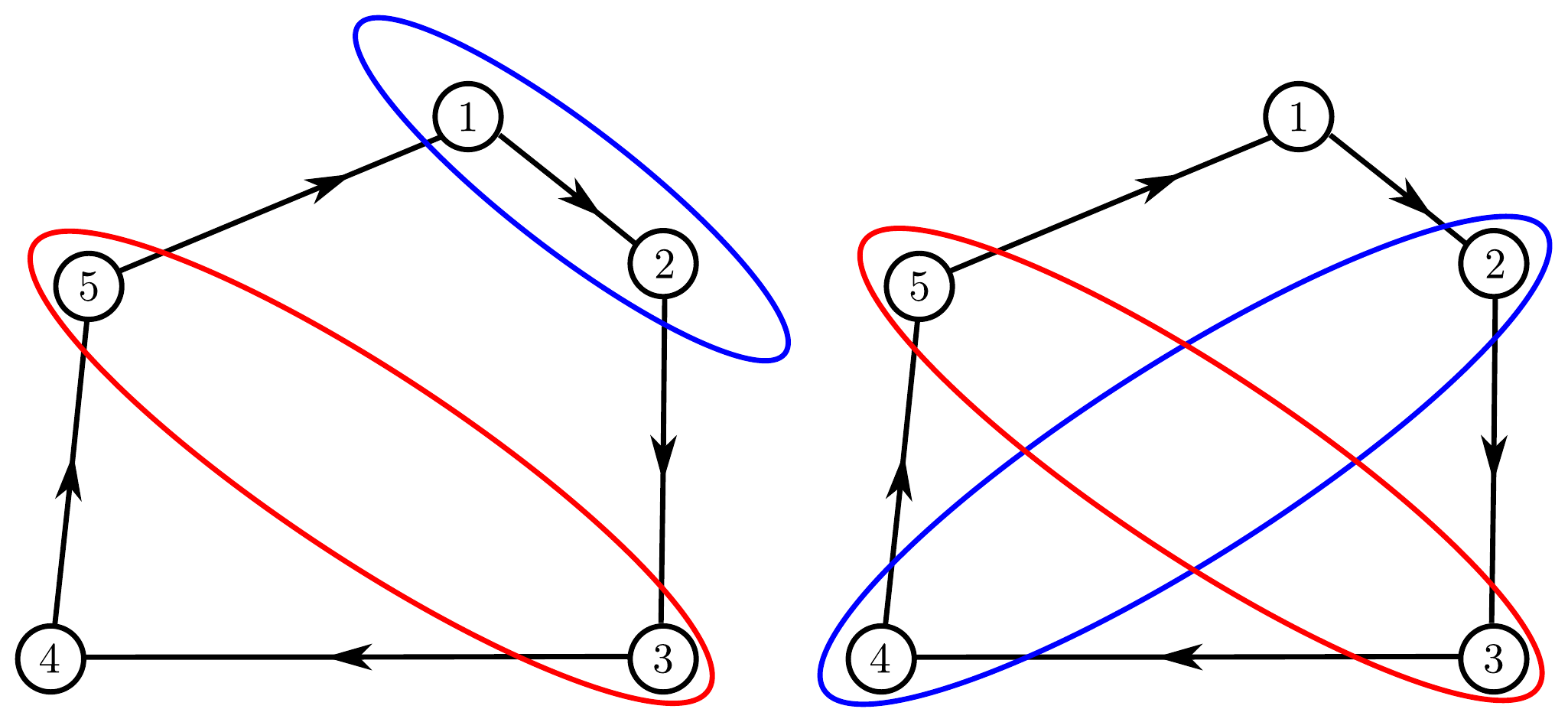}
     \caption{In black the cycle $\si=(1,2,3,4,5)$. On the left two subcycles $\pi_1=(1,2)$ and $\pi_2=(3,5)$ which are noncrossing. On the right, two subcycles $\pi_1=(2,4)$ and $\pi_2=(3,5)$ which are crossing with respect to $\si$. The sequence $(2,4,3,5)$ does not the order of the vertices in $\si$. \label{fig:crossing}}
\end{figure}

\begin{theorem}\label{T:2}
	There are two families of polynomials $(\frp_{\pi}, \frQ_{\pi})_{\pi \in \CS}$, 
with $\frp_{\pi}$ and $\frQ_\pi$ being homogeneous polynomials from $\BC^{\SV(\pi)}\times \BC^{\SV(\pi)}$ to $\BC$ of degree $\binom{|\SV(\pi)|-1}{2}$ in the first and second sequence of variables, 	such that for every permutation $\si\in\CS$ and every $\tbu,\tbv\in\tbD_1^{\SV(\si)}$
\[
	\rho(\si;\tbu,\tbv) =\hspace{-8pt}\sum_{\substack{\pi\trieq \si \\ \SV(\pi)=\SV(\si)}} \frac{\frp_{\pi}(\overline{\tbu}|_{\SV(\pi)},\tbv|_{\SV(\pi)})\frQ_{\si\circ \pi^{-1}}(\overline{\tbu}|_{\SV(\pi)},\pi^{-1}(\tbv|_{\SV(\pi)}))}{\mathsf{V}_{\SV(\si)}(\overline{\tbu})^2\mathsf{V}_{\SV(\si)}({\tbv})^2}\prod_{\alpha\in \SV(\pi)}\rho_2(u_\alpha,v_{\pi^{-1}(\alpha)})\,,
\]
where $\rho_2$ given by \eqref{eq:CM}.
\end{theorem}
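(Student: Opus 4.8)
The proof of Theorem~\ref{T:2} will proceed by a combinatorial expansion of $\widehat{\rho}_{N,\eps}(\si;\tbu,\tbv)$ at finite $N$, followed by a careful identification of the surviving terms in the limit. The starting point is the known joint density of eigenvalues and the structure of the overlap matrix $\mathcal{O}_{ij}=(\tbl_i\cdot\tbl_j^\dg)(\tbr_i^\dg\cdot\tbr_j)$; conditioning on the eigenvalues, the conditional moments of products of the $Q_i$'s can be written via an integral (or Schur-complement) representation that, after the Ginibre-specific algebra, reduces the computation of $\E[\mathrm{Tr}(Q_{I_1}^\dag Q_{I_2}\cdots Q_{I_{2\ell}})]$ to a sum over ways of ``pairing up'' the indices. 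First I would set up this expansion for a single cycle $\CL$ (Theorem~\ref{T:1} reduces everything to cycles): expand the trace along the cycle, introduce the Ginibre propagator $G=(z-M_N)^{-1}$-type resolvent identities that Chalker--Mehlig used for $\ell=1$, and organize the resulting terms by the pattern in which intermediate summation indices coincide with the external indices $I_1,\dots,I_{2\ell}$. Each such pattern is naturally encoded by a sub-permutation $\pi\trieq\si$ with $\SV(\pi)=\SV(\si)$, and the ``non-crossing'' condition in the definition of $\trieq$ is exactly the planarity constraint that makes the corresponding term non-vanishing (crossing patterns being suppressed by powers of $N$, or vanishing after the $\eps\to 0$ limit). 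This is the conceptual heart and the step I expect to be the main obstacle: proving that precisely the non-crossing sub-loop configurations contribute, with the right combinatorial bookkeeping of which factor of $\rho_2$ attaches to which pair $(u_\alpha,v_{\pi^{-1}(\alpha)})$.

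Granting that reduction, the remaining work is to show the coefficient of $\prod_{\alpha}\rho_2(u_\alpha,v_{\pi^{-1}(\alpha)})$ has the claimed rational form. Here I would argue as follows. For fixed $\pi$, the ``leftover'' combinatorial data — how the arcs of $\si$ not accounted for by $\pi$ are routed — is itself captured by the permutation $\si\circ\pi^{-1}$, and the associated sum evaluates to a ratio of the form $\frp_\pi(\overline{\tbu},\tbv)\,\frQ_{\si\circ\pi^{-1}}(\overline{\tbu},\pi^{-1}(\tbv))\big/\bigl(\mathsf{V}_{\SV(\si)}(\overline{\tbu})^2\mathsf{V}_{\SV(\si)}(\tbv)^2\bigr)$. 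The denominator $\mathsf{V}^2\mathsf{V}^2$ arises because the only poles in $\tbu,\tbv$ beyond those already inside the $\rho_2$ factors come from the resolvent kernels evaluated at coinciding spectral parameters, and one can bound their order: each pair $(\alpha,\beta)$ of vertices in a common cycle of $\si$ can contribute at most a double pole in $(\overline{u_\alpha}-\overline{u_\beta})$ and in $(v_\alpha-v_\beta)$. To make this rigorous I would (i) show $\rho(\si;\tbu,\tbv)$ times $\mathsf{V}_{\SV(\si)}(\overline{\tbu})^2\mathsf{V}_{\SV(\si)}(\tbv)^2\big/\prod_\alpha\rho_2(u_\alpha,v_{\pi^{-1}(\alpha)})$ — summed appropriately — extends to an entire function of the $\overline{u_\alpha}$ and $v_\alpha$ on $\mathbf{D}_1^{\SV(\si)}$, using the explicit $N\to\infty$ kernel asymptotics and a removable-singularity argument; and (ii) control its growth to conclude it is polynomial, then read off the homogeneity degree $\binom{|\SV(\pi)|-1}{2}$ by scaling. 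The degree count is forced: $\mathsf{V}_{\SV(\si)}$ has degree $\binom{|\SV(\si)|-1}{2}$... wait, degree $\binom{n}{2}$ with $n=|\SV(\si)|$, squared gives $2\binom{n}{2}=n(n-1)$, and matching against the pole structure of the $\rho_2$ denominators pins down $\deg\frp_\pi+\deg\frQ_{\si\circ\pi^{-1}}$, which after the analysis distributes as $\binom{|\SV(\pi)|-1}{2}$ on each side in each set of variables.

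For the base case and as a sanity check I would verify the formula directly against \eqref{eq:CM} when $\si$ is the $2$-cycle (so $|\SV(\si)|=2$, $\binom{1}{2}=0$, both polynomials are constants, $\mathsf{V}$'s are linear, and the single non-crossing $\pi$ is the identity-type pairing giving exactly $\rho_2$), and against the stated $\rho_4$ formula when $\si=C_4$: there $\binom{3}{2}=3$, the two sub-permutations $\pi$ correspond to the two terms $\rho_2(\nu_1,\nu_2)\rho_2(\nu_3,\nu_4)$ and $\rho_2(\nu_1,\nu_4)\rho_2(\nu_3,\nu_2)$, and the prefactor $1/((\nu_2-\nu_4)(\overline{\nu_1}-\overline{\nu_3}))$ is precisely $\frp_\pi\frQ_{\si\pi^{-1}}/(\mathsf{V}^2\mathsf{V}^2)$ after cancellation. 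The induction on $|\SV(\si)|$ (or on $|\si|$ within a cycle) will use Theorem~\ref{T:1} for the multiplicativity and a resolvent expansion that peels off one vertex of the cycle at a time, expressing the $n$-vertex correlation in terms of $(n-1)$-vertex ones plus a genuinely new planar contribution; tracking how the Vandermonde denominators and polynomial numerators transform under this peeling is the bookkeeping that, while routine in spirit, is where all the homogeneity and ``$\trieq$'' constraints must be checked line by line.
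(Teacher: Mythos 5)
Your opening moves (contour/resolvent representation, Mehta's unitary triangularization, Schur--complement conditioning on the eigenvalues) do match the paper's setup, but the two load-bearing steps of your plan are either unsubstantiated or not the mechanism that actually works. First, you attribute the restriction of the sum to $\pi\trieq\si$ to a planarity constraint, with crossing index patterns ``suppressed by powers of $N$, or vanishing after the $\eps\to0$ limit.'' That is not how the structure arises, and you give no argument for it. In the paper, integrating out the columns one at a time yields a transfer matrix $A^{\lambda}$ indexed by permutations, the product $\BE[B^{\lambda_1}\cdots B^{\lambda_N}]$ converges to $\exp(\FN)$ for an explicit matrix $\FN$ that is upper triangular with respect to a relation $\preceq$ defined by \emph{interval decompositions of cycles} (Corollary \ref{cor:perc}), and the sum over $\pi\trieq\si$ is precisely the support of the left/right eigenvectors of $\FN$ in the spectral decomposition $\exp(\FN)=\frR\exp(\frh)\frL$ (Lemma \ref{lem:warum}). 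The restriction to $\SV(\pi)=\SV(\si)$ comes from the differentiation $\partial_{\tbu}\partial_{\overline{\tbv}}$ annihilating terms that do not depend on all coordinates, not from the $\eps\to0$ limit, and the $\rho_2$ factors appear as $\partial_{u}\partial_{\overline v}e^{h(u,v)}$ acting on the diagonal part $\exp(\frh_\pi)$. Without the transfer-matrix/eigenvector structure, your ``coefficient of $\prod_\alpha\rho_2$'' is not even a well-defined object to which a removable-singularity argument could be applied.

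Second, and more seriously, your route to polynomiality (extend to an entire function, control growth, read off homogeneity by scaling) presupposes that the coefficients are free of logarithms. They are not, a priori: the entries of $\FN$ are built from $h(u,v)=\log\bigl(\tfrac{1-\overline u v}{|u-v|^2}\bigr)$, so the eigenvector components $\frr_\pi(\varnothing)$ and $\frl_{\texttt{I}}(\si\circ\pi^{-1})$ are \emph{a priori} rational expressions in these logarithms. The paper itself flags this: it is ``NOT clear that $\frl_\si(\tau),\frr_\tau(\si)$ are rational functions in $\bar{\tbu},\tbv$,'' and establishing this is the content of Lemma \ref{T:Comp1}, proved by showing that the eigenvector equation is equivalent to the constancy in $(\lambda,\kappa)$ of a ratio $Q^\ell/P^\ell$ of two polynomials of bidegree at most $\ell-2$ that vanish at $\ell(\ell-2)$ common points, followed by a recursion (\ref{eq:proving_stuff_1}) that yields divisibility by the Vandermonde factors, the homogeneity degree $\binom{\ell-1}{2}$, and the vanishing on the diagonals $(u_\alpha,v_\alpha)=(u_\beta,v_\beta)$. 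Nothing in your plan produces the cancellation of the logarithms or pins down the degree; the scaling argument you invoke cannot start until rationality is known. So while your sanity checks for $\ell=1,2$ are consistent with the theorem, the proposal as written has a genuine gap at both of the steps you yourself identify as the hard ones.
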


\begin{remark*}
	In spite of the factorization stated in Theorem \ref{T:1}, note that Theorem \ref{T:2} suggests, and this is born out by its proof, that even the cycle correlation function cannot be disentangled from the correlation functions corresponding to more complicated permutations. 
\end{remark*}

We have computed these polynomials (as well as the associated correlation functions) for $\sigma=(1,2)$, see Section \ref{sec:The_matrix_frN}, and also in the case $\si=(1,2,3)$ and $\sigma=(1, 2, 3, 4)$ (unpublished). We think it would be an interesting combinatorial problem to give a closed expressions for them, but have not completely achieved this goal for the time being. 

Nevertheless, we finish this section by describing some structural properties the polynomials satisfy and providing a recurrence equation for their computation. To this end let us introduce a matrix $\frN$, indexed by the elements of $\CS$, which plays a crucial role in our paper. For $\si,\tau\in\CS$ we say that $\si\preceq\tau$ if $\SV(\si)\subset\SV(\tau)$, every loop of $\si$ is a subloop of $\tau$ and all but at most one of the loops of $\tau$ are also loops in $\si$. Notice that $\si\preceq \tau$ implies $\si\trianglelefteq\tau$ but the converse is not necessarily true. Given $\si,\tau\in\CS$ such that $\si\preceq \tau$  define $\SV_{nf}(\si;\tau)$ to be the set of the \emph{non-fixed points} of $\tau\circ \sigma^{-1}$ as map from $\SV(\si)$ to $\SV(\tau)$ and let $\widehat{\SV}_{nf}(\si;\tau) = \SV_{nf}(\si;\tau)\cup (\SV(\tau)\setminus \SV(\si))$.

Then we define the functions $\frn:\CS \times \prod_{\tau\in\CS}\BC^{\SV(\tau)}\times \BC^{\SV(\tau)}\to\BC$ and $\frh:\prod_{\tau\in\CS}\BC^{\SV(\tau)}\times \BC^{\SV(\tau)}\to \BC$ by 
\begin{equation}\label{the_function_Fn}
	\frn_{\sigma,\tau}(\tbu,\tbv) = \frac{1}{\pi}\int_{\D_1}\prod_{\alpha\in \widehat{\SV}_{nf}(\sigma;\tau)} \frac{1}{(\overline{\nu}-\overline{u_{\alpha}})} \frac{1}{(\nu-v_{\sigma^{-1}(\alpha)})} \textrm{d}^2 \nu
\end{equation}
and
\begin{equation}\label{eq:the_function_h}
	\frh_\si(\tbu,\tbv) = \sum_{\alpha\in\SV(\si)}h(u_\alpha,v_{\sigma^{-1}(\alpha)})\,, 
\end{equation}
where for $u,v\in\tbD_1$, we define\footnote{Note that $\partial_{u}\partial_{\overline{v}}h(u,v) = \rho_2(u,v)$. 
}
\[
	h(u, v)=\frac{1}{\pi}\int_{\D_1} \frac{1}{(\overline{\nu}-\overline{u})(\nu-v)} \textrm{d}^2 \nu = 
	\log\Big(\frac{1-\overline{u}v}{|u-v|^2}\Big)\,.
\]

The matrix $\frN\equiv \frN(\tbu,\tbv)$ is then defined by
\begin{equation}\label{eq:The_Matrix_N}
	\frN_{\sigma, \tau}=\begin{cases}
		\frh_{\si}(\tbu,\tbv) & \text{ if }\sigma=\tau, \\
		\frn_{\sigma, \tau}(\tbu,\tbv) & \text{ if }\sigma\prec \tau,\\
		0 & \text{ otherwise}.
\end{cases}
\end{equation}

Note that $\frN$ is upper triangular since $\preceq$ is a partial order, and therefore its eigenvalues are the diagonal entries $\frh_{\si}(\tbu,\tbv)$. 

The equations for the eigenvectors of $\frN$ can be written recursively. If $\frl_\pi$, respectively $\frr_{\pi}$, denote the left (respectively right) eigenvectors, then 
\begin{align}\label{eq:EV}
	&[\frh_{\si}-\frh_{\tau}]\frl_\si(\tau)= \sum_{\si\trieq \pi \prec\tau} \frl_{\si}(\pi) \frN_{\pi, \tau},\qquad \forall \si, \tau\in\CS\,,\\
	&[\frh_{\tau}-\frh_{\si}]\frr_\tau(\si)= \sum_{\si\tri \pi \preceq\tau} \frN_{\si, \pi} \frr_{\tau}(\pi),\qquad \forall \si, \tau\in\CS\,,
\end{align}
subject to the initial conditions $\frl_{\si}(\si)=\frr_{\si}(\si)=1$ for all $\si\in\CS$. For $|\SV(\si)|, |\SV(\tau)|\leq 2$ we can compute $\frl_\si(\tau), \frr_\tau(\si)$ directly from the eigenvector equation. Let $\Delta=\mathsf{V}_{12}(\tbu)^{-1}\mathsf{V}_{12}(\tbv)^{-1}$.  Then they are expressed in the following matrices (with rows indexed by the subscript):

\hspace{10pt}

\noindent
\[
\begin{array}{c|rrrrr}
\frl_\si(\tau)&\varnothing&(1) & (2) &(1)(2) &(12)\\
\hline
\varnothing& 1& -1& -1&1 &0\\
(1) & 0 & 1& 0 &-1 & 0\\
(2)  &0 & 0 & 1& -1& 0\\
(1)(2) & 0 & 0 &0 & 1 &   \Delta\\
(12) &  0 & 0&0 &0 &1
\end{array}
\quad
\begin{array}{c|rrrrr}
\frr_\tau(\sigma)&\varnothing&(1) & (2) &(1)(2) &(12)\\
\hline
\varnothing& 1& 1& 1&1 & - \Delta\\
(1) & 0 & 1& 0 &1 & - \Delta\\
(2)  &0 & 0 & 1& 1& - \Delta\\
(1)(2) & 0 & 0 &0 & 1 &  - \Delta\\
(12) &  0 & 0&0 &0  &1
\end{array}
\]
\noindent
Note in particular that these matrices are inverses of one another (as should be the case since they are matrices of left and right eigenvectors of the same matrix). Without some more sophisticated analysis of the equations \eqref{eq:EV}, adding even $1$ further vertex makes the calculation prohibitively time consuming. \textit{A priori} it is NOT clear that $\frl_\si(\tau), \frr_\tau(\si)$ are rational functions in $\bar{\tbu}, \tbv$.  The fact that they are provides Theorem \ref{T:2} with a somewhat miraculous quality not apparent at first sight. Some further properties are stated in the next theorem. The reader may also consult Section \ref{sec:The_matrix_frN} for a complete account of the analysis, which contains additional structural properties of $\frl_\si(\tau), \frr_\tau(\si)$.

\begin{theorem}\label{thm:T3}
The polynomials $\frp_\si, \frQ_\si$ appearing in Theorem \ref{T:2} are respectively 
\[
	\frp_{\sigma}(\overline{\tbu},\tbv)=\mathsf{V}_{\SV(\si)}(\overline{\tbu})\mathsf{V}_{\SV(\si)}({\tbv})\frr_{\sigma}(\varnothing;\tbu,\tbv)
\]
and
\[
	\frQ_{\sigma}(\overline{\tbu},\tbv)=\mathsf{V}_{\SV(\si)}(\overline{\tbu})\mathsf{V}_{\SV(\si)}({\tbv})\frl_{\texttt{I}_{\SV(\si)}}(\sigma;\tbu,\tbv)\,,
\]  
Moreover, for all $\sigma\in\CS$ and $i,j\in\SV(\si)$ such that $i,j$ belong to a common cycle of $\si$ of length $\geq 3$, both polynomials $\frp_{\sigma}$ and $\frQ_{\sigma}$ vanish whenever $(u_i, v_i)=(u_j, v_j)$. 
\end{theorem}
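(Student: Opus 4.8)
The statement has two independent parts — the identification of the polynomials $\frp_\si,\frQ_\si$ of Theorem~\ref{T:2} with entries of the eigenvector matrices of $\frN$, and their vanishing on the coincidence loci inside long cycles — and I would treat them in turn.

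\emph{Part 1: identification.} Write the expansion of Theorem~\ref{T:2} in the form
\[
	\rho(\si;\tbu,\tbv)=\sum_{\substack{\pi\trieq\si\\ \SV(\pi)=\SV(\si)}}\frac{a_\pi(\si;\tbu,\tbv)}{\mathsf V_{\SV(\si)}(\overline{\tbu})^{2}\,\mathsf V_{\SV(\si)}(\tbv)^{2}}\ \prod_{\alpha\in\SV(\pi)}\rho_2\bigl(u_\alpha,v_{\pi^{-1}(\alpha)}\bigr),\qquad a_\pi(\si)=\frp_\pi(\overline{\tbu},\tbv)\,\frQ_{\si\circ\pi^{-1}}\bigl(\overline{\tbu},\pi^{-1}(\tbv)\bigr).
\]
The plan is to produce, from the Schwinger--Dyson / loop equation for the finite-$N$ correlators followed by the limits $N\to\infty$ and $\varepsilon\to0$ underlying Theorems~\ref{T:1}--\ref{T:2}, a linear recursion for $\rho(\si;\cdot)$ — hence for the $a_\pi$ — whose kernels are precisely the integrals $\frn_{\pi',\pi}$ of~\eqref{the_function_Fn}, whose diagonal part is $\frh_\si$, and whose $\pi$-labelled term carries exactly the factor $\prod_\alpha\rho_2(u_\alpha,v_{\pi^{-1}(\alpha)})$ (this is where the footnote identity $\partial_u\partial_{\overline v}h=\rho_2$, together with $\frh_\pi=\sum_\alpha h(u_\alpha,v_{\pi^{-1}(\alpha)})$, enters). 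After dividing through by the common factor $\mathsf V^{2}\mathsf V^{2}\,\prod_\alpha\rho_2(u_\alpha,v_{\pi^{-1}(\alpha)})$, this recursion decouples into two triangular linear systems — one governing the dependence on $\pi$, one the dependence on $\si\circ\pi^{-1}$ — with coefficient matrix $\frN$ of~\eqref{eq:The_Matrix_N}, and it is exactly~\eqref{eq:EV} once the leading ($\pi=\si$, respectively $\si\circ\pi^{-1}=\texttt{I}_{\SV(\si)}$) terms are normalised to $1$, matching the initial conditions $\frl_\si(\si)=\frr_\si(\si)=1$. Since $\frN$ is triangular with generically distinct diagonal entries $\frh_\si$, solutions of~\eqref{eq:EV} are unique, so $\frp_\pi=\mathsf V_{\SV(\pi)}(\overline{\tbu})\mathsf V_{\SV(\pi)}(\tbv)\,\frr_\pi(\varnothing)$ and $\frQ_\pi=\mathsf V_{\SV(\pi)}(\overline{\tbu})\mathsf V_{\SV(\pi)}(\tbv)\,\frl_{\texttt{I}_{\SV(\pi)}}(\pi)$, which is Theorem~\ref{thm:T3}(1); together with the polynomiality asserted in Theorem~\ref{T:2} this is also precisely what yields the (a priori non-obvious) rationality of the entries $\frr_\si(\varnothing)$ and $\frl_{\texttt{I}}(\si)$. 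The one delicate point here is bookkeeping: that it is the ``$I$-side'' of~\eqref{eq:finite_correlation_function1} that collapses onto the empty permutation $\varnothing$ and the ``$J$-side'' onto $\texttt{I}_{\SV(\si)}$ (and not the reverse), together with the relabelling sign $\mathsf V_{\SV(\si)}(\pi^{-1}(\tbv))=\operatorname{sgn}(\pi)\,\mathsf V_{\SV(\si)}(\tbv)$ needed so that the Vandermonde normalisations cancel correctly against the $\mathsf V^{2}\mathsf V^{2}$ in the denominator.

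\emph{Part 2: the vanishing.} Fix $i,j$ lying in a common cycle of $\si$ of length $\ge3$ and set $L=\{(u_i,v_i)=(u_j,v_j)\}$. Since $\mathsf V_{\SV(\si)}(\overline{\tbu})$ carries the factor $\overline{u_j}-\overline{u_i}$ and $\mathsf V_{\SV(\si)}(\tbv)$ carries $v_j-v_i$, the formulas of Part~1 reduce the claim to showing that the rational functions $\frr_\si(\varnothing)$ and $\frl_{\texttt{I}_{\SV(\si)}}(\si)$ do \emph{not} carry a corner pole of type $\bigl((\overline{u_i}-\overline{u_j})(v_i-v_j)\bigr)^{-1}$ along $L$, so that the two Vandermonde zeros together overpower whatever singularity sits on $L$ and leave a genuine zero. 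Unwinding~\eqref{eq:EV}, $\frr_\si(\varnothing)$ is a sum over saturated chains $\varnothing=\si_0\prec\si_1\prec\cdots\prec\si_k=\si$ of $\prod_{m=1}^{k}\frN_{\si_{m-1},\si_m}\cdot\prod_{m=0}^{k-1}\bigl[\frh_\si-\frh_{\si_m}\bigr]^{-1}$, and two observations finish it. First, every off-diagonal entry $\frN_{\pi',\pi}=\frn_{\pi',\pi}$ is \emph{analytic} across $\{u_i=u_j\}$ and $\{v_i=v_j\}$: the apparent double pole of the integrand is removable via the divided-difference identity $\frac1\pi\int_{\D_1}\frac{f(\nu)\,\mathrm{d}^{2}\nu}{(\overline\nu-\overline a)(\overline\nu-\overline b)}=\frac{g(a)-g(b)}{\overline a-\overline b}$, with $g(a)=\frac1\pi\int_{\D_1}\frac{f(\nu)}{\overline\nu-\overline a}\,\mathrm{d}^{2}\nu$, so all poles along these hyperplanes come from the prefactors $[\frh_\si-\frh_{\si_m}]^{-1}$. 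Second, writing $\frh_\tau=\sum_\alpha h(u_\alpha,v_{\tau^{-1}(\alpha)})$ and using the factorisation of $h$ into logarithms, one checks that $\frh_\si-\frh_{\si_m}$ vanishes identically on $\{u_i=u_j\}$ only for $\si_m=(i\,j)\circ\si$, identically on $\{v_i=v_j\}$ only for $\si_m=\si\circ(i\,j)$, and that such a prefactor is then only a \emph{simple} pole. The length-$\ge3$ hypothesis is used here: $(i\,j)\circ\si$ and $\si\circ(i\,j)=(\si(i)\,\si(j))\circ\si$ cut the cycle of $\si$ at different places, hence share no loop and are $\preceq$-incomparable (for a $2$-cycle they coincide, and the chain through $(i)(j)$ then does produce a genuine corner pole — matching the self-overlap blow-up of $\rho_2$ — so the hypothesis is sharp); moreover the only other permutations on which $\frh_\si$ agrees with $\frh_{\si_m}$ along all of $L$, namely $\si$ itself and $(i\,j)\si(i\,j)$, are not \emph{strictly} below $\si$. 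Consequently no single chain acquires more than one prefactor pole along $L$, and in every case multiplying by $\mathsf V_{\SV(\si)}(\overline{\tbu})\mathsf V_{\SV(\si)}(\tbv)$ absorbs it and leaves a contribution that vanishes on $L$. Summing over chains gives $\frp_\si|_L=0$, and the same argument run with the ``$\frl$-chain'' ending at $\texttt{I}_{\SV(\si)}$ gives $\frQ_\si|_L=0$.

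I expect the real obstacle to be entirely within Part~2: making precise the genericity (the ``independence'' of the transcendental functions $\{\frh_{\si_m}\}$) underlying the claim that $[\frh_\si-\frh_{\si_m}]^{-1}$ is singular on a coincidence hyperplane \emph{only} for the transposed permutations, and the combinatorics of $\preceq$ showing that $(i\,j)\si$, $\si(i\,j)$ and $(i\,j)\si(i\,j)$ never occur below $\si$ in a configuration that would build a corner pole — in effect, pinning down enough of the pole divisor of $\frr_\si(\varnothing)$ and $\frl_{\texttt{I}}(\si)$ to exclude such a pole inside a long cycle. Part~1, by comparison, is a matter of recasting the recursion already available from the proof of Theorem~\ref{T:2} and appealing to the uniqueness of solutions of~\eqref{eq:EV}.
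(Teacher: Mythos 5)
Your Part~1 is essentially the paper's own route: Theorem~\ref{T:2} is proved there precisely by reading off the $(\varnothing,\si)$ entry of $\exp(\frN)=\frR\exp(\frh)\frL$ and \emph{defining} $\frp_\pi=\mathsf V\mathsf V\,\frr_\pi(\varnothing)$, $\frQ_\pi=\mathsf V\mathsf V\,\frl_{\texttt{I}}(\pi)$, so the first assertion of Theorem~\ref{thm:T3} is definitional once uniqueness of the normalized eigenvectors is invoked; your recursion-plus-uniqueness matching lands in the same place.

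Part~2, however, has a genuine gap, and it sits exactly where you did not look: in the $\frQ$-side. For the left eigenvector the resolvent denominators in the chain expansion are $[\frh_{\texttt{I}_{\SV(\si)}}-\frh_{\pi_m}]^{-1}$, referenced to the \emph{identity}, not to $\si$. Running your multiset argument with base point $\texttt{I}$ shows that $\frh_{\texttt{I}}-\frh_{\pi_m}$ vanishes identically on $\{u_i=u_j\}$ iff $\pi_m=(i\,j)$ (extended by fixed points), and the \emph{same} $\pi_m=(i\,j)$ is the unique culprit for $\{v_i=v_j\}$: indeed $\frh_{\texttt{I}}-\frh_{(i\,j)}=h(u_i,v_i)+h(u_j,v_j)-h(u_i,v_j)-h(u_j,v_i)$ vanishes on \emph{both} hyperplanes, i.e.\ to second (product) order on $L$. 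Whenever $i,j$ are adjacent in the cycle of $\si$, the transposition $(i\,j)$ is a noncrossing subloop of $\si$, so $\texttt{I}\trieq(i\,j)\trieq\si$ and chains through $(i\,j)$ do occur. One sees the resulting corner pole explicitly: since $\frN_{\texttt{I},(i\,j)}=\bigl(\frh_{\texttt{I}}-\frh_{(i\,j)}\bigr)\big/\bigl((w_i-w_j)(\overline{z_i}-\overline{z_j})\bigr)$, the chain $\texttt{I}\prec(i\,j)\prec\cdots\prec\si$ carries the exact rational factor $\bigl((w_i-w_j)(\overline{z_i}-\overline{z_j})\bigr)^{-1}$, which the product $\mathsf V_{\SV(\si)}(\overline{\tbu})\mathsf V_{\SV(\si)}(\tbv)$ cancels without leaving a zero on $L$. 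So individual chains of $\frQ_\si$ do \emph{not} vanish on $L$ after multiplication by the Vandermondes; the vanishing is a cancellation \emph{between} chains, which a term-by-term pole count cannot detect. (Your argument for $\frp_\si$ is better placed, since there the reference eigenvalue is $\frh_\si$ and the two singular intermediates $(i\,j)\si$, $\si(i\,j)$ are $\trieq$-incomparable; but even there the claim that the $\frn_{\pi',\pi}$ are ``analytic'' across the coincidence loci is false --- $h$ is not anti-holomorphic, so its divided differences in $\overline{u}$ are bounded but direction-dependent at coincidence; boundedness is what you actually need and is what holds.)

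The paper avoids all of this by proving the vanishing in Lemma~\ref{T:Comp1} through a closed recursion among the polynomials themselves: extracting the top coefficient of the identity $\frl_{\texttt{I}}(C_\ell)P^\ell=Q^\ell$ gives
\begin{equation*}
\Fe_{C_\ell}\cdot\Bigl(-\sum_{\alpha}\overline{z_\alpha}\bigl(w_{C_\ell^{-1}(\alpha)}-w_\alpha\bigr)\Bigr)=\sum_{\substack{\texttt{I}\trieq\si\prec C_\ell\\ |\si|=2}}\CW_\si\,\Fe_\si\,,
\end{equation*}
and on the coincidence locus each right-hand term dies either by induction (when $i,j$ lie in one cycle of $\si$) or because $\CW_\si$ carries the factor $(\overline{u_i}-\overline{u_j})(v_i-v_j)$ (when they lie in different cycles). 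If you want to salvage your approach for $\frQ_\si$, you must either group the chains so that the corner singularities cancel, or switch to an inductive identity of this type.
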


%****************************************************************************************************************
%****************************************************************************************************************

\section{Rewriting the correlation functions}
 
The first order of business is to recast $\widehat{\rho}_{N, \eps}(\si;\tbu,\tbv)$ in terms of resolvents and contour integrals, which are substantially more amenable to analysis than the expression in \eqref{eq:finite_correlation_function1}. 

\subsection{Upper triangulating $M_N$}
 
While it is NOT possible in general to diagonalize $M_N$ with a unitary transformation, it is possible  to bring $M_N$ to upper triangular form using unitary matrices. Appendix A.33 of \cite{Mehta} presents a useful set of coordinates implementing this change of variables. In summary, \cite{Mehta} shows that one can find a (random) unitary transformation $U$ under which $M_N$ can be represented by an upper triangular matrix $T=T_N$ whose diagonal entries are the eigenvalues of $M_N$ and whose off diagonal entries (above the diagonal) are i.i.d. (also independent of the eigenvalues) complex Gaussian random variables with mean zero and variance $1/N$.

Using the notation $\lambda_j=T_{jj}$ for the eigenvalues, the law for $T$ under $\BP$ is given explicitly by
\begin{equation}\label{E:Tmarg}
	\BP(\textrm{d}T)\propto \prod_{1\leq i<j\leq N}|\lambda_i-\lambda_j|^2 \prod_{j=1}^{N} \kappa_N(\mathrm{d}\lambda_j) \prod_{1\leq i<j\leq N}\kappa_N(\mathrm{d}T_{ij})\,.
\end{equation}

% *********************************************************************************************** 
\subsection{Rewriting $\widehat{\rho}_{N,\varepsilon}(\si)$ using resolvents} 

For $z\in\mathbf{D}_1$, let 
\[
	R_N(z)=(T_N-z)^{-1}\,,
\]
be the resolvent of $T_N$ and, for a cyclic permutation $\si\in\CS$ and $\bfz, \bfw \in \textbf{D}_1^{\SV(\si)}$, define
\begin{equation}\label{eq:F_jCL}
	F_N(\si)=F_N(\si;\tbz,\tbw) = \mathrm{Tr}\bigg(\prod_{\alpha\in \SV(\si)}(R_N(z_{\alpha}))^\dag R_j(w_{\alpha})\bigg)\,,
\end{equation}
with the product of the matrices taken according to the orbit of a fixed element of the cycle $\si$. Note that this is well defined due to the invariance of the trace under cyclic permutations. Furthermore, since $T_N$ is obtained from $M_N$ by a unitary transformation $F_{N}(\si;\bfz, \bfw)$ is also the corresponding trace of a product of resolvents for $M_N$. 

We extend the definition of $F_N$ to general permutations $\si\in\CS$ by setting 
\begin{equation}\label{eq:F_jCD}
	F_N(\si)= F_N(\si;\tbz,\tbw) = \prod_{\CL\in\si} F_N(\CL;\tbz|_{\SV(\pi)},\tbw|_{\SV(\pi)}),\qquad \forall \si\in\CS\quad \forall \tbz,\tbw\in\tbD_1^{\SV(\si)}\,,
\end{equation}
where the product is taken over all cyclic permutations in $\si$ and with the convention that $F_N(\emptyset)=1$.

\begin{remark} \label{rem:on_the_defn_of_F_j} In the last product and in many of the products to follow we observe that each of the constituent factors is a trace of product of matrices. Hence there is no need to order the loops of a permutation. On the other hand, each loop imposes a natural ordering on its vertices and the product of matrices appearing in a loops trace is always taken with respect to this ordering. This convention will be enforced throughout the remainder of this paper. 
\end{remark}

Using the spectral decomposition for $M_N$ in terms of $\tbr_i, \tbl_i$,
\[
\begin{aligned}
	F_N(\si) & = \prod_{\pi\in\si} F_n(\pi) = \prod_{\pi\in\si}\mathrm{Tr}\bigg(\prod_{\alpha\in\SV(\pi)}\sum_{i,j=1}^N \frac{Q_i^\dag}{\overline{\lam_i}-\overline{z_\alpha}}\frac{Q_j}{\lam_j-w_\alpha}\bigg)\\
	&= \sum_{I,J\in \CI_\si}\prod_{\pi\in\si}\mathrm{Tr}\bigg(\prod_{\alpha\in\SV(\pi)}\frac{Q_{I_\alpha}^\dag Q_{J_\alpha}}{(\overline{\lam_{I_\alpha}}-\overline{z_\alpha})(\lambda_{J_\alpha}-w_\alpha)}\bigg)\,.
\end{aligned}
\]

Hence, by Cauchy's Theorem
\begin{equation}\label{eq:Relaton_of_rho_and_F}
	\widehat{\rho}_{N, \eps}(\si;\tbu,\tbv)=\frac{1}{(2\pi i)^{2|\SV(\si)|}N^{|\si|}\varepsilon^{4|\SV(\si)|}}\int_{\caS_\varepsilon^{\SV(\si)}(\tbu,\tbv)}  F_{N}(\si;\bfz, \bfw)\textrm{d}\overline{\bfz} \textrm{d}\bfw \,.
\end{equation}
where the contour integrals over $\caS_\varepsilon^{\SV(\si)}(\tbu,\tbv)$ are clockwise over the conjugate variables  $\overline{z}_\alpha$ along the circle of radius $\varepsilon$ and center $u_\alpha$ and counterclockwise over the variables $w_\alpha$ along the circle of radius $\varepsilon$ and center $v_\beta$ for every $\alpha\in\SV(\si)$.

Due to the above presentation for $\widehat{\rho}_{N,\varepsilon}(\si)$, our main object of interest in the following is the quantity $\E[F_{N}(\si;\bfz,\bfw)]$.

\section{A recursive equation via a diagrammatic expansion}\label{S:Rec}

In order to compute the expectation of $\E[F_{N}(\si;\bfz,\bfw)]$, we shall first compute $\E[F_{N}(\si)|\lambda_j: j\in [N]]$, in the terms of the eigenvalues $(\lambda_j)_{j\in [N]}$. This is achieved by successively integrating out the column vectors $\tbv_j=(T_{pj})_{1\leq p< j}$ using Schur's complement formula. The method described below for computing $\E[F_{N}(\si)|\lambda_j: j\in [N]]$ leads to the study conditional expectations of arbitrary correlation functions for the $j$ by $j$ minor corresponding to $\lambda_1,\ldots,\lambda_j$ and $\tbv_1,\ldots,\tbv_j$. 

For $1\leq j\leq N$, let $R_j(z)=(T_j-z)^{-1}$ be the resolvent of $T_j$ and extend the definition of the functions $F_j(\si)$ in an appropriate way. 

Let us discuss the iterative procedure for their computation. To every permutation $\si\in\CS$ one can associate a natural directed graph (digraph) whose vertex set is $\SV(\si)$ and whose edge set, denoted $\CE(\si)$, is the set of ordered vertices $(v,w)\in \SV(\si)\times \SV(\si)$ such that $\si(v)=w$. The resulting digraph is composed of finitely many loops (including loops of length $1$) on the vertex set $\SV(\si)$. Denoting by $\BG$ the set of all such digraphs, one can verify that the mapping $\si\mapsto (\SV(\si),\CE(\si))$ is a bijection from $\CS$ to $\BG$. 

Given a permutation $\si$ and $F\subset \CE(\si)$, define $F^o = \{v ~:~ (v,w)\in F\}$ and $F^i=\{w ~:~ (v,w)\in F\}$. Furthermore, for every pair of permutations $\si,\si'$ set $\CE(\si,\si')=\CE(\si)\cap\CE(\si')$. Finally, for $2\leq j\leq N$, define the sigma-algebra $\CF_j = {\Large \si}(\tbv_k,\lambda_i ~:~ 2\leq k\leq j,\, i\in [N])$ and for $z\in\BC$ and $j\in [N]$ denote 
\[ 
	a_j(z) = (\lambda_j-z)^{-1}\,.
\]

Our main recursive identity computes the action of taking expectation of $F_j$ with respect to $\tbv_j$ as the application of a transfer matrix $A_j$ to $F_{j-1}$. The key point is that $A_j$ only depends on the eigenvalue $\lambda_j$.
\begin{proposition}\label{prop:recurssive_identity} For every $2\leq j\leq N$ and $\si\in\CS$
	\begin{equation}\label{eq:recurrsive_identity_0}
		\BE[F_j(\si)|\CF_{j-1}] =  \sum_{\si'\in\CS}F_{j-1}(\si') A_j(\si',\si) \,,	
	\end{equation}		
	where for any pair of permutations $\si,\si'\in\CS$ such that $\SV(\si')\subset \SV(\si)$
	\begin{align}\label{eq:recurssive_identity_1}
	A_j(\si',\si) &= N^{-\SV(\si')}\prod_{\alpha\in\SV(\si)} a_j(w_\alpha)\overline{a_j(z_\alpha)}\cdot \prod_{e=(e-,e^+)\in\CE(\si,\si')}\Big(1+\frac{N}{\overline{a_j(z_{e^-})}a_j(w_{e^+})}\Big)\nonumber\\
	&=\sum_{F\subset \CE(\si,\si')} N^{-|\SV(\si')|+|F|}\hspace{-10pt}\prod_{\alpha\in \SV(\si)\setminus F^o} a_j(w_\alpha)\prod_{\alpha\in\SV(\si)\setminus F^i}\overline{a_j(z_\alpha)}\,,
\end{align}
and $A_j(\si',\si)=0$ otherwise. 
\end{proposition}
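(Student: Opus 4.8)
The plan is to prove Proposition \ref{prop:recurssive_identity} by performing the Gaussian integration over the last column vector $\tbv_j = (T_{pj})_{1\le p<j}$ explicitly, using Schur's complement to express the resolvent $R_j(z)$ of the $j\times j$ minor $T_j$ in terms of $R_{j-1}(z)$, the scalar $a_j(z) = (\lambda_j - z)^{-1}$, and the vector $\tbv_j$. Writing $T_j$ in block form with $T_{j-1}$ in the upper-left, $\tbv_j$ in the upper-right column and $\lambda_j$ in the lower-right corner, Schur's formula gives, for the resolvent entries, a rank-one type correction: the top-left $(j-1)\times(j-1)$ block of $R_j(z)$ equals $R_{j-1}(z)$, the bottom-right entry equals $a_j(z)$, and the off-diagonal pieces are $-a_j(z) R_{j-1}(z)\tbv_j$ (top-right column) and correspondingly on the left. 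Crucially, because $T_j$ is upper triangular, there is no $\tbv_j$-dependence creeping into the denominators — the decomposition is \emph{exact} and polynomial in $\tbv_j$ of low degree, which is what makes the Gaussian moments computable in closed form.

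Next I would substitute this block decomposition into $F_j(\si) = \mathrm{Tr}\big(\prod_{\alpha\in\SV(\si)}(R_j(z_\alpha))^\dagger R_j(w_\alpha)\big)$ (and into the corresponding product over loops for general $\si$), and expand the trace of the product. Each factor $R_j(z_\alpha)$ or $(R_j(z_\alpha))^\dagger$ contributes either its ``bulk'' part ($R_{j-1}$ or $a_j$) or one of the rank-one ``tail'' parts carrying a factor of $\tbv_j$ or $\tbv_j^\dagger$. Collecting terms, $F_j(\si)$ becomes a sum over subsets of positions where the tail part is chosen; the positions where $\tbv_j$ appears in the product correspond exactly to edges of $\si$, since consecutive factors around a loop are what get linked by an inner product $\tbv_j^\dagger(\cdots)\tbv_j$ or by the scalar $a_j$. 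This is the combinatorial heart: an edge $e=(e^-,e^+)\in\CE(\si)$ either stays ``intact'' (both resolvents at that junction keep their bulk form, contributing $\overline{a_j(z_{e^-})}$ and $a_j(w_{e^+})$ appropriately — this is the $F^o, F^i$ bookkeeping) or gets ``cut,'' producing a $\tbv_j$-bilinear insertion that, after taking expectation, contracts against another such insertion.

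Then I would take the conditional expectation $\BE[\cdot\,|\,\CF_{j-1}]$. Since $\tbv_j$ has i.i.d. complex Gaussian entries with variance $1/N$, independent of $\CF_{j-1}$, Wick's theorem (Isserlis) applies: only even numbers of $\tbv_j$-insertions survive, and each pairing of a $\tbv_j$ with a $\tbv_j^\dagger$ contributes a factor $\tfrac1N\,\mathrm{tr}(\cdots)$ of intervening $R_{j-1}$-operators. The key structural claim to verify is that the Wick pairings which survive at the relevant order in $N$ are precisely those in which each cut edge is paired ``locally'' — i.e., the pairing reconstitutes, after removing vertex $j$ from the loop, a permutation $\si'$ with $\CE(\si,\si')$ being exactly the set of uncut edges, and $F^o, F^i$ being endpoints of the cut set $F=\CE(\si)\setminus\CE(\si,\si')$. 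Matching the power of $N$ — $N^{-|\SV(\si')|+|F|}$ — comes from counting: $|\SV(\si)| = |\SV(\si')|+1$ tails' worth of $1/N$ from the Gaussian variance against $|F|$ traces each contributing a factor of $N$ from $\mathrm{Tr}(\mathrm{Id})$-type terms, plus the normalization $a_j(w_\alpha)\overline{a_j(z_\alpha)}$ over all $\alpha\in\SV(\si)$. The resulting expression is exactly $\sum_{\si'}F_{j-1}(\si')A_j(\si',\si)$ with $A_j$ as in \eqref{eq:recurssive_identity_1}, and one checks that the two displayed forms of $A_j(\si',\si)$ agree by expanding the product $\prod_e(1+N/(\overline{a_j(z_{e^-})}a_j(w_{e^+})))$ over the edges of $\CE(\si,\si')$.

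The main obstacle I anticipate is the bookkeeping in the Wick contraction step: showing that cross-pairings (a $\tbv_j$ from one loop paired with a $\tbv_j^\dagger$ from another, or ``long-range'' pairings within a loop) are either impossible by the cyclic structure of the trace or are absorbed into the same sum over $\si'$ via the identification of $\CS$ with digraphs $\BG$. One must argue carefully that every surviving Wick pairing yields a well-defined permutation $\si'\in\CS$ (in particular that the cut-and-reconnect operation produces disjoint cycles, never a multigraph), that $\SV(\si')\subset\SV(\si)$ necessarily (vertex $j$, if present in $\si$, is always removed or retained consistently), and that distinct pairings give distinct $(\si', F)$ pairs so there is no overcounting. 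Getting the $N$-powers to land on $-|\SV(\si')|+|F|$ rather than off by the number of loops touched by vertex $j$ is the delicate part; I'd handle it by treating one loop through $j$ at a time, noting loops of $\si$ not containing $j$ pass through untouched, and using the convention $F_N(\emptyset)=1$ and the fact that $A_j(\si',\si)=0$ unless $\SV(\si')\subset\SV(\si)$ to close the induction cleanly.
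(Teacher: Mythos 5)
Your overall strategy is exactly the paper's: Schur's complement for $R_j$ in terms of $R_{j-1}$, $a_j$ and $\tbv_j$; expansion of the trace of the product into ``bulk'' and ``$\tbv_j$--insertion'' terms; Wick contraction of the Gaussian column; and reassembly of the surviving pairings into permutations $\si'$. However, your description of the combinatorial bookkeeping contains concrete errors that would derail the argument if carried out as written. First, you repeatedly treat the minor index $j$ as if it were a vertex of the permutation (``after removing vertex $j$ from the loop'', ``loops of $\si$ not containing $j$ pass through untouched'', ``off by the number of loops touched by vertex $j$''). The index $j$ labels which column is being integrated out and is unrelated to $\SV(\si)$; \emph{every} vertex $\alpha\in\SV(\si)$ has its factor $R_j(z_\alpha)^\dag R_j(w_\alpha)$ decomposed simultaneously, and the vertices deleted in passing to $\si'$ are precisely those $\alpha$ at which the pure scalar term $\overline{a_j(z_\alpha)}a_j(w_\alpha)$ is selected. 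In particular $\SV(\si)\setminus\SV(\si')$ can be \emph{any} subset, so your assertion $|\SV(\si)|=|\SV(\si')|+1$ is false in general, and ``treating one loop through $j$ at a time'' does not correspond to anything in the actual expansion.

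Second, you have the role of $F$ inverted: in the formula to be proved, $F$ ranges over subsets of the \emph{common} edge set $\CE(\si,\si')$ and records the junctions left intact (no $\tbv_j$ insertion), whereas you identify $F$ with the cut set $\CE(\si)\setminus\CE(\si,\si')$ and take ``$\CE(\si,\si')$ being exactly the set of uncut edges''. The point you are missing is that a common edge of $\si$ and $\si'$ may \emph{either} be kept intact \emph{or} be cut and then Wick-paired back to itself; this is exactly why one sums over all subsets $F\subset\CE(\si,\si')$, and why the closed form of $A_j(\si',\si)$ is a product of binomials $\bigl(1+N/(\overline{a_j(z_{e^-})}a_j(w_{e^+}))\bigr)$ over common edges. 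Third, the power $N^{-|\SV(\si')|+|F|}$ arises solely as $N^{-(\#\text{Wick pairings})}$ with the number of pairings equal to $|\SV(\si')|-|F|$ (each contraction contributes the variance $1/N$, the index sum being absorbed into the matrix product defining $F_{j-1}(\si')$); there are no compensating $\mathrm{Tr}(\mathrm{Id})$-type factors of $N$ as you claim. Finally, you should also verify uniqueness of the reconnecting pairing: given the set of intact edges and the target $\si'$, there is exactly one bijection between the $\tbv_j$- and $\tbv_j^\dag$-insertions producing $\si'$ (the paper's Claim 3.5), which is what prevents overcounting.
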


% ************************************************************************************************

\subsection{Proof of Proposition \ref{prop:recurssive_identity}}

By Schur's complement formula, for $2\leq j\leq N$, we can write 
\[
	R_{j}(z):=\Bigg(\begin{array}{cc}
		R_{j-1}(z) & -(\lambda_j-z)^{-1}	R_{j-1}(z)\tbv_j \\
		0 & (\lambda_j-z)^{-1}\\
	\end{array}
	\Bigg)\,.
\]
Therefore, with the abbreviation $a_j(z)=(\lambda_j-z)^{-1}$, 
\begin{equation}\label{eq:expansion_of_products_of_Green_functions}
\begin{aligned}
	& R_j(z)^\dag R_j(w) \\
	& \qquad = \Bigg(\begin{array}{cc}
		R_{j-1}(z)^\dag R_{j-1}(w)  & -a_j(w)R_{j-1}(z)^\dag R_{j-1}(w)\tbv_j \\
		-\overline{a_j(z)}\tbv_j^\dag R_{j-1}(z)^\dag R_{j-1}(w) & \overline{a_j(z)}a_j(w)+ \overline{a_j(z)}a_j(w)\tbv_j^\dag R_{j-1}(z)^\dag R_{j-1}(w)\tbv_j\\
	\end{array}
	\Bigg)\,.
\end{aligned}
\end{equation}

Recall that for $2\leq j\leq N$, we defined $\CF_j = {\Large \si}(\tbv_k,\lambda_i ~:~ 2\leq k\leq j,\, i\in [N])$ and that $(\tbv_j)_{2\leq j\leq N}$ are independent random vectors whose entries are i.i.d. complex Gaussians with mean zero and variance $1/N$. Since $\tbv_j$ is independent of $R_{j-1}$, computation of $\BE[F_j(\si)|\CF_{j-1}]$ reduces to the expansion of $F_{j}(\si)$ in terms of $(F_{j-1}(\si'))_{\si'\in\CS}$ and the Gaussian vectors $\tbv_j, \tbv_j^{\dag}$ followed by integration over these later. 

We now develop a diagrammatic language to efficiently interpret the meaning and size of the terms appearing in this expansion of $F_j(\si)$ according to \eqref{eq:expansion_of_products_of_Green_functions}. Fix $\pi\in\CS$ and assume it is a cycle. Treating each of the two terms in the lower right block in \eqref{eq:expansion_of_products_of_Green_functions} separately, the term $F_j(\pi)$ can be written as a sum over products of the form $\mathrm{Tr}\big(\prod_{\alpha\in\SV(\pi)} X_{\alpha}\big)$, where 
\[
	X_{\alpha} \in \bigg\{\begin{array}{l} 
R_{j-1}(z_\alpha)^\dag R_{j-1}(w_\alpha),\quad  -a_j(w_\alpha)R_{j-1}(z_\alpha)^\dag R_{j-1}(w_\alpha)\tbv_j,\quad \overline{a_j(z_\alpha)}a_j(w_\alpha) \\
		-\overline{a_j(z_\alpha)}\tbv_j^\dag R_{j-1}(z_\alpha)^\dag R_{j-1}(w_\alpha),\quad  \overline{a_j(z_\alpha)}a_j(w_\alpha)\tbv_j^\dag R_{j-1}(z_\alpha)^\dag R_{j-1}(w_\alpha)\tbv_j	
	\end{array}\bigg\}\,.
\]
To each such product we associate a set of vertices, labeled by elements from $\SV(\pi)$, of five types. The vertices are decorated with zero or two half edges (each of which is either dotted or solid) according to the following rule:
\[
\begin{array}{cccc}
(\RN{1}) &R_{j-1}(z_\alpha)^\dag R_{j-1}(w_\alpha) &\qquad \qquad &  \begin{tikzpicture}[roundnode/.style={circle, draw=black, fill=white, very thick, minimum size=2mm}] 
							\draw[very thick, directed] (0, 0) -- (1, 0);
							\draw[very thick, directed] (1, 0) -- (2, 0);
							\node[roundnode,placec] at (1,0) {$\alpha$};
							\end{tikzpicture}\\
&&&\\
(\RN{2}) & -a_j(w_\alpha)R_{j-1}(z_\alpha)^\dag R_{j-1}(w_\alpha)\tbv_j &\qquad\qquad & 							
\begin{tikzpicture}[roundnode/.style={circle, draw=black, fill=white, very thick, minimum size=2mm}] 
							\draw[very thick, directed] (0, 0) -- (1, 0);
							\draw[very thick, directed, dotted] (1,0) -- (2, 0);
							\node[roundnode,placec] at (1,0) {$\alpha$};
							\end{tikzpicture}\\	
&&&\\
(\RN{3}) & -\overline{a_j(z_\alpha)}\tbv_j^\dag R_{j-1}(z_\alpha)^\dag R_{j-1}(w_\alpha) & \qquad\qquad & 
\begin{tikzpicture}[roundnode/.style={circle, draw=black, fill=white, very thick, minimum size=2mm}] 
							\draw[very thick, directed] (1, 0) -- (2, 0);
							\draw[very thick, directed, dotted] (0,0) -- (1, 0);
							\node[roundnode,placec] at (1,0) {$\alpha$};
							\end{tikzpicture}\\	
&&&\\
(\RN{4}) & \overline{a_j(z_\alpha)}a_j(w_\alpha)\tbv_j^\dag R_{j-1}(z_\alpha)^\dag R_{j-1}(w_\alpha)\tbv_j & \qquad\qquad & 							
\begin{tikzpicture}[roundnode/.style={circle, draw=black, fill=white, very thick, minimum size=2mm}] 
							\draw[very thick, directed,dotted] (0, 0) -- (1, 0);
							\draw[very thick, directed,dotted] (1, 0) -- (2, 0);
%							\draw[very thick, directed, dotted] (1,0) -- (0.5, 0.866);
%							\draw[very thick, directed, dotted] (1.5, 0.866) -- (1,0);
							\node[roundnode,placec] at (1,0) {$\alpha$};
							\end{tikzpicture}\\	

&&&\\
(\RN{5}) & \overline{a_j(z_\alpha)}a_j(w_\alpha) &\qquad\qquad & 
\begin{tikzpicture}[roundnode/.style={circle, draw=black, fill=white, very thick, minimum size=2mm}] 						
							\node[roundnode,placer] at (1,0) {$\alpha$};
							\end{tikzpicture}\\	
\end{array}
\]

The nemonic behind these associations is as follows. The vertex $\alpha\in\SV(\pi)$ is decorated with a circle whenever the term $R_{j-1}(z_\alpha)^\dag R_{j-1}(w_\alpha)$  is present in the expansion of the product of $R_j$'s and with a square whenever \emph{only} the factor $\overline{a_j(z_\alpha)}a_j(w_\alpha)$ appears (i.e. case $(\RN{5})$). Additionally a dotted half edge going into the vertex $\alpha$ is associated with the term $-\overline{a_j(z_\alpha)}\tbv_j^\dag$, a dotted half edge going out of the the vertex $\alpha$ is associated with the term $-a_j(w_\alpha)\tbv_j$ and every thick half edge is associated with the term $1$. 

Due to the definition of matrix product and trace, decorated vertices contributing to $\mathrm{Tr}\big(\prod_{\alpha\in \SV(\pi)} R_j^\dag(z_\alpha)R_j(w_\alpha)\big)$ must fit together according to the following rules: 
\begin{enumerate}
\item Vertices of type $(\RN{1})$ or $(\RN{3})$ can only be followed (in the sense of the loop $\pi$) by a vertex of type $(\RN{1})$ or $(\RN{2})$. 
\item Vertices of type $(\RN{1})$ or $(\RN{2})$ can only be ahead (in the loop) of a vertex of type $(\RN{1})$ or $(\RN{3})$. 
\end{enumerate}
This means that a vertex with a solid half-edge going out can only be followed by a vertex with a solid half-edge going in and vice versa. Similarly, 
\begin{enumerate}
  \setcounter{enumi}{2}
\item Vertices of type $(\RN{2}), (\RN{4})$ or $(\RN{5})$ can only be followed (in the loop) by a vertex of type $(\RN{3}), (\RN{4})$ and $(\RN{5})$
\item Vertices of type $(\RN{3}), (\RN{4})$ or $(\RN{5})$ can only be ahead (in the loop) of a vertex of type $(\RN{2}), (\RN{4})$ and $(\RN{5})$
\end{enumerate}
Thus a circularly decorated vertex with a dotted half edge going out can only be followed by a circularly decorated vertex with a dotted half edge going in or by a square vertex. A circular vertex with an incoming dotted half edge  can only appear after a circular vertex with an outgoing dotted half-edge or after a square vertex. A square vertex can only be followed by (appear after) a square vertex or by a circular vertex with a incoming and$\backslash$or outgoing dotted half edge. A diagram associated to a product $\prod_{\alpha\in\SV(\pi)}X_{\alpha}$ which satisfies the four conditions above is called {\it compatible} with the loop $\pi$. 

Going back to general diagrams, given $\si\in \CS$, the product $F_j(\si)=\prod_{\pi\in\si}F_j(\pi)$ can be written as a sum over terms of the form $\prod_{\pi\in\si}\mathrm{Tr}\prod_{\alpha\in\SV(\pi)}X_{\alpha}$. The only terms which contribute to $F_j(\si)$ have the property that for each $\pi\in \si$ the product $\prod_{\alpha\in\SV(\pi)}X_{\alpha}$ corresponds to a diagram compatible with $\pi$. In this case we say that $(\prod_{\alpha\in\SV(\pi)}X_{\alpha})_{\pi\in\si}$ is compatible with $\si$ (or $\si$-compatible) and associate with it the decorated vertices from each of its loops. 

Given a permutation $\si$ and a $\si$-compatible family of products $(\prod_{\alpha\in\SV(\pi)}X_{\alpha})_{\pi\in\si}$, we generate a partial-digraph from the decorated vertices, denoted $\Gamma(\si)$, by gluing every outgoing solid half arrow associated with a given vertex with the incoming solid half arrow associated with its successor vertex in the permutation, where its successor is the vertex obtained by applying $\si$ to its label. See Figure \ref{fig:partial_graph} for an illustration. This is well defined since the vertices of the different loops are disjoint. We denote by $\CG(\si)$ the set of partial graphs obtained from $\si$-compatible products.

\begin{figure}
  \centering
    \includegraphics[width=0.7\textwidth]{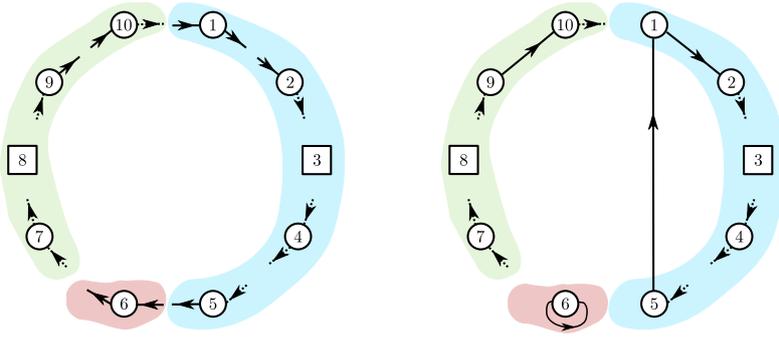}
     \caption{An illustration of a compitable set of vertices (on the left) and the partial graph associated with it (on the right). The original diagram is composed of three loops $\{(1,2,3,4,5),(6),(7,8,9,10)\}$ appearing in the illustration in the colors blue, red and green.\label{fig:partial_graph}}
\end{figure}

Using the fact that compatible products are in bijection with partial digraphs we obtain 
\begin{equation}
	F_j(\si) =  \sum_{\substack{(\prod_{\alpha\in\SV(\pi)}X_{\alpha})_{\pi\in\si}\\ \text{is } \si-\text{compatible}}}\prod_{\pi\in\si}\mathrm{Tr}\big(\prod_{\alpha\in\SV(\pi)}X_{\alpha}\big) = \sum_{\Gamma\in\CG(\si)}\Theta(\Gamma)\,, 
\end{equation}
where for $\Gamma\in\CG(\si)$ we denote by $\Theta(\Gamma)$ the term $\prod_{\pi\in\si}\mathrm{Tr}\big(\prod_{\alpha\in\SV(\pi)}X_{\alpha}\big)$ for the $\si$-compatible product associated with the partial graph $\Gamma$. 

Next we discuss the interpretation, in terms of partial graphs, of integrating over $\tbv_j=(T_{pj})_{1\leq p< j}$. We begin with some elementary combinatorial observations. Let $\Gamma$ be a partial graph associated with the diagram $\si$, and for $\FX\in \{\RN{1},\RN{2},\RN{3},\RN{4},\RN{5}\}$ denote by $\Gamma_\FX$ the set of vertices of type $\FX$ in $\Gamma$. Also, denote $\Gamma_{out} = \Gamma_{\RN{2}}\cup \Gamma_{\RN{4}}$ and $\Gamma_{in} = \Gamma_{\RN{3}}\cup \Gamma_{\RN{4}}$. Note that in a partial graph obtained from a compatible product $|\Gamma_\RN{2}|=|\Gamma_\RN{3}|$ and therefore $|\Gamma_{in}|=|\Gamma_{out}|$. Also, recall that dotted half edges going into (out of) a vertex in $\Gamma_{in}$ ($\Gamma_{out}$) are in correspondence with appearances of $\tbv_j$ and $\tbv_j^\dag$. Since each appearance of $\tbv_j$ or $\tbv_j^\dag$ in the product can be replaced by a sum over $T_{\cdot j}$ (respectively $\overline{T}_{j\cdot}$) and since those are independent complex Gaussian random variables with mean zero and variance $N^{-1}$, any product in which the number of appearances of $T_{p j}$ is not equal to the number of appearances of $\overline{T}_{jp}$ equals zero in expectation. In other words, 
\begin{equation}\label{eq:computing_expectation_1}
	\BE[F_j(\si)|\CF_{j-1}] = \sum_{\Gamma\in\CG(\si)} \sum_{\tbp,\tbq} \BE[\Theta(\Gamma)^{\tbp,\tbq}]\,,
\end{equation}
where the first sum is over partial graphs $\Gamma$, the second sum is over $\tbp:\Gamma_{out}\to [j-1],\tbq:\Gamma_{in}\to[j-1]$ such that $|\tbp^{-1}(i)|=|\tbq^{-1}(i)|$ for every $i\in [j-1]$ and $\Theta(\Gamma)^{\tbp,\tbq}$ is obtained from $\Theta(\Gamma)$ be replacing $v_\alpha$ in the product by $T_{\tbp(\alpha)j}$ and $v^\dag_\alpha$ by $T_{\tbq(\alpha) j}$ whenever they appear in the term related to the vertex $\alpha$. 

Using the fact that $(T_{pj})_{1\leq p<j}$ are independent Gaussian random variables with mean zero and variance $N^{-1}$ gives

\begin{claim}\label{clm:computing_expectation_1}
	Let $\si\in\CS$ and $\Gamma\in\CG(\si)$. Then, 
\begin{equation}\label{eq:computing_expectation_2}
	\sum_{\tbp,\tbq} \BE[\Theta(\Gamma)^{\tbp,\tbq}] = \sum_{f} \sum_{\tbp}\BE[\Theta(\Gamma)^{\tbp,\tbp f^{-1}}]\,,
\end{equation}		
where the first sum on the right hand side is over all bijections $f:\Gamma_{out}\to \Gamma_{in}$. 
\end{claim}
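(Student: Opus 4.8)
The plan is to obtain the claim from Wick's theorem (a.k.a.\ Isserlis' formula) for the moments of the independent centered complex Gaussians $T_{1j},\dots,T_{j-1,j}$, followed by a reindexing of the resulting double sum. The starting observation is that, after the substitution defining $\Theta(\Gamma)^{\tbp,\tbq}$, each occurrence of $\tbv_j$ (resp.\ $\tbv_j^\dag$) has been replaced by a single basis vector $e_{\tbp(\alpha)}$ (resp.\ $e_{\tbq(\beta)}^\dag$) carrying the scalar $T_{\tbp(\alpha)j}$ (resp.\ $\overline{T_{\tbq(\beta)j}}$), and that the compatibility rules force these basis vectors to be absorbed into the matrix products as index selectors. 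Consequently $\Theta(\Gamma)^{\tbp,\tbq}$ factors as
\[
	\Theta(\Gamma)^{\tbp,\tbq}=c_{\tbp,\tbq}\cdot\prod_{\alpha\in\Gamma_{out}}T_{\tbp(\alpha)j}\prod_{\beta\in\Gamma_{in}}\overline{T_{\tbq(\beta)j}}\,,
\]
where $c_{\tbp,\tbq}$ is assembled only from entries of the matrices $R_{j-1}(z_\alpha)^\dag R_{j-1}(w_\alpha)$, $\alpha\in\SV(\si)$, and from the scalars $a_j(z_\alpha),a_j(w_\alpha)$, the relevant indices being read off from $\tbp,\tbq$; in particular $c_{\tbp,\tbq}$ is $\CF_{j-1}$-measurable and carries no dependence on $\tbv_j$.

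Given this, since $\tbv_j$ is independent of $\CF_{j-1}$ and $\BE[T_{pj}\overline{T_{qj}}]=N^{-1}\delta_{pq}$, $\BE[T_{pj}T_{qj}]=0$, Wick's theorem gives
\begin{align*}
	\BE\!\Big[\prod_{\alpha\in\Gamma_{out}}T_{\tbp(\alpha)j}\prod_{\beta\in\Gamma_{in}}\overline{T_{\tbq(\beta)j}}\ \Big|\ \CF_{j-1}\Big]
	&=\sum_{f\colon\Gamma_{out}\to\Gamma_{in}\text{ bij.}}\ \prod_{\alpha\in\Gamma_{out}}\BE\big[T_{\tbp(\alpha)j}\,\overline{T_{\tbq(f(\alpha))j}}\big]\\
	&=N^{-|\Gamma_{out}|}\sum_{f}\ \prod_{\alpha\in\Gamma_{out}}\delta_{\tbp(\alpha),\,\tbq(f(\alpha))}\,,
\end{align*}
the moment vanishing unless $|\Gamma_{out}|=|\Gamma_{in}|$ --- precisely the equality $|\Gamma_{\RN{2}}|=|\Gamma_{\RN{3}}|$ recorded above. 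I would substitute this into the left-hand side of the claim, interchange the two finite sums over $(\tbp,\tbq)$ and over $f$, and note that for fixed $f$ the product of Kronecker deltas is non-zero exactly when $\tbp=\tbq\circ f$, i.e.\ $\tbq=\tbp\circ f^{-1}$. The inner sum then collapses to one sum over $\tbp\colon\Gamma_{out}\to[j-1]$ with $\tbq$ determined, and recognizing $N^{-|\Gamma_{out}|}c_{\tbp,\tbp f^{-1}}$ as the value of $\Theta(\Gamma)^{\tbp,\tbp f^{-1}}$ under the contraction prescribed by $f$ gives the identity $\sum_{\tbp,\tbq}\BE[\Theta(\Gamma)^{\tbp,\tbq}]=\sum_f\sum_\tbp\BE[\Theta(\Gamma)^{\tbp,\tbp f^{-1}}]$.

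The delicate point, which I would flag explicitly, is the bookkeeping of repeated indices. When $\tbp$ is not injective the monomial in $\Theta(\Gamma)^{\tbp,\tbp f^{-1}}$ is $\prod_{\alpha\in\Gamma_{out}}|T_{\tbp(\alpha)j}|^2$, whose honest expectation $\big(\prod_i m_i!\big)\,N^{-|\Gamma_{out}|}$ with $m_i:=|\tbp^{-1}(i)|$ carries extra factorials, and symmetrically a single balanced pair $(\tbp,\tbq)$ with fibre sizes $(m_i)_i$ arises from exactly $\prod_i m_i!$ pairs $(f,\tbp)$ in the reindexed sum. Letting $f$ range over \emph{all} bijections $\Gamma_{out}\to\Gamma_{in}$ --- rather than over pairings adapted to a prescribed fibre structure --- is exactly what makes these two overcounts cancel; equivalently, on the right-hand side $\BE[\Theta(\Gamma)^{\tbp,\tbp f^{-1}}]$ is to be read as the contribution of the single Wick contraction indexed by $f$, namely $c_{\tbp,\tbp f^{-1}}N^{-|\Gamma_{out}|}$, which agrees with the genuine conditional expectation precisely when $\tbp$ is injective. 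Once this convention is fixed everything else is formal, and I would organize the notation so that the next step --- carrying out the sums over $\tbp$ against the index-selector structure of $c_{\tbp,\tbp f^{-1}}$ --- feeds directly into the transfer-matrix formula of Proposition~\ref{prop:recurssive_identity}.
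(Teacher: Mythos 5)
Your argument is correct and is essentially the paper's own proof written out in full: the paper disposes of this claim with the single sentence ``This follows from Wick's theorem for centered, complex normal random variables,'' and your expansion via the Isserlis formula, the interchange of sums, and the collapse of the Kronecker deltas to $\tbq=\tbp\circ f^{-1}$ is exactly what that sentence compresses. Your explicit flagging of the convention for non-injective $\tbp$ (reading $\BE[\Theta(\Gamma)^{\tbp,\tbp f^{-1}}]$ as the contribution of the single contraction indexed by $f$, so that the factorials from repeated indices cancel against the overcounting of pairs $(f,\tbp)$) is a genuine subtlety the paper leaves implicit, and it is needed for consistency with the way Claim~\ref{clm:computing_expectation_2} is subsequently used.
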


\begin{proof}
	This follows from Wick's theorem for centered, complex normal random variables. 
\end{proof}

We organize the remainder of the proof of Proposition \ref{prop:recurssive_identity} as a series of combinatorial statements, see Claims  \ref{clm:computing_expectation_2} - \ref{clm:Comb_claim_3}.
Fix a permutation $\si\in\CS$. To every pair $(\Gamma,f)$, where $\Gamma\in\CG(\si)$ and $f$ is a bijection from $\Gamma_{out}$ to $\Gamma_{in}$, we associate a digraph $\si_0=\si_0(\Gamma,f)$ obtained from $\Gamma$ by gluing together the dotted half edge going out of the vertex $\alpha$ with the dotted half edge going into $f(\alpha)\in \Gamma_{in}$, for every $\alpha\in \Gamma_{out}$, and then removing all square vertices, namely vertices of type $(\RN{5})$. See Figure \ref{fig:partial_graph_and_bijection} for an illustration. The resulting digraph is always composed of disjoint loops (including loops with one vertex and one edge) and as such corresponds to a unique element in $\CS$.

\begin{figure}
  \centering
    \includegraphics[width=0.7\textwidth]{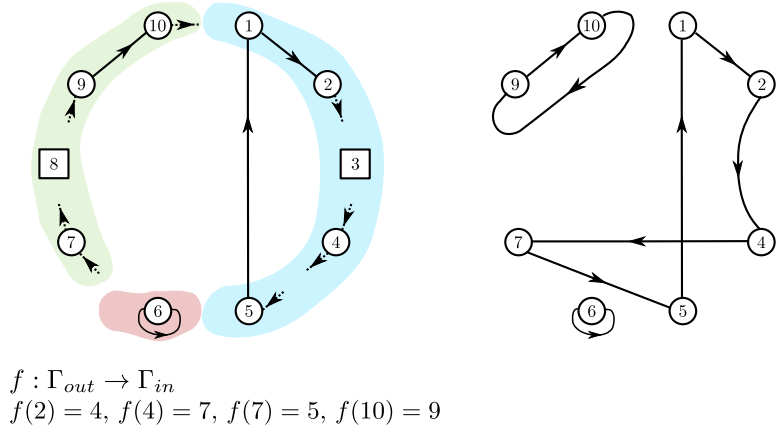}
     \caption{An illustration of a partial digraph and a bijection $\si$ related to it (on the left) together with the digraph associated with it (on the right). \label{fig:partial_graph_and_bijection}}
\end{figure}

\begin{claim}\label{clm:computing_expectation_2}
	Let $\si\in\CS$ be a permutation, $\Gamma\in\CG(\si)$ and $f:\Gamma_{out}\to \Gamma_{in}$ a bijection. Then 
	\begin{equation}
		\sum_{\tbp}\Theta(\Gamma)^{\tbp,\tbp f^{-1}} = N^{-|\Gamma_{out}|}\prod_{\alpha\in \Gamma_{out}\cup \Gamma_{\RN{5}}}a_j(w_\alpha)\prod_{\alpha\in\Gamma_{in}\cup \Gamma_{\RN{5}}}\overline{a_j(z_\alpha)} \cdot F_{j-1}(\si_0(\Gamma,f))\,.
	\end{equation}
\end{claim}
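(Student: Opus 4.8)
The plan is to unwind the definitions from the diagrammatic expansion and carry out the Gaussian (Wick) contraction explicitly, keeping careful track of the powers of $N$ and the scalar prefactors $a_j(w_\alpha), \overline{a_j(z_\alpha)}$ that are peeled off when the $\tbv_j$-edges are removed. First I would fix $\si\in\CS$, a partial graph $\Gamma\in\CG(\si)$, and a bijection $f:\Gamma_{out}\to\Gamma_{in}$, and write out $\Theta(\Gamma)^{\tbp,\tbp f^{-1}}$ as the product over loops $\pi\in\si$ of traces of products of the matrices $X_\alpha$ (of the five types (\RN{1})--(\RN{5})), with each $\tbv_j$ replaced by the column vector $T_{\tbp(\alpha)j}e_{\tbp(\alpha)}$ and each $\tbv_j^\dag$ by $\overline{T}_{\tbq(\alpha)j}e_{\tbq(\alpha)}^\dag$ where $\tbq=\tbp f^{-1}$. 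Each type-(\RN{2}) vertex contributes a factor $-a_j(w_\alpha)$ times an outgoing dotted half-edge, each type-(\RN{3}) vertex a factor $-\overline{a_j(z_\alpha)}$ times an incoming dotted half-edge, each type-(\RN{4}) vertex $\overline{a_j(z_\alpha)}a_j(w_\alpha)$ times both, and each type-(\RN{5}) vertex exactly $\overline{a_j(z_\alpha)}a_j(w_\alpha)$ with no half-edges. This already accounts for the product $\prod_{\alpha\in\Gamma_{out}\cup\Gamma_{\RN{5}}}a_j(w_\alpha)\prod_{\alpha\in\Gamma_{in}\cup\Gamma_{\RN{5}}}\overline{a_j(z_\alpha)}$, except for the signs, which I will need to verify cancel.

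Next I would carry out the sum over $\tbp$. The point of gluing along $f$ (which is already built into $\si_0=\si_0(\Gamma,f)$) is that the edge-variable identification $\tbq=\tbp f^{-1}$ forces each $T_{\tbp(\alpha)j}$ appearing at an outgoing dotted half-edge of $\alpha$ to be paired with the matching $\overline{T}_{\tbp(\alpha)j}$ appearing at the incoming dotted half-edge of $f(\alpha)$; summing over $\tbp:\Gamma_{out}\to[j-1]$ and using $\E[|T_{pj}|^2]=N^{-1}$ (and independence) collapses each such pair to a factor $N^{-1}$ together with a contraction of the corresponding indices. There are $|\Gamma_{out}|$ such pairs, giving the overall $N^{-|\Gamma_{out}|}$. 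The index contraction is exactly what it means to glue the dotted half-edge out of $\alpha$ to the one into $f(\alpha)$: matrix-vector products $\cdots R_{j-1}(z_\alpha)^\dag R_{j-1}(w_\alpha)\tbv_j$ followed (after the glue) by $\tbv_j^\dag R_{j-1}(z_{f(\alpha)})^\dag R_{j-1}(w_{f(\alpha)})\cdots$ become $\cdots R_{j-1}(z_\alpha)^\dag R_{j-1}(w_\alpha)R_{j-1}(z_{f(\alpha)})^\dag R_{j-1}(w_{f(\alpha)})\cdots$ once we use $\sum_p e_p e_p^\dag = I_{j-1}$ after the variance has been extracted. Deleting the square vertices (type (\RN{5})) is harmless at the level of the trace since those vertices carry no matrix factor — only the scalar $\overline{a_j(z_\alpha)}a_j(w_\alpha)$, already extracted — so the surviving matrix products are precisely the resolvent products of $T_{j-1}$ indexed by the loops of $\si_0(\Gamma,f)$. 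Taking the product of traces over these loops yields exactly $F_{j-1}(\si_0(\Gamma,f))$ by definition \eqref{eq:F_jCD}.

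The one genuinely fiddly point, which I would treat as the main obstacle, is the bookkeeping of signs: each type-(\RN{2}) and type-(\RN{3}) vertex carries a minus sign, and a type-(\RN{4}) vertex carries none despite contributing both a $w$- and a $z$-factor. When a dotted half-edge out of $\alpha$ is glued to the dotted half-edge into $f(\alpha)$, the two minus signs it involves (one from each endpoint, unless an endpoint is type (\RN{4}), which has no sign) must multiply to $+1$. I would argue this by the parity/compatibility constraints: along each loop, incoming and outgoing dotted half-edges alternate in a way that pairs each ``$-a_j(w_\alpha)$'' with exactly one ``$-\overline{a_j(z_\beta)}$'', because $|\Gamma_{\RN{2}}|=|\Gamma_{\RN{3}}|$ and each contraction under $f$ joins an outgoing to an incoming dotted half-edge; type-(\RN{4}) vertices, carrying an incoming and an outgoing dotted half-edge with no sign, are sign-neutral and simply pass the bookkeeping along. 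Hence the product of all signs over the $|\Gamma_{out}|$ contractions is $(-1)^{2|\Gamma_{out}|}=1$, leaving the stated positive prefactor. Once the signs are checked, assembling the three ingredients — scalar prefactor, $N^{-|\Gamma_{out}|}$ from Wick, and $F_{j-1}(\si_0(\Gamma,f))$ from the glued matrix products — gives the claim.
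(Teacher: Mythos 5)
Your proposal is correct and follows essentially the same route as the paper's (much terser) proof: Wick contraction of the $\tbv_j,\tbv_j^\dag$ pairs yields $N^{-|\Gamma_{out}|}$ and glues the resolvent blocks into the loops of $\si_0(\Gamma,f)$, while the scalar prefactors are read off from the vertex types. Your extra care with the signs is warranted and correctly resolved — the total sign is $(-1)^{|\Gamma_{\RN{2}}|+|\Gamma_{\RN{3}}|}=+1$ since $|\Gamma_{\RN{2}}|=|\Gamma_{\RN{3}}|$ (note the exponent is $2|\Gamma_{\RN{2}}|$ rather than $2|\Gamma_{out}|$, as type-$(\RN{4})$ vertices carry no sign, but the parity conclusion is unaffected).
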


\begin{proof}
	Let us begin by noting that $\sum_{\tbp}\Theta(\Gamma)^{\tbp,\tbp f^{-1}}$ gives us a term of the form $F_{j-1}(\si_0(\Gamma,f))$ which is the corresponding product of traces of resolvents. Furthermore, each pairing of $\tbv_j$ and $\tbv_j^\dag$, which corresponds to a matching in $f$ yields a power of $N^{-1}$.  Since there are $|\Gamma_{out}|=|\Gamma_{in}|$ such matchings,  $F_{j-1}(\si_0(\Gamma,f))$ is multiplied by the total power $N^{-|\Gamma_{out}|}$.  Additionally, there are factors of $a_j$'s originating in the decorated vertices that must be accounted for. Each decorated vertex $\alpha$ of type $(\RN{2})$, $(\RN{4})$ or $(\RN{5})$ yields a factor of $a_j(w_\alpha)$ while each vertex of type $(\RN{3})$, $(\RN{4})$ or $(\RN{5})$ yields a factor of $\overline{a_j(z_\alpha)}$.
\end{proof}

Combining \eqref{eq:computing_expectation_1} together with Claims \ref{clm:computing_expectation_1} and \ref{clm:computing_expectation_2} we conclude 
\begin{align}\label{eq:matrix}
	\BE[F_j(\si)|\CF_{j-1}] &= \sum_{\Gamma\in\CG(\si),f} N^{-|\Gamma_{out}|}\prod_{\alpha\in \Gamma_{out}\cup \Gamma_{\RN{5}}}a_j(w_\alpha)\prod_{\alpha\in\Gamma_{in}\cup \Gamma_{\RN{5}}}\overline{a_j(z_\alpha)} \cdot F_{j-1}(\si_0(\Gamma,f))\nonumber\\
 &=\sum_{\si'\in \CS} F_{j-1}(\si') A_j(\si',\si)\,,
\end{align}
where 
\begin{equation}\label{eq:formula_for_A_j}
	 A_j(\si',\si) = \sum_{\substack{\Gamma\in\CG(\si),f\text{ s.t.}\\ \si_0(\Gamma,f)=\si'}} N^{-|\Gamma_{out}|}\prod_{\alpha\in \Gamma_{out}\cup \Gamma_{\RN{5}}}a_j(w_\alpha)\prod_{\alpha\in\Gamma_{in}\cup \Gamma_{\RN{5}}}\overline{a_j(z_\alpha)}\,.
\end{equation}

Finally, it is left to compare \eqref{eq:formula_for_A_j} with the formula for $A_j(\si',\si)$ appearing in Proposition \ref{prop:recurssive_identity}. 

\begin{claim}
	Let $\si,\si'\in\CS$. There exists $\Gamma\in\CG(\si)$ and a bijection $f:\Gamma_{out}\to \Gamma_{in}$ such that $\si_0(\Gamma,f)=\si'$ if and only if $\SV(\si')\subset\SV(\si)$. Furthermore, assuming $\SV(\si')\subset \SV(\si)$, if $\Gamma\in\CG(\si)$, then there exists a bijection $f:\Gamma_{out}\to\Gamma_{in}$ such that $\si_0(\Gamma,f)=\si'$ if and only if $\Gamma_{\RN{5}}=\SV(\si)\setminus\SV(\si')$. 
\end{claim}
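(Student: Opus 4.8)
The plan is to analyze the two directions of each ``if and only if'' by tracking how the operation $\si \mapsto \si_0(\Gamma, f)$ acts on vertex sets and on the loop structure. For the first equivalence, the key observation is that the construction of $\si_0(\Gamma, f)$ from a compatible $\Gamma$ never creates new vertices: the vertex set of $\si_0(\Gamma,f)$ is exactly $\SV(\si) \setminus \Gamma_{\RN{5}}$, since all we do is glue dotted half-edges (which does not alter the vertex set) and then delete the type-$(\RN{5})$ vertices. Hence $\SV(\si_0(\Gamma,f)) \subseteq \SV(\si)$ always, which gives the ``only if'' direction immediately. For the ``if'' direction, given $\si' \in \CS$ with $\SV(\si') \subset \SV(\si)$, I would explicitly exhibit a compatible $\Gamma$ and bijection $f$ producing $\si'$: declare every vertex of $\SV(\si) \setminus \SV(\si')$ to be of type $(\RN{5})$, and on $\SV(\si')$ put a decoration that, after gluing solid half-edges along $\si$ and dotted half-edges along $f$, reconstructs the cycles of $\si'$. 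The most natural choice is to route the dotted edges along $\si'$ itself, i.e. take $f$ so that the dotted arrow out of $\alpha$ meets the dotted arrow into $\si'(\alpha)$; one then checks the four compatibility rules are satisfied by the resulting types. This requires care because a vertex of $\SV(\si')$ may be ``isolated'' in $\si'$ (a fixed point, a loop of length one), in which case it should be decorated as type $(\RN{4})$ — two dotted half-edges — and the gluing along $f$ produces the length-one loop; vertices lying on longer cycles of $\si'$ receive a mix of solid and dotted half-edges dictated by how $\si$ and $\si'$ interleave on that cycle.

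For the second equivalence (with $\SV(\si') \subset \SV(\si)$ fixed and $\Gamma \in \CG(\si)$ given), the forward direction is the vertex-count argument again: if some $f$ yields $\si_0(\Gamma,f) = \si'$, then $\SV(\si') = \SV(\si) \setminus \Gamma_{\RN{5}}$ by the observation above, so $\Gamma_{\RN{5}} = \SV(\si) \setminus \SV(\si')$. The converse is the substantive direction: assuming $\Gamma_{\RN{5}} = \SV(\si) \setminus \SV(\si')$, I must produce a bijection $f \colon \Gamma_{out} \to \Gamma_{in}$ with $\si_0(\Gamma, f) = \si'$. Here one first removes the square vertices of $\Gamma$, which breaks the solid-edge structure of $\si$ into a collection of directed paths (and possibly solid loops) on $\SV(\si')$ whose endpoints are precisely the vertices in $\Gamma_{out}$ (path starts, carrying an outgoing dotted half-edge) and $\Gamma_{in}$ (path ends, carrying an incoming dotted half-edge); interior vertices of these paths are type $(\RN{1})$. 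The compatibility rules force each such path to begin at a $\Gamma_{out}$-vertex and end at a $\Gamma_{in}$-vertex, and $|\Gamma_{out}| = |\Gamma_{in}|$. A bijection $f$ then corresponds exactly to a way of closing up these solid paths into loops via dotted edges, and the resulting loop structure on $\SV(\si')$ is an arbitrary permutation of $\SV(\si')$ refining the path decomposition in the appropriate sense. The point is that \emph{every} $\si' \in \CS_{\SV(\si')}$ arises this way — one simply chooses $f$ to glue the end of each solid path to the start of the next solid path as prescribed by the cycles of $\si'$. This is always possible because the solid paths partition $\SV(\si')$ and one has complete freedom in choosing how to concatenate them; verifying that the chosen $f$ is a genuine bijection $\Gamma_{out} \to \Gamma_{in}$ and that the output digraph is exactly $\si'$ is then a routine check.

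I expect the main obstacle to be the bookkeeping in the converse of the second equivalence: one must argue that for a \emph{fixed} $\Gamma$ (hence a fixed decomposition into solid paths) and \emph{any} target $\si'$ on the correct vertex set, a compatible $f$ exists. The subtlety is that $\si'$ is prescribed in advance and need not respect the path decomposition determined by $\Gamma$ in any obvious way — but in fact it does, because each solid path was itself a fragment of a $\si$-orbit and the dotted gluing is unconstrained, so any concatenation pattern, in particular the one spelled out by the cycles of $\si'$, is realizable. Making this precise amounts to observing that contracting each solid path to a single ``super-vertex'' turns the choice of $f$ into the choice of an arbitrary permutation of the super-vertices, and then unfolding shows the result is $\si'$ on the nose. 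Once both equivalences are in place, substituting $\Gamma_{\RN{5}} = \SV(\si)\setminus\SV(\si')$ and $|\Gamma_{out}| = |F|$ (with $F = \CE(\si,\si')$ the set of surviving solid edges) into \eqref{eq:formula_for_A_j} matches it termwise with the sum over $F \subset \CE(\si,\si')$ in \eqref{eq:recurssive_identity_1}, completing the comparison.
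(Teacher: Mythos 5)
Your handling of the first equivalence and of the forward (``only if'') direction of the second is correct and matches the paper: the vertex set of $\si_0(\Gamma,f)$ is always $\SV(\si)\setminus\Gamma_{\RN{5}}$, which yields both necessity statements, and for existence the paper simply declares every vertex of $\SV(\si)\setminus\SV(\si')$ to be of type $(\RN{5})$ and every vertex of $\SV(\si')$ to be of type $(\RN{4})$, after which $\Gamma$ has no solid edges, $\Gamma_{out}=\Gamma_{in}=\SV(\si')$, and $f=\si'$ does the job. Your description of mixing solid and dotted half-edges ``dictated by how $\si$ and $\si'$ interleave'' is an unnecessary complication for this direction, but not an error.

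The gap is in the converse of the second equivalence, which you rightly call the substantive direction and then argue incorrectly. The assertion that ``any concatenation pattern, in particular the one spelled out by the cycles of $\si'$, is realizable'' is false: gluing dotted half-edges via $f$ can only concatenate the solid paths of $\Gamma$, never break them, so every edge of $\CE(\Gamma)$ is an edge of $\si_0(\Gamma,f)$ for \emph{every} choice of $f$. A prescribed $\si'$ that omits some solid edge of $\Gamma$ is therefore unreachable. Concretely, take $\si=(1,2)$ and $\si'=(1)(2)$, and let $\Gamma$ have vertex $1$ of type $(\RN{2})$ and vertex $2$ of type $(\RN{3})$; this $\Gamma$ is compatible, has $\Gamma_{\RN{5}}=\emptyset=\SV(\si)\setminus\SV(\si')$ and retains the solid edge $(2,1)$, and the unique bijection $f(1)=2$ gives $\si_0(\Gamma,f)=(1,2)\neq\si'$. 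Contracting solid paths to super-vertices does show that every permutation \emph{of the super-vertices} is realizable, but unfolding such a permutation never yields an arbitrary permutation of $\SV(\si')$ --- only those whose edge set contains $\CE(\Gamma)$. In fairness, this direction of the claim is false as literally stated: the paper's own proof establishes only the ``only if'' direction here, and the correct characterization of which $\Gamma$ with $\Gamma_{\RN{5}}=\SV(\si)\setminus\SV(\si')$ admit a suitable $f$ is deferred to Claim \ref{clm:Comb_claim_3}, which requires in addition that $\CE(\Gamma)\subset\CE(\si,\si')$ (and then shows $f$ is unique). The ``subtlety'' you flagged --- that $\si'$ need not respect the solid-path decomposition of a given $\Gamma$ --- is exactly the obstruction, and it cannot be waved away.
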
	

\begin{proof}	
	If $\alpha\in \SV(\si')\setminus \SV(\si)$, then $\alpha\notin \Gamma$ for every $\Gamma\in\CG(\si)$ which implies that $\alpha\notin \si_0(\Gamma,f)$ and in particular $\si_0(\Gamma,f)\neq \si'$. In the other direction, assume that $\SV(\si')\subset\SV(\si)$. Let $\Gamma$ be the partial graph obtained from $\si$ be choosing all vertices in $\SV(\si)\setminus\SV(\si')$ to be of type $(\RN{5})$ and all remaining vertices to be of type $(\RN{4})$. One can verify that in this case $\Gamma\in \Gamma(\si)$ and $\Gamma_{out}=\Gamma_{in}=\SV(\si')$. Choosing $f$ to be the bijection induced from the cycle structure of $\si'$ one obtains $\si_0(\Gamma,f)=\si'$ as required. 
	
	Furthermore, the vertices in the permutation $\si_0(\Gamma,f)$ are exactly $\SV(\si)\setminus \Gamma_{\RN{5}}$ and therefore, given two permutations $\si,\si'$ such that $\SV(\si')\subset\SV(\si)$, a partial graph $\Gamma\in\CG(\si)$ can satisfy $\si_0(\Gamma,f)=\si'$ for some bijection $f$ only if and $\SV(\si')\subset \SV(\si)$ and $\Gamma_{\RN{5}} = \SV(\si)\setminus \SV(\si')$. 
\end{proof}

\begin{claim}\label{clm:Comb_claim_3}
	Let $\si,\si'\in\CS$ be two permutations such that $\SV(\si')\subset\SV(\si)$ and let $\Gamma\in\CG(\si)$ such that $\Gamma_{\RN{5}}=\SV(\si)\setminus\SV(\si')$. There exists a bijection $f:\Gamma_{out}\to\Gamma_{in}$ such that $\si_0(\Gamma,f)=\si'$ if and only if the set of directed edges in $\Gamma$ is contained in $\CE(\si,\si')$. Furthermore, if the last condition holds, then such a bijection is unique. 
\end{claim}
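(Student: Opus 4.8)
\emph{Proof strategy.} Everything reduces to bookkeeping with the vertex types, since a partial graph $\Gamma\in\CG(\si)$ determines the type $(\RN{1})$--$(\RN{5})$ of each of its vertices. Let $A$ be the set of vertices of $\Gamma$ of type $(\RN{1})$ or $(\RN{3})$ (those carrying an outgoing solid half-edge) and let $B$ be the set of vertices of type $(\RN{1})$ or $(\RN{2})$ (those carrying an incoming solid half-edge); write $E(\Gamma)$ for the set of (full) directed edges of $\Gamma$, i.e.\ the solid edges produced by the gluing in the definition of $\Gamma$. By construction $E(\Gamma)=\{(v,\si(v)):v\in A\}\subseteq\CE(\si)$, and because the compatibility conditions force every solid half-edge to be glued, $A$ is \emph{exactly} the set of tails of $E(\Gamma)$ and $B$ is \emph{exactly} its set of heads. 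The first step is to record the two partitions $\SV(\si)=A\sqcup\Gamma_{out}\sqcup\Gamma_{\RN{5}}$ (according to the outgoing half-edge) and $\SV(\si)=B\sqcup\Gamma_{in}\sqcup\Gamma_{\RN{5}}$ (according to the incoming half-edge); combined with the hypothesis $\Gamma_{\RN{5}}=\SV(\si)\setminus\SV(\si')$ they give $\Gamma_{out}=\SV(\si')\setminus A$ and $\Gamma_{in}=\SV(\si')\setminus B$. I would also note, straight from the definition of $\si_0$, that for \emph{any} bijection $f:\Gamma_{out}\to\Gamma_{in}$ the digraph $\si_0(\Gamma,f)$ has vertex set $\SV(\si)\setminus\Gamma_{\RN{5}}=\SV(\si')$ and edge set $E(\Gamma)\sqcup\{(\alpha,f(\alpha)):\alpha\in\Gamma_{out}\}$; here the union is disjoint because the tails of the two pieces lie in the disjoint sets $A$ and $\Gamma_{out}$, and no edge is incident to a type $(\RN{5})$ vertex, so deleting those vertices removes no edge.

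Granting this, the ``only if'' direction is immediate: if $\si_0(\Gamma,f)=\si'$, then $E(\Gamma)\subseteq\CE(\si_0(\Gamma,f))=\CE(\si')$, and together with $E(\Gamma)\subseteq\CE(\si)$ this gives $E(\Gamma)\subseteq\CE(\si,\si')$.

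For the ``if'' direction, assume $E(\Gamma)\subseteq\CE(\si,\si')$. For $v\in A$ the edge $(v,\si(v))$ lies in $\CE(\si')$, so $\si'(v)=\si(v)$, which is a vertex of $B$ by the compatibility conditions; since $|A|=|B|$ (equivalently $|\Gamma_{\RN{2}}|=|\Gamma_{\RN{3}}|$) and $\si'$ is a bijection, $\si'$ restricts to a bijection $A\to B$, hence, taking complements inside $\SV(\si')$, also to a bijection $f:=\si'|_{\Gamma_{out}}:\Gamma_{out}\to\Gamma_{in}$. Substituting this $f$ into the edge-set formula above gives $E(\si_0(\Gamma,f))=\{(v,\si'(v)):v\in A\}\sqcup\{(\alpha,\si'(\alpha)):\alpha\in\Gamma_{out}\}=\{(v,\si'(v)):v\in\SV(\si')\}=\CE(\si')$, so $\si_0(\Gamma,f)=\si'$. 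Uniqueness follows at once: any bijection $f'$ with $\si_0(\Gamma,f')=\si'$ must satisfy $\{(\alpha,f'(\alpha)):\alpha\in\Gamma_{out}\}=\CE(\si')\setminus E(\Gamma)=\{(\alpha,\si'(\alpha)):\alpha\in\Gamma_{out}\}$, and comparing the edges with a prescribed tail $\alpha$ forces $f'(\alpha)=\si'(\alpha)=f(\alpha)$ for all $\alpha\in\Gamma_{out}$.

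The argument has no genuine obstacle; the one point deserving care is the assertion in the first paragraph that $A$ and $B$ are \emph{exactly} (and not merely contained in) the sets of tails and heads of $E(\Gamma)$. This rests on the compatibility conditions — a solid half-edge going out of a vertex must be matched with a solid half-edge going into its successor, and conversely — and it is precisely this ``all solid half-edges get glued'' fact that makes $\Gamma_{out}$ and $\Gamma_{in}$ the exact complements $\SV(\si')\setminus A$ and $\SV(\si')\setminus B$ needed to reconstruct $\si'$ from $E(\Gamma)$, thereby forcing $f$ both to exist and to be unique.
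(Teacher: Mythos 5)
Your proof is correct and follows essentially the same route as the paper's: necessity comes from $\CE(\Gamma)\subseteq\CE(\si)\cap\CE(\si_0(\Gamma,f))$, and sufficiency together with uniqueness come from observing that the only candidate bijection is the one adding exactly the edges $\CE(\si')\setminus\CE(\Gamma)$, i.e.\ $f=\si'|_{\Gamma_{out}}$. Your write-up is somewhat more careful than the paper's in one respect worth keeping: you explicitly verify, via the partitions $\SV(\si')=A\sqcup\Gamma_{out}=B\sqcup\Gamma_{in}$ and the equality $|A|=|B|$, that $\si'$ really does map $\Gamma_{out}$ bijectively onto $\Gamma_{in}$, a point the paper leaves implicit when it declares $f(\alpha)$ to be ``the unique vertex $\beta$ with $(\alpha,\beta)\in\CE(\si')$.''
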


\begin{proof}
	Denote by $\CE(\Gamma)$ the set of oriented edges in the partial graph $\Gamma$. From the definition of partial graphs associated with a diagram we know that $\CE(\Gamma)\subset\CE(\si)$ for every $\Gamma\in\CG(\si)$. Furthermore, from the definition of the digraph $\si_0(\Gamma,f)$ obtained from $\Gamma$ with the help of the bijection $f$, it follows that $\CE(\Gamma)\subset\CE(\si')$. Hence the condition is necessary. 
	
	Conversely, assume next that $\Gamma\in\CG(\si)$ and that $\CE(\Gamma)\subset\CE(\si,\si')$. Since $\si'$ is composed of disjoint oriented loops and since $\Gamma$ can be thought of as an oriented graph on the same vertex set as $\si'$ with $\CE(\Gamma)\subset\CE(\si')$, there can be at most one way to complete the set of oriented edges of $\Gamma$ in order to create $\si'$, that is, by adding the set of edges $\CE(\si')\setminus\Gamma$. This can be done using the bijection $f:\Gamma_{out}\to\Gamma_{in}$ defined by $f(\alpha)=\beta$, where for $\alpha\in\Gamma_{out}$ we define $\beta$ to be the unique vertex in $\SV(\si')$ such that that $(\alpha,\beta)\in\CE(\si')$. 	
\end{proof}

Combining the last two claims with \eqref{eq:formula_for_A_j} we obtain for $\si,\si'\in\CS$ such that $\SV(\si')\subset\SV(\si)$
\begin{equation}
	A_j(\si',\si) = \hspace{-10pt}\prod_{\alpha\in \SV(\si)\setminus\SV(\si')} \hspace{-15pt}a_j(w_\alpha)\overline{a_j(z_\alpha)} 
		\sum_{F\subset \CE(\si,\si')} N^{-|\SV(\si')|+|F|}\hspace{-5pt}\prod_{\alpha\in \SV(\si')\setminus F^o} a_j(w_\alpha)\prod_{\alpha\in\SV(v')\setminus F^i}\overline{a_j(z_\alpha)}
		\,.
\end{equation}
and $A_j(\si',v)=0$ whenever $\SV(\si')\setminus\SV(\si)\neq \emptyset$. This completes the proof of Proposition \ref{prop:recurssive_identity}.
\qed

% ************************************************************************************************

\subsection{From the recursive equation to the conditional expectation}
\label{S:CE}
Going back to Proposition \ref{prop:recurssive_identity}, for every fixed choice of $\tbz$ and $\tbw$, we may interpret $F_{j-1}(\si)$ and hence $\BE[F_j(\si)|\CF_{j-1}]$ as random elements of $\BC^{\CS}$. Naturally, we can then view $A_j(\cdot,\cdot)$ as a linear  operator on $\bbC^{\CS}$. In particular, \eqref{eq:recurrsive_identity_0} may then be written in a matrix form $\BE[F_j(\cdot)|\CF_{j-1}]=F_{j-1}A_j(\cdot)$.  Let us observe that the matrix $A_j$ is the specialization, at the value $\lambda_j$, of the matrix valued function $A^{\lambda}:=(A(\si', \si;\lambda,\tbz,\tbw))_{\si,\si\in\CS}$ given by 
\begin{align}\label{eq:A1} 
A(\si', \si;\lambda,\tbz,\tbw) =& 
	\prod_{\alpha\in \SV(\si)\setminus\SV(\si')}(\lambda-w_\alpha)^{-1}(\overline{\lambda}-\overline{z_\alpha})^{-1} \nonumber \\
	&	\times\sum_{F\subset \CE(\si,\si')} N^{-|\SV(\si')|+|F|}\prod_{\alpha\in \SV(\si')\setminus F^o } (\lambda-w_\alpha)^{-1}\prod_{\alpha\in\SV(\si')\setminus F^i}(\overline{\lambda}-\overline{z_\alpha})^{-1}\,,
\end{align}
and by Proposition \ref{prop:recurssive_identity}
\[
	\BE[F_j(\cdot)|\CF_{j-1}]=F_{j-1}A^{\lambda_j}(\cdot)\,.
\]

Repeating the induction procedure and using the tower property of conditional expectations, we conclude that for every $2\leq j\leq N$
\[
	\BE[F_N(\cdot)|\CF_{j-1}] = F_{j-1}A^{\lambda_N}A^{\lambda_{N-1}}\ldots A^{\lambda_j}(\cdot)\,,
\]
and in particular 
\[
	\BE[F_N(\cdot)|\lambda_i ~:~i\in [N]]=\BE[F_N(\cdot)|\CF_{1}] = F_{1}A^{\lambda_2}A^{\lambda_3}\ldots A^{\lambda_N}(\cdot)\,.
\]
Note that once we get down to $F_1$, all resolvent matrices are given by the appropriate scalars $\overline{a_1(z)},a_1(w)$.  Therefore, it is natural to interpret $F_1$ as 
\[
	F_1(\si) = \prod_{\alpha\in\SV(\si)}(\lambda_1-w_\alpha)^{-1}(\overline{\lambda}-\overline{z_\alpha})^{-1} = A^{\lambda_1}(\emptyset,\si).
\]
We therefore obtain the compact formula
\begin{equation}\label{eq:cond_expec_for_F_N}
	\BE[F_N(\si)|\lambda_i ~:~i\in [N]] = \tbe_\emptyset^\dag A^{\lambda_1}A^{\lambda_2}A^{\lambda_3}\ldots A^{\lambda_N}\tbe_{\si},\qquad \forall \si\in\CS\,,
\end{equation}
where $\emptyset$ denotes the empty diagram and $(\tbe_{\si})_{\si\in\CS}$ is the standard basis of column vectors, i.e., $\tbe_\si(\si') = 1$ if $\si'=\si$ and is $0$ otherwise.

\section{Integrating over the eigenvalues}
%Computing correlation functions via estimates on the operator $A^{\lambda}$}
\label{S:M}\label{S:A}

We now have (at least in principal) expressed all correlation functions of interest in terms of the eigenvalues $(\lambda_i)_{i\in[N]}$. The next step is therefore to take the expectation with respect to the eigenvalues. By \eqref{eq:cond_expec_for_F_N}, we can write this as 
\[
	\BE[F_N(\si)] = \tbe_\emptyset^\dg \BE[A^{\lambda_1}A^{\lambda_2}\ldots A^{\lambda_N}]\tbe_\si\,.
\]

Recall that our main goal is to prove the existence of the limit $\lim_{N\to\infty} \allowbreak\BE[\widehat{\rho}_{N,\varepsilon}(\si)]$. Due to the relation between $\E[\widehat{\rho}_{N,\varepsilon}(\si)]$ and $F_{N}(\si)$ given by \eqref{eq:Relaton_of_rho_and_F}, we wish to take the limit of the normalized function $N^{-|\si|} \BE[F_{N}(\si)]$ as $N$ tends to infinity. 

Let us introduce a normalized version of $A^{\lambda}$ to absorb the factor $N^{-|\si|}$. Let $\Lambda$ be the diagonal $\CS\times\CS$ matrix given by $\Lambda_{\si,\tau} = N^{|\si|}\ind_{\si=\tau}$ and define the matrix $B^{\lambda} = \Lambda A^{\lambda} \Lambda^{-1}$. In other words,
\begin{align}\label{eq:The_matrix_B}
&B^{\lambda}(\si, \tau) = 
	\prod_{\alpha\in \SV(\tau)\setminus\SV(\si)}(\lambda-w_\alpha)^{-1}(\overline{\lambda}-\overline{z_\alpha})^{-1} \nonumber \\
	&\qquad	\cdot\sum_{F\subset \CE(\si,\tau)} N^{-|\SV(\si)|+|F|-|\tau|+|\si|}\prod_{\alpha\in \SV(\si)\setminus F^o} (\lambda-w_\alpha)^{-1}\prod_{\alpha\in\SV(\si)\setminus F^i}(\overline{\lambda}-\overline{z_\alpha})^{-1}\,,
\end{align}
whenever $\SV(\si)\subset \SV(\tau)$ and $0$ otherwise. 

From the definition of $B^{\lambda}$ and the fact that the empty diagram does not contain any loops, we obtain 
\begin{equation}\label{eq:F_in_terms_of_B}
	N^{-|\si|}\BE[F_N(\si)] = \tbe_\emptyset^\dg \BE[\Lambda A^{\lambda_1}A^{\lambda_2}\ldots A^{\lambda_N}\Lambda^{-1}]\tbe_\si
	= \tbe_\emptyset^\dg \BE[B^{\lambda_1}B^{\lambda_2}\ldots B^{\lambda_N}]\tbe_\si\,.
\end{equation}
Hence, it suffices to understand the limit as $N$ tends to infinity of the entries of the matrix $\BE[B^{\lambda_1},\ldots,B^{\lambda_N}]$.

Recall the definition of the distance between vectors of complex numbers.
\begin{align*}
	& \mathrm{Dist}(\tbz,\tbw) = \mathrm{Dist}_\ell(\tbz,\tbw;\partial\tbD_1)\\
	&\qquad =  \min_{\alpha,\beta\in [\ell]} \{|z_\alpha-w_\beta|\}  \wedge \min_{\alpha,\beta\in [\ell],~\alpha\neq \beta}\{|z_\alpha-z_\beta|,|w_\alpha-w_\beta|\}\wedge \min_{\alpha\in [\ell]}\{1-|z_\alpha|, 1-|w_\alpha|\}\,. 
\end{align*}
Throughout the remainder of the paper we use the notation $\sum'$ in order to denote standard summation with the exception that the empty sum equals $1$. 

\begin{theorem}[The matrix $\FN$]\label{thm:limit_exists}
	Fix $\ell\in\BN$ and $\tbz,\tbw\in\BC^\ell$ such that $\mathrm{Dist}(\tbz,\tbw)>0$. Then, for every $\si,\tau\in \CS_\ell :=\biguplus_{A\subset[\ell]}\CS_A$ the sequence $\tbe_\si\BE[B^{\lambda_1}B^{\lambda_2}\ldots B^{\lambda_N}]\tbe_\tau$ converges as $N$ goes to infinity to $\tbe_\si\exp(\FN_\ell(\tbz,\tbw))\tbe_\tau$, where $\FN_\ell$ is the restriction of the matrix $\FN$, see \eqref{eq:THE_MATRIX_N_WELL}, to entries in $\CS_\ell$. In particular, for every fixed $\tbz,\tbw\in\tbD_1^\ell$ such that $\mathrm{Dist}(\tbz,\tbw)>0$ and every $\si\in\CS_\ell$
	\[ 
		\lim_{N\to\infty} N^{-|\si|}\BE[F_N(\si)] = \tbe_\emptyset^\dag \exp(\FN_\ell) \tbe_\si\,.
	\]
The matrix $\FN$ considered above is defined by 
\begin{equation}\label{eq:THE_MATRIX_N_WELL}
\begin{aligned}
		\FN_{\si,\tau} = &\ind_{\SV(\si)\subset\SV(\tau)} \hspace{-20pt}
		\sum_{\substack{F\subset\CE(\si,\tau) \\ |F| = |\SV(\si)|+|\tau|-|\si|-1}}\\
		&\quad 	\sideset{}{'}\sum_{\substack{\alpha\in \SV(\si)\setminus F^{o} \\ \beta\in \SV(\si)\setminus F^{i}}} \bigg(\prod_{\substack{\gamma\in \SV(\tau)\setminus F^o \\ \gamma\neq \alpha}}(w_\alpha-w_\gamma)^{-1} \prod_{\substack{\delta \in \SV(\tau)\setminus F^i \\ \delta\neq \beta}}(\overline{z}_\beta-\overline{z}_\delta)^{-1}\bigg) h(w_\alpha,z_\beta)\,,
\end{aligned}
\end{equation}
	where for $z,w\in\tbD_1$ we define $h(z,w)=\log\big(\frac{1-\overline{z}w}{|z-w|^2}\big)$.
\end{theorem}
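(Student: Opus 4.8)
## Proof plan for Theorem \ref{thm:limit_exists}

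\textit{Overall strategy.} The plan is to analyze the product $\BE[B^{\lambda_1}\cdots B^{\lambda_N}]$ by exploiting the fact that the eigenvalues $(\lambda_j)_{j=1}^N$ appearing in \eqref{E:Tmarg} form a determinantal point process whose empirical measure converges to the uniform measure on $\tbD_1$, and that the matrices $B^{\lambda_j}$ depend on $\lambda_j$ only. Since the $\lambda_j$'s are not i.i.d., one cannot simply factor the expectation; however, the key structural observation is that $B^\lambda = \tbI + N^{-1}C^\lambda + O(N^{-2})$ in an appropriate sense, where $\tbI$ is the identity on $\BC^{\CS_\ell}$ and $C^\lambda$ is a nilpotent (strictly upper-triangular with respect to $\preceq$) matrix. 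Indeed, in \eqref{eq:The_matrix_B} the diagonal term ($\si=\tau$, $F=\emptyset$) carries the factor $N^{-|\SV(\si)|}\prod_\alpha(\lambda-w_\alpha)^{-1}(\overline\lambda-\overline z_\alpha)^{-1}$ together with the combinatorial sum; isolating the leading $N$-power in each entry, only the terms with $|F| = |\SV(\si)| + |\tau| - |\si| - 1$ survive at order $N^{-1}$, which is exactly the constraint appearing in \eqref{eq:THE_MATRIX_N_WELL}. So the first step is an algebraic expansion: write $B^\lambda = \tbI + N^{-1}C^\lambda_N$ where $C^\lambda_N$ has entries that are bounded (uniformly in $N$, using $\mathrm{Dist}(\tbz,\tbw)>0$ to control the resolvent-type factors $(\lambda - w_\alpha)^{-1}$ etc. away from the spectral parameters) and converge entrywise as $N\to\infty$ to a matrix $C^\lambda$.

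\textit{From the product to an exponential.} Once $B^{\lambda_j} = \tbI + N^{-1}C^{\lambda_j}_N$, the product telescopes:
\[
	\prod_{j=1}^N B^{\lambda_j} = \sum_{k=0}^{|\CS_\ell|} N^{-k} \sum_{1\le j_1 < \cdots < j_k \le N} C^{\lambda_{j_1}}_N \cdots C^{\lambda_{j_k}}_N\,,
\]
the sum truncating because $C^\lambda$ is nilpotent of index at most $|\CS_\ell|$ (it strictly increases the partial-order level $\preceq$, or decreases $|\SV|$). Taking expectations and using the convergence of $k$-point correlation functions of the Ginibre eigenvalue process to $\prod \frac{1}{\pi}\ind_{\tbD_1}$, one gets
\[
	\lim_{N\to\infty} \BE\Big[\prod_{j=1}^N B^{\lambda_j}\Big] = \sum_{k\ge 0}\frac{1}{k!}\Big(\frac{1}{\pi}\int_{\tbD_1} C^\lambda\,\mathrm{d}^2\lambda\Big)^k = \exp\Big(\frac{1}{\pi}\int_{\tbD_1} C^\lambda\,\mathrm{d}^2\lambda\Big)\,,
\]
where the $k!$ arises from symmetrizing the ordered sum $\sum_{j_1<\cdots<j_k}$ against the (asymptotically) exchangeable structure of the point process. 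The final step is then to check that $\frac{1}{\pi}\int_{\tbD_1} C^\lambda\,\mathrm{d}^2\lambda$ equals the matrix $\FN_\ell$ in \eqref{eq:THE_MATRIX_N_WELL}: the integral $\frac{1}{\pi}\int_{\tbD_1}\prod_\gamma(\lambda - w_\gamma)^{-1}\prod_\delta(\overline\lambda - \overline z_\delta)^{-1}\,\mathrm{d}^2\lambda$ is evaluated by partial fractions in $\lambda$ and $\overline\lambda$ — producing exactly the Vandermonde-type products $\prod(w_\alpha - w_\gamma)^{-1}\prod(\overline z_\beta - \overline z_\delta)^{-1}$ — and each resulting scalar integral $\frac{1}{\pi}\int_{\tbD_1}(\lambda-w_\alpha)^{-1}(\overline\lambda - \overline z_\beta)^{-1}\mathrm{d}^2\lambda = h(w_\alpha, z_\beta)$ by the formula recorded just before Section 2. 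The statement $\lim_N N^{-|\si|}\BE[F_N(\si)] = \tbe_\emptyset^\dag\exp(\FN_\ell)\tbe_\si$ is then immediate from \eqref{eq:F_in_terms_of_B}.

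\textit{Main obstacle.} The delicate point is the interchange of limits and the control of error terms: one must justify that the $O(N^{-2})$ remainders in $B^\lambda = \tbI + N^{-1}C^\lambda_N + (\text{remainder})$ do not accumulate over the $N$ factors in the product, and that the expectation against the (correlated) eigenvalue process of products $C^{\lambda_{j_1}}_N\cdots C^{\lambda_{j_k}}_N$ genuinely converges to the product of integrals — that is, that the correlations between distinct $\lambda_{j_i}$'s wash out in the limit. This requires (i) uniform-in-$N$ bounds on the entries of $C^\lambda_N$ and its remainder, which hold on the region $\mathrm{Dist}(\tbz,\tbw)>0$ since all the problematic factors $(\lambda-w_\alpha)^{-1}$, $(\overline\lambda-\overline z_\alpha)^{-1}$, $(w_\alpha-w_\gamma)^{-1}$ stay bounded and $h(w_\alpha,z_\beta)$ is integrable over $\tbD_1$; and (ii) the quantitative convergence of the $k$-point correlation functions $\rho_k^{(N)}$ of the Ginibre eigenvalues to $\pi^{-k}\ind_{\tbD_1^k}$ in $L^1$ against the (integrable) test functions built from $h$ and the rational factors, together with a bound on the number of coincidences (diagonal contributions $j_a = j_b$), which contribute a lower order in $N$. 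I would handle (i) by a direct inspection of \eqref{eq:The_matrix_B}, extracting the power of $N$ in each $F$-term and noting nilpotency forces the exponential series to terminate, and (ii) by invoking the determinantal structure \eqref{E:Tmarg} and the standard asymptotics of the Ginibre kernel, reducing everything to dominated convergence on $\tbD_1^k$.
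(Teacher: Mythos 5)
Your overall architecture matches the paper's: write $B^\lambda=\mathrm{Id}+X^\lambda$ with $X^\lambda=O(N^{-1})$, resum the product into an exponential using the exchangeability of the eigenvalues, and identify the generator with $\FN$ via partial fractions and the integral $\frac1\pi\int_{\tbD_1}(\lambda-w)^{-1}(\overline\lambda-\overline z)^{-1}\,\mathrm{d}^2\lambda=h(w,z)$. However, there is a genuine error at the center of your argument: the matrix $C^\lambda$ (the paper's $NX^\lambda$, to leading order) is \emph{not} nilpotent, and the expansion of $\prod_j B^{\lambda_j}$ does \emph{not} truncate at $k\leq|\CS_\ell|$. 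The matrix is upper triangular with respect to $\preceq$, but its \emph{diagonal} entries are nonzero: taking $\si=\tau$ and $|F|=|\SV(\si)|-1$ in \eqref{eq:The_matrix_B} produces, at order $N^{-1}$, the term $\sum_{(\alpha,\beta)\in\CE(\si)}(\lambda-w_\alpha)^{-1}(\overline\lambda-\overline z_\beta)^{-1}$, which integrates to $\frh_\si\neq 0$ --- these are precisely the eigenvalues of $\FN$, and if they vanished $\exp(\FN)$ would be a finite polynomial in $\FN$, which it is not. Consequently the sum over $k$ runs all the way to $N$, and you must justify interchanging $N\to\infty$ with an infinite series. This is exactly what the paper's Proposition \ref{L:EstMain}(2) supplies: a bound $\|\binom{N}{m}\Psi(m)\|\leq(C/(\mathrm{Dist}(\tbz,\tbw)^{2\ell+2}m))^m$ \emph{uniform in $m$}, whose proof requires the $1/m!$ gain from the determinant expansion and the kernel trace estimates of Lemma \ref{L:pfe}. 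Your plan, believing the series terminates, contains no substitute for this tail bound, so the limit interchange is unjustified.

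A secondary (fixable, but nontrivial) issue: you assert the entries of $C^\lambda_N$ are bounded uniformly because $\mathrm{Dist}(\tbz,\tbw)>0$, but the factors $(\lambda-w_\alpha)^{-1}(\overline\lambda-\overline z_\beta)^{-1}$ are singular at points $w_\alpha, z_\beta$ \emph{inside} $\tbD_1$; the separation hypothesis controls the Vandermonde-type prefactors, not the $\lambda$-singularities. The decorrelation statement you invoke --- that $\E[C^{\lambda_{j_1}}_N\cdots C^{\lambda_{j_k}}_N]$ factorizes into products of integrals --- therefore has to be proved for these singular, $N$-dependent test functions; this is the content of the paper's Lemma \ref{lem:bound_on_psi_m} and the appendix estimates on $\frK$, where the off-identity permutations in the determinant of the correlation kernel are shown to contribute only $O((\log N/\sqrt N)^{k-1})$. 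You correctly flag this as the main obstacle, but ``dominated convergence against the limiting correlation functions'' understates what is needed, since one must also quantify the error to make it summable over the (non-terminating) series in $k$.
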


\begin{remark*}
	In Section \ref{sec:The_matrix_frN} we will show that the above formula for $\FN$ coincides with the one provided in \eqref{the_function_Fn}-\eqref{eq:The_Matrix_N}.
\end{remark*}

The following weaker notion of distance will be used throughout this section. For $\tbz,\tbw\in \tbD_1^{\ell}$ let 
\[
	\mathrm{dist}(\tbz,\tbw)= \mathrm{dist}_\ell(\tbz,\tbw;\partial\tbD_1)=  \min_{\alpha\in [\ell]} \{|z_\alpha-w_\alpha|, 1-|z_\alpha|, 1-|w_\alpha|)\} \,.
\]

% *********************************************************************************************************
\subsection{Key proposition}

Fix $\ell\in\BN$. Throughout the remainder of this section we make the dependence on $\ell$ implicit. In particular,  with a slight abuse of notation we use $B^{\lambda}$ also to denote the restriction of $B^{\lambda}=\Lambda A^\lambda \Lambda^{-1}$ to $\CS_\ell\times\CS_\ell$. We organize the matrix elements of $B^{\lambda}$ according to powers of $N$, writing
\[
	B^{\lambda} = \sum_{k\in \BZ} N^{-k} B^{\lambda,k}\,,
\]	
where the matrices $B^{\lambda,k}$ depend only on $\tbz,\tbw$, $\ell$ and $\lambda$ and in particular not on $N$. 

Our first observation is that  $B^{\lambda,k}=0$ for $k<0$ and $k>2\ell$. 

\begin{claim}\label{clm:power_of_N}
	For every $\si,\tau\in\CS_\ell$ such that $\si\neq \tau$ and $\SV(\si)\subset\SV(\tau)$,
\[
		|\CE(\si,\tau)|\leq |\SV(\si)| + |\tau| - |\si|-1\,.
\] 
In particular, $B^{\lambda,k}=0$ for every $k<0$ and $k>2\ell$. 
\end{claim}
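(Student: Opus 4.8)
\textbf{Proof plan for Claim \ref{clm:power_of_N}.}

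The plan is to prove the combinatorial inequality $|\CE(\si,\tau)| \le |\SV(\si)| + |\tau| - |\si| - 1$ and then read off the vanishing statement directly from the formula \eqref{eq:The_matrix_B} for $B^\lambda$. For the second part, observe that in \eqref{eq:The_matrix_B} the power of $N$ attached to the summand indexed by $F \subset \CE(\si,\tau)$ is $-|\SV(\si)| + |F| - |\tau| + |\si|$, so the exponent $k$ of $N^{-k}$ is $k = |\SV(\si)| - |F| + |\tau| - |\si|$. Since $0 \le |F| \le |\CE(\si,\tau)|$ and (when $\si \ne \tau$) $|\CE(\si,\tau)| \le |\SV(\si)| + |\tau| - |\si| - 1$ by the first part, we get $k \ge |\tau| - |\si| + 1 \ge 1 > 0$ in the off-diagonal case, while on the diagonal $\si = \tau$ one has $\SV(\si) \setminus \SV(\si) = \emptyset$, $\CE(\si,\si) = \CE(\si)$, and the exponent is $k = -|F|$ ranging over $0 \le |F| \le |\SV(\si)| \le \ell$; combining the two cases, $B^{\lambda,k} = 0$ unless $0 \le k \le \ell$ — and in fact the bound $k \le 2\ell$ claimed is weaker, so it certainly holds. (The extra slack to $2\ell$ presumably anticipates the product of $N$ copies of $B^\lambda$; for a single matrix $\ell$ suffices.)

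For the combinatorial inequality itself, I would argue componentwise over the cycles of $\tau$. Write $\tau = \{\CL_1, \dots, \CL_{|\tau|}\}$ with vertex sets $V_k = \SV(\CL_k)$, and for each $k$ let $\si_k$ be the permutation $\si$ restricted to $V_k$ (its cycles are exactly the cycles of $\si$ contained in $V_k$, since every cycle of $\si$ is a subloop of some cycle of $\tau$ when $\SV(\si) \subset \SV(\tau)$ and $\CE(\si,\tau) \ne \emptyset$ — more precisely $\CE(\si,\tau) = \bigcup_k \CE(\si_k, \CL_k)$). It then suffices to show, for a single cycle $\CL$ of length $n = |V|$ on vertex set $V$ and an arbitrary permutation $\si'$ of a subset $\SV(\si') \subseteq V$, that $|\CE(\si',\CL)| \le |\SV(\si')| + 1 - |\si'|$ when $\si'$ is not the full cycle $\CL$ itself, and summing over $k$: since $\sum_k 1 = |\tau|$ and $\sum_k |\si_k| = |\si|$ and $\sum_k |\SV(\si_k)| = |\SV(\si)|$, this yields $|\CE(\si,\tau)| \le |\SV(\si)| + |\tau| - |\si|$, with the strict drop by $1$ coming from at least one cycle $\CL_k$ on which $\si_k$ is not the full cycle (such a $k$ must exist when $\si \ne \tau$). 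So the heart of the matter is the single-cycle bound.

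To prove the single-cycle bound: an edge $(v,w) \in \CE(\si',\CL)$ requires $\si'(v) = w = \CL(v)$, i.e. $v$ is a vertex of $\SV(\si')$ on which $\si'$ agrees with the cyclic successor map of $\CL$. Think of $\SV(\si')$ sitting on the cycle $\CL$; the edges of $\CE(\si',\CL)$ are precisely the "arcs of agreement" where consecutive (in $\CL$) elements of $\SV(\si')$ are also consecutive in $\si'$. Decompose each cycle of $\si'$ into maximal runs of consecutive-in-$\CL$ vertices: a cycle of $\si'$ with $a_i$ vertices broken into $r_i$ such maximal runs contributes exactly $a_i - r_i$ edges to $\CE(\si',\CL)$ (each run of length $\ell$ gives $\ell - 1$ agreement-edges internally, none at its ends), so $|\CE(\si',\CL)| = \sum_i (a_i - r_i) = |\SV(\si')| - \sum_i r_i \le |\SV(\si')| - |\si'|$ since each $r_i \ge 1$. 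This already gives $|\CE(\si',\CL)| \le |\SV(\si')| - |\si'| \le |\SV(\si')| + 1 - |\si'|$. The sharper statement needed — strict inequality $|\CE(\si',\CL)| \le |\SV(\si')| - |\si'|$, hence at least one less than $|\SV(\si')| + |\tau| - |\si|$ overall — in fact comes for free here: equality $\sum_i r_i = |\si'|$ forces every cycle of $\si'$ to be a single $\CL$-consecutive run, and then gluing these runs around $\CL$ and using that $\si'$ must be a genuine permutation forces $\si' = \CL$ (one run wrapping all the way around) unless there are at least two runs, in which case $\sum r_i \ge 2 > 1 = |\si'|$ for a single cycle — so whenever $\si' \ne \CL$ we get the strict drop. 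I would lay this out carefully with the run-decomposition picture. The main obstacle is purely bookkeeping: making the "run decomposition" argument airtight, in particular handling the wrap-around case and correctly tracking how the per-cycle strict inequalities aggregate to the single global $-1$; there is no analytic difficulty, only the risk of an off-by-one error, so I would double-check the base cases ($|\SV(\si')| = 1$, $\si'$ a single transposition inside a long cycle, $\si' = \CL$) explicitly.
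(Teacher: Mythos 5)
Your derivation of the vanishing of $B^{\lambda,k}$ from the combinatorial inequality is essentially the same bookkeeping as the paper's, and your single-cycle ``run decomposition'' is a correct and rather elegant way to see the key estimate. However, there is a genuine gap in the reduction step, plus a couple of smaller slips.

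The gap: you reduce to the single-cycle case by restricting $\si$ to each vertex set $V_k=\SV(\CL_k)$ and summing the per-cycle bounds, using $\sum_k|\si_k|=|\si|$ and $\sum_k|\SV(\si_k)|=|\SV(\si)|$. This requires every cycle of $\si$ to be contained in a single cycle of $\tau$, and your justification --- that $\SV(\si)\subset\SV(\tau)$ forces every cycle of $\si$ to be a subloop of some cycle of $\tau$ --- is false. For instance, with $\tau=(1,2,3)(4,5,6)$ and $\si=(1,2,4,5)$ one has $\SV(\si)\subset\SV(\tau)$, yet the cycle of $\si$ straddles both cycles of $\tau$; the restriction of $\si$ to $V_1=\{1,2,3\}$ is not even a permutation of a subset of $V_1$, so $|\si_k|$ is undefined and the aggregation collapses. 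The claim is stated (and needed) for all $\si\neq\tau$ with $\SV(\si)\subset\SV(\tau)$, so this case cannot be excluded. The fix is to run your decomposition over the cycles of $\si$ rather than of $\tau$: each agreement edge $(v,\si(v))$ with $\si(v)=\tau(v)$ keeps $v$ and $\si(v)$ in the same cycle of $\tau$, so each cycle $C$ of $\si$ with $a_C$ vertices splits into $r_C\geq 1$ maximal runs of agreement and contributes $a_C-r_C$ edges unless $C$ is itself a cycle of $\tau$ (in which case it contributes $a_C$). Writing $m$ for the number of cycles of $\si$ that are cycles of $\tau$, this gives $|\CE(\si,\tau)|\leq|\SV(\si)|-|\si|+m$, and $m\leq|\tau|-1$ because $m=|\tau|$ together with $\SV(\si)\subset\SV(\tau)$ would force $\si=\tau$. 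This recovers the claim. For comparison, the paper avoids any decomposition: it splits into the cases $|\si|<|\tau|$ (where $|\CE(\si,\tau)|\leq|\CE(\si)|=|\SV(\si)|$ already suffices) and $|\tau|\leq|\si|$ (where it counts at least $|\si|-|\tau|+1$ edges of $\CE(\si)$ outside $\CE(\tau)$, one from each cycle of $\si$ that is not a cycle of $\tau$). Your run-count is finer but must be set up globally as above.

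Two smaller points in the second part. On the diagonal the exponent is $k=|\SV(\si)|-|F|$, not $-|F|$ (the sign error does not affect the conclusion $0\leq k\leq\ell$ there). Off the diagonal your chain ``$k\geq|\tau|-|\si|+1\geq 1$'' fails when $|\si|>|\tau|$ (e.g.\ $\si=(1)(2)$, $\tau=(1,2)$); the correct deduction is $k=|\SV(\si)|-|F|+|\tau|-|\si|\geq|\SV(\si)|-|\CE(\si,\tau)|+|\tau|-|\si|\geq 1$ directly from the first part. Also your parenthetical assertion that $k\leq\ell$ always suffices is wrong --- taking $\si=C_\ell$, $\tau=\mathtt{I}_{[\ell]}$, $F=\emptyset$ gives $k=2\ell-1$ --- but the bound actually claimed, $k\leq 2\ell$, follows trivially from $|\SV(\si)|,|\tau|\leq\ell$ and $|F|,|\si|\geq 0$, so this does not affect the result.
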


\begin{proof}
	Since each permutation consists of disjoint collection of loops and since the number of (directed) edges in each loop equals the number of vertices in it, it follows that 
\begin{equation}\label{eq:edges_verices}
	|\CE(\si)|=|\SV(\si)|\,.
\end{equation}

We split the proof into two cases. Assume first that $|\si|< |\tau|$. Using \eqref{eq:edges_verices}, we obtain $|\CE(\si,\tau)| < |\CE(\si)| = |\SV(\si)|\leq |\SV(\si)|+|\tau|-|\si|$, as required. 
	
Assume next that $|\tau|\leq|\si|$ and $\si\neq \tau$. Once again, due to \eqref{eq:edges_verices} it suffices to prove that $|\CE(\si,\tau)|\leq |\CE(\si)| -(|\si|-|\tau|)-1$. To see this, observe that $\SV(\si)\subset\SV(\tau)$ and therefore each loop in $\si$ which is not part of $\tau$ must contain at least two directed edges that do not belong to $\CE(\tau)$. Since the loops are disjoint so are the edges, which implies that there are at least $|\si|-|\tau|+1$ oriented edges in $\CE(\si)$ that do not belong to $\CE(\tau)$, that is $|\CE(\si,\tau)|\leq |\CE(\si)|-(|\si|-|\tau|)-1$ as required. 

Finally, note that for every $F\subset\CE(\si,\tau)$ we have $-|\SV(\si)|+|F|-|\si|+|\tau|\geq -\ell + 0 -\ell +0 = -2\ell$, which implies that $B^{\lambda,k}=0$ for $k>2\ell$. Similarly, $-|\SV(\si)|+|F|-|\si|+|\tau|\leq 0$ and therefore $B^{\lambda,k}=0$ for $k<0$
\end{proof}

The second observation reads
\begin{claim}
	$B^{\lambda,0} = \mathrm{Id}$ for every $\lambda\in\BC$. 
\end{claim}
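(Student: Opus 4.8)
The plan is to read off $B^{\lambda,0}$, the coefficient of $N^0$, directly from the defining expansion \eqref{eq:The_matrix_B} of $B^\lambda$. Recall that $B^\lambda(\si,\tau)$ is supported on pairs with $\SV(\si)\subset\SV(\tau)$, and for such pairs is a sum over subsets $F\subset\CE(\si,\tau)$ with each term carrying the power $N^{-|\SV(\si)|+|F|-|\tau|+|\si|}$. So the first step is to determine, for fixed $\si,\tau$, which terms have exponent exactly $0$, i.e. which $F$ satisfy $|F| = |\SV(\si)| + |\tau| - |\si|$.

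By Claim \ref{clm:power_of_N}, whenever $\si\neq\tau$ (with $\SV(\si)\subset\SV(\tau)$) we have $|\CE(\si,\tau)|\leq |\SV(\si)|+|\tau|-|\si|-1$, so \emph{no} subset $F\subset\CE(\si,\tau)$ can have cardinality $|\SV(\si)|+|\tau|-|\si|$; hence $B^{\lambda,0}(\si,\tau)=0$ for all off-diagonal entries. It therefore remains to handle the diagonal $\si=\tau$. In that case $\SV(\si)\setminus\SV(\tau)=\emptyset$, the prefactor $\prod_{\alpha\in\SV(\tau)\setminus\SV(\si)}(\cdots)$ is the empty product $1$, $|\tau|=|\si|$, and $\CE(\si,\tau)=\CE(\si)$, which has cardinality $|\SV(\si)|$ by \eqref{eq:edges_verices}. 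Thus the condition $|F| = |\SV(\si)| + |\tau| - |\si| = |\SV(\si)|$ forces $F = \CE(\si)$, the unique subset of full size. For that choice $F^o = F^i = \SV(\si)$ (every vertex is both the tail and the head of an edge of the cycle structure), so both products $\prod_{\alpha\in\SV(\si)\setminus F^o}$ and $\prod_{\alpha\in\SV(\si)\setminus F^i}$ are empty and equal $1$, and the accompanying power of $N$ is $N^{-|\SV(\si)|+|\CE(\si)|-|\si|+|\si|}=N^0$. Hence the $N^0$-coefficient of $B^\lambda(\si,\si)$ is exactly $1$, i.e. $B^{\lambda,0}(\si,\si)=1$, independently of $\lambda$, $\tbz$, $\tbw$.

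Combining the two cases gives $B^{\lambda,0}=\mathrm{Id}$. There is essentially no obstacle here: the statement is an immediate bookkeeping consequence of Claim \ref{clm:power_of_N} together with the identity $|\CE(\si)|=|\SV(\si)|$ from \eqref{eq:edges_verices}. The only point requiring a moment's care is checking that on the diagonal the unique maximal $F=\CE(\si)$ indeed yields coefficient $1$ (empty products) rather than some nontrivial function of the spectral parameters, which follows from $F^o=F^i=\SV(\si)$.
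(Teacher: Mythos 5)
Your proof is correct and follows essentially the same route as the paper's: the off-diagonal vanishing is exactly Claim \ref{clm:power_of_N}, and on the diagonal the unique subset $F=\CE(\si)$ attaining exponent $0$ gives empty products and hence coefficient $1$ (the paper phrases this via $B^{\lambda}_{\si,\si}=A^{\lambda}_{\si,\si}$, but the computation is identical). No gaps.
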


\begin{proof}
	For the diagonal entries, this follows from the fact that $B^{\lambda}_{\si,\si}=A^{\pmb{\lambda}}_{\si,\si}$ for every $\si\in \CS_\ell$ together with the fact that $A^{\lambda}_{\si,\si}=1$ for every $\si\in\CS_\ell$ as can be seen by examining the explicit expression \eqref{eq:recurssive_identity_1}. Indeed, the possible powers of $N$ in \eqref{eq:recurssive_identity_1} for $\si=\si'$ are $-|\SV(\si)|+|F|\leq -|\SV(\si)|+|\CE(\si)|=0$ for some $F\subset \CE(\si)$ with equality if and only if $F=\CE(\si)$. For the off-diagonal entries this follows from Claim \ref{clm:power_of_N}.
\end{proof}

Using the last two claims we can rewrite the matrix $B^{\lambda}$ as 
\begin{equation}\label{eq:B_to_F}
	B^{\lambda} = \mathrm{Id} + X^{\lambda}\,,
\end{equation}
where $X^{\lambda} = \sum_{k=1}^{2\ell} N^{-k}B^{\lambda,k}$. 

Let us observe that by virtue of the symmetry of \eqref{E:Tmarg}, the product $\E[X^{\lambda_{j_1}}X^{\lambda_{j_2}}\ldots X^{\lambda_{j_m}}]$ depends only on the cardinality $m$ and not on the specific choice of $j_1,\ldots,j_m\in [N]$ as long as $j_1,\ldots,j_m$ are distinct. Therefore,
\begin{equation}\label{eq:Psi}
	\BE[B^{\lambda_1}B^{\lambda_2}\ldots B^{\lambda_N}] = \BE[(\mathrm{Id}+X^{\lambda_1})(\mathrm{Id}+X^{\lambda_2})\ldots (\mathrm{Id}+X^{\lambda_N})]
	 =\sum_{m=0}^N    {N \choose m} \Psi(m)\,,
\end{equation}
where for $0\leq m\leq N$ we introduced the matrix 
\[
	\Psi(m)=\E[X^{\la_1}\dotsc X^{\la_m}]\,.
\]
 
Combining \eqref{eq:F_in_terms_of_B} with \eqref{eq:Psi} we conclude that Theorem \ref{thm:limit_exists} amounts to two estimates. The first evaluates $\Psi(m)$ for a fixed $m$ as $N$ goes to infinity, while the second allows us to conclude that the limit exists as the sum of the term-by-term limits. Both estimations are summarized in the following lemma:

\begin{proposition}\label{L:EstMain} Fix $\ell\in\BN$, $\delta\in (0,1/2)$ and $\tbz,\tbw\in\tbD_1^\ell$ such that $\mathrm{Dist}(\tbz,\tbw)>0$. 
\begin{enumerate}
\item[(1)] For every fixed $m\in \BN$, there exists a constant $\widehat{C}=\widehat{C}(m,\mathrm{Dist}(\tbz,\tbw),\ell,\delta)\in (0,\infty)$ such that 
	\begin{equation}\label{eq:Term:ii}
		\bigg\|{N \choose m}\Psi(m)-\frac{\frN^m}{m!}\bigg\|\leq \frac{\widehat{C}}{N^{1/2-\delta}}\,.
	\end{equation}
\item[(2)] There exists a universal constant $C\in (0,\infty)$ such that for all $m\in \BN$.
	\begin{equation}\label{eq:Term:iii}
		\bigg\|{N \choose m}\Psi(m)\bigg\| \leq \Big(\frac{C}{\mathrm{Dist}(\tbz,\tbw)^{2\ell+2} m}\Big)^m\,.
\end{equation}
\end{enumerate}
\end{proposition}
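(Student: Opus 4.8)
The plan is to analyze the matrix $\Psi(m)=\BE[X^{\lambda_1}\cdots X^{\lambda_m}]$ by expanding each $X^{\lambda_j}$ in powers of $N$ and then integrating over the eigenvalues using the determinantal structure of the Ginibre point process. Recall from \eqref{eq:B_to_F} that $X^{\lambda}=\sum_{k=1}^{2\ell}N^{-k}B^{\lambda,k}$, where each $B^{\lambda,k}$ is a fixed matrix-valued function of $\lambda$ (and of the frozen parameters $\tbz,\tbw,\ell$). Expanding the product, $\Psi(m)$ becomes a sum over tuples $(k_1,\dots,k_m)\in\{1,\dots,2\ell\}^m$ of $N^{-(k_1+\cdots+k_m)}\,\BE[B^{\lambda_1,k_1}\cdots B^{\lambda_m,k_m}]$. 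The key structural point — which follows from Claim \ref{clm:power_of_N} applied iteratively along the product — is that the matrix entry $(\si_0,\si_m)$ of $B^{\lambda_1,k_1}\cdots B^{\lambda_m,k_m}$ vanishes unless the sum of the ``deficiencies'' telescopes correctly; more precisely, along any chain $\si_0\subset\si_1\subset\cdots\subset\si_m$ realizing a nonzero term, one has $k_r\ge 1$ with equality forced on all but finitely many steps, so that the only tuples contributing at the leading order $N^{-m}$ (after the ${N\choose m}\sim N^m/m!$ prefactor) are those with $k_1=\cdots=k_m=1$, i.e. the term $\BE[B^{\lambda_1,1}\cdots B^{\lambda_m,1}]$. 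This is exactly the combinatorics that produces $\frN$: by construction $B^{\lambda,1}$ is the ``single-edge-deficient'' part of $B^\lambda$, and summing/integrating it against one eigenvalue $\lambda$ produces one factor of the kernel $h$, giving one row of $\frN$ as in \eqref{eq:THE_MATRIX_N_WELL}.

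For part (1), the core estimate is therefore: for fixed $m$,
\[
	\frac{N!}{(N-m)!\,N^m}\cdot\BE\big[B^{\lambda_1,1}\cdots B^{\lambda_m,1}\big] \;\longrightarrow\; \frac{\frN^m}{m!}\cdot m! \;=\;\frN^m
\]
modulo the right bookkeeping of the $m!$, with all other tuples $(k_1,\dots,k_m)$ contributing $O(N^{-1})$ relative error. First I would establish that, for each fixed entry, $\BE[B^{\lambda,1}]$ integrated against the one-point intensity of the Ginibre process converges to the corresponding entry of $\frN$: the one-point intensity converges (with explicit rate, away from $\partial\tbD_1$) to the uniform density on $\tbD_1$, and then $\frn_{\si,\tau}$ and $\frh_\si$ arise as the stated integrals $\frac1\pi\int_{\D_1}(\cdots)\,\mathrm d^2\nu$; the identity $\frac1\pi\int_{\D_1}\frac{\mathrm d^2\nu}{(\overline\nu-\overline u)(\nu-v)}=\log\frac{1-\overline u v}{|u-v|^2}$ is the explicit computation behind $h$. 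To pass from one integration to $m$ integrations one uses that the $m$-point correlation functions of the Ginibre ensemble factorize into a product of one-point densities up to an error controlled by off-diagonal kernel terms, which are exponentially small once $\mathrm{Dist}(\tbz,\tbw)>0$ keeps the relevant arguments separated — this is where the hypothesis $\mathrm{Dist}>0$ and the rate $N^{-1/2+\delta}$ enter (the $\delta$ absorbing polynomial corrections near the scale of the microscopic spacing). Assembling: ${N\choose m}\Psi(m)=\frac{1}{m!}\big(N^m\cdot N^{-m}\BE[B^{\lambda_1,1}\cdots B^{\lambda_m,1}]\big)+O(N^{-1/2+\delta})=\frac{\frN^m}{m!}+O(N^{-1/2+\delta})$, giving \eqref{eq:Term:ii}.

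For part (2), I would obtain a uniform-in-$m$ operator-norm bound by a cruder but global argument. The matrix $B^{\lambda}$ on $\CS_\ell$ is finite-dimensional (of size depending only on $\ell$), and its entries are bounded, for $\lambda$ in a compact subset of $\tbD_1$ bounded away from $\tbz,\tbw$, by $C\,\mathrm{Dist}(\tbz,\tbw)^{-(2\ell+2)}$ per resolvent-type factor — the $2\ell+2$ counting the worst-case number of factors $(\lambda-w_\alpha)^{-1}$, $(\overline\lambda-\overline z_\alpha)^{-1}$ that can appear in one entry of $X^\lambda$. The subtlety is that $\lambda$ ranges over all $N$ eigenvalues, some of which may land close to the $z_\alpha,w_\alpha$; here one splits the expectation according to whether any eigenvalue falls within a small disc of the singular points, using that the probability of this event is polynomially small (from the determinantal formula) against the crude deterministic bound, while on the complementary event each $X^{\lambda_j}$ contributes $\le C\,\mathrm{Dist}^{-(2\ell+2)}/N$ in norm (the extra $1/N$ being the smallest power of $N^{-k}$ in $X^\lambda$). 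Then $\|\Psi(m)\|\le (C\,\mathrm{Dist}^{-(2\ell+2)}/N)^m$ up to the combinatorial count of tuples, and multiplying by ${N\choose m}\le N^m/m!$ yields $\|{N\choose m}\Psi(m)\|\le (C\,\mathrm{Dist}^{-(2\ell+2)})^m/m!\le (C'/(\mathrm{Dist}^{2\ell+2}m))^m$ by $m!\ge (m/e)^m$, which is \eqref{eq:Term:iii}.

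The main obstacle is part (1): cleanly isolating the $k_1=\cdots=k_m=1$ contribution and showing every other tuple is genuinely lower order requires a careful induction on the chain $\si_0\preceq\si_1\preceq\cdots$ keeping track of how $|\SV|$, $|\cdot|$, and $|F|$ interact with the power of $N$, and — more delicately — controlling the error when replacing the true $m$-point Ginibre correlation function by the product of one-point densities, uniformly in the (bounded) number of matrix entries but with a quantitative rate. The exponential decay of the Ginibre kernel off the diagonal is the mechanism, but making the bookkeeping match the claimed $N^{-1/2+\delta}$ rate (rather than a worse power) is the technical heart of the argument.
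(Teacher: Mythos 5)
Your high-level architecture matches the paper's: isolate the terms of total order $N^{-m}$ (via Claim \ref{clm:power_of_N}), reduce the eigenvalue average to the determinantal structure, identify $h$ as the limit of a single-eigenvalue integral, and control cross terms. But two of your key steps do not work as stated. First, in part (1) you claim the $m$-point correlation function factorizes into one-point densities up to \emph{exponentially} small errors ``once $\mathrm{Dist}(\tbz,\tbw)>0$ keeps the relevant arguments separated.'' This confuses the spectral parameters with the eigenvalues: the arguments of the determinantal kernel are the eigenvalues $\lambda_1,\dots,\lambda_m$, which are \emph{integrated over the whole disc} against the resolvent weights and are not kept apart by any hypothesis. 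The off-diagonal (non-identity permutation) contributions, after integration, are only suppressed by a factor $(\log N/\sqrt N)^{k-1}$ per $k$-cycle — this is precisely the content of the paper's Lemma \ref{L:pfe}(2), whose proof requires a delicate decomposition of $\BC^k$ and the Gaussian decay of $K(a,b)$ in $|a|-|b|$, not in the separation of $\tbz,\tbw$. Relatedly, you skip the partial fraction expansion (Lemma \ref{lem:pfe}) that reduces the several poles $\prod_\alpha(\lambda_i-w_\alpha)^{-1}$ attached to each eigenvalue to a single $w$-pole and a single $z$-pole; without it the ``one factor of $h$ per eigenvalue'' picture does not apply, and the prefactors $\CR$ produced by this expansion are where the $\mathrm{Dist}^{-2\ell m}$ in the final bound actually comes from.

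Second, your part (2) argument fails at the splitting step. If you excise fixed-radius discs around the singular points, the probability that some eigenvalue lands inside is of order $N\varepsilon^2$, i.e.\ close to $1$, not polynomially small; if you shrink the radius with $N$ to make the probability small, the deterministic resolvent bound on the complementary event blows up and destroys the $1/N$ gained from $X^\lambda$. The singularities cannot be avoided on a high-probability event — they must be integrated. The paper's route is to bound $\binom{N}{m}N^{-m}\psi_m$ directly via the kernel estimates \eqref{eq:k2} and \eqref{eq:k4}, using that $|\lambda-w|^{-1}|\overline\lambda-\overline z|^{-1}$ is integrable against the (renormalized) determinantal density, uniformly in $m$; the crucial $m^{-m}$ in \eqref{eq:Term:iii} then comes from the $1/m!$ in $\binom{N}{m}N^{-m}$ together with the fact that the sum over permutations $\pi\in\CS_{[m]}$ of the products of cycle traces is controlled by the generating function $\prod_{j=1}^{m-1}(1+j\log N/\sqrt N)=O(1)$. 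You do correctly invoke $m!\ge(m/e)^m$ at the end, but the expectation bound feeding into it is not established by your event-splitting.
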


\begin{proof}[Proof of Theorem \ref{thm:limit_exists} assuming Proposition \ref{L:EstMain}]
	This is a standard $\varepsilon/3$ argument. 
\end{proof}

\vspace{0.5cm}
% *********************************************************************************************************

\subsection{Proof of Proposition \ref{L:EstMain}}

Fix $m\in\BN$. We start with the proof of \eqref{eq:Term:ii}. Using the explicit expression for the entries of the matrix $B^{\lambda}$ and its relation to the matrix $F^{\lambda}$ (see \eqref{eq:The_matrix_B} and \eqref{eq:B_to_F}), for every $m\in\BN$ and $\si,\tau\in \CS_\ell$
\begin{align}\label{eq:proof_of_Prop_eq1}
	\tbe_\si\binom{N}{m}\Psi(m)\tbe_\tau & = \sum_{\substack{\si_i\in\CS_\ell \\ i=1,\ldots,m-1}} \sum_{\substack{F_i\subset \CE(\si_{i-1},\si_i)\\ i=1,\ldots,m-1}}  \binom{N}{m}N^{-\sum_{i=1}^{m}(|\SV(\si_i)|-|F_i|+|\si_{i-1}|-|\si_i|)}\nonumber \\
	& \qquad \quad\times \BE\Big[\prod_{i=1}^m \prod_{\alpha\in \SV(\si_i)\setminus F_i^o}(\lambda_i-w_\alpha)^{-1} \prod_{\beta\in\SV(\si_i) \setminus F_i^i}(\overline{\lambda_i}-\overline{z_\beta})^{-1}\Big]\,,
\end{align}
where we used the notation $\si_0=\tau$ and $\si_m=\si$. 

Recall that we use $\sum'_{\alpha\in I}$ to denote summation over $\alpha\in I$ which is taken to be $1$ if $I$ is empty. 
\begin{lemma}[Partial Fraction Expansion]\label{lem:pfe}
	For any finite index set $\caI$ and any pairwise distinct $|\caI|$-tuple $(w_\alpha)_{\alpha\in \caI}$
\[
	\prod_{\alpha \in \caI }(\lambda-w_{\alpha})^{-1}=\sideset{}{'}\sum_{\alpha \in \caI} (\lambda-w_{\alpha})^{-1} \prod_{\CI\ni\beta\neq \alpha}(w_{\alpha}-w_{\beta})^{-1}\,.
\]
\end{lemma}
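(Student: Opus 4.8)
This is the classical partial fraction identity, and I would prove it by the residue/uniqueness argument, handling the degenerate cases first. If $\caI=\emptyset$, both sides equal $1$ by the convention that the empty product is $1$ and $\sum'$ over the empty set is $1$, so we may assume $|\caI|\geq 1$. Clearing denominators, the asserted identity is equivalent to the polynomial identity
\[
	1 = \sum_{\alpha\in\caI}\ \prod_{\beta\in\caI,\ \beta\neq\alpha}\frac{\lambda-w_\beta}{w_\alpha-w_\beta}\,,
\]
obtained by multiplying through by $\prod_{\alpha\in\caI}(\lambda-w_\alpha)$ and using pairwise distinctness of the $w_\alpha$ to justify the cancellations.

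The plan is then to observe that the right-hand side is a polynomial in $\lambda$ of degree at most $|\caI|-1$ (each summand being the Lagrange basis polynomial at the nodes $(w_\alpha)_{\alpha\in\caI}$), and that evaluating at $\lambda=w_\gamma$ for any $\gamma\in\caI$ leaves only the $\alpha=\gamma$ term, which equals $1$. Thus this polynomial of degree $\le |\caI|-1$ agrees with the constant polynomial $1$ at the $|\caI|$ distinct points $(w_\gamma)_{\gamma\in\caI}$, hence is identically $1$ by uniqueness of polynomial interpolation. Dividing back by $\prod_{\alpha\in\caI}(\lambda-w_\alpha)$ recovers the statement as an identity of rational functions, valid for all $\lambda\notin\{w_\alpha:\alpha\in\caI\}$.

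Alternatively, and perhaps more in the spirit of the surrounding combinatorial arguments, one can induct on $|\caI|$: the cases $|\caI|\in\{0,1\}$ are immediate, and for the inductive step one applies the two-term identity $\frac{1}{(\lambda-w_\alpha)(\lambda-w_\gamma)}=\frac{1}{w_\alpha-w_\gamma}\big(\frac{1}{\lambda-w_\alpha}-\frac{1}{\lambda-w_\gamma}\big)$ to peel off one factor and then regroups terms using the induction hypothesis on the two resulting $(|\caI|-1)$-fold products; the bookkeeping of the Vandermonde-type coefficients is routine. I do not anticipate any genuine obstacle here — the only point requiring mild care is the treatment of the empty-set conventions for $\prod$ and $\sum'$, which is why I would dispatch those cases explicitly at the outset.
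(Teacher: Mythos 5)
Your proof is correct and complete: the reduction to the Lagrange interpolation identity $1=\sum_{\alpha\in\caI}\prod_{\beta\neq\alpha}\frac{\lambda-w_\beta}{w_\alpha-w_\beta}$, verified by degree count and evaluation at the $|\caI|$ distinct nodes, together with the explicit treatment of the empty-set conventions, fully establishes the lemma. The paper itself states this classical fact without proof, so there is no argument to compare against; your write-up (either the interpolation route or the sketched induction) would serve as a valid proof.
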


For $m\in\BN$ and $\tbw',\tbz'\in\tbD_1^m$, let 
\begin{equation}
	\psi_m (\tbz',\tbw') = \BE\bigg[\prod_{i=1}^{m}(\lambda_i-w'_i)^{-1}(\overline{\lambda}_i-\overline{z'}_i)^{-1}\bigg]\,.
\end{equation}
Assuming that $\mathrm{Dist}(\tbz,\tbw)>0$ and using the partial fraction expansion, we can rewrite \eqref{eq:proof_of_Prop_eq1} as
\begin{align}\label{eq:proof_of_Prop_eq2}
	\tbe_\si\binom{N}{m}\Psi(m)\tbe_\tau  &= \sum_{\substack{\si_i\in\CS_\ell \\ i=1,\ldots,m-1}} \sum_{\substack{F_i\subset \CE(\si_{i-1},\si_i)\\ i=1,\ldots,m-1}}\sideset{}{'}\sum_{\substack{ \alpha_i\in \SV(\si_i)\setminus F_i^o \\ \beta_i\in\SV(\si_i)\setminus F_i^i\\ i=1,\ldots,m-1}} 
	\CR(\tbz,\tbw,\{\si_i,F_i,\alpha_i,\beta_i\}_{i=1}^{m-1},\si,\si)\nonumber\\
	&\qquad\qquad\quad \times \binom{N}{m}N^{-\sum_{i=1}^{m}(|\SV(\si_i)|-|F_i|+|\si_{i-1}|-|\si_i|)}\psi_m(\tbz^{\pmb{\alpha}},\tbw^{\pmb{\beta}})\,,
\end{align}
where for $\pmb{\alpha}=(\alpha_1,\ldots,\alpha_m)$ and $\pmb{\beta}=(\beta_1,\ldots,\beta_m)$, we define $z^{\pmb{\alpha}}_i = z_{\alpha_i}$ ($w^{\pmb{\beta}}_i = w_{\beta_i}$) and 
\begin{align}
		&\CR(\tbz,\tbw,\{\si_i,F_i,\alpha_i,\beta_i\}_{i=1}^{m-1},\si,\tau) \nonumber\\
		&\quad \qquad =	\prod_{i=1}^m\ind_{\SV(\si_i)\subset \SV(\si_{i-1})}\hspace{-15pt}\prod_{\substack{\gamma\in \SV(\si_i)\setminus F_i^o \\ \gamma\neq \alpha_i}}\hspace{-5pt}(w_{\alpha_i}-w_\gamma)^{-1}\hspace{-5pt}\prod_{\substack{\delta\in \SV(\si_i)\setminus F_i^i \\ \delta\neq \beta_i}}(\overline{z_{\beta_i}}-\overline{z_\delta})^{-1}\,.
\end{align}

The following lemma summarizes the required bounds on $\psi_m$ needed to control $\Psi(m)$. Recall that for $z,w\in\tbD_1$, we denote $h(w,z)=\log\big(\frac{1-\bar{z}w}{|z-w|^2}\big)$.
\begin{lemma}\label{lem:bound_on_psi_m} Fix $m\in \BN$. For every $\delta>o$, there exists a constant $C\in (0,\infty)$ such that for every $\tbz',\tbw'\in\tbD_1^m$
\begin{equation}\label{eq:psi_bound_i}
	\bigg|\binom{N}{m}N^{-m}\psi_m(\tbz',\tbw') - \frac{1}{m!}\prod_{i=1}^m h(w'_i,z'_i)\bigg| \leq \frac{C}{\mathrm{dist}^{2m}(\tbz',\tbw')}N^{\delta-\frac12}\,.
\end{equation}
In addition, there exists a universal constant $C\in (0,\infty)$ such that for every $m\in\BN$
\begin{equation}\label{eq:psi_bound_ii}
	\bigg|\binom{N}{m}N^{-m}\psi_m(\tbz',\tbw')\bigg|\leq \bigg(\frac{C}{m\cdot \mathrm{dist}^2(\tbz',\tbw')}\bigg)^m\,.
\end{equation}
\end{lemma}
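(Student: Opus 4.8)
The plan is to exploit that the eigenvalues of $M_N$ form a determinantal point process whose correlation kernel is the reproducing kernel $K_N(z,w)=\frac N\pi e^{-\frac N2(|z|^2+|w|^2)}\sum_{k=0}^{N-1}\frac{(Nz\overline w)^k}{k!}$ of the $N$-dimensional space spanned by $\{z^k e^{-N|z|^2/2}\}_{k<N}$; in particular $K_N=K_N^*=K_N^2$ and $\mathrm{Tr}\,K_N=N$. Writing $f_i(\lambda)=(\lambda-w'_i)^{-1}(\overline\lambda-\overline{z'_i})^{-1}$ and using exchangeability of the eigenvalues together with $\rho_N^{(m)}=\det[K_N(\lambda_i,\lambda_j)]_{i,j=1}^m$, one has
\[
	\binom Nm N^{-m}\psi_m(\tbz',\tbw')=\frac{N^{-m}}{m!}\int_{\BC^m}\prod_{i=1}^m f_i(\lambda_i)\,\det\bigl[K_N(\lambda_i,\lambda_j)\bigr]_{i,j=1}^m\prod_{i=1}^m\textrm{d}^2\lambda_i .
\]
Two features of $K_N$ will be used throughout: (i) $0\le N^{-1}K_N(\lambda,\lambda)=\frac1\pi\gamma_N(|\lambda|^2)\le\frac1\pi$, where $\gamma_N(t)=e^{-Nt}\sum_{k<N}\frac{(Nt)^k}{k!}$ is the lower tail of a $\mathrm{Poisson}(Nt)$ variable, so that by a Chernoff bound $|\gamma_N(t)-\ind_{t<1}|\le e^{-cN^{2\delta}}$ as soon as $|t-1|\ge N^{-1/2+\delta}$; and (ii) the off-diagonal bound $|K_N(\lambda,\mu)|\le\frac N\pi e^{-\frac N2(|\lambda|-|\mu|)^2}$, which for $\lambda,\mu$ in a fixed neighbourhood of $\overline{\D_1}$ can be sharpened to $|K_N(\lambda,\mu)|\le C\frac N\pi e^{-\frac N2|\lambda-\mu|^2}$ and is $O(e^{-cN})$ once $|\lambda|\ge2$ or $|\mu|\ge2$.

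For the crude estimate \eqref{eq:psi_bound_ii} I would invoke Fischer's inequality for positive semidefinite kernels, $\det[K_N(\lambda_i,\lambda_j)]\le\prod_iK_N(\lambda_i,\lambda_i)$, which makes the integral factorise into $\prod_i\int_\BC|f_i(\lambda)|K_N(\lambda,\lambda)\,\textrm{d}^2\lambda$. Since $|w'_i-z'_i|\wedge(1-|w'_i|)\wedge(1-|z'_i|)\ge\mathrm{dist}(\tbz',\tbw')$, splitting $\int_{\D_2}\frac{\textrm{d}^2\lambda}{|\lambda-w'_i||\lambda-z'_i|}$ into the regions near $w'_i$, near $z'_i$, and away yields $\int_\BC|f_i(\lambda)|K_N(\lambda,\lambda)\,\textrm{d}^2\lambda\le CN/\mathrm{dist}^2(\tbz',\tbw')$ (the part $|\lambda|>2$ being $O(e^{-cN})$). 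Hence $\binom Nm N^{-m}|\psi_m|\le\frac1{m!}\bigl(C/\mathrm{dist}^2(\tbz',\tbw')\bigr)^m$, and $m!\ge(m/e)^m$ converts this into \eqref{eq:psi_bound_ii}.

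For the sharp estimate \eqref{eq:psi_bound_i} I would expand the determinant by the Leibniz formula and factorise over cycles, so that, with $G_i$ the operator of multiplication by $f_i$, the integral equals $\sum_{\pi\in S_m}\mathrm{sgn}(\pi)\prod_{(j_1\dots j_r)\in\pi}\mathrm{Tr}(G_{j_1}K_NG_{j_2}K_N\cdots G_{j_r}K_N)$. The identity permutation contributes $\prod_{i=1}^m\mathrm{Tr}(G_iK_N)=\prod_{i=1}^m\int_\BC f_i(\lambda)K_N(\lambda,\lambda)\,\textrm{d}^2\lambda$; using feature (i) together with the fact that on the transition annulus $\{\,||\lambda|-1|\le N^{-1/2+\delta}\,\}$ (of area $O(N^{-1/2+\delta})$) one has $|f_i|\le C/\mathrm{dist}^2$, while off it $N^{-1}K_N(\lambda,\lambda)$ is within $e^{-cN^{2\delta}}$ of $\frac1\pi\ind_{|\lambda|<1}$, one gets $N^{-1}\int_\BC f_iK_N(\cdot,\cdot)=\frac1\pi\int_{\D_1}f_i+O\!\bigl(N^{\delta-1/2}/\mathrm{dist}^2\bigr)=h(w'_i,z'_i)+O\!\bigl(N^{\delta-1/2}/\mathrm{dist}^2\bigr)$. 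Telescoping the product and using $|h(w'_i,z'_i)|\le C+2\log(1/\mathrm{dist})\le C/\mathrm{dist}$, the identity term equals $\frac1{m!}\prod_i h(w'_i,z'_i)+O_m\!\bigl(N^{\delta-1/2}/\mathrm{dist}^{2m}\bigr)$, which is the asserted main term.

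The remaining point — and the main obstacle — is to show that every $\pi\neq\mathrm{id}$ contributes only $O_m(N^{\delta-1})$. For a cycle of length $r\ge2$ I would bound $|\mathrm{Tr}(G_{j_1}K_N\cdots G_{j_r}K_N)|=\bigl|\int\prod_s f_{j_s}(\lambda_s)\prod_sK_N(\lambda_s,\lambda_{s+1})\prod_s\textrm{d}^2\lambda_s\bigr|$ by discarding (at cost $O(e^{-cN})$) the region where some $|\lambda_s|\ge2$, applying feature (ii), and making the substitution $\lambda_1=\lambda$, $\lambda_s=\lambda+N^{-1/2}\mu_s$ for $2\le s\le r$: the $r$ kernel factors then supply $(N/\pi)^r e^{-\frac12\sum_s|\mu_s-\mu_{s+1}|^2}$, the $r-1$ Jacobians supply $N^{-(r-1)}$, and the surviving integral of $\prod_s|f_{j_s}(\lambda+N^{-1/2}\mu_s)|$ against the integrable Gaussian in $\mu$ is at most $C/\mathrm{dist}^{2r}$, up to a possible $\log N$ when two of the $w'$'s (or $z'$'s) coincide. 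Thus each cycle of length $\ge2$ costs only one power of $N$, exactly as a length-one factor, which is the quantitative reflection of the Gaussian concentration (equivalently the projection property) of $K_N$. Consequently a $\pi$ with $f$ fixed points and $k\ge1$ nontrivial cycles satisfies $\bigl|\prod_{\text{cycles}}\mathrm{Tr}(\cdots)\bigr|\le C_mN^{f+k}\,\mathrm{dist}^{-2m}\log^{C_m}N$, and since $f+k\le m-1$ its contribution to $\binom Nm N^{-m}\psi_m$ is $O(N^{-1}\mathrm{dist}^{-2m}\log^{C_m}N)=O_m(N^{\delta-1/2})$; summing the finitely many such $\pi$ and adding the identity term gives \eqref{eq:psi_bound_i}. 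The delicate feature forcing this hands-on argument is that the $f_i$ are only locally integrable (not $L^2$) at $w'_i$ and $z'_i$, so the naïve Hilbert–Schmidt/Schatten bounds for traces of products are unavailable, and the transition annulus must be handled directly.
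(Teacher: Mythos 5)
Your overall architecture matches the paper's: reduce to the determinantal kernel, expand $\det[K_N(\lambda_i,\lambda_j)]$ by Leibniz and factor over cycles, extract the main term $\frac{1}{m!}\prod_i h$ from the identity permutation via the one-point estimate $N^{-1}\mathrm{Tr}(G_iK_N)=h(w_i',z_i')+O(N^{\delta-1/2}/\mathrm{dist}^2)$ (this is exactly the paper's Lemma \ref{L:pfe}(3), including the Chernoff treatment of the transition annulus), and show that permutations with a nontrivial cycle are subleading. Your route to \eqref{eq:psi_bound_ii} via Hadamard's inequality $\det[K_N(\lambda_i,\lambda_j)]\le\prod_iK_N(\lambda_i,\lambda_i)$ for the positive semidefinite Gram matrix is a genuine and cleaner alternative to the paper's term-by-term summation over all permutations; it is correct and buys you a shorter proof of the crude bound.

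The gap is in your "feature (ii)". The sharpened off-diagonal bound $|K_N(\lambda,\mu)|\le C\frac N\pi e^{-\frac N2|\lambda-\mu|^2}$ is \emph{false} on a neighbourhood of $\overline{\D_1}$: writing $K_N(\lambda,\mu)=\frac N\pi e^{-\frac N2|\lambda-\mu|^2+iN\,\mathrm{Im}(\lambda\overline\mu)}-\frac N\pi e^{-\frac N2(|\lambda|^2+|\mu|^2)}\sum_{k\ge N}\frac{(N\lambda\overline\mu)^k}{k!}$, the tail term is only $O(Ne^{-cN})$ when $|\lambda\overline\mu|\le 1-\eps$, but for $\lambda=e^{i\theta}$, $\mu=1$ one has $|K_N(e^{i\theta},1)|\asymp\sqrt N/|\theta|$ rather than $Ne^{-cN\theta^2}$. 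So for two points both within $O(N^{-1/2})$ of the unit circle the kernel decays only polynomially in the angular separation, and your local substitution $\lambda_s=\lambda+N^{-1/2}\mu_s$ with a Gaussian weight simply does not see these edge--edge contributions. Since $\mathrm{dist}$ only guarantees that the \emph{poles} $w_i',z_i'$ stay away from $\partial\D_1$, not that the integration variables do, this regime cannot be discarded; it turns out to contribute at most $O(\sqrt N\,\mathrm{polylog})$ per cycle, but proving that is precisely the hard part of the paper's Appendix (the estimate $\bigl|\frac1\pi\int_\BC\frac{K(a,b)}{(a-\sqrt Nw)(\overline a-\sqrt N\overline z)}\,\mathrm{d}^2a\bigr|\le\frac{C\log N}{|z-w|^2\sqrt N}$, whose proof analyses the intersections of the circle $|a|=|b|$ with the circles centred at $\sqrt Nz$ and $\sqrt Nw$ and is where the $\log N$ comes from). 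The paper avoids your difficulty by iterating the variables of a cycle one at a time using only the radial bound $|K(a,b)|\le e^{-(|a|-|b|)^2/2}$, accepting the weaker (but sufficient) gain of $\log N/\sqrt N$ per extra kernel factor instead of your claimed $N^{-1/2}$ per factor beyond the first. To repair your argument you would either need to restrict the Gaussian bound to the bulk and supply a separate edge estimate, or adopt the paper's one-step integral bound.
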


We postpone the proof of Lemma \ref{lem:bound_on_psi_m} and return to complete the proof of Proposition \ref{L:EstMain}. Fix some $\delta\in(0,1/2)$. By Claim \ref{clm:power_of_N}, the term $\sum_{i=1}^{m}(|\SV(\si_i)|-|F_i|+|\si_{i-1}|-|\si_i|)$ in \eqref{eq:proof_of_Prop_eq2} is at least $m$. Combining \eqref{eq:psi_bound_i} with the fact that $\mathrm{dist}(\tbz^{\pmb{\alpha}},\tbw^{\pmb{\beta}})\geq \mathrm{Dist}(\tbz,\tbw)>0$ for every choice of indexes $\pmb{\alpha},\pmb{\beta}$ we conclude that all summands in \eqref{eq:proof_of_Prop_eq2} satisfying $\sum_{i=1}^{m}(|\SV(\si_i)|-|F_i|+|\si_{i-1}|-|\si_i|)>m$ are of order $C(m,\mathrm{Dist}(\tbz,\tbw))/N$, while all summands in \eqref{eq:proof_of_Prop_eq2} satisfying $\sum_{i=1}^{m}(|\SV(\si_i)|-|F_i|+|\si_{i-1}|-|\si_i|)=m$, are at distance at most $\widehat{C}(m,\mathrm{Dist}(\tbz,\tbw))N^{\delta-1/2}$ from 
\[
	\CR(\tbz,\tbw,\{\si_i,F_i,\alpha_i,\beta_i\}_{i=1}^{m-1},\si,\tau)\frac{1}{m!}\prod_{i=1}^{m}h(z_{\beta_i},w_{\alpha_i})\,,
\]
for appropriate choices of $\{\si_i,F_i,\alpha_i,\beta_i\}_{i=1}^m$. Since the number of summands is finite we conclude that 
\begin{align}
 	& \bigg|\tbe_\si\binom{N}{m}\Psi(m)\tbe_\tau-\frac{1}{m!}(\FN^m)_{\si,\tau}\bigg|
	 =	\bigg|\binom{N}{m}\Psi(m) - \frac{1}{m!}\sum_{\substack{\si_i\in\CS_\ell \\ i=1,\ldots,m-1}} \sum_{\substack{F_i\subset \CE(\si_{i-1},\si_i)\\ |F_i| = |\SV(\si_i)| + |\si_{i-1}|-|\si_i|-1 \\ i=1,\ldots,m-1}}\nonumber\\
	 &\qquad \sum_{\substack{ \alpha_i\in \SV(\si_i)\setminus F_i^o \\ \beta_i\in\SV(\si_i)\setminus F_i^i\\ i=1,\ldots,m-1}} \CR(\tbz,\tbw,\{\si_i,F_i,\alpha_i,\beta_i\}_{i=1}^{m-1},\si,\tau)\frac{1}{m!}\prod_{i=1}^{m}h(z_{\beta_i},w_{\alpha_i})\bigg| \leq \widehat{C}N^{\delta-1/2}\,,
\end{align}
as required. 

Next, we turn to prove \eqref{eq:Term:iii}. Using again the fact that the sum in \eqref{eq:proof_of_Prop_eq2} is finite, it suffices to prove the result for each of the summands separately. Fix $\si=\si_m,\tau=\si_0,\si_i\in\CS_\ell$, $F_i\subset \CE(\si_{i-1},\si_i)$ and $\alpha_i\in\SV(\si_i)\setminus F_i^o$, $\beta_i\in\SV(\si_i)\setminus F_i^i$ for $i=1,\ldots,m$. 
From the definition of $\mathrm{Dist}$,	for every $\tbz,\tbw\in\tbD_1^\ell$ such that $\mathrm{Dist}(\tbz,\tbw)>0$, we have 
\[
	|\CR(\tbz,\tbw,\{\si_i,F_i,\alpha_i,\beta_i\}_{i=1}^{m-1},\si,\tau)|\leq \frac{1}{\mathrm{Dist}(\tbz,\tbw)^{2\ell m}}\,,
\]
which together with \eqref{eq:psi_bound_ii} proves that for every $m\in \BN$, the summand under consideration is bounded by 
\begin{align*}
	&\Big|\CR(\tbz,\tbw,\{\si_i,F_i,\alpha_i,\beta_i\}_{i=1}^{m-1},\si,\tau)\binom{N}{m}N^{-\sum_{i=1}^{m}(|\SV(\si_i)|-|F_i|+|\si_{i-1}|-|\si_i|)}\psi_m(\tbz^{\pmb{\alpha}},\tbw^{\pmb{\beta}})\Big|\\
	\leq &\Big|\frac{1}{\mathrm{Dist}(\tbz,\tbw)^{2\ell m}}\binom{N}{m}N^{-m}\psi_m(\tbz^{\pmb{\alpha}},\tbw^{\pmb{\beta}})\Big| \leq \Big(\frac{C}{\mathrm{Dist}(\tbz,\tbw)^{2\ell+2}m}\Big)^m\,,
\end{align*}
as required. \hfill \qed

%****************************************************************************************************
\vspace{0.5cm}
\subsection{Proof of Lemma \ref{lem:bound_on_psi_m}}

In order to prove Lemma \ref{lem:bound_on_psi_m} we need to estimate the function $\psi_m$ defined by 
\[
	\psi_m (\tbz',\tbw') = \BE\bigg[\prod_{i=1}^{m}(\lambda_i-w'_i)^{-1}(\overline{\lambda}_i-\overline{z}'_i)^{-1}\bigg]\,.
\]
Recall that the density of the eigenvalues (see \eqref{E:Tmarg}) is given by the kernel
\begin{equation}\label{eq:eigenvalues_kernel}
	\frac{1}{Z_N}\prod_{1\leq i<j\leq N}|\lambda_i-\lambda_j|^2 \prod_{i\in [N]} \kappa_N(\textrm{d}\lambda_i)\,,
\end{equation}
where $Z_N$ is the Selberg integral 
\[
	Z_N = \int_{\BC^N}  \prod_{1\leq i<j\leq N}|\lambda_i-\lambda_j|^2 \prod_{i\in [N]} \kappa_N(\textrm{d}\lambda_i)\,.
\]

We start by rewriting the kernel using a matrix determinant. For $\lambda\in \C$, let $\tbp^{\lambda}$ be the column vector in $\C^N$ with components 
\[
	p^{\lambda}_j=N^{(j-1)/2}\lambda^{j-1}/\sqrt{(j-1)!}
\] 
and let ${\tbp^{\lambda}}^{\dg}$ denote its conjugate transpose row vector. Define the $N\times N$ matrix $K$ by $K_{i, j}=  {\tbp^{\lambda_i}}^\dg\cdot \tbp^{\lambda_j}$ for $i,j\in [N]$. Denoting by $V^{\pmb{\lambda}}$ be the Vandermonde matrix 
\[
	V^{\pmb{\lambda}} = \left(\begin{array}{ccccc}
	 1 & \lambda_1 & \lambda_1^2 & \ldots & \lambda_1^{N-1}\\
	 1 & \lambda_2 & \lambda_2^2 & \ldots & \lambda_2^{N-1}\\
 	 1 & \lambda_3 & \lambda_3^2 & \ldots & \lambda_3^{N-1}\\
 	 \vdots & \vdots & \vdots & \vdots & \vdots \\
 	 1 & \lambda_N & \lambda_N^2 & \ldots & \lambda_N^{N-1}\\
	\end{array}\right)\,,
\]
and by $D$ the diagonal matrix with entries $D_{ii} = N^{(i-1)/2}/\sqrt{(i-1)!}$, one can verify that $K=V^{\pmb{\lambda}}DD^\dag {V^{\pmb{\lambda}}}^\dg$. Consequently, by the Vandermonde determinant formula 
\[
		\prod_{1\leq i<j\leq N}|\lambda_{i}-\lambda_{j}|^{2} = \frac{\prod_{k=1}^{N}(k-1)!}{N^{N(N-1)/2}}\mathrm{Det}(K)\,.
\]
As a result we can rewrite $\psi_m$ as 
\begin{equation}\label{eq:psi_m_explicit}
	\psi_m(\tbz',\tbw') = \frac{1}{\widehat{Z}_N}\int_{\BC^N} \mathrm{Det}(K) \prod_{i=1}^{m}(\lambda_i-w'_i)^{-1}(\overline{\lambda}_i-\overline{z'}_i)^{-1}\prod_{i\in [N]} \kappa_N(\textrm{d}\lambda_i)\,,
\end{equation}
where 
\[
	\widehat{Z}_N = \frac{N^{N(N-1)/2}}{\prod_{k=1}^{N} (k-1)!}Z_N=\int_{\BC^N} \mathrm{Det}(K)\prod_{i\in [N]}\kappa_N(\textrm{d}\lambda_i)\,.
\]

The advantage the renormalized version $\textrm{Det}(K)$ which we use instead of $\prod_{1\leq i<j\leq N}|\lambda_i-\lambda_j|^2=\textrm{Det}(V^{\pmb{\lambda}} {V^{\pmb{\lambda}}}^\dg)$ is the orthonormality property of the vector $\tbp^{\lambda}$ with respect to integration over $\lambda$, namely
\begin{equation}\label{eq:ON}
	\int_{\BC} p^{\lambda}_i\overline{p^{\lambda}_j} \kappa_N(\textrm{d}\lambda) = \delta_{ij},\qquad \forall i,j\in[N]\,.
\end{equation}

We now turn to estimate $\psi_m$ with the help of \eqref{eq:ON}. We start with an explicit calculation for $\widehat{Z}_N$. Using the Leibniz' formula for the determinant and the invariance of the trace under cyclic permutations
\begin{align}\label{eq:Imsotired}
	\textrm{Det}(K) &=\sum_{\pi\in \CS_{[N]}} (-1)^{\mathrm{sgn}(\pi)} \prod_{i=1}^N({\tbp^{\lambda_{\pi(i)}}}^\dg\cdot \tbp^{\lambda_i}) = \sum_{\pi \in \caS_{[N]}} (-1)^{\textrm{sgn}(\pi)} \prod_{\CL\in \pi} \prod_{\alpha\in \CL}( {\tbp^{\lambda_\alpha}}^\dg \cdot \tbp^{\lambda_{\CL(\alpha)}}) \nonumber\\ 
	& = \sum_{\pi \in \caS_{[N]}} (-1)^{\textrm{sgn}(\pi)} \prod_{\CL\in \pi} \mathrm{Tr}\Big(\prod_{\alpha\in \CL}{\tbp^{\lambda_\alpha}}{\tbp^{\lambda_\alpha}}^\dg\Big)\,,
\end{align}
where the product over $\CL$ is taken over all cycles in the cycle decomposition of $\pi$ and in the last expression, the order of product over the matrices ${\tbp^{\lambda_\alpha}}{\tbp^{\lambda_\alpha}}^\dg$ is according to the order in the cycle $\CL$. Recalling that  $|\pi|$ the number of cycles in $\pi$ and using \eqref{eq:ON}, gives
\begin{align*}
	\widehat{Z}_N & = \sum_{\pi\in\CS_{[N]}} (-1)^{\mathrm{sgn}(\pi)}\prod_{\CL\in \pi} \int_{\BC^{|\SV(\CL)|}} \mathrm{Tr}\Big(\prod_{\alpha\in \CL}{\tbp^{\lambda_\alpha}}{\tbp^{\lambda_\alpha}}^\dg\Big) \prod_{\alpha\in \CL}\kappa_N(\textrm{d}\lambda_\alpha)=\sum_{\pi\in\CS_{[N]}} (-1)^{\mathrm{sgn}(\pi)} N^{|\pi|}\\
	&  = \sum_{\pi\in\CS_{[N]}} \prod_{\CL\in \pi} (-1)^{|\CL|-1}N= (-1)^{N}\sum_{k=1}^{N}(-N)^k |\{\pi\in\CS_{[N]} ~:~ |\pi|=k\}|=N!\,,
\end{align*}
where in the last step we used the fact that the generating function for the number of cycles in a permutation in $\CS_{[N]}$ is given by $\sum_{k=1}^{N}|\{\pi\in\CS_{[N]} ~:~ |\pi|=k\}|t^k = t(t+1)(t+2)\cdots (t+N-1)$, see \cite[Proposition 1.3.7]{Sta97}.

Next lets consider the integral $\widehat{Z}_N\psi_{m}(\bfz', \bfw')$, see \eqref{eq:psi_m_explicit},  with respect to the above decomposition of $\mathrm{Det}(K)$.  We start by integrating out $\lambda_N, \cdots, \la_{N-m+1}$.  Let $K^{(m)}$ be the $m$-by-$m$ matrix $(K_{i, j})_{1\leq i, j\leq m}$.  Note that $K^{(m)}$ is NOT the matrix $K$ corresponding to the $m$-by-$m$ Ginibre matrix, as the sums in the inner product of $\tbp^{\lambda}$ and $\tbp^{\lambda_i\,\dg}$ goes all the way to $N$. Using the orthonormality property \eqref{eq:ON}, one can verify that 
\[
	\int_{\C^{N-m}} \textrm{Det}(K) \prod_{i= m+1}^{N} \kappa_N(\textrm{d}\lambda_i)=(N-m)!\,\textrm{Det}(K^{(m)})\,,
\]
and therefore 
\begin{equation}
	{N \choose m}N^{-m}\psi_{m}(\bfz', \bfw')= \frac{N^{-m}}{m!} \int_{\C^m}\hspace{-5pt}\textrm{Det}(K^{(m)}) \prod_{i=1}^m ({\la_i-w_i})^{-1}({\overline{\la_i}-\overline{z_i}})^{-1} \prod_{i=1}^{m}\kappa_N(\textrm{d}\lambda_i)\,.
\end{equation}

For $z,w\in\BC$ such that $z\neq w$, we introduce the kernel $\frK(w,z)=(\frK_{i,j}(w,z))_{i,j\in [N]}$ given by 
\begin{equation}
	\frK_{i,j}(w,z)=\frac{1}{N}\int_\C p^\lambda_i \overline{p^\lambda_j}~ (\lambda-w)^{-1}(\overline{\lambda}-\overline{z})^{-1} \kappa_N(\mathrm{d}\lambda)\,.
\end{equation}

Repeating the argument in \eqref{eq:Imsotired} for the determinant $\mathrm{Det}(K^{(m)})$, we obtain
\begin{align}\label{eq:normalized_psi_m}
	& {N \choose m}N^{-m}\psi_{m}(\bfz', \bfw')	\nonumber\\
	&\qquad  =\frac{1}{m!}\sum_{\pi\in \caS_{[m]}} (-1)^{\textrm{sgn}(\pi)}\prod_{\CL\in \pi}\Tr(\frK(w_i,z_i) \frK(w_{\CL(i)},z_{\CL(i)}) \ldots  \frK(w_{\CL^{|\SV(\CL)|-1}(i)},z_{\CL^{|\SV(\CL)|-1}(i)}))\,.
\end{align}

The next lemma contains the bounds on the kernel $\frK(z,w)$ needed to complete the proof of Lemma \ref{lem:bound_on_psi_m}. 
\begin{lemma}\label{L:pfe}$~$
\begin{enumerate}
\item For every $z,w\in\tbD_1$ such that $|z-w|>0$
\begin{equation}\label{eq:k1}
	 |\frK_{i,j}(z,w)|\leq 4\Big(1+\frac{1}{|z-w|^2N}\Big),\qquad \forall i,j\in [N]\,.
\end{equation}
\item For every $k\geq 2$ and $\tbz, \tbw\in \tbD_1^k$ such that $\mathrm{dist}(\tbz,\tbw)>0$
\begin{equation}\label{eq:k2}
	|\Tr(\frK(w_1, z_1)\cdots \frK(w_k, z_k))|\leq  \bigg(\frac{C}{\mathrm{dist}(\tbz,\tbw)^2}\bigg)^k\cdot \bigg(\frac{\log N}{\sqrt{N}}\bigg)^{k-1}\,,
\end{equation}
where $C\in (0,\infty)$ is some universal constant. 
\item Fix $\delta>0$. There exists a universal constant $C\in (0,\infty)$ such that for every $z,w\in\tbD_1$ 
\begin{equation}\label{eq:k3}
		|\Tr(\frK(w, z))-h(z,w)|\leq \frac{C}{N^{\frac{1}{2}-\delta}|z-w|}\,,
\end{equation}
where $C\in (0,\infty)$ is some universal constant. In particular, 
\begin{equation}\label{eq:k4}
	|\Tr(\frK(w,z))|\leq |h(z,w)| + \frac{C}{N^{\frac{1}{2}-\delta}|z-w|}\leq \frac{C}{|z-w|^2}\,.
\end{equation}
\end{enumerate}
\end{lemma}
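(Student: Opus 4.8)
The plan is to express both $\frK_{ij}$ and the traces of products of $\frK$'s as explicit integrals against the orthonormal family $\{p^\lambda_j\}_{j=1}^N$, to collapse the index sums using the closed form $\sum_{j=1}^N p^\lambda_j\overline{p^\mu_j}=e_{N-1}(N\bar\lambda\mu)$ for the truncated exponential, and then to control the difference between $e_{N-1}$ and the full exponential by a Poisson tail estimate. Writing $G_N(r):=e^{-Nr}\sum_{j=0}^{N-1}(Nr)^j/j!=\bbP\big(\mathrm{Poisson}(Nr)\le N-1\big)$, the Bennett--Chernoff bound for Poisson variables gives, for $r$ in a fixed neighbourhood of $1$, $\big|G_N(r)-\ind_{r\le1}\big|\le e^{-cN(r-1)^2}$, together with an exponentially small bound away from $r=1$; in particular $|G_N(r)-\ind_{r\le1}|$ is super-polynomially small once $|r-1|\ge N^{-1/2+\delta}$, and this window of width $N^{-1/2+\delta}$ is the only source of the exponent $\tfrac12-\delta$ in \eqref{eq:k3}. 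Applied to $Q_N(\lambda,\mu):=e^{-\frac N2(|\lambda|^2+|\mu|^2)}e_{N-1}(N\bar\lambda\mu)$, the same estimate yields the three facts used repeatedly below: the factorised bound $|Q_N(\lambda,\mu)|\le \sqrt{G_N(|\lambda|^2)G_N(|\mu|^2)}$ (from $e_{N-1}(ab)\le e_{N-1}(a^2)^{1/2}e_{N-1}(b^2)^{1/2}$ with $a=\sqrt N|\lambda|$, $b=\sqrt N|\mu|$), the radial bound $|Q_N(\lambda,\mu)|\le e^{-\frac N2(|\lambda|-|\mu|)^2}$ valid everywhere, and the Gaussian off-diagonal decay $|Q_N(\lambda,\mu)|\le C e^{-cN|\lambda-\mu|^2}$ (up to a super-polynomially small remainder) valid once $|\lambda|\,|\mu|\le 1-N^{-1/2+\delta}$.

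\textbf{Part (1).}
I would expand $\frac{\lambda^{i-1}}{\lambda-w}=\frac{w^{i-1}}{\lambda-w}+\sum_{l=0}^{i-2}w^l\lambda^{i-2-l}$ and symmetrically $\frac{\bar\lambda^{j-1}}{\bar\lambda-\bar z}=\frac{\bar z^{j-1}}{\bar\lambda-\bar z}+\sum_{l=0}^{j-2}\bar z^l\bar\lambda^{j-2-l}$, so that $\frK_{ij}(z,w)$ becomes a sum of four families of elementary integrals against $\kappa_N$: polynomial times polynomial, evaluated by $\int\lambda^a\bar\lambda^b\,\kappa_N(\mathrm d\lambda)=\delta_{ab}\,a!/N^a$; the two mixed families, evaluated as incomplete-Gamma integrals via the term-by-term Laurent/Taylor expansion of $(\lambda-w)^{-1}$ and angular orthogonality, e.g. $\int\bar\lambda^m(\lambda-w)^{-1}\kappa_N(\mathrm d\lambda)=-\gamma(m+1,N|w|^2)\,w^{-m-1}N^{-m}$; and the remainder times remainder term $\int(\lambda-w)^{-1}(\bar\lambda-\bar z)^{-1}\kappa_N(\mathrm d\lambda)$, which reduces to a one-dimensional integral by the radial argument of the next paragraph. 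Each of the four pieces is individually of polynomial size in $N$ while their sum is $O(1)+O\big((N|z-w|^2)^{-1}\big)$, so the real content here is organising the cancellations: using $|w|,|z|<1$ one checks that the polynomial contributions reassemble into the $O(1)$ term and the incomplete-Gamma contributions combine so that only the truncation defects (again controlled by the Poisson tail) survive, yielding the $(N|z-w|^2)^{-1}$ term. This is the most computational but least conceptual step.

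\textbf{Parts (3) and (4).}
Here I would use the radial symmetry of the single-kernel trace. Since $G_N(|\lambda|^2)$ depends only on $|\lambda|$, writing $\lambda=re^{i\theta}$ in $\Tr\big(\frK(w,z)\big)=\frac1\pi\int_\C\frac{G_N(|\lambda|^2)}{(\lambda-w)(\bar\lambda-\bar z)}\mathrm d^2\lambda$ and evaluating the $\theta$-integral by residues gives $\int_0^{2\pi}\frac{\mathrm d\theta}{(re^{i\theta}-w)(re^{-i\theta}-\bar z)}=\frac{2\pi}{r^2-w\bar z}$ for $r>\max(|z|,|w|)$, $\;0$ for $\min(|z|,|w|)<r<\max(|z|,|w|)$, and $\frac{2\pi}{w\bar z-r^2}$ for $r<\min(|z|,|w|)$. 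Running the same computation with $G_N$ replaced by $\ind_{[0,1)}$ recovers $h(z,w)$: the middle range drops out and the two logarithmic terms combine, via $(w\bar z-|w|^2)(|z|^2-w\bar z)=w\bar z\,|z-w|^2$, to $\log\frac{1-\bar z w}{|z-w|^2}$. Denoting by $\Xi(r)$ the radial kernel above, one gets $\Tr(\frK(w,z))-h(z,w)=\frac1\pi\int_0^\infty\big(G_N(r^2)-\ind_{r\le1}\big)\Xi(r)\,\mathrm dr$; the Poisson tail makes the integrand super-polynomially small outside $\{|r^2-1|\le N^{-1/2+\delta}\}$, and on that annulus $|r^2-w\bar z|\ge|1-w\bar z|-|1-r^2|\ge|z-w|-N^{-1/2+\delta}$ since $|1-w\bar z|^2=|z-w|^2+(1-|z|^2)(1-|w|^2)\ge|z-w|^2$, so $|\Xi(r)|\lesssim|z-w|^{-1}$ there; integrating over an $r$-window of length $O(N^{-1/2+\delta})$ gives \eqref{eq:k3}. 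Estimate \eqref{eq:k4} follows from \eqref{eq:k3} and the elementary bound $|h(z,w)|\le C|z-w|^{-2}$ for $z,w\in\tbD_1$, which uses $|z-w|\le|1-\bar z w|\le 2$ and $\log(1/t)\le C/t$.

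\textbf{Part (2) and the main obstacle.}
The starting point is the identity $\Tr\big(\frK(w_1,z_1)\cdots\frK(w_k,z_k)\big)=\frac1{\pi^k}\int_{\C^k}\prod_{a=1}^k Q_N(\lambda_{a-1},\lambda_a)\prod_{a=1}^k\frac{\mathrm d^2\lambda_a}{(\lambda_a-w_a)(\bar\lambda_a-\bar z_a)}$ (indices cyclic mod $k$), obtained from $\sum_l p^\lambda_l\overline{p^\mu_l}=e_{N-1}(N\bar\lambda\mu)$ and a symmetric redistribution of $e^{-N\sum_a|\lambda_a|^2}$ along the cycle. One then splits $\C^k$ by whether each $|\lambda_a|$ lies in the bulk $|\lambda_a|\le1-N^{-1/2+\delta}$: outside the bulk a neighbouring factor $\sqrt{G_N(|\lambda_a|^2)}$ is negligible once $|\lambda_a|^2>1+N^{-1/2+\delta}$ (keeping the Cauchy factors integrable with $\mathrm{dist}(\tbz,\tbw)>0$), while the thin crossover annulus $1-N^{-1/2+\delta}\le|\lambda_a|^2\le1+N^{-1/2+\delta}$ is handled via $|Q_N|\le e^{-\frac N2(|\lambda|-|\mu|)^2}$ and costs only a $\log N$. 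On the all-bulk region, using $|Q_N(\lambda_{a-1},\lambda_a)|\le Ce^{-cN|\lambda_{a-1}-\lambda_a|^2}$ for $a=2,\dots,k$ and $|Q_N(\lambda_k,\lambda_1)|\le\sqrt{G_N(|\lambda_k|^2)G_N(|\lambda_1|^2)}$ for the last edge, integrate out $\lambda_k,\dots,\lambda_2$ one at a time; each step is $\int_\C e^{-cN|\nu-\mu|^2}\frac{\mathrm d^2\mu}{|\mu-w_a||\mu-z_a|}$, bounded uniformly in $\nu$ by $\lesssim\frac{\log N}{\sqrt N}\,\mathrm{dist}^{-2}$ (split off the two simple poles, where $|\mu-w_a|^{-1}$ is locally integrable and the Gaussian localises $\mu$ to scale $N^{-1/2}$, contributing $\lesssim(\sqrt N|z_a-w_a|)^{-1}$, from the region away from both poles, where $|\mu-w_a|\,|\mu-z_a|\ge|z_a-w_a|^2/4$ and the Gaussian integrates to $O(N^{-1})$); the remaining free integral over $\lambda_1$, carrying $\sqrt{G_N(|\lambda_1|^2)}$ and its own Cauchy factor, is $\lesssim\mathrm{dist}^{-2}$ as in part (4). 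Collecting the $k-1$ chain steps and the free integral gives \eqref{eq:k2}. The hard part will be making the last two points uniform: obtaining the sharp power $(\log N/\sqrt N)^{k-1}$ for \emph{all} positions of the poles $w_a,z_a$ relative to the spectral edge requires a careful uniform treatment of the crossover region $|\lambda_a|\approx1$, where neither the Gaussian off-diagonal bound nor the radial bound alone is sharp, and it is precisely there that the logarithmic factor is produced.
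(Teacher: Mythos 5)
Your parts (3)--(4) are correct and take a genuinely different route from the paper: you diagonalize the angular integral by residues, reducing $\Tr(\frK(w,z))$ to a one-dimensional radial integral against $G_N(r^2)-\ind_{r\le 1}$, whereas the paper first replaces the truncated exponential by the indicator via the same Poisson--Chernoff estimate and then evaluates $\frac1\pi\int_{|\nu|\le1}\frac{\mathrm{d}^2\nu}{(\nu-w_N)(\overline{\nu}-\overline{z_N})}$ by the complex Green theorem on an explicit punctured contour. Your route is arguably cleaner and makes the source of the $N^{-1/2+\delta}$ window transparent. Part (2) has the same architecture as the paper (cyclic $k$-fold kernel integral, one edge bounded trivially, the rest chained one variable at a time), but your per-step bound rests on the Gaussian off-diagonal decay $|Q_N(\lambda,\mu)|\le Ce^{-cN|\lambda-\mu|^2}$ in the bulk, while the paper only ever uses the radial bound $e^{-(|a|-|b|)^2/2}$ and extracts the $\log N/\sqrt N$ from a geometric analysis of how the circle $\{|a|=|b|\}$ meets the poles (Claim A.1(2)); your crossover annulus $\big||\lambda_a|^2-1\big|\le N^{-1/2+\delta}$, which you flag as "the hard part," is in fact harmless under the hypotheses of the lemma, since $\mathrm{dist}(\tbz,\tbw)>0$ keeps the poles at fixed distance from $\partial\tbD_1$, so on that annulus the Cauchy factors are $O(\mathrm{dist}^{-2})$ and the radial bound integrates to $O(N^{-1/2})$ across the annulus. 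You should still write that estimate out, but it does close.

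The genuine gap is part (1). You decompose $\frac{\lambda^{i-1}}{\lambda-w}$ by polynomial division into pieces that are, as you say yourself, "individually of polynomial size in $N$," and then assert that "organising the cancellations" reassembles them into $O(1)+O((N|z-w|^2)^{-1})$. That cancellation is the entire content of the bound \eqref{eq:k1}, it must be uniform over all $i,j\in[N]$ and all $z,w\in\tbD_1$, and nothing in the proposal demonstrates it; as written, part (1) is a restatement of the claim, not a proof. Moreover the detour is unnecessary: no cancellation occurs in the paper's argument. After rescaling $\lambda\mapsto\sqrt N\lambda$ one has the pointwise bound $\big|\lambda^{j-1}e^{-|\lambda|^2/2}/\sqrt{(j-1)!}\big|\le1$ (from $k!\ge(k/e)^k$), and then a direct absolute-value estimate over the three regions $B_{|z-w|\sqrt N/2}(\sqrt N z)$, $B_{|z-w|\sqrt N/2}(\sqrt N w)$ and their complement gives \eqref{eq:k1}: in the complement both Cauchy factors are $O(1/(|z-w|\sqrt N))$ and the Gaussian moment integral is handled by Cauchy--Schwarz, while in each ball one Cauchy factor is $O(1/(|z-w|\sqrt N))$ and the other is locally integrable. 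I would replace your part (1) by this argument.
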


We postpone the proof of Lemma \ref{L:pfe} to Appendix \ref{appendix} and turn to complete the proof of Lemma \ref{lem:bound_on_psi_m}. We start with the proof of \eqref{eq:psi_bound_i}. Splitting the sum in \eqref{eq:normalized_psi_m} into $\pi=\mathrm{id}$ and $\pi\in\CS_{[m]}\setminus \{\mathrm{id}\}$ we obtain from Lemma \ref{L:pfe}
\begin{align}\label{eq:bound_bound}
	&\bigg|{N \choose m}N^{-m}\psi_{m}(\bfz', \bfw')-
		\frac{1}{m!}\prod_{i=1}^{m} h(z'_i, w'_i)\bigg|\nonumber\\
	& \qquad \leq \frac{1}{m!}\bigg|\prod_{i=1}^{m}\mathrm{Tr}(\frK(z_i',w_i'))-\prod_{i=1}^m h(z_i',w_i')\bigg|\nonumber\\
	&\qquad +\frac{1}{m!} \sum_{\textrm{id}\neq \pi\in\CS_{[m]}} \prod_{\CL\in \pi}\big|\Tr(\frK(w_i,z_i) \frK(w_{\CL(i)},z_{\CL(i)}) \ldots  \frK(w_{\CL^{|\SV(\CL)|-1}(i)},z_{\CL^{|\SV(\CL)|-1}(i)}))\big|\,.
\end{align}

Using a telescopic sum  we can rewrite the first term on the right hand side as 
\begin{align*}
	&\frac{1}{m!}\bigg|\prod_{i=1}^{m}\mathrm{Tr}(\frK(z_i',w_i'))-\prod_{i=1}^m h(z_i',w_i')\bigg| \\
	\leq &	\frac{1}{m!}\sum_{j=1}^{m}\bigg|\prod_{i=1}^{j-1} h(z_i',w_i')\prod_{i=j}^{m}\mathrm{Tr}(\frK(z_i',w_i'))-\prod_{i=1}^{j} h(z_i',w_i')\prod_{i=j+1}^{m}\mathrm{Tr}(\frK(z_i',w_i'))\bigg|\\
	= &	\frac{1}{m!}\sum_{j=1}^{m}\bigg|\prod_{i=1}^{j-1} h(z_i',w_i')\prod_{i=j+1}^{m}\mathrm{Tr}(\frK(z_i',w_i'))\Big[\mathrm{Tr}(\frK(z_j',w_j')) - h(z_j',w_j')\Big]\bigg|\,.
\end{align*}
and therefore by \eqref{eq:k3} and \eqref{eq:k4},
\[
	\frac{1}{m!}\bigg|\prod_{i=1}^{m}\mathrm{Tr}(\frK(z_i',w_i'))-\prod_{i=1}^m h(z_i',w_i')\bigg|\leq
	\frac{1}{(m-1)!}\bigg(\frac{C}{\mathrm{dist}(\tbz,\tbw)^2}\bigg)^{m}\frac{1}{N^{1/2-\delta}}\,.
\]

Turning to deal with the second term on the right hand side of \eqref{eq:bound_bound},  since there is noting to prove for $m=1$, we assume that $m\geq 2$. By \eqref{eq:k2} and \eqref{eq:k4}, this is bounded from above by 
\begin{align*}
	&\frac{C}{\mathrm{dist}(\tbz,\tbw)^2 m!}\sum_{\textrm{id}\neq \pi\in\CS_{[m]}}\prod_{\CL\in\pi}\cdot \bigg(\frac{C\log N}{\mathrm{dist}(\tbz,\tbw)^2\sqrt{N}}\bigg)^{|\SV(\CL)|-1}\\
	&\qquad =\frac{C}{\mathrm{dist}(\tbz,\tbw)^2 m!}\bigg[ -1 + \prod_{j=1}^{m-1}\bigg(1+\frac{jC\log N}{\mathrm{dist}(\tbz,\tbw)^2\sqrt{N}}\bigg)\bigg]\leq \frac{C\log N}{\mathrm{dist}(\tbz,\tbw)^{2m}  (m-2)!\sqrt{N}}\,,
\end{align*}
where in the last equality we used the generating function for the number of loops in a permutation. Combining the estimation for both terms \eqref{eq:psi_bound_i} follows. 

Tuning to prove \eqref{eq:psi_bound_ii}, a similar computation shows that  
\begin{align*}
	&\bigg|{N \choose m}N^{-m}\psi_{m}(\bfz', \bfw')\bigg|\\
	&\qquad \leq \frac{1}{m!} \sum_{\pi\in\CS_{[m]}} \prod_{\CL\in \pi}\big|\Tr(\frK(w_i,z_i) \frK(w_{\CL(i)},z_{\CL(i)}) \ldots  \frK(w_{\CL^{|\SV(\CL)|-1}(i)},z_{\CL^{|\SV(\CL)|-1}(i)}))\big|\\
	& \qquad \leq \frac{1}{m!} \sum_{\pi\in\CS_{[m]}} \prod_{\CL\in \pi}\bigg(\frac{C}{\mathrm{dist}(\tbz,\tbw)^2}\bigg)^{|\SV(\CL)|}\cdot \bigg(\frac{\log N}{\sqrt{N}}\bigg)^{|\SV(\CL)|-1} \\
	&\qquad = \frac{1}{m!}\bigg(\frac{C}{\mathrm{dist}(\tbz,\tbw)^2}\bigg)^{m}\cdot \prod_{j=1}^{m-1}\bigg(1+\frac{j\log N}{\sqrt{N}}\bigg)\leq \bigg(\frac{C}{m\cdot  \mathrm{dist}(\tbz,\tbw)^2}\bigg)^{m}\,,
\end{align*}
where in the last inequality we used the fact that for sufficiently large $N$ (depending on $m$)
\[
	\prod_{j=1}^{m-1}\bigg(1+\frac{j\log N}{\sqrt{N}}\bigg) \leq \exp\bigg(\sum_{j=1}^{m-1}\frac{j\log N}{\sqrt{N}}\bigg) = \exp\bigg(\binom{m}{2}\frac{\log N}{\sqrt{N}}\bigg)\leq C\,.
\]
This completes the proof of \eqref{eq:psi_bound_ii}. \hfill\qed

% **************************************************************************************************
% **************************************************************************************************

\section{The matrix $\frN$ and its eigenvectors.}\label{sec:The_matrix_frN}

In this section we wish to provide further information on the limiting correlation matrix $\exp(\frN)$. In particular: (1) We prove that the formula for $\FN$ obtained in Theorem \ref{thm:limit_exists} and the formula in the introduction \eqref{eq:The_Matrix_N} coincide. (2) We provide an explicit formula for the correlation function $\rho(C_\ell)$ for $\ell=1,2$ and $3$. (3) We introduce a recursive formula for computing the correlation functions.  The main ingredient in obtaining all of the above is the understanding of the algebraic structure originating in the entries of the eigenvectors of the matrix $\FN$

\subsection{Rewriting the matrix $\FN$}
	Our first goal is to provide further insight into the entries of the matrix $\frN$ from Theorem \ref{thm:limit_exists}. In particular we wish to prove that $\FN$ is upper triangular with respect to the partial ordering $\preceq$ and that the formula for its entries given in \eqref{eq:The_Matrix_N} equals the one provided in Theorem \ref{thm:limit_exists}.

We start by rewriting the diagonal entries of $\FN$. Note that in \eqref{eq:THE_MATRIX_N_WELL}, for $\si,\si'\in\CS$, the sume over $F$ runs over all subsets $F\subset \CE(\si)$ of size $|F|=|\SV(\si)|-1$, and that due to the cyclic structure of the digraph associated with the permutation, those sets are exactly the sets containing all but one of the directed edges. Consequently, 
\[
		\FN_{\si,\si} = \sum_{e=(\alpha,\beta)\in\CE(\si)} h(z_\beta,w_\alpha) = \sum_{\alpha\in \SV(\si)}h(z_{\alpha},w_{\si^{-1}(\alpha)})=\frh_\si(\tbz,\tbw)\,,
\]
see \eqref{eq:the_function_h}, for the definition of $\frh_\si$.

Next, we turn to deal with the off-diagonal entries. For $\si,\tau\in\CS$ declare $\si$ to be smaller than $\tau$, denoted $\si\preceq\tau$, if $\SV(\si)\subset \SV(\tau)$ and $|\SV(\si)|+|\tau|-|\si|-1\leq |\CE(\si,\tau)|$. Similarly, we denote $\si\prec \tau$ if $\si\preceq \tau$ and $\si\neq \tau$. Although it is not immediately clear that the definition of $\preceq$ given here is related to the one given in the introduction, we will shortly show that they coincide. It follows from the definition of $\prec$ and the formula for $\FN$ provided in Theorem \ref{thm:limit_exists}, that $\FN_{\si,\tau}\neq0$ implies $\si\preceq \tau$. 

\begin{claim}
	The relation $\prec$ is asymmetric, and therefore for every distinct $\si,\tau\in\CS$ 
\[
	\FN_{\si,\tau}\neq 0 \qquad \Rightarrow \qquad \FN_{\tau,\si}=0\,. 
\]
In particular, the matrix $\FN$ is upper triangular with respect to $\preceq$. 
\end{claim}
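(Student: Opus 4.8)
The plan is to show that the relation $\prec$ defined in this section (via $\SV(\si)\subset\SV(\tau)$ together with the inequality $|\SV(\si)|+|\tau|-|\si|-1\leq|\CE(\si,\tau)|$) is asymmetric, i.e. there is no pair of distinct permutations $\si,\tau\in\CS$ with both $\si\prec\tau$ and $\tau\prec\si$. Once this is established, the implication $\FN_{\si,\tau}\neq 0\Rightarrow \FN_{\tau,\si}=0$ for distinct $\si,\tau$ follows immediately: by the remark just before the claim, $\FN_{\si,\tau}\neq 0$ forces $\si\prec\tau$ (note $\si\neq\tau$), hence $\tau\prec\si$ fails, hence $\FN_{\tau,\si}=0$; and the diagonal is unconstrained, so $\FN$ is upper triangular with respect to $\preceq$ after choosing any linear extension of the partial order.

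First I would observe that if both $\si\prec\tau$ and $\tau\prec\si$ held, then in particular $\SV(\si)\subset\SV(\tau)$ and $\SV(\tau)\subset\SV(\si)$, forcing $\SV(\si)=\SV(\tau)=:A$. Since $\si\neq\tau$ but they act on the same vertex set, their edge sets satisfy $\CE(\si)\neq\CE(\tau)$, and by \eqref{eq:edges_verices} we have $|\CE(\si)|=|\CE(\tau)|=|A|$, so $|\CE(\si,\tau)|=|\CE(\si)\cap\CE(\tau)|\leq |A|-1$. On the other hand, applying the defining inequality of $\si\prec\tau$ with $\SV(\si)=\SV(\tau)=A$ gives $|A|+|\tau|-|\si|-1\leq|\CE(\si,\tau)|$, and the symmetric one gives $|A|+|\si|-|\tau|-1\leq|\CE(\si,\tau)|$. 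Adding these two inequalities yields $2|A|-2\leq 2|\CE(\si,\tau)|$, i.e. $|\CE(\si,\tau)|\geq|A|-1$. Combined with $|\CE(\si,\tau)|\leq|A|-1$ this forces $|\CE(\si,\tau)|=|A|-1$ and, back in the two inequalities, $|\si|=|\tau|$.

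It then remains to rule out the borderline case: $\SV(\si)=\SV(\tau)=A$, $|\si|=|\tau|$, and $\si,\tau$ share exactly $|A|-1$ directed edges. The key combinatorial point is that a permutation of $A$ (viewed as its functional digraph, a disjoint union of directed cycles) is determined by any $|A|-1$ of its $|A|$ edges: knowing all but one edge $(\alpha,\beta)$, the missing source $\alpha$ is the unique vertex of in-degree $0$ among the chosen edges and the missing target $\beta$ the unique vertex of out-degree $0$, so the edge is forced. Hence if $\CE(\si)$ and $\CE(\tau)$ agree on $|A|-1$ edges they agree on all $|A|$, contradicting $\si\neq\tau$. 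I expect this last uniqueness argument to be the only real content; everything preceding it is bookkeeping with \eqref{eq:edges_verices}. One should be slightly careful stating the ``determined by $|A|-1$ edges'' fact, since a priori the $|A|-1$ common edges of $\si$ might not extend \emph{to a permutation} in more than one way — but they extend to \emph{both} $\si$ and $\tau$, and the complement of a set of $|A|-1$ edges inside a permutation's edge set is a single edge whose endpoints are pinned as above, which is exactly what we need.
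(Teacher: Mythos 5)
Your proposal is correct and follows essentially the same route as the paper: both reduce to $\SV(\si)=\SV(\tau)$, combine the two defining inequalities to force $|\CE(\si,\tau)|\geq|\SV(\si)|-1$, and then invoke the combinatorial fact that two \emph{distinct} permutations on the same vertex set can share at most $|\SV(\si)|-2$ directed edges (the paper asserts this; you supply the proof via the observation that a permutation is determined by all but one of its edges). One harmless slip: with the paper's convention that $(\alpha,\beta)\in\CE(\si)$ means $\si(\alpha)=\beta$, the source $\alpha$ of the missing edge is the unique vertex of \emph{out}-degree $0$ and the target $\beta$ the unique vertex of \emph{in}-degree $0$ among the remaining edges — you have the two degrees interchanged, but the conclusion that the missing edge is forced is unaffected.
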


\begin{proof}
	Let $\si,\tau\in\CS$ and assume that $\si\preceq \tau$ and $\tau\preceq\si$. It follows from the definition of $\preceq$ that $\SV(\si)=\SV(\tau)$ and that $|\CE(\si,\tau)|\geq  |\SV(\si)|+|\si|-|\tau|-1$ as well as $|\CE(\si,\tau)|\geq |\SV(\si)|+|\tau|-|\si|-1$, and therefore $|\CE(\si,\tau)|= |\SV(\si)|-1$. Since $|\CE(\si,\tau)|\leq |\SV(\si)|-2$ whenever $\si\neq \tau$ (because the edges are directed and form disjoint loops) this implies that $\si=\tau$, as required. 
\end{proof}

Let $\si,\tau\in\CS$ such that $\si\preceq\tau$. Since in this case $\SV(\si)\subset\SV(\tau)$, the definition of the digraphs associated with the permutations implies that 
\[
	\CE(\si,\tau)=\CE(\si)\cap \CE(\tau) = \{(\si^{-1}(\beta),\beta) ~:~ \beta\in\SV(\si) \text{ and }\tau\circ \si^{-1}(\beta)=\beta\}
\]
and therefore $|\SV(\si)|-|\CE(\si,\tau)|=|\SV_{nf}(\si;\tau)|$, where for a pair of diagrams $\si,\tau\in\CS$ such that $\si\preceq\tau$ we introduced the notation
\[
	\SV_{nf}(\si;\tau)=\{\beta\in\SV(\si) ~:~ \beta \text{ is not a fixed point of }\tau\circ\si^{-1}\}\,.
\]

Consequently, $\si\preceq\tau$ if and only if $\SV(\si)\subset\SV(\tau)$ and $|\SV_{nf}(\si;\tau)|= |\si|-|\tau|+1$. 

Next, we distinguish between cycles in $\si$ according to the number of non fixed points in them (with respect to $\tau$). Let $\si,\tau\in\CS$ such that $\si\preceq\tau$. A cycle $\CL\in \si$ is said to be of type $i$ (with respect to $\tau$) for $i\in \{0,1\}$, if $|\SV_{nf}(\si;\tau)\cap \SV(\CL)|=i$. Any other cycle is said to be of type $2$ (with respect to $\tau$), i.e., a cycle $\CL\in \si$ is of type $2$ if $|\SV_{nf}(\si;\tau)\cap \SV(\CL)|\geq 2$. For $i\in \{0,1,2\}$, we denote by $J_i(\si;\tau)$ the number of cycles of type $i$ in $\si$ with respect to $\tau$. 

\begin{claim}\label{clm:prec}
	Let $\si,\tau\in\CS$ such that $\si\preceq \tau$. Then, $J_2(\si;\tau)=0$ and $J_0(\si;\tau)\in \{|\tau|-1,|\tau|\}$, with $J_0(\si;\tau)=|\tau|$ if and only if $\si=\tau$. 
\end{claim}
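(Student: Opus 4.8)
The plan is to set up a bookkeeping identity relating the cycle counts $J_0,J_1,J_2$ to $|\si|$, $|\tau|$, and the number of non‑fixed points, and then use the characterization $\si\preceq\tau \iff \SV(\si)\subset\SV(\tau)$ and $|\SV_{nf}(\si;\tau)|=|\si|-|\tau|+1$ that was just established in the paragraph preceding the claim. First I would record the trivial partition $|\si| = J_0(\si;\tau)+J_1(\si;\tau)+J_2(\si;\tau)$, since every cycle of $\si$ has exactly one of the three types. Next I would bound $|\SV_{nf}(\si;\tau)|$ from below in terms of the type counts: each type‑$0$ cycle contributes $0$ non‑fixed points, each type‑$1$ cycle contributes exactly $1$, and each type‑$2$ cycle contributes at least $2$, so $|\SV_{nf}(\si;\tau)|\ge J_1(\si;\tau)+2J_2(\si;\tau)$.

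Now I would combine these with the preceding characterization $|\SV_{nf}(\si;\tau)|=|\si|-|\tau|+1$. Substituting $|\si|=J_0+J_1+J_2$ gives
\[
	J_1(\si;\tau)+2J_2(\si;\tau) \le |\SV_{nf}(\si;\tau)| = |\si|-|\tau|+1 = J_0(\si;\tau)+J_1(\si;\tau)+J_2(\si;\tau)-|\tau|+1\,,
\]
which simplifies to $J_2(\si;\tau) \le J_0(\si;\tau)-|\tau|+1$. To finish I need a matching inequality in the other direction, namely $J_0(\si;\tau)\le |\tau|$. This is the step I expect to be the crux: it amounts to showing that distinct type‑$0$ cycles of $\si$ sit inside distinct cycles of $\tau$. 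A type‑$0$ cycle $\CL$ of $\si$ has all of $\SV(\CL)$ fixed by $\tau\circ\si^{-1}$, i.e.\ $\tau$ agrees with $\si$ on $\SV(\CL)$, so $\CL$ is literally a cycle of $\tau$; since the cycles of $\tau$ are disjoint, two distinct type‑$0$ cycles of $\si$ are two distinct cycles of $\tau$, giving $J_0(\si;\tau)\le |\tau|$. Plugging this back into $J_2 \le J_0 - |\tau| + 1$ yields $J_2(\si;\tau)\le 1$; but if $J_2(\si;\tau)=1$ then all the inequalities above must be tight, forcing $J_0(\si;\tau)=|\tau|$, which in turn (by the disjointness argument) forces $\si$ and $\tau$ to have exactly the same cycles, contradicting $J_2(\si;\tau)=1$. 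Hence $J_2(\si;\tau)=0$.

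With $J_2(\si;\tau)=0$ in hand, the inequality $J_2\le J_0-|\tau|+1$ becomes $J_0(\si;\tau)\ge |\tau|-1$, and combined with $J_0(\si;\tau)\le |\tau|$ this gives $J_0(\si;\tau)\in\{|\tau|-1,|\tau|\}$. For the final dichotomy: if $J_0(\si;\tau)=|\tau|$ then every cycle of $\tau$ arises as a type‑$0$ cycle of $\si$ (the $|\tau|$ type‑$0$ cycles of $\si$ are distinct cycles of $\tau$, and there are exactly $|\tau|$ of the latter), so $\SV(\tau)\subseteq\SV(\si)$; together with $\SV(\si)\subseteq\SV(\tau)$ and the fact that $\si$ and $\tau$ agree on every type‑$0$ cycle, this forces $\si=\tau$. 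Conversely $\si=\tau$ obviously gives $J_0=|\tau|$. The only genuinely delicate point in the whole argument is making sure the "type‑$0$ cycle of $\si$ equals a cycle of $\tau$'' claim is used correctly, i.e.\ that $\tau\circ\si^{-1}$ fixing $\SV(\CL)$ pointwise really does mean $\tau|_{\SV(\CL)}=\si|_{\SV(\CL)}$ and hence $\CL\in\tau$; this is immediate from the definitions but is the load‑bearing observation.
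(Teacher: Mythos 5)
Your proposal is correct and follows essentially the same route as the paper's proof: the partition $|\si|=J_0+J_1+J_2$, the count $|\SV_{nf}(\si;\tau)|\geq J_1+2J_2$ combined with $|\SV_{nf}(\si;\tau)|=|\si|-|\tau|+1$ to get $J_0\geq|\tau|-1+J_2$, and the observation that type-$0$ cycles of $\si$ are cycles of $\tau$ (so $J_0\leq|\tau|$ with equality iff $\si=\tau$), closed off by the same contradiction if $J_2\geq 1$. Your justification of the equality case $J_0=|\tau|\iff\si=\tau$ is in fact spelled out in more detail than the paper's.
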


\begin{proof}
	We fix $\si,\tau\in\CS$ such that $\si\preceq\tau$ and abbreviate $J_i=J_i(\si;\tau)$ for $i\in\{0,1,2\}$. Since every cycle of type $0$ in $\si$ (with respect to $\tau$) is also a cycle in $\tau$, it follows that $J_0\leq |\tau|$ with equality if and only if $\si=\tau$.
	Note that $\sum_{i=0}^2 J_i = |\si|$ and that $\sum_{i=0}^2 iJ_i \leq |\SV_{nf}(\si;\tau)|$ and therefore, the assumption $\si\preceq \tau$, implies $J_0\geq |\tau|-1+J_2$ and in particular that $ J_0\geq |\tau|-1$. Observing that $J_2\geq 1$ implies $J_0\geq |\tau|$ and recalling that $J_0\leq |\tau|$ with equality if and only if $\si=\tau$ and in particular $J_2=0$, the result follows. 
\end{proof}

The last claim provides us with a relatively simple way to describe the relation $\prec$ between permutations. An interval of a  permutation $\tau$ is a sequence of vertices in $\SV(\tau)$, which appear consecutively along the orbit of the permutation. For example if, $\tau=(1,2,3,5,7,4,8)$ then $(1,2,3,5)$, $(3,5,7)$ and $(4,8,1,2)$ are all intervals of $\tau$, but $(1,2,3,4)$ is not. 

\begin{corollary}[the relation $\prec$]\label{cor:perc}
	Let $\si,\tau\in\CS$. Then $\si\preceq\tau$ if and only if all but at most one of the cycles $\CL\in \tau$ are also cycles of $\si$ and $\si$ is obtained from $\tau$ by decomposing the remaining cycle $\CL$ into disjoint intervals of $\CL$ and then for each of the intervals $(v_1,v_2,\ldots,v_m)$, either removing it or declaring it to be a cycle in $\si$ given by $v_i$ to $v_{i+1 \text{ mod }m}$ for every $1\leq i\leq m$. See Figure \ref{fig:prec} for an illustration. 
\end{corollary}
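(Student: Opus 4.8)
The plan is to extract the combinatorial picture from the equivalent form of $\preceq$ derived just above the corollary—for $\si,\tau\in\CS$ with $\SV(\si)\subset\SV(\tau)$ one has $\si\preceq\tau$ iff $\si=\tau$ or $|\SV_{nf}(\si;\tau)|=|\si|-|\tau|+1$—together with Claim~\ref{clm:prec}, which already pins down $J_2(\si;\tau)=0$ and $J_0(\si;\tau)\in\{|\tau|-1,|\tau|\}$. Everything then reduces to reading off what these numerical constraints say about how $\si$ sits inside $\tau$.

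For the "if" direction I would start from a $\si$ built from $\tau$ as in the statement: all cycles of $\tau$ except one, say $\CL$, are kept; $\SV(\CL)$ is partitioned into disjoint intervals $I_1,\dots,I_r$ of $\CL$; the intervals $I_1,\dots,I_s$ are retained (each $I_k=(v^k_1,\dots,v^k_{m_k})$ becoming the cycle $v^k_i\mapsto v^k_{i+1\bmod m_k}$) and the rest are removed. Then $\SV(\si)\subset\SV(\tau)$ and $|\si|=(|\tau|-1)+s$ are immediate, and a direct inspection of $\tau\circ\si^{-1}$ on $\SV(\si)$ shows it fixes every vertex of a retained cycle of $\tau$ (there $\si=\tau$) and every $v^k_i$ with $i\geq 2$ (there $\si^{-1}(v^k_i)=v^k_{i-1}=\tau^{-1}(v^k_i)$, as these are consecutive in $\CL$), while $v^k_1$ is moved precisely when $I_k$ is a proper sub-interval of $\CL$ (then $\si^{-1}(v^k_1)=v^k_{m_k}$ has $\tau$-image the start of the following interval along $\CL$). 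If some kept interval is all of $\CL$ then $\si=\tau$; otherwise $|\SV_{nf}(\si;\tau)|=s=|\si|-|\tau|+1$. Either way $\si\preceq\tau$.

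For the "only if" direction I would invoke Claim~\ref{clm:prec}. The case $J_0(\si;\tau)=|\tau|$ forces $\si=\tau$, a trivial instance, so assume $J_0(\si;\tau)=|\tau|-1$. First I would note that any type-$0$ cycle $\CL'$ of $\si$ is a cycle of $\tau$: on $\SV(\CL')$ we have $\tau\circ\si^{-1}=\mathrm{id}$, so $\si=\tau$ there, and as $\si$ preserves $\SV(\CL')$ so does $\tau$ with the same action. These $|\tau|-1$ cycles are distinct cycles of $\tau$; call the leftover cycle of $\tau$ by $\CL$. Since $\tau$'s cycles are disjoint, the supports of the remaining (type-$1$, using $J_2=0$) cycles of $\si$, together with the removed set $\SV(\CL)\setminus\SV(\si)$, all lie inside and partition $\SV(\CL)$.

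The hard part will be showing that each type-$1$ cycle $\CL'$ of $\si$ is an interval of $\CL$ closed up as in the statement. Here I would write $\CL'=(a_1,\dots,a_m)$ with $\si(a_j)=a_{j+1\bmod m}$, rotate the labelling so that the unique non-fixed point of $\tau\circ\si^{-1}$ on $\SV(\CL')$ is $a_1$, and then read the fixed-point conditions at $a_2,\dots,a_m$ as $\tau(a_{j-1})=a_j$, which gives $a_1\xrightarrow{\tau}a_2\xrightarrow{\tau}\cdots\xrightarrow{\tau}a_m$; thus $(a_1,\dots,a_m)$ is a genuine interval of $\CL$ and $\si$ closes it up exactly as prescribed. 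One also has to exclude $m=|\SV(\CL)|$ (else $\tau(a_m)=a_1$ and $a_1$ would be fixed, a contradiction), which incidentally forces $\SV(\CL)\setminus\SV(\si)\neq\emptyset$ in that sub-case, consistent with $J_0(\si;\tau)=|\tau|-1$. Finally, deleting these disjoint intervals from the cycle $\CL$ leaves a disjoint union of intervals of $\CL$—the removed intervals—so the decomposition of $\CL$ described in the statement is recovered. The one genuine subtlety throughout is precisely this conversion of the "single non-fixed point" condition into the interval picture, with the attendant care about cyclic orientation and the degenerate cases $\si=\tau$ and intervals of length $|\SV(\CL)|$.
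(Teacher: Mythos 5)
Your proposal is correct and follows essentially the same route as the paper: the ``only if'' direction rests on Claim~\ref{clm:prec} exactly as in the paper's proof (you merely spell out the step, left implicit there, that a cycle with a single non-fixed point of $\tau\circ\si^{-1}$ must be a closed-up interval of $\CL$), and the ``if'' direction is the same direct verification, phrased via the count $|\SV_{nf}(\si;\tau)|=|\si|-|\tau|+1$ rather than the equivalent edge count $|\CE(\si,\tau)|=|\SV(\si)|+|\tau|-|\si|-1$ that the paper uses — an equivalence the paper establishes just before the corollary.
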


\begin{figure}
  \centering
    \includegraphics[width=0.7\textwidth]{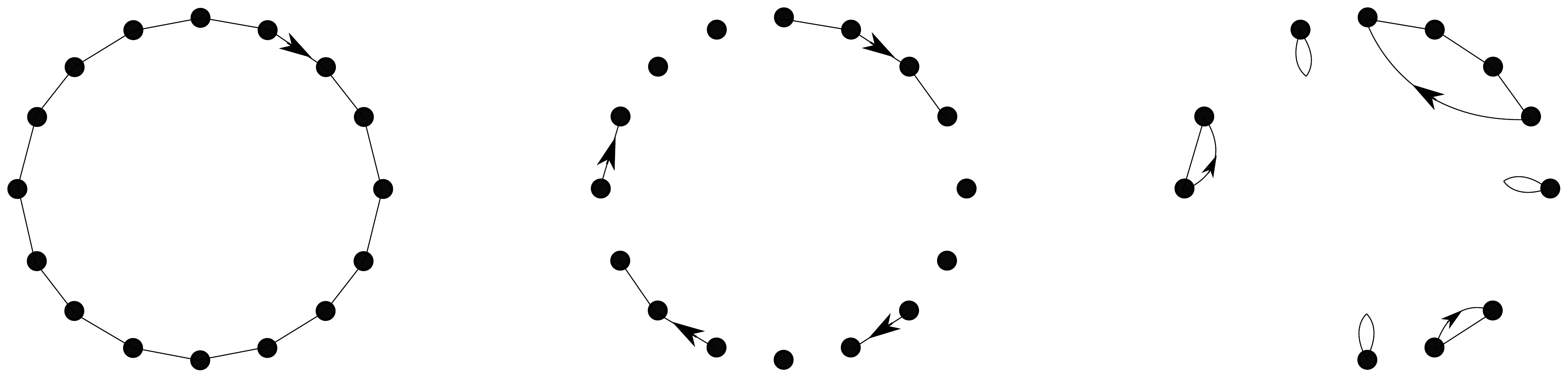}
     \caption{The cycle on the left $\tau$ is decomposed into intervals (middle) and then each of the intervales is either closed into a loop or deleted, thus generating $\si\prec\tau$. If there $\tau$ is composed of several cycles, then the remaining cycles must also be part of $\si$. \label{fig:prec}}
\end{figure}

\begin{proof}
	Assume first that $\si \prec\tau$. By Claim \ref{clm:prec} , we have $J_0(\si;\tau)=|\tau|-1$. Consequently, all but at most one of the cycles of $\tau$ are also cycles of $\si$. Denoting by $\CL$ the remaining cycle of $\tau$, since $\SV(\si)\subset \SV(\tau)$, the vertex set of each additional cycle of $\si$ is contained in $\SV(\CL)$. Furthermore, using Claim \ref{clm:prec} once more, each of these cycles has exactly one vertex in $\SV_{nf}(\si;\tau)$, which implies that it must be an interval of $\CL$. 
	
	In the other direction, if $\si,\tau\in\CS$ satisfy the conditions in Corollary \ref{cor:perc}, then by definition $\SV(\si)\subset \SV(\tau)$. Furthermore, the only edges in $\CE(\si)$ which are not in $\CE(\si,\tau)$ are the ones used to close the intervals in the unique cycle of $\tau$ which is not a cycle of $\si$. This gives one edge for each cycle in $\si$ which is not in $\tau$, and therefore 
\[
	|\CE(\si,\tau)| = |\CE(\si)|-(|\si|-|\tau|+1)=|\SV(\si)|-|\tau|+|\si|-1
\]
which implies that $\si\preceq\tau$. 
\end{proof}

\begin{remark}
	Note that $\prec$ is asymmetric but is not transitive. The relation $\trieq$ from the introduction, see also \eqref{lem:warum}, is a partial ordered set obtained from $\prec$ to guarantee the relation is also transitive. 
\end{remark}

The last corollary implies that for every $\si,\tau\in\CS$, we have $\FN_{\si,\tau}>0$ if and only if $\si\preceq\tau$ in which case 
\begin{align}\label{eq:FN}
	\FN_{\si,\tau} &= \hspace{-10pt}\sideset{}{'}\sum_{\substack{\alpha\in \si^{-1}(\widehat{\SV}_{nf}(\si;\tau))\\ \beta\in \widehat{\SV}_{nf}(\si;\tau)}}h(z_\beta,w_\alpha)\hspace{-10pt}\prod_{\alpha\neq \gamma\in \si^{-1}(\widehat{\SV}_{nf}(\si;\tau))}\hspace{-10pt}(w_\alpha-w_\gamma)^{-1}\prod_{\beta\neq \delta\in \widehat{\SV}_{nf}(\si;\tau)}\hspace{-10pt}(\overline{z}_\beta-\overline{z}_\delta)^{-1}\nonumber\\
&= \frac{1}{\pi}\int_{\D_1}\prod_{\alpha\in \widehat{\SV}_{nf}(\si;\tau)} \frac{1}{(\overline{\nu}-\overline{u_{\alpha}})} \frac{1}{(\nu-w_{\sigma^{-1}(\alpha)})} \textrm{d}^2 \nu\,,
\end{align}
where we introduced the notations 
\[
	\widehat{\SV}_{nf}(\si;\tau) = \SV_{nf}(\si;\tau)\cup (\SV(\tau)\setminus\SV(\si)),
\]
and defined $\si^{-1}(\widehat{\SV}_{nf}(\si;\tau))=\si^{-1}(\SV_{nf}(\si;\tau))\cup (\SV(\tau)\setminus\SV(\si))$.

Recalling the definition of $\frn_{\si,\tau}$, see \eqref{the_function_Fn}, we obtain 
\begin{equation}\label{eq:FN}
	\frN_{\sigma, \tau}=\begin{cases}
		\frh_{\tau}(\tbz,\tbw) & \text{ if }\sigma=\tau, \\
		\frn_{\sigma, \tau}(\tbz,\tbw) & \text{ if }\sigma\prec \tau,\\
		0 & \text{ otherwise}.
\end{cases}
\end{equation}
This completes the proof of the equivalence between the formula for $\FN$ given in Theorem \ref{thm:limit_exists} and the one in \eqref{eq:The_Matrix_N}.

% ************************************************************************************************

\subsection{Examples}

 \subsubsection{The case $\ell=1$} In this case 
\[
	\frN = \begin{blockarray}{cc}
	 \emptyset & (1)  \\
	 \begin{block}{(cc)}
		0 & h(w_{1},z_{1})\\
		0 & h(w_{1},z_{1})\\
	\end{block}
	\end{blockarray}\,.
\]
%\[
%	\frN = \left(\begin{array}{cc}
%		0 & h(w_{1},z_{1})\\
%		0 & h(w_{1},z_{1})\\
%\end{array}\right)\,,
%\]
and therefore 
\[
		\lim_{N\to\infty} N^{-1}\BE[F_N((1))] = \tbe_\emptyset^\dag \exp(\FN) \tbe_{(1)} = \exp(h(w_1,z_1))-1 = \frac{1-w_1\overline{z_1}}{|w_1-z_1|^2}-1\,.
\]
Using the complex Green theorem, we conclude 
\begin{align}
	{\rho}_{2}(\nu_1,\nu_2) & = \lim_{\varepsilon\to 0}\frac{1}{\pi^{2}\varepsilon^{4}}\int_{\caS_\varepsilon^{2}(\nu_1,\nu_2)}\frac{1-w_1\overline{z_1}}{|w_1-z_1|^2}-1\textrm{d}\overline{z_1} \textrm{d}w_1\nonumber\\
	& = \lim_{\varepsilon\to 0}\frac{1}{\pi^{2}\varepsilon^{4}}\int_{|w_1-\nu_2|<\varepsilon}\int_{|z_1-\nu_1|<\varepsilon}\partial_{z_1}\partial_{\overline{w_1}}\frac{1-w_1\overline{z_1}}{|w_1-z_1|^2} \textrm{d}z_1\wedge \overline{z_1} \textrm{d}w_1\wedge\textrm{d}\overline{w_1} \nonumber\\
		& = -\lim_{\varepsilon\to 0}\frac{1}{\pi^{2}\varepsilon^{4}}\int_{|w_1-\nu_2|<\varepsilon}\int_{|z_1-\nu_1|<\varepsilon}\frac{1-w_1\overline{z_1}}{|w_1-z_1|^4}\textrm{d}z_1\wedge \textrm{d}\overline{z_1} \textrm{d}w_1\wedge\textrm{d}\overline{w_1} \nonumber\\
		& = -\frac{1-\overline{\nu_1}\nu_2}{|\nu_1-\nu_2|^4}\,.
\end{align}

\subsubsection{The case $\ell=2$} In this case 
{\small \[
	\frN =
	\begin{blockarray}{ccccc}
	 \emptyset & (1)  & (2) & (1)(2) & (1,2)\\
	 &&&\\
	 \begin{block}{(ccccc)}
		0 & h(w_1,z_1) & h(w_2,z_2) & 0 & H \\
		0 & h(w_1,z_1) & 0 & h(w_2,z_2) & H\\
		0 & 0 & h(w_2,z_2) &  h(w_1,z_1) & H \\
		0 & 0 & 0 & h(w_1,z_1)+h(w_2,z_2) & H \\
		0 & 0 & 0 & 0 & h(w_1,z_2)+h(w_2,z_1)\\
	\end{block}
	\end{blockarray}\,,
\]}
where we denoted 
\[
	H=(w_1-w_2)^{-1}(\overline{z_1}-\overline{z_2})^{-1}[h(w_1,z_1)+h(w_2,z_2)-h(w_1,z_2)-h(w_2,z_1)].
%$H=\sum_{\alpha,\beta\in\{1,2\}}\prod_{\substack{\gamma,\delta\in \{1,2\}\\ \gamma\neq \alpha,~\delta\neq \beta}}(w_\alpha-w_\gamma)^{-1}(\overline{z_\beta}-\overline{z_\delta})^{-1}h(w_\alpha,z_\beta)
\]
As a result
\begin{align*}
	&\lim_{N\to\infty} N^{-1}\BE[F_N((1,2))]  = \tbe_\emptyset^\dag \exp(\FN) \tbe_{(1,2)} \\
	&\qquad  = (w_1-w_2)^{-1}(\overline{z_1}-\overline{z_2})^{-1}\bigg[\frac{(1-w_1\overline{z_1})(1-w_2\overline{z_2})}{|w_1-z_1|^2|w_2-z_2|^2}-\frac{(1-w_1\overline{z_2})(1-w_2\overline{z_1})}{|w_1-z_2|^2|w_2-z_1|^2}\bigg]\,.
\end{align*}
Using the complex Green theorem once again, we obtain 
\begin{equation*}
\begin{aligned}
		{\rho}_4(\nu_1,\nu_2,\nu_3,\nu_4)&= \frac{1}{(\nu_2-\nu_4)(\overline{\nu_1}-\overline{\nu_3})}\bigg[\frac{(1-\nu_2\overline{\nu_1})(1-\nu_4\overline{\nu_3})}{|\nu_2-\nu_1|^4|\nu_4-\nu_3|^4}
	-\frac{(1-\nu_2\overline{\nu_3})(1-\nu_4\overline{\nu_1})}{|\nu_2-\nu_3|^4|\nu_4-\nu_1|^4}\bigg]\\
	& =\frac{1}{(\nu_2-\nu_4)(\overline{\nu_1}-\overline{\nu_3})}\Big[{\rho}_2(\nu_1,\nu_2){\rho}_2(\nu_3,\nu_4)-{\rho}_2(\nu_1,\nu_4){\rho}_2(\nu_3,\nu_2)\Big]\,.
\end{aligned}
\end{equation*}

The case by case computation of this correlation functions seems to be complicated in general and so in the next section we shall provide some important structural properties of the matrix $\FN$ which provides further information on the high-order correlation functions. 

%
%\ron{TO COMPLETE.}
%\vspace{3cm}

% ************************************************************************************************

\subsection{Eigenvectors}\label{sub:eigenbevtors} In order to calculate the correlation function for general permutations and in particular for the cyclic permutation on $[\ell]$, we consider eigenvectors of $\frN$. To this end we fix $\ell\in\BN$ and recall that $\FN_\ell$ is the restriction of $\FN$ to rows and columns of permutations in $\CS_\ell$. In terms of the partial order $\preceq$ described above, $\frN_\ell$ is an upper triangular matrix with eigenvalues $(\frh_{\si})_{\si\in \CS_\ell}$. It has two bases, one of left eigenvectors denoted by $(\frl_\si(\cdot))_{\si\in \CS_\ell}$ and another set of right eigenvectors denoted by $(\frr_\si(\cdot))_{\si\in \CS_\ell}$. We normalize them so that
\[
	\frl_{\si}(\si)=\frr_{\si}(\si)=1,\qquad\forall \si\in\CS_\ell\,.
\]

A priori, the eigenvectors of $\FN_\ell$ depend on $\ell$. However, due to the fact that $\FN$ (and hence $\FN_\ell$) is an upper triangular matrix, common entries of eigenvectors of a permutation $\si$ for different values of $\ell$ are consistent. In other words, for every $\ell_1<\ell_2$ and $\si\in\CS_{\ell_1}$ the left (right) eigenvectors of $\FN_{\ell_1}$ and $\FN_{\ell_2}$ associated with $\si$ coincide on all joint entries, the entries in $\CS_{\ell_1}$. 

If we introduce the matrices  associated with these eigenvectors $L_{\si,\tau}:=\frl_\si(\tau)$ and $R_{\si, \tau}=\frr_\tau(\si)$, then
\[
	\frL=\frR^{-1}\qquad , \qquad  \frN=\frR \frh \frL\qquad ,\qquad \exp(\FN) = \frR\exp(\frh)\frL\,,
\] 
with $\frh$ understood as the diagonal matrix with entries $(\frh_\si)_{\si\in\CS_\ell}$. Therefore, in order to calculate $\exp(\frN)$, it suffices to compute the eigenvectors of $\FN$. We shall work with the left eigenvectors for notational convenience. The proof for the right eigenvectors are similar.

The eigenvector equation $\frh_\si \frl_\si = \frl_\si \frN$ reads
\begin{equation}\label{eq:pre_EV}
	[\frh_{\si}-\frh_{\tau}]\frl_\si(\tau)= \sum_{\pi\neq \tau} \frl_{\si}(\pi) \frN_{\pi, \tau}
	=\sum_{\pi\prec \tau} \frl_{\si}(\pi) \frN_{\pi, \tau},\qquad \forall \si, \tau\in\CS_\ell\,.
\end{equation}

Recall the definition of $\trieq$ from the introduction. The following lemma shows that the summation can be restricted to an even smaller set of permutations. 

\begin{lemma}\label{lem:warum}
	Let $\si,\tau\in\CS$, then $\si\trieq \tau$ if and only if there exists $m\in\BN$ and $\si=\pi_0\preceq \pi_1\prec\pi_2\ldots \pi_m = \tau$. In particular, $\trieq$ is reflexive and transitive, if $\si,\tau\in\CS_\ell$, then $\frl_\si(\tau)\neq 0$ if and only if $\si\trieq\tau$ and the eigenvector equation can be written as 
\begin{equation}\label{eq:EV}
	[\frh_{\si}-\frh_{\tau}]\frl_\si(\tau)= \sum_{\si\trieq \pi \prec\tau} \frl_{\si}(\pi) \frN_{\pi, \tau},\qquad \forall \si, \tau\in\CS_\ell\,.
\end{equation}
\end{lemma}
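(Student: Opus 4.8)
The plan is to prove the lemma in three steps: first establish the characterization of $\trieq$ as the (reflexive) transitive closure of $\prec$, then deduce reflexivity and transitivity of $\trieq$ for free, and finally read off the statements about the eigenvectors. For the ``if'' direction of the chain characterization I would show the two elementary facts that $\si\preceq\tau$ implies $\si\trieq\tau$, and that $\si\trieq\rho$ together with $\rho\prec\tau$ implies $\si\trieq\tau$; the general chain $\si=\pi_0\preceq\pi_1\prec\cdots\prec\pi_m=\tau$ then gives $\si\trieq\tau$ by induction on $m$. For $\si\preceq\tau$ one invokes Corollary \ref{cor:perc}: $\si$ is obtained from $\tau$ by choosing one cycle $\CL$ of $\tau$, decomposing $\SV(\CL)$ into disjoint intervals of $\CL$, and for each interval either deleting it or closing it into a loop. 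Each loop of $\si$ is then either a cycle of $\tau$ or an interval of $\CL$ closed up, hence a subloop of a loop of $\tau$; and loops coming from distinct intervals of $\CL$ span disjoint arcs of $\CL$ and so are non-crossing, while loops inside distinct cycles of $\tau$ cannot cross. For the absorption step I would use that ``passing to the induced subcycle'' is transitive (a subloop of an interval of $\CL$ closed up is again a subloop of $\CL$) and that two loops supported in disjoint intervals of $\CL$ are automatically non-crossing in $\CL$, whereas two loops supported in the same interval $a$ are non-crossing in $\CL$ iff they are non-crossing in $a$ closed up, since the cyclic order $a$ induces is the restriction of that of $\CL$; tracking these through the loops of $\si$ (each lying in some loop of $\rho$, which is a cycle of $\tau$ or an interval of the single refined cycle) yields $\si\trieq\tau$.

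The heart of the proof, and the step I expect to require the most care, is the ``only if'' direction: $\si\trieq\tau$ implies a chain exists. Since the loops of $\si$ are pairwise vertex-disjoint and each is a subloop of a (necessarily unique) cycle of $\tau$, $\si$ splits as $\si=\biguplus_k\si_k$ with $\si_k$ the loops of $\si$ inside the $k$-th cycle $\CL_k$ of $\tau$, and $\si_k\trieq\CL_k$. A chain from $\tau$ to $\si$ is then obtained by assembling chains $\CL_k\succ\cdots\succ\si_k$ one cycle at a time, which is legitimate because $\prec$ modifies only one cycle per step. It therefore suffices to treat a single cycle $\CL$ with $\si'\trieq\CL$, which I would do by induction on $|\SV(\CL)|$. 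If $\si'=\CL$ there is nothing to prove. If some vertex $c$ of $\CL$ is absent from $\si'$, a single $\prec$-step decomposes $\SV(\CL)$ into the arcs $\{c\}$ (deleted) and $\SV(\CL)\setminus\{c\}$ (closed up), strictly decreasing the vertex count, and $\si'$ remains a valid target over the surviving arc; apply the inductive hypothesis. Otherwise $\SV(\si')=\SV(\CL)$ and $\si'\neq\CL$, so $\si'$ is a non-crossing partition of the cyclic set $\SV(\CL)$ into at least two blocks, each carrying the cyclic order induced by $\CL$; by the standard innermost-block property of non-crossing partitions there is a block $B$ that is an interval of $\CL$, and then its complementary arc $B'$ is an interval as well. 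The single $\prec$-step decomposing $\SV(\CL)$ into $B$ and $B'$ (both closed up) already produces the final loop on $B$ and reduces $B'$ to a strictly smaller cyclic set over which $\si'|_{B'}\trieq(B' \text{ closed up})$; the inductive hypothesis finishes the job, and concatenation gives the chain.

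Reflexivity ($\si\trieq\si$, via the trivial chain) and transitivity (concatenate two chains, merging the $\preceq$-step at the junction) now follow at once. For the eigenvectors, recall from Section \ref{sec:The_matrix_frN} that $\frN$ restricted to $\CS_\ell$ is upper triangular with respect to $\preceq$, with distinct diagonal entries $\frh_\si$, and that $\frl_\si(\si)=1$. Unrolling the recursion $[\frh_\si-\frh_\tau]\frl_\si(\tau)=\sum_{\pi\prec\tau}\frl_\si(\pi)\frN_{\pi,\tau}$ writes $\frl_\si(\tau)$ as a sum, over all strictly increasing chains $\si=\pi_0\prec\pi_1\prec\cdots\prec\pi_r=\tau$, of $\prod_{i=1}^r \frN_{\pi_{i-1},\pi_i}/(\frh_\si-\frh_{\pi_i})$. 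Hence $\frl_\si(\tau)\neq0$ forces such a chain to exist, which by the previous steps is equivalent to $\si\trieq\tau$ (the case $\si=\tau$ being $\frl_\si(\si)=1$); conversely, when $\si\trieq\tau$ the summands are distinct nonzero rational functions of $(\tbz,\tbw)$ whose polar loci do not all coincide, so no cancellation occurs — this is the only mildly delicate point outside the combinatorial heart, and it can alternatively be extracted from the explicit recursion of Section \ref{sec:The_matrix_frN}. Finally, in $\sum_{\pi\prec\tau}\frl_\si(\pi)\frN_{\pi,\tau}$ a summand vanishes unless $\frl_\si(\pi)\neq0$, i.e.\ unless $\si\trieq\pi$, so the sum may be restricted to $\si\trieq\pi\prec\tau$, which is the asserted form of the eigenvector equation. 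The right-eigenvector statement is proved identically with the arrows reversed.
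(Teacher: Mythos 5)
Your argument is correct and follows essentially the same route as the paper's (very terse) proof: unroll the eigenvector recursion into a sum over $\prec$-chains, identify chain-connectedness with $\trieq$ combinatorially, and then restrict the sum accordingly; you simply supply the combinatorial details (via Corollary \ref{cor:perc} and the innermost-block induction) that the paper dismisses with ``proved similarly via induction on $m$.'' The one step you rightly flag as delicate — that $\si\trieq\tau$ forces $\frl_\si(\tau)\neq 0$, i.e.\ that the chain contributions cannot cancel — is asserted rather than proved in your write-up (distinct polar loci alone do not preclude cancellation of rational functions), but the paper's own proof is no more rigorous on this point, and only the converse implication is actually needed to justify \eqref{eq:EV}.
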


\begin{proof}
	Let $\si,\tau\in\CS_\ell$. If $\ell_\si(\tau)\neq 0$, then by the eigenvector equation \eqref{eq:pre_EV}, one can find $m\in\BN$ and a sequence $\si=\pi_0,\pi_1,\pi_2,\ldots, \pi_m=\tau$ such that $\frN_{\pi_{j-1},\pi_{j}}\neq 0$ for all $1\leq j\leq m$, namely $\pi_{j-1}\preceq \pi_j$ for all $1\leq j\leq m$. The other direction is proved similarly via induction on $m$. The proof of \eqref{eq:EV} follows by induction on pairs $\si,\tau$ with respect to   $\prec$. 
\end{proof}

\begin{lemma}\label{lem:exist_uniq}
	The family of equations in \eqref{eq:EV} has a unique solution subject to the boundary condition $ \frl_{\si}(\si)=1$ for all $\si\in\CS_\ell$.
\end{lemma}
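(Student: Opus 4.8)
The statement is a triangular‑solvability claim for the eigenvector system \eqref{eq:EV}, so the plan is to solve it by back‑substitution along a well‑founded ordering of $\CS_\ell$. The first step is to produce a rank function that strictly increases along $\prec$: take $\mathrm{rk}(\pi)=(\ell+1)\,|\SV(\pi)|-|\pi|$ for $\pi\in\CS_\ell$. Using Corollary \ref{cor:perc}, if $\pi\prec\pi'$ then either $|\SV(\pi)|<|\SV(\pi')|$, or $\SV(\pi)=\SV(\pi')$ and $|\pi|>|\pi'|$; since $|\pi|\le|\SV(\pi)|\le\ell$ for every $\pi\in\CS_\ell$, in the first case $\mathrm{rk}(\pi')-\mathrm{rk}(\pi)\ge(\ell+1)-\ell=1$ and in the second $\mathrm{rk}(\pi')-\mathrm{rk}(\pi)=|\pi|-|\pi'|\ge1$. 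Invoking Lemma \ref{lem:warum}, which realizes any relation $\pi\trieq\pi'$ as a chain of $\preceq$/$\prec$ steps, one obtains $\mathrm{rk}(\pi)\le\mathrm{rk}(\pi')$ whenever $\pi\trieq\pi'$, with equality only if $\pi=\pi'$; in particular every $\pi$ occurring on the right‑hand side of \eqref{eq:EV} (those with $\pi\prec\tau$) has $\mathrm{rk}(\pi)<\mathrm{rk}(\tau)$.

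Next I would fix $\si\in\CS_\ell$ and prove, by induction on $\mathrm{rk}(\tau)$, that $\frl_\si(\tau)$ is uniquely forced by \eqref{eq:EV} together with the normalization $\frl_\si(\si)=1$, and that $\frl_\si(\tau)=0$ whenever $\si\ntrianglelefteq\tau$ (this simultaneously recovers the support statement of Lemma \ref{lem:warum}). There are three cases. If $\si\ntrianglelefteq\tau$, set $\frl_\si(\tau)=0$; the $(\si,\tau)$ instance of \eqref{eq:EV} then holds because its right‑hand sum is empty, since $\si\trieq\pi$ and $\pi\prec\tau$ would force $\si\trieq\tau$ by transitivity of $\trieq$ (Lemma \ref{lem:warum}). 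If $\tau=\si$, the normalization fixes $\frl_\si(\si)=1$, and $\si$ is the unique minimal element of $\{\pi:\si\trieq\pi\}$ for $\mathrm{rk}$, so there is nothing more to check at that rank. If $\si\trieq\tau$ with $\tau\neq\si$, then $\mathrm{rk}(\tau)>\mathrm{rk}(\si)$, hence $\frh_\si\neq\frh_\tau$ (see the next paragraph) and \eqref{eq:EV} is equivalent to
\[
	\frl_\si(\tau)=\frac{1}{\frh_\si-\frh_\tau}\sum_{\si\trieq\pi\prec\tau}\frl_\si(\pi)\,\frN_{\pi,\tau},
\]
whose right‑hand side involves only the already‑determined values $\frl_\si(\pi)$ with $\si\trieq\pi$ and $\mathrm{rk}(\pi)<\mathrm{rk}(\tau)$. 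Thus each value is forced, and the resulting assignment solves \eqref{eq:EV} by construction — every equation was used either to define a value or was verified directly — which gives both existence and uniqueness. The transposed argument yields the right eigenvectors $\frr_\si$.

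The only non‑formal point, and the one I expect to require care, is the non‑vanishing of the denominators $\frh_\si-\frh_\tau$ for $\si\neq\tau$, which I would deduce from $\frh_\si\not\equiv\frh_\tau$ as functions of $(\tbz,\tbw)$. Writing $h(u,v)=\log(1-\overline u v)-\log(u-v)-\log\overline{(u-v)}$ and recalling $\frh_\pi(\tbz,\tbw)=\sum_{\beta\in\SV(\pi)}h(z_{\pi(\beta)},w_\beta)$: if $\SV(\si)\neq\SV(\tau)$ the two expressions involve different variables and cannot agree identically; if $\SV(\si)=\SV(\tau)=A$, comparing the parts holomorphic in $\tbw$ and exponentiating reduces $\frh_\si\equiv\frh_\tau$ to an identity $\prod_{\beta\in A}\frac{1-\overline{z_{\si(\beta)}}\,w_\beta}{z_{\si(\beta)}-w_\beta}=c\prod_{\beta\in A}\frac{1-\overline{z_{\tau(\beta)}}\,w_\beta}{z_{\tau(\beta)}-w_\beta}$ of rational functions, and matching the pole in each $w_\beta$ forces $z_{\si(\beta)}=z_{\tau(\beta)}$ for all $\beta$, i.e. $\si=\tau$. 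Hence the denominators vanish only on a proper analytic subset of parameter space, and the lemma (and the diagonalization $\frN=\frR\,\frh\,\frL$, $\exp(\frN)=\frR\exp(\frh)\frL$ used afterwards) is to be read for parameters off that subset, where $\frN_\ell$ has simple spectrum. Aside from this, I anticipate only routine bookkeeping in matching the rank order with the combinatorics of $\prec$ and $\trieq$.
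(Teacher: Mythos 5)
Your proposal is correct and rests on the same mechanism as the paper's (very terse) proof: for generic $(\tbz,\tbw)$ the diagonal entries $\frh_\si$ of the upper triangular matrix $\frN_\ell$ are pairwise distinct, so the eigenvectors are determined up to the scalar fixed by $\frl_\si(\si)=1$. You simply make explicit what the paper leaves implicit — the well-founded ordering along which back-substitution proceeds and the verification that $\frh_\si\not\equiv\frh_\tau$ for $\si\neq\tau$ — which is a welcome elaboration rather than a different route.
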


\begin{proof}
	Since for a generic choice of $\tbz,\tbw\in\BC^\ell$, it holds that $(\frh_\si)_{\si\in\BS_\ell}$ are all distinct, it follows that all the all eigenvalues of $\FN_\ell$ are distinct and hence all eigenvectors are unique up to multiplication by a scalar. The scalars are fixed by the choice of normalization $\frl_\si(\si)=1$ for $\si\in\BS_\ell$ and hence so are the eigenvectors. 
%	The hierarchy of equations determined by \eqref{eq:EV} together with the boundary conditions is a set of linear inhomogeneous equations for $(\frl_\si(\tau))_{\si,\tau\in\CS_\ell}$, the inhomogeneity coming from the boundary conditions for $\si=\tau$. Uniqueness follows from the fact that, if $\frf, \frg$ are two families of solutions, then  the difference solves the homogeneous version of these equations, i.e., the one with the boundary conditions replaced with $(\frf-\frg)_\si(\si)=0$ for all $\si\in\CS_\ell$. Using the partial order $\preceq$ one can verify by induction that the unique solution in this case is indeed $\frf-\frg=0$ which completes the proof. Referring to equation \eqref{eq:EV}, we can take the sum on the right hand side divided by $\frh_\si-\frh_\tau$ as the definition of $\frl_\si(\tau)$ which provides one possible solution.
\end{proof}

Having proved the existence and uniqueness of the solution, we turn to discuss its properties. 
%We start with an observation regarding the relation between the eigenvectors for different values of $\ell$. Recall that $\FN(\ell)$ is the restriction of $\FN$ to rows and columns indexed by permutations in $\CS_\ell$. Since the matrices are upper triangular, it follows from \eqref{eq:FN} that for every fixed $\si,\tau\in\CS_\ell$, the $(\si,\tau)$-entry of $\FN_{\ell}$ does not depend on $\ell$. In particular, given $\ell<m$ and $\si,\tau\in\CS_\ell$ the $\tau$ entry of the left (right) eigenvector associated with the eigenvalue $\frh_\si$ for the matrices $\FN_{\ell}$ and $\FN_{m}$ are the same.  in $\BD_\ell$ is $\FN(\ell)$. Furthermore, when combined with \eqref{eq:EV}, this implies that the left eigenvectors are also independent of $\ell$ in all entries which are common, namely for $\ell_1<\ell_2$ and $\pi,\tau\in\BD_{\ell_1}$, we have $\frl^{\ell_1}_{\pi}(\sigma)=\frl^{\ell_2}_{\pi}(\sigma)$, where $\frl^{\ell}_\pi$ is the left eigenvector with eigenvalue $h_\pi$ for the matrix $\FN(\ell)$. As a result we can omit the superscript $\ell$ in $\frl_\pi$ and similarly in $\frr_\pi$. 
%

\begin{lemma}[Tensorial property of eigenvector components] \label{L:Tens}
	Let $\si,\tau\in\CS_\ell$ such that $\si\trieq \tau$ and suppose that $\tau=\{\CL_k\}_{k=1}^{|\tau|}$ are the cycles of $\tau$. Then, there exists $\si_1,\ldots,\si_{|\tau|}\in\CS_\ell$ such that $\si=\biguplus_{k=1}^{|\tau|}\si_k$ and $\si_k\trieq\CL_k$ for $1\leq k\leq |\tau|$. Furthermore, 
\begin{equation}\label{eq:Tens}
	\frl_{\si}(\tau; \bfz, \bfw)=\prod_{k=1}^{|\tau|} \frl_{\si_k}(\CL_k; \tbz|_{\SV(\CL_k)}, \tbw|_{\SV(\CL_k)})\,.
\end{equation}

In particular, if $\si\prec\tau$ and $\CL$ is the unique cycle of $\tau$ that does not belong to $\si$, then $\si|_{\SV(\CL)\cap \SV(\si)}$ is a permutation satisfying $\si|_{\SV(\CL)\cap \SV(\si)}\prec \CL$ and 
\begin{equation}
	\frl_{\si}(\tau; \bfz, \bfw)=\frl_{\si|_{\SV(\CL)\cap \SV(\si)}}(\CL; \tbz|_{\SV(\CL)}, \tbw|_{\SV(\CL)})\,.
\end{equation}
\end{lemma}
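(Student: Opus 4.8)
The plan is to prove the two assertions in turn: the combinatorial cycle‑decomposition of $\si$ first, and then the product formula, which I would deduce from the uniqueness of the eigenvectors (Lemma~\ref{lem:exist_uniq}) by an induction on $\tau$ with respect to the partial order $\trieq$; the ``in particular'' statement is then an immediate specialization. For the decomposition: since $\si\trieq\tau$, every cycle $\CL'$ of $\si$ is a sub‑loop of some cycle of $\tau$, and because the vertex sets $\SV(\CL_1),\dots,\SV(\CL_{|\tau|})$ are pairwise disjoint and contain $\SV(\CL')$, that cycle of $\tau$ is unique. I would then define $\si_k$ to be the union of those cycles $\CL'$ of $\si$ with $\SV(\CL')\subseteq\SV(\CL_k)$, so that automatically $\si=\biguplus_{k}\si_k$ and $\SV(\si_k)\subseteq\SV(\CL_k)$. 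The relation $\si_k\trieq\CL_k$ follows because each cycle of $\si_k$ is a sub‑loop of some cycle of $\tau$, hence (by disjointness of the $\SV(\CL_j)$) of $\CL_k$, and because the cyclic order of any four vertices all lying in $\SV(\CL_k)$ is the one induced by $\CL_k$, so that ``non‑crossing with respect to $\tau$'' and ``non‑crossing with respect to $\CL_k$'' are the same condition for pairs of loops of $\si_k$. This is routine manipulation of the definitions of sub‑loop and crossing.

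\emph{Product formula.} Set $\widetilde{\frl}_\si(\tau):=\prod_{k=1}^{|\tau|}\frl_{\si_k}(\CL_k;\tbz|_{\SV(\CL_k)},\tbw|_{\SV(\CL_k)})$ whenever $\si\trieq\tau$, with the $\si_k$ from the decomposition above (the factors being unambiguous by the consistency of eigenvectors across levels noted in Section~\ref{sub:eigenbevtors}), and $\widetilde{\frl}_\si(\tau):=0$ otherwise. For a fixed $\si$ I would show that $(\widetilde{\frl}_\si(\tau))_{\tau\in\CS_\ell}$ solves the eigenvector system \eqref{eq:EV} with normalization $\widetilde{\frl}_\si(\si)=1$; then Lemma~\ref{lem:exist_uniq} forces $\widetilde{\frl}_\si=\frl_\si$. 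The normalization is immediate, since $\tau=\si$ forces each $\si_k=\CL_k$, hence a product of $1$'s; and when $\si\not\trieq\tau$ or $\si=\tau$ both sides of \eqref{eq:EV} vanish, using transitivity and antisymmetry of $\trieq$ (Lemma~\ref{lem:warum}). The real case $\si\trieq\tau$, $\si\neq\tau$ I would treat by strong induction on $\tau$ with respect to $\trieq$, using two ingredients. First, $\SV(\si)=\bigsqcup_k\SV(\si_k)$ with $\SV(\si_k)\subseteq\SV(\CL_k)$ gives $\frh_\si-\frh_\tau=\sum_k(\frh_{\si_k}-\frh_{\CL_k})$, so the left‑hand side of \eqref{eq:EV} for $\widetilde{\frl}_\si(\tau)$ equals $\sum_k\bigl[(\frh_{\si_k}-\frh_{\CL_k})\frl_{\si_k}(\CL_k)\bigr]\prod_{j\neq k}\frl_{\si_j}(\CL_j)$, and each bracket can be expanded by \eqref{eq:EV} applied on the vertex set $\SV(\CL_k)$. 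Second, the map $(k,\rho)\mapsto\pi:=\rho\uplus\biguplus_{j\neq k}\CL_j$ is a bijection from $\{(k,\rho):\rho\prec\CL_k\}$ onto $\{\pi:\pi\prec\tau\}$, whose inverse reads off the unique modified cycle of $\tau$ via Corollary~\ref{cor:perc}; under it one has $\FN_{\pi,\tau}(\tbz,\tbw)=\FN_{\rho,\CL_k}(\tbz|_{\SV(\CL_k)},\tbw|_{\SV(\CL_k)})$, because $\widehat{\SV}_{nf}(\pi;\tau)=\widehat{\SV}_{nf}(\rho;\CL_k)\subseteq\SV(\CL_k)$ and the integral defining $\frn$ in \eqref{the_function_Fn} involves only this set, and, applying the decomposition to the pair $(\si,\pi)$, one has $\si\trieq\pi$ iff $\si_k\trieq\rho$, in which case $\widetilde{\frl}_\si(\pi)=\bigl(\prod_{j\neq k}\frl_{\si_j}(\CL_j)\bigr)\widetilde{\frl}_{\si_k}(\rho)$. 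Since $\rho\prec\CL_k$ and $\CL_k$ is a cycle of $\tau$, we have $\rho\trieq\tau$ with $\rho\neq\tau$, so the induction hypothesis yields $\widetilde{\frl}_{\si_k}(\rho)=\frl_{\si_k}(\rho)$; assembling the two ingredients, the right‑hand side of \eqref{eq:EV} becomes $\sum_k\bigl(\sum_{\si_k\trieq\rho\prec\CL_k}\frl_{\si_k}(\rho)\FN_{\rho,\CL_k}\bigr)\prod_{j\neq k}\frl_{\si_j}(\CL_j)$, matching the left‑hand side. This closes the induction and gives the product formula.

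\emph{The ``in particular''.} If $\si\prec\tau$, Corollary~\ref{cor:perc} says exactly one cycle $\CL$ of $\tau$ is modified; the corresponding piece $\si_k$ of the decomposition is $\si|_{\SV(\CL)\cap\SV(\si)}$ and satisfies $\si|_{\SV(\CL)\cap\SV(\si)}\prec\CL$ (again Corollary~\ref{cor:perc}), while every other $\si_j$ equals $\CL_j$ and contributes a factor $\frl_{\CL_j}(\CL_j)=1$; the product formula then collapses to the single surviving factor $\frl_{\si|_{\SV(\CL)\cap\SV(\si)}}(\CL;\tbz|_{\SV(\CL)},\tbw|_{\SV(\CL)})$.

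I expect the main obstacle to be the combinatorial core of the induction step: verifying the bijection $(k,\rho)\leftrightarrow\pi$ together with the equivalence $\si\trieq\pi\Leftrightarrow\si_k\trieq\rho$ and the localization $\FN_{\pi,\tau}=\FN_{\rho,\CL_k}$ — that is, checking carefully that replacing a single cycle of $\tau$ affects $\si$, and the corresponding entry of $\FN$, only through that cycle's vertex set. Once those are established, the remainder is formal manipulation of the (unique) eigenvector system and involves no analysis.
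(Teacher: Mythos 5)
Your proposal is correct and follows essentially the same route as the paper: the same decomposition $\si_k=\si|_{\SV(\CL_k)\cap\SV(\si)}$, the same bijection between $\{\pi:\si\trieq\pi\prec\tau\}$ and pairs $(k,\rho)$ with $\si_k\trieq\rho\prec\CL_k$, the same locality $\frN_{\pi,\tau}=\frN_{\rho,\CL_k}$, the identity $\frh_\si-\frh_\tau=\sum_k(\frh_{\si_k}-\frh_{\CL_k})$, and an induction with respect to the partial order (your ``candidate plus uniqueness'' framing is just the paper's entry-by-entry recursion in disguise). The only point to tighten is that, since the inductive step invokes the conclusion for the pair $(\si_k,\rho)$ with $\si_k\neq\si$, the induction must run over pairs of permutations (as the paper does) rather than for a fixed first argument $\si$.
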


\begin{proof}
	For $1\leq k\leq |\tau|$, define $\si_k = \si|_{\SV(\CL_k)\cap \SV(\si)}$. By Lemma \ref{lem:warum}, $(\si_k)_{k=1}^{|\tau|}$ are all permutations such that $\si = \biguplus_{k=1}^{|\tau|}\si_k$ and $\si_k\trieq \CL_k$ for $1\leq k\leq |\tau|$. Similarly, for every $\pi\in\CS_\ell$ such that $\si\trieq\pi\prec\tau$ we have $\si_k\trieq\pi_k\preceq \CL_k$ for every $1\leq k\leq |\tau|$, where we denoted $\pi_k = \pi|_{\SV(\CL_k)\cap \SV(\pi)}$. Also, from the definition of the relation $\prec$, we must have a unique $1\leq k_0\leq|\tau|$ such that $\pi_{k_0}\prec \tau_{k_0}$ and $\pi_k=\tau_k$ for every $k\neq k_0$. In particular, recalling the definition of $\FN$, see \eqref{eq:FN}, we obtain for every $\pi\in\CS_\ell$ satisfying $\si\trieq \pi \prec \si$ that 
\[
	\FN_{\pi,\tau} = \FN_{\pi_{k_0},\CL_{k_0}}\,.
\]
	
From the eigenvector equation \eqref{eq:EV} and the last observation
\[
		[\frh_{\si}-\frh_{\tau}]\frl_{\si}(\tau) = \sum_{\si\trieq\pi\prec \tau}\frl_{\si}(\pi)\FN_{\pi,\tau} = \sum_{k=1}^{|\tau|}\sum_{\si_k\trieq \pi \prec \CL_k}\frl_\si(\{\CL_j\}_{j\neq k}\cup \pi)\FN_{\pi,\CL_k}
\]
Using an induction argument over pairs of permutations with respect to the relation $\prec$, we obtain for every $1\leq k\leq |\tau|$
\[
	\frl_\si(\{\CL_j\}_{j\neq k}\cup \pi) = \frl_{\si_k}(\pi)\cdot \prod_{k\neq j=1}^{|\tau|}\frl_{\si_j}(\CL_j)\,,
\]
and therefore 
\[
\begin{aligned}
~		[\frh_{\si}-\frh_{\tau}]\frl_{\si}(\tau) &= \sum_{\si\trieq\pi\prec \tau}\frl_{\si}(\pi)\FN_{\pi,\tau} = \sum_{k=1}^{|\tau|}\prod_{k\neq j=1}^{|\tau|}\frl_{\si_j}(\CL_j)\sum_{\si_k\trieq \pi \prec \CL_k}\frl_{\si_k}(\pi)\FN_{\pi,\CL_k}\\
		& =\sum_{k=1}^{|\tau|}\prod_{k\neq j=1}^{|\tau|}\frl_{\si_j}(\CL_j)\cdot [\frh_{\si_k}-\frh_{\tau_k}]\frl_{\si_k}(\tau_k)\\
		& =\bigg(\sum_{k=1}^{|\tau|}[\frh_{\si_k}-\frh_{\tau_k}]\bigg)\cdot \prod_{j=1}^{|\tau|}\frl_{\si_j}(\CL_j)\,.
\end{aligned}
\]
Noting that the definition of $\frh$ implies 
\[
	[\frh_{\si}-\frh_{\tau}] = \sum_{k=1}^{|\tau|}[\frh_{\si_k}-\frh_{\tau_k}]\,,
\]
the result follows. 
\end{proof}

%Due to the last lemma, in order to understand the eigenvectors $\frl_{\si}$ reduces to the understanding of the components $\frl_{\si}(\tau)$ for $\si,\tau\in\CS_\ell$ such that $\tau$ is composed of a unique cycle. Without loss of generality we will take $\tau$ to be the  cycle $C_\ell=(1,2,\ldots,\ell)$. 

Next, we argue that it is enough to calculate $\frl_{\texttt{I}_A}(C_\ell)$ for some subset $A\subset [\ell]$, where $\texttt{I}_A$ is the identity permutation on $A$. We start with a few additional notations. 
For $A\subseteq B\subseteq [\ell]$, define $U_A^B : \CS_A \to \CS_B$ by 
\[
	U^B_A(\pi)(i) = \begin{cases}
		\pi(i) & i\in A\\
		i & i\in B\setminus A
	\end{cases}\qquad \forall \pi\in\CS_A\,.
\]	

Similarly, for $A\subseteq B\subseteq [\ell]$, define $L_A^B : \{\pi\in \CS_B ~:~ \pi(i)=i \text{ for all }i\in B\setminus A\}\to \CS_A$ by 
\[
	L_A^B(\pi)(i)=\pi(i),\qquad \forall i\in A \text{ and }\pi\in \CS_B \text{ as above.}
\]

\begin{lemma}[Recursive property of eigenvector components]\label{lem:Recurrence}
	For every $\si,\tau\in\CS_\ell$ such that $\si\trieq \tau$ denote $\si_\tau = U_{\SV(\si)}^{\SV(\tau)}(\si)$. Then, for every $\tbz,\tbw\in \BC^{\SV(\tau)}$
\[
	\frl_{\si}(\tau; \bfz, \bfw)=\frl_{\texttt{I}_{\SV(\si)}}(\tau\circ \si_\tau^{-1}; \bfz, \si_\tau^{-1}(\bfw))\,.
\]
\end{lemma}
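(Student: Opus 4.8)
The plan is to prove the identity by induction with respect to the partial order $\prec$, using the triangular structure of the eigenvalue problem for $\FN$. Write $g_\si(\tau;\bfz,\bfw):=\frl_{\texttt{I}_{\SV(\si)}}(\tau\circ\si_\tau^{-1};\bfz,\si_\tau^{-1}(\bfw))$ for the claimed right-hand side. By the uniqueness statement in Lemma \ref{lem:exist_uniq} it suffices to check that $\tau\mapsto g_\si(\tau;\bfz,\bfw)$ satisfies the boundary condition $g_\si(\si;\bfz,\bfw)=1$ and the eigenvector equation \eqref{eq:EV} with eigenvalue $\frh_\si(\bfz,\bfw)$. The boundary condition is immediate, since $\tau=\si$ forces $\si_\tau=\si$, hence $\tau\circ\si_\tau^{-1}=\texttt{I}_{\SV(\si)}$ and $g_\si(\si;\cdot)=\frl_{\texttt{I}_{\SV(\si)}}(\texttt{I}_{\SV(\si)};\cdot)=1$. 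Moreover, applying the tensorial property (Lemma \ref{L:Tens}) to both $\frl_\si(\tau)$ and $\frl_{\texttt{I}_{\SV(\si)}}(\tau\circ\si_\tau^{-1})$ — which is legitimate because $\si_\tau$ preserves the vertex set of each cycle of $\tau$, every loop of $\si$ being a sub-loop of a single loop of $\tau$ — reduces the whole statement to the case in which $\tau$ is a single cycle; I will keep that reduction available.

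The core of the argument is a change-of-variables identity. Put $\tau':=\tau\circ\si_\tau^{-1}$, $\bfw':=\si_\tau^{-1}(\bfw)$, and to each $\pi$ with $\SV(\si)\subseteq\SV(\pi)\subseteq\SV(\tau)$ associate $\mu:=\pi\circ\si_\pi^{-1}$; note $\si_\pi=\si_\tau|_{\SV(\pi)}$, so $\si_\tau^{-1}\circ\si_\pi=\mathrm{id}$ on $\SV(\pi)$. I would establish three facts: (a) $\pi\mapsto\mu$ restricts to a bijection from $\{\pi:\si\trieq\pi\prec\tau\}$ onto $\{\mu:\texttt{I}_{\SV(\si)}\trieq\mu\prec\tau'\}$, with inverse $\mu\mapsto\mu\circ\si_\mu$; (b) $\FN_{\pi,\tau}(\bfz,\bfw)=\FN_{\mu,\tau'}(\bfz,\bfw')$ for such $\pi$; and (c) $\frh_\si(\bfz,\bfw)=\frh_{\texttt{I}_{\SV(\si)}}(\bfz,\bfw')$ and $\frh_\tau(\bfz,\bfw)=\frh_{\tau'}(\bfz,\bfw')$. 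Fact (c) is a one-line computation from \eqref{eq:the_function_h}, since $\si_\tau^{-1}\circ(\tau')^{-1}=\tau^{-1}$ leaves every summand $h(z_\alpha,\cdot)$ literally unchanged. For (b) I would use the integral representation of $\FN$ in \eqref{the_function_Fn}--\eqref{eq:FN}: the cancellation $\si_\tau^{-1}\circ\si_\pi=\mathrm{id}$ on $\SV(\pi)$ gives $\tau'\circ\mu^{-1}=\tau\circ\pi^{-1}$ as maps $\SV(\pi)\to\SV(\tau)$, hence $\widehat{\SV}_{nf}(\mu;\tau')=\widehat{\SV}_{nf}(\pi;\tau)$, and for $\alpha$ in this common set one checks $w'_{\mu^{-1}(\alpha)}=w_{\pi^{-1}(\alpha)}$ (with the convention $\pi^{-1}(\alpha)=\alpha$ off $\SV(\pi)$), so the two integrands agree term by term.

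Granting (a)--(c), the inductive step runs as follows. For $\pi$ occurring in \eqref{eq:EV} (so $\pi\prec\tau$), the induction hypothesis, together with the consistency of eigenvector components across different $\ell$ and the fact that $\frl_{\texttt{I}_{\SV(\si)}}(\mu)$ depends only on the coordinates indexed by $\SV(\mu)=\SV(\pi)$ — on which $\bfw'$ equals $\si_\pi^{-1}(\bfw|_{\SV(\pi)})$ — yields $\frl_\si(\pi;\bfz,\bfw)=\frl_{\texttt{I}_{\SV(\si)}}(\mu;\bfz,\bfw')=g_\si(\pi;\bfz|_{\SV(\pi)},\bfw|_{\SV(\pi)})$. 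Writing the equation \eqref{eq:EV} for the eigenvector $\frl_{\texttt{I}_{\SV(\si)}}$ at $(\tau';\bfz,\bfw')$ and substituting (a) for the summation index, (b) for the entries $\FN_{\pi,\tau}$, (c) for the eigenvalue difference, and the previous display for $\frl_\si(\pi)$, it becomes precisely the equation \eqref{eq:EV} for $g_\si(\cdot;\bfz,\bfw)$ at $\tau$. Uniqueness then forces $g_\si(\tau;\bfz,\bfw)=\frl_\si(\tau;\bfz,\bfw)$, completing the induction.

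The hard part will be fact (a): the twist $\rho\mapsto\rho\circ\si_\rho^{-1}$ does not preserve the order $\preceq$ in general — it can change the number of cycles — so it is not a mere relabeling, and $\pi\prec\tau\iff\mu\prec\tau'$ fails unconditionally. What saves the argument is that one ranges only over $\pi$ with $\si\trieq\pi$: using the characterization $\rho\preceq\rho'\iff\SV(\rho)\subseteq\SV(\rho')$ and $|\SV_{nf}(\rho;\rho')|=|\rho|-|\rho'|+1$, together with the twist-invariance of $\SV_{nf}$ from (b), the only thing left to verify is $|\pi|-|\tau|=|\mu|-|\tau'|$, i.e.\ that $|\rho\circ\si_\rho^{-1}|-|\rho|$ takes the same value for $\rho=\pi$ and $\rho=\tau$. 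In the single-cycle reduction this can be read off from the interval description of $\prec$ in Corollary \ref{cor:perc}: $\si\trieq\pi\prec\tau$ forces each loop of $\si$ into one closed interval of the cycle $\tau$ making up $\pi$, so $\si_\rho^{-1}$ acts loop-by-loop within $\rho$ and contributes the same cycle-count defect on both sides; running the same bookkeeping through the inverse map $\mu\mapsto\mu\circ\si_\mu$ gives surjectivity. I expect essentially all the real work of the proof to be concentrated in this combinatorial verification.
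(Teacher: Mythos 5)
Your proposal is correct and follows essentially the same route as the paper: induction on $\tau$ with respect to $\prec$, with the inductive step consisting of reindexing the sum in the eigenvector equation via $\pi\mapsto\pi\circ\si_\pi^{-1}$ and matching the entries $\FN_{\pi,\tau}(\tbz,\tbw)=\FN_{\pi\circ\si_\pi^{-1},\tau\circ\si_\tau^{-1}}(\tbz,\si_\tau^{-1}(\tbw))$ and the eigenvalues $\frh_\si,\frh_\tau$. Your facts (a)--(c) are exactly the steps the paper uses (it asserts the index bijection and the $\FN$ identity without detailed verification), so your more explicit treatment of (a) is a refinement rather than a departure.
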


\begin{proof}
	We fix $\si\in\CS_\ell$ and prove the result by induction on $\tau\in\CS_\ell$ such that $\si\trieq \tau$ with the order taken with respect to the relation $\prec$. For the base of the induction, $\tau=\si$, we have from the normalization
\[
	\frl_\si(\tau,\tbz,\tbw) = 1 = 	\frl_{\texttt{I}_{\SV(\si)}}(\texttt{I}_{\SV(\si)},\tbz,\tbw) = \frl_{\texttt{I}_{\SV(\si)}}(\tau\circ \si_\tau^{-1},\tbz,\si_\tau^{-1}(\tbw))\,.
\]

Next, let $\tau\in\CS_\ell$ such that $\si\trieq\tau$ and assume the result holds for all $\pi\in\CS_\ell$ such that $\pi \prec \tau$. Then,
\[
\begin{aligned}
	~[\frh_\si-\frh_\tau]\frl_\si(\tau;\tbz,\tbw) &= \sum_{\si\trieq\pi\prec \tau}\frl_{\si}(\pi;\tbz,\tbw)\FN_{\pi,\tau}(\tbz,\tbw)\\
	& = \sum_{\si\trieq\pi\prec \tau}\frl_{\texttt{I}_{\SV(\si)}}(\pi\circ \si_\pi^{-1};\tbz,\si_\pi^{-1}(\tbw))\FN_{\pi,\tau}(\tbz,\tbw)
\end{aligned}
\]
Noting that $\frl_{\texttt{I}_{\SV(\si)}}(\pi\circ \si_\pi^{-1};\tbz,\si_\pi^{-1}(\tbw))$ is in fact only a function of $\tbz|_{\SV(\pi)}$ and $\tbw|_{\SV(\pi)}$ it follows that $\frl_{\texttt{I}_{\SV(\si)}}(\pi\circ \si_\pi^{-1};\tbz,\si_\pi^{-1}(\tbw))=\frl_{\texttt{I}_{\SV(\si)}}(\pi\circ \si_\pi^{-1};\tbz,\si_\tau^{-1}(\tbw))$. Furthermore, from the definition $\FN$, see \eqref{eq:FN}, for $\si\trieq \pi\prec \tau$, we have 
\[
	\FN_{\pi,\tau}(\tbz,\tbw) = \FN_{\pi\circ \si_\pi^{-1},\tau\circ \si_\tau^{-1}}(\tbz,\si_{\tau}^{-1}(\tbw))\,.
\]	

Combining all of the above, we conclude that
\[
\begin{aligned}
	~[\frh_\si-\frh_\tau]\frl_\si(\tau;\tbz,\tbw) &= \sum_{\si\trieq\pi\prec \tau}\frl_{\texttt{I}_{\SV(\si)}}(\pi\circ \si_\pi^{-1};\tbz,\si_\tau^{-1}(\tbw))\FN_{\pi\circ \si_\pi^{-1},\tau\circ \si_\tau^{-1}}(\tbz,\si_{\tau}^{-1}(\tbw))\\
	& = \sum_{\texttt{I}_{\SV(\si)}\trieq\pi\prec \tau\circ \si_\tau^{-1}}\frl_{\texttt{I}_{\SV(\si)}}(\pi;\tbz,\si_\tau^{-1}(\tbw))\FN_{\pi,\tau\circ \si_\tau^{-1}}(\tbz,\si_{\tau}^{-1}(\tbw))\\
	& =[\frh_{\texttt{I}_{\SV(\si)}}(\tbz,\si_\tau^{-1}(\tbw))-\frh_{\tau\circ \si_\tau^{-1}}(\tbz,\si_\tau^{-1}(\tbw))]\frl_{\texttt{I}_{\SV(\si)}}(\tau\circ \si_\tau^{-1};\tbz,\si_\tau^{-1}(\tbw))\,.
\end{aligned}
\]
Noting that $\frh_{\texttt{I}_{\SV(\si)}}(\tbz,\si_\tau^{-1}(\tbw)) = \frh_{\si}(\tbz,\tbw)$ and $\frh_{\tau\circ \si_\tau^{-1}}(\tbz,\si_\tau^{-1}(\tbw))=\frh_\tau(\tbz,\tbw)$ the result follows. 
\end{proof}

\begin{lemma}\label{lem:up_down}
	For every $A\subseteq B\subseteq [\ell]$, every $\si\in \CS_A$ and every $\tau\in \CS_B$, 
\[
	\frl_\si(\tau)=\begin{cases}
(-1)^{|B|-|A|} \frl_\si(\pi) & \text{if }\exists \pi \in \CS_A \text{ such that }\tau = U_A^B(\pi)\\
0 & \text{ otherwise}
\end{cases}\,.
\] 
\end{lemma}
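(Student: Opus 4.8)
The plan is to reduce to the case where $\tau$ is a single cycle, and then treat that case by induction using the eigenvector equation \eqref{eq:EV}.

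\emph{Step 1: reduction to a single cycle.} If $\si\not\trieq\tau$, both sides vanish by Lemma~\ref{lem:warum}, so assume $\si\trieq\tau$. Writing $A=\SV(\si)$, $B=\SV(\tau)$ and letting $\CL_1,\dots,\CL_r$ be the cycles of $\tau$, the tensorial property (Lemma~\ref{L:Tens}) gives $\si=\biguplus_k\si_k$ with $\si_k=\si|_{\SV(\CL_k)\cap A}\trieq\CL_k$ and $\frl_\si(\tau)=\prod_k\frl_{\si_k}(\CL_k)$. So it is enough to prove two single-cycle facts: (a) $\frl_\emptyset((v))=-1$ for a one-vertex cycle $(v)$; and (b) $\frl_{\si'}(\CL)=0$ whenever $\CL$ is a cycle with $|\SV(\CL)|\ge 2$ and $\SV(\si')\subsetneq\SV(\CL)$. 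Granting these, the lemma follows by reading off the factors: if some $\CL_k$ of length $\ge2$ is not contained in $A$ then that factor is $0$ by (b), and $\tau$ then fails to fix $\SV(\CL_k)\setminus A$, matching the ``otherwise'' case; if every cycle of $\tau$ meeting $B\setminus A$ is a fixed point then $\tau=U_A^B(\pi)$ with $\pi=\tau|_A$, the factors from the fixed points $(v)$ with $v\in B\setminus A$ are each $-1$ by (a), those from fixed points $(a)$ with $a\in A$ are $1$ (here $\si\trieq\tau$ forces each such $a$ to be a fixed point of $\si$ too), and a second use of Lemma~\ref{L:Tens} applied to $\pi$ gives $\frl_\si(\tau)=(-1)^{|B|-|A|}\frl_\si(\pi)$. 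Fact (a) is immediate from \eqref{eq:EV}: the only $\pi\prec(v)$ is $\pi=\emptyset$, so $-\frh_{(v)}\frl_\emptyset((v))=\frN_{\emptyset,(v)}=\frh_{(v)}$.

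\emph{Step 2: the single-cycle claim (b).} I would induct on $n=|\SV(\CL)|$, the base $n=2$ requiring no hypothesis. Put $D=\SV(\CL)\setminus\SV(\si')\ne\emptyset$. Since $\frh_{\si'}-\frh_\CL$ is not identically zero, by \eqref{eq:EV} it suffices to show $\sum_{\si'\trieq\pi\prec\CL}\frl_{\si'}(\pi)\,\frN_{\pi,\CL}=0$. By Corollary~\ref{cor:perc} every $\pi\prec\CL$ arises by partitioning $\SV(\CL)$ into intervals and keeping or deleting each, $\pi$ being the union of the loops on the kept intervals; as $\pi\ne\CL$, each interval has fewer than $n$ vertices. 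Applying Lemma~\ref{L:Tens} to the cycles of $\pi$ together with the inductive hypothesis and (a), one gets $\frl_{\si'}(\pi)=0$ unless every kept interval of $\pi$ is contained in $\SV(\si')$ or is a singleton contained in $D$ — call such $\pi$ \emph{admissible}. For an admissible $\pi$ the $\SV(\si')$-intervals partition each maximal run of $\SV(\si')$-vertices along $\CL$; let $\mathcal A$ be this partition. Every vertex of $D$ is then a kept singleton of $\pi$ or is deleted, and, using (a) for the singleton factors,
\[
\frl_{\si'}(\pi)=\Big(\prod_{J\in\mathcal A}\frl_{\si'|_J}(\text{loop on }J)\Big)\,(-1)^{\,\#\{\text{kept }D\text{-singletons of }\pi\}}.
\]

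\emph{Step 3: insensitivity of $\frN_{\pi,\CL}$ and the alternating sum.} The decisive (and, I expect, most delicate) point is that for admissible $\pi$ the value $\frN_{\pi,\CL}$ depends only on $\mathcal A$ and not on how $D$ is broken up. By \eqref{eq:FN}, $\frN_{\pi,\CL}=\frac1\pi\int_{\D_1}\prod_{\alpha\in\widehat{\SV}_{nf}(\pi;\CL)}(\overline{\nu}-\overline{z}_\alpha)^{-1}(\nu-w_{\pi^{-1}(\alpha)})^{-1}\,\textrm{d}^2\nu$; for admissible $\pi$ one checks that $\widehat{\SV}_{nf}(\pi;\CL)$ equals $\{\text{first vertex of each }J\in\mathcal A\}\cup D$ — every vertex of $D$ lies in it, being a non-fixed point of $\CL\circ\pi^{-1}$ if it is a kept singleton and a deleted vertex otherwise — while $w_{\pi^{-1}(\alpha)}=w_\alpha$ for $\alpha\in D$ (under the convention $\pi^{-1}(\alpha)=\alpha$ for $\alpha\notin\SV(\pi)$) and, for $\alpha$ the first vertex of some $J\in\mathcal A$, $\pi^{-1}(\alpha)$ is the last vertex of $J$. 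Thus the integrand, hence $\frN_{\pi,\CL}$, depends only on $\mathcal A$; denote it $\frN_{[\mathcal A]}$. Grouping the sum over admissible $\pi$ by $\mathcal A$ and then by the breaking of $D$, and noting that the kept singletons inside distinct maximal $D$-runs may be chosen independently, one gets
\[
\sum_{\si'\trieq\pi\prec\CL}\frl_{\si'}(\pi)\frN_{\pi,\CL}=\sum_{\mathcal A}\frN_{[\mathcal A]}\Big(\prod_{J\in\mathcal A}\frl_{\si'|_J}(\text{loop on }J)\Big)\prod_{R}\Big(\sum_{S\subseteq R}(-1)^{|S|}\Big),
\]
the last product over the maximal runs $R$ of $D$-vertices along $\CL$, with $S$ ranging over the sets of kept singletons inside $R$. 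Since $D\ne\emptyset$ there is at least one $R$, and $\sum_{S\subseteq R}(-1)^{|S|}=(1-1)^{|R|}=0$, so the whole sum vanishes and the induction closes. The bulk of the real work is the bookkeeping in the insensitivity step (tracking $\widehat{\SV}_{nf}$ and the role of the convention $\pi^{-1}(\alpha)=\alpha$); once that is in place, the cancellation is just $(1-1)^{|R|}=0$.
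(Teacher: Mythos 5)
Your proposal is correct, but it is organized quite differently from the paper's argument, so a comparison is worthwhile. The paper proves the lemma directly for general permutations by two separate inductions: for $\tau=U_A^B(\pi)$ it adds the fixed points of $B\setminus A$ one at a time and inducts on $\tau$ with respect to $\prec$, using the identity $\frN_{\pi,U_A^B(\tau)}=\frN_{L_A^B(\pi),\tau}$; for the ``otherwise'' case it inducts on $\tau$, restricts the sum in \eqref{eq:EV} to those $\pi$ fixing $\SV(\pi)\setminus A$ pointwise, observes that $\frN_{\pi,\tau}$ is constant on each fiber $\{\pi:\pi=U_A^{\SV(\pi)}(\eta)\}$, and finishes with $\sum_{A\subseteq C\subseteq B}(-1)^{|C|-|A|}=0$. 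You instead push everything through the tensorial property (Lemma \ref{L:Tens}) so that only single cycles remain: the sign $(-1)^{|B|-|A|}$ then falls out of the one computation $\frl_\emptyset((v))=-1$ rather than a separate induction, and the vanishing statement becomes an induction on cycle length in which the sum is grouped by the partition $\mathcal{A}$ of the $\SV(\si')$-runs, the fiber over each $\mathcal{A}$ being parametrized by the set $S$ of kept $D$-singletons and killed by $(1-1)^{|D|}=0$. The two proofs share the same two engines --- constancy of $\frN_{\pi,\cdot}$ over the relevant fibers (your computation of $\widehat{\SV}_{nf}(\pi;\CL)$, including the convention $\pi^{-1}(\alpha)=\alpha$ for $\alpha\notin\SV(\pi)$, is exactly the bookkeeping needed) and the binomial cancellation --- but your reduction buys a cleaner treatment of fixed points and a more local induction, at the price of leaning harder on Lemma \ref{L:Tens} and Corollary \ref{cor:perc}. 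The one point worth making explicit when writing this up: for fixed $\mathcal{A}$, whether $\si'\trieq\pi$ holds (and hence whether the term appears in \eqref{eq:EV} at all) depends only on $\mathcal{A}$ and not on $S$, since the loops of $\si'$ avoid $D$; this is what guarantees that the fiber over $\mathcal{A}$ really ranges over all $S\subseteq D$ and the alternating sum is complete.
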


\begin{proof}
	Fix $A\subseteq B\subseteq [\ell]$ and $\si\in\CS_A$. We split the proof into the statement regarding the permutations $\tau$ such that $\tau = U_A^{\SV(\tau)}(\pi)$ for some $\pi\in \CS_A$, and the ones for which no such $\pi$ exists. 
	
Starting with the former, we prove the statement by double induction: First on the size of $B\setminus A$ and second on the permutation $\tau$ (with respect to the ordering $\prec$). If $|B\setminus A|=0$, then $A=B$ and there is nothing to prove. Next, let $A\subset B\subseteq [\ell]$ such that $|B\setminus A|=1$ and denote by $i$ the unique element in $B\setminus A$. 

We start an induction on $\tau\in \CS_B$ such that $\si\trieq \tau$ (with respect to the ordering $\prec$). The base of the induction is given by $\tau = U_A^B(\si)$ for which the eigenvector equation \eqref{eq:EV} yields
\[
\begin{aligned}
- h(z_i, w_i)\frl_{\si}(U_A^B(\si))
	& = \big[\frh_\si -\frh_{U_A^B(\si)}\big] \frl_\si(U_A^B(\si)) 
	\\
	& =\sum_{\si\trieq \pi \prec U_A^B(\si)} \frl_{\si}(\pi) \FN_{\pi,U_A^B(\si)}	= \frl_\si(\si)\FN_{\si,U_A^B(\si)}	=h(z_i,w_i)\,,
\end{aligned}
\]
which shows that $\frl_\si(U_A^B(\si)) = -1 = -\frl_\si(\si)$, due to the chosen normalization of the eigenvectors. Next assume that for every permutation in $\CS_B$ which is in the image of $I_A^B$ and is smaller than $\tau\in \CS_B$ (with respect to $\prec$) the statement holds. By the eigenvector equation \eqref{eq:EV}, for every $\tau\in\CS_A$
\[
\begin{aligned}
\big[\frh_{\si}-\frh_{\tau}- h(z_i, w_i)\big]\frl_{\si}(U_A^B(\tau))
	& = \big[\frh_\si -\frh_{U_A^B(\tau)}\big] \frl_\si(U_A^B(\tau)) 
	=\sum_{\si\trieq \pi \prec U_A^B(\tau)} \frl_{\si}(\pi) \FN_{\pi,U_A^B(\tau)}	\\	
	& = \sum_{\substack{\si\trieq \pi \prec U_A^B(\tau) \\ i\notin \SV(\pi)}} \frl_{\si}(\pi) \FN_{\pi,U_A^B(\tau)} + \sum_{\substack{\si\trieq \pi \prec U_A^B(\tau) \\ i\in \SV(\pi)}} \frl_{\si}(\pi) \FN_{\pi,U_A^B(\tau)}\,.
\end{aligned}
\]
	
Note that the only $\pi\in \CS_A$ satisfying $\pi \prec U_A^B(\tau)$ and $i\notin \SV(\pi)$ is $\pi=\tau$ and therefore, the first sum equals $\frl_\si(\tau) \FN_{\tau,U_A^B(\tau)} = \frl_\si(\tau)h(z_i,w_i)$. As for the second sum, since $\pi\prec U_A^B(\tau)$ and $i\in\SV(\pi)$ we must have that $i$ is also an isolated vertex of $\pi$ and therefore that $L_A^B(\pi)$ is well defined and $L_A^B(\pi)\prec \tau$. Due to \eqref{eq:FN}, we have $\FN_{\pi,U_A^B(\tau)}=\FN_{L_A^B(\pi),\tau}$ and by the induction assumption $\ell_\si(\pi) = -\ell_\si(L_A^B(\pi))$. Therefore
\[
\begin{aligned}
	&\big[\frh_{\si}-\frh_{\tau}- h(z_i, w_i)\big]\frl_{\si}(U_A^B(\tau))\\
	&\qquad   = \frl_{\si}(\tau)h(z_i, w_i)- 	 \sum_{\si \trieq L_A^B(\pi) \prec \tau}\frl_{\si}(L_A^B(\pi))\frN_{L_A^B(\pi),\tau}
 = \frl_\si(\tau)\frh(z_i,w_i) - [\frh_\si-\frh_\tau]\ell_\si(\tau) \\
 &\qquad = [h_\si-h_\tau-h(z_i,w_i)]\ell_\si(\tau)\,,
\end{aligned}
\]
and therefore that $\frl_\si(U_A^B(\tau))=-\frl_\si(\tau)$. This complete the induction over $\tau$ and thus the proof for the case $|B\setminus A|=1$. Noting that for $A\subset B\subset C$ we have $U_A^C(\tau) = U_B^C(U_A^B(\tau))$, one can repeat the previous argument adding one vertex at a time, thus completing the induction over $|B\setminus A|$, and thus the proof of the first part. 

Next, we turn to deal with these $\tau$ for which there is no $\pi\in\CS_A$ such that $\tau = I_A^{\SV(\tau)}(\pi)$. As before, this is done by induction on the permutation with respect to the relation $\prec$. Let $\tau$ be a permutation with $A\subset \SV(\tau)$ and denote $B=\SV(\tau)$. Since $\tau$ is not of the form $I_A^B(\pi)$ for some $\pi\in\CS_A$, it follows that, there exists $i\in B\setminus A$ such that $\tau(i)\neq i$. Furthermore, due to the first part, we can assume without loss of generality that the set of fixed points of $\tau$ is contained in $A$.

The eigenvector equation reads
\[
	[\frh_{\si}-\frh_{\tau}]\frl_\si(\eta)
	= \sum_{\si\trieq \pi \prec\tau} \frl_{\si}(\pi) \frN_{\pi, \tau}
\]  
By the induction assumption, for every $\si\trieq \pi\prec \tau$ such that $\exists j\in \SV(\pi)$ and $\pi(j)\neq j$, we have $\frl_\si(\pi)=0$. Thus we can restrict the sum to $E_{\si,\tau} = \{\si\trieq \pi\prec \tau ~:~ \pi(j)= j~\forall j \in \SV(\pi) \setminus A\}$. In other words
\[
	[\frh_{\si}-\frh_{\tau}]\frl_\si(\tau)=\sum_{\pi\in E_{\si,\tau}}  \frl_{\si}(\pi) \frN_{\pi, \tau} = \sum_{\eta\in \CS_A}\sum_{\substack{\pi\in E_{\si,\tau}\\ \pi=U_A^{\SV(\pi)}(\eta)}}\frl_\si(\pi)\FN_{\pi,\tau}.
\]
The key point is that for every $\eta\in \CS_A$ the values of $\FN_{\pi,\tau}$ for $\pi\in E_{\si,\tau}$ such that $\pi=U_A^{\SV(\pi)}(\eta)$ are all the same. Indeed, from the properties of the relation $\prec$ (see Corollary \ref{cor:perc}) and the fact that $\tau$ has no fixed points in $B\setminus A$, each vertex in $\pi=U_A^{\SV(\pi)}(\eta)$ is either in a cycle of $\eta$ (which is also a cycle in $\pi$ and $\tau$, a vertex from $B\setminus A$ that belongs to $\tau$ and not to $\pi$ or a vertex from $B\setminus A$ which belongs to $\tau$ and to $\pi$, in which case it is a fixed point of $\pi$ but not of $\tau$. In all the cases above, we obtain that $\widehat{\SV}_{bf}(\pi;\tau)=B\setminus A$, and thus that $\FN_{\si,\tau}$ are all the same. 

Denoting by $K_{\eta,\tau}$ the common value $\FN_{\pi,\tau}$ for $\pi\in E_{\si,\tau}$ such that $\pi = U_A^{\SV(\pi)}(\eta)$, we conclude that
\[
\begin{aligned}
~	[\frh_\si-\frh_\tau]\frl_\si(\tau) &= \sum_{\eta\in\CS_A} K_{\eta,\tau} \sum_{\substack{\pi\in E_{\si,\tau}\\ \pi=U_A^{\SV(\pi)}(\eta)}}\frl_\si(\pi)
	= \sum_{\eta\in\CS_A} K_{\eta,\tau} \sum_{\substack{\pi\in E_{\si,\tau}\\ \pi=U_A^{\SV(\pi)}(\eta)}}(-1)^{|\SV(\pi)|-|A|}\frl_\si(\eta)\\
	& = \sum_{\eta\in\CS_A} K_{\eta,\tau}\frl_\si(\eta) \sum_{A\subseteq C\subseteq B}(-1)^{|C|-|A|}
\end{aligned}
\]
Noting that 
\[
	\sum_{A\subseteq C\subseteq B}(-1)^{|C|-|A|} = \sum_{k=0}^{|B\setminus A|}\binom{|B\setminus A|}{k}(-1)^k = (-1+1)^{|B\setminus A|} = 0\,,
\]
the result follows. 
\end{proof}

We now combine the last three lemmas. Given $\si,\tau\in \CS_\ell$ such that $\si\trieq \tau$, by Lemma \ref{lem:up_down}, we have $\frl_\si(\tau)=0$ unless $\tau= U_{\SV(\si)}^{\SV(\tau)}(\pi)$ for some $\pi\in \CS_{\SV(\si)}$ which satisfy $\si\trieq \pi$. 

Given a cycle $\CL\in\pi$, denote by $\si_\CL = \si|_{\SV(\CL)}$ the restriction of $\si$ to the vertices of $\CL$. Since $\si\trieq\pi$, each of the functions $\si_\CL$ is a permutation satisfying $\si_\CL\trieq \CL$ and $\SV(\si_\CL) = \SV(\CL)$. Hence, by Lemma \ref{L:Tens} 
\begin{align}\label{eq:well_ok}
	(-1)^{|\SV(\tau)|-|\SV(\si)|}\frl_\si(\tau;\tbz,\tbw) &= \frl_\si(\pi;\tbz,\tbw) 
	= \prod_{\CL\in\pi} \frl_{\texttt{I}_{\SV(\si_\CL)}}(\CL\circ \si_\CL^{-1};\tbz|_{\SV(\CL)},\si_\CL^{-1}(\tbw|_{\SV(\CL)}))\nonumber\\
	&= \prod_{\CL\in\pi} \prod_{\CL'\in \CL\circ \si_\CL^{-1}}\frl_{\texttt{I}_{\SV(\CL')}}(\CL';\tbz|_{\SV(\CL')},(\si_\CL^{-1}(\tbw|_{\SV(\CL)})|_{\SV(\CL')}))\,.
\end{align}

Since each of the terms on the right hand size is (up to reindexing) of the form $\frl_{\texttt{I}_{[\ell]}}(C_l;\tbz,\tbw)$ for some choice of $\ell\in\BN$ and $\tbz,\tbw\in\BC^\ell$, the only equations that are left to be solved are (for every $\ell\in\BN$)
\begin{align}&\label{E:fun}
[\frh_{\mathtt{I}_{[\ell]}}-\frh_{C_\ell}]\frl_{\mathtt{I}_{[\ell]}}(C_\ell;\bfz, \bfw)= \sum_{\mathtt{I}_{[\ell]}\trieq \sigma\prec C_\ell } \frl_{\mathtt{I}_{[\ell]}} (\sigma;\bfz, \bfw) \frN_{\sigma, C_\ell},
%&\label{E:up}[\frh_{\mathtt{I}_A}-\frh_{\tau}-h_{ii}]\frf^{(\ell)}_{\mathtt{I}_A}(\tau \circ \mathtt{I}_{\{i\}} ;\bfz, \bfw)= \sum_{\mathtt{I}\trieq \sigma\prec \tau\circ \mathtt{I}_{\{i\}} }\frf^{(\ell)}_{\mathtt{I}_A} (\sigma;\bfz, \bfw) \frN_{\ell}(\sigma, C_k)\,.
\end{align}

%Note that since the entries of eigenvectors for different values of $\ell$ coincide whenever both are defined, we can in fact omit the index $(\ell)$. 

Recall that for $A\subset [\ell]$ and $\bfz\in \C^A$, we denote by $\mathsf{V}_A(\bfz)$ the Vandermonde determinant $\prod_{\substack{\alpha, \beta\in A\\ \alpha< \beta}}(z_{\beta}-z_{\alpha})$ and for a permutation $\si\in\CS$ and $\tbz\in\BC^{\SV(\si)}$, we define $\mathsf{V}_\si(\tbz) = \prod_{\CL\in\si}\mathsf{V}_{\SV(\CL)}(\tbz|_{\SV(\CL)})$

\begin{lemma}\label{T:Comp1}
	For every $\ell\in\BN$ and $\si\in \CS_\ell$, the function $(\tbz,\tbw)\mapsto \frl_{\mathtt{I}_{[\ell]}}(\si; \bfz, \bfw)$, for $\tbz,\tbw\in \BC^{\SV(\si)}$ is a rational function in the variables $\overline{\tbz}$ and $\tbw$ (and in particular does not depend on $\tbz$ and $\overline{\tbw}$). Moreover $\mathsf{V}_{\si}(\bar{\bfz})\mathsf{V}_{\si}(\bfw) \frl_{\textrm{I}_{\SV(\si)}}(\si; \bfz, \bfw)$ is a homogeneous polynomial of degree ${|\SV(\si)|-1 \choose  2}$ in $\bar{\bfz}$ and $\bfw$. In addition, for $\ell\geq 3$, the polynomial $\mathsf{V}_{[\ell]}(\bar{\bfz})\mathsf{V}_{[\ell]}(\bfw) \frl_{\textrm{I}_{[\ell]}}(C_\ell; \bfz, \bfw)$ vanishes on the complex hyperplanes
\[
	\{(\bfz, \bfw)\in(\BC^{[\ell]})^2 ~:~ (z_\alpha, w_{\alpha})= (z_\beta, w_{\beta})\},\qquad \forall \alpha,\beta\in [\ell] ~\text{ such that }\alpha\neq \beta\,.
\]
\end{lemma}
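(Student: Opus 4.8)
The plan is to prove the three assertions by induction on $\ell$, after reducing the case of a general $\si\in\CS_\ell$ to that of the cyclic permutation $C_\ell$ by means of the tensorial, recursive and sign lemmas. Indeed, if $\SV(\si)\subsetneq[\ell]$ then $\mathtt{I}_{[\ell]}\not\trieq\si$ and $\frl_{\mathtt{I}_{[\ell]}}(\si)=0$ by Lemma \ref{lem:warum}, so there is nothing to prove; and if $\SV(\si)=[\ell]$, then identity \eqref{eq:well_ok} (which combines Lemmas \ref{L:Tens}, \ref{lem:Recurrence} and \ref{lem:up_down}) writes $\frl_{\mathtt{I}_{[\ell]}}(\si)$, up to a sign, as the product over the cycles $\CL$ of $\si$ of factors each equal --- after relabelling $\SV(\CL)$ as $[\,|\SV(\CL)|\,]$ --- to $\frl_{\mathtt{I}_{[k]}}(C_k;\cdot,\cdot)$ with $k=|\SV(\CL)|\le\ell$. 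Since $\mathsf V_\si=\prod_{\CL\in\si}\mathsf V_{\SV(\CL)}$ factorises the same way, rationality in $(\overline{\bfz},\bfw)$, the Vandermonde--polynomial structure, the (additively behaving) degree, and vanishing on the diagonals $\{(z_i,w_i)=(z_j,w_j)\}$ are all inherited from the cyclic factors, with $\sum_{\CL}\binom{|\SV(\CL)|-1}{2}=\binom{\ell-1}{2}$ when $\si$ is a single cycle; the hyperplane claim is only asserted for $\ell\ge3$. The base cases are immediate: $\frl_{\mathtt{I}_{[1]}}(C_1)=1$, and for $\ell=2$ the explicit table gives $\frl_{(1)(2)}((12))=\bigl((\overline{z_2}-\overline{z_1})(w_2-w_1)\bigr)^{-1}$, so $\mathsf V_{[2]}(\overline{\bfz})\mathsf V_{[2]}(\bfw)\,\frl_{(1)(2)}((12))\equiv1$.

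For the inductive step, fix $\ell\ge 3$, assume the statement for all smaller indices, and use the eigenvector equation \eqref{E:fun} in the form $D\cdot\frl_{\mathtt{I}_{[\ell]}}(C_\ell)=G$, where $D:=\frh_{\mathtt{I}_{[\ell]}}-\frh_{C_\ell}=\sum_{\alpha\in[\ell]}\bigl(h(z_\alpha,w_\alpha)-h(z_\alpha,w_{\alpha-1})\bigr)$ (indices modulo $\ell$) and $G:=\sum_{\mathtt{I}_{[\ell]}\trieq\sigma\prec C_\ell}\frl_{\mathtt{I}_{[\ell]}}(\sigma)\,\frN_{\sigma,C_\ell}$. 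Because $\mathtt{I}_{[\ell]}\trieq\sigma$ forces $\SV(\sigma)=[\ell]$, Corollary \ref{cor:perc} identifies the index set of this sum with the permutations obtained by cutting the $\ell$-cycle into at least two consecutive arcs and closing each arc into a loop; for such $\sigma$ the induction hypothesis (through the reduction above) makes $\frl_{\mathtt{I}_{[\ell]}}(\sigma)$ a rational function of $(\overline{\bfz},\bfw)$ with the stated structure, while partial fractions applied to the integral \eqref{the_function_Fn} exhibit $\frN_{\sigma,C_\ell}=\frn_{\sigma,C_\ell}$ as a finite $\BC(\overline{\bfz},\bfw)$-linear combination of the functions $h_{\alpha\beta}:=h(z_\alpha,w_\beta)$. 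Thus $D$ and $G$ both lie in the $\BC(\overline{\bfz},\bfw)$-module $\CM$ spanned by $\{h_{\alpha\beta}:\alpha,\beta\in[\ell]\}$, with $D$ having the constant coefficients $+1$ on $h_{\alpha\alpha}$, $-1$ on $h_{\alpha,\alpha-1}$ and $0$ otherwise; and the crux of the proof is the divisibility statement: $G$ equals $D$ times a rational function of $(\overline{\bfz},\bfw)$.

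To establish divisibility I would argue as follows. The functions $h_{\alpha\beta}$ are linearly independent over $\BC(\bfz,\overline{\bfz},\bfw,\overline{\bfw})$ --- each carries, through its summand $-\log(z_\alpha-w_\beta)$, a logarithmic monodromy along $\{z_\alpha=w_\beta\}$ that no other $h_{\alpha'\beta'}$ has --- so writing $G=\sum_{\alpha,\beta}g_{\alpha\beta}h_{\alpha\beta}$ with $g_{\alpha\beta}\in\BC(\overline{\bfz},\bfw)$, the equality $G=D\cdot\phi$ for a rational $\phi$ is \emph{equivalent} to the identities $g_{\alpha\alpha}=g_{\beta\beta}$ for all $\alpha,\beta$, $g_{\alpha,\alpha-1}=-g_{\alpha\alpha}$ for all $\alpha$, and $g_{\alpha\beta}=0$ for $\beta\notin\{\alpha,\alpha-1\}$. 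These identities do not involve $\phi$, so they can be checked directly: one expands $g_{\alpha\beta}$ as a sum over those arc-cuttings $\sigma$ whose defining disc integral in \eqref{the_function_Fn} produces $h_{\alpha\beta}$ after partial fractions, weighted by $\frl_{\mathtt{I}_{[\ell]}}(\sigma)$ and by products of Cauchy kernels $(\overline{z_\gamma}-\overline{z_\delta})^{-1}$, $(w_\gamma-w_\delta)^{-1}$, and then groups the terms using the recursion \eqref{E:fun} \emph{on the arcs} (all of length $<\ell$) to produce the cancellations --- the vanishing $g_{\alpha\beta}=0$ for non-adjacent pairs being exactly where the non-crossing structure of the admissible $\sigma$ enters. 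Granting the identities, $\phi:=g_{11}\in\BC(\overline{\bfz},\bfw)$ satisfies $D\phi=G$, hence (since $D\not\equiv0$, or by Lemma \ref{lem:exist_uniq}) $\frl_{\mathtt{I}_{[\ell]}}(C_\ell)=\phi$; in particular it is rational in $(\overline{\bfz},\bfw)$ and independent of $(\bfz,\overline{\bfw})$. The remaining assertions follow by bookkeeping on $\phi=g_{11}$: every contribution to $g_{11}$ is a product of Cauchy kernels and of lower-order factors $\frl_{\mathtt{I}_{[k]}}(C_k)$, hence by induction is homogeneous of $\overline{\bfz}$-degree $-(\ell-1)$ and $\bfw$-degree $-(\ell-1)$ (each Cauchy kernel contributes $-1$, each $\frl_{\mathtt{I}_{[k]}}(C_k)$ contributes $-(k-1)$, and the weights add to $-(\ell-1)$ independently of $\sigma$), so multiplying by $\mathsf V_{[\ell]}(\overline{\bfz})\mathsf V_{[\ell]}(\bfw)$ (of bidegree $(\binom{\ell}{2},\binom{\ell}{2})$) yields bidegree $(\binom{\ell-1}{2},\binom{\ell-1}{2})$; that this is a genuine polynomial --- the Cauchy kernels combining to at most simple poles along each $\{\overline{z_\gamma}=\overline{z_\delta}\}$ and $\{w_\gamma=w_\delta\}$ --- is part of the same bookkeeping and is consistent with Theorem \ref{thm:T3}. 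Finally, the vanishing of $\mathsf V_{[\ell]}(\overline{\bfz})\mathsf V_{[\ell]}(\bfw)\,\frl_{\mathtt{I}_{[\ell]}}(C_\ell)$ on $\{(z_i,w_i)=(z_j,w_j)\}$, $i\ne j$, follows by induction: on that locus two factors of each admissible integral kernel in \eqref{the_function_Fn} coincide and every lower-order factor $\frl_{\mathtt{I}_{[k]}}(C_k)$ vanishes there, which forces a compensating zero once the Vandermonde denominators are cleared.

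The step I expect to be the main obstacle is the divisibility statement of the previous paragraph. \emph{A priori} $\frl_{\mathtt{I}_{[\ell]}}(C_\ell)$ is merely the solution of a linear system whose coefficients are $\BC$-combinations of logarithms, so the fact that this solution is a rational function --- the ``somewhat miraculous'' phenomenon mentioned after Theorem \ref{T:2} --- is not formal; the whole content reduces to verifying the cancellation identities among the coefficients $g_{\alpha\beta}$, a genuinely combinatorial computation organised by the non-crossing arc-cuttings of the $\ell$-cycle together with the partial-fraction identity \eqref{the_function_Fn} for $\frN$.
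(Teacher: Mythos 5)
Your reduction to the cyclic case via \eqref{eq:well_ok}, your identification of the eigenvector equation \eqref{E:fun} as the engine of the induction, and your diagnosis of the crux -- that the right-hand side $G=\sum_{\mathtt{I}_{[\ell]}\trieq\sigma\prec C_\ell}\frl_{\mathtt{I}_{[\ell]}}(\sigma)\frN_{\sigma,C_\ell}$ must be divisible by $D=\frh_{\mathtt{I}_{[\ell]}}-\frh_{C_\ell}$ inside the $\BC(\overline{\bfz},\bfw)$-span of the functions $h_{\alpha\beta}$ -- all match the structure of the actual proof, and your reformulation of divisibility as the coefficient identities $g_{\alpha\alpha}=g_{\beta\beta}$, $g_{\alpha,\alpha-1}=-g_{\alpha\alpha}$, $g_{\alpha\beta}=0$ otherwise is a correct and equivalent restatement (granting the linear independence of the $h_{\alpha\beta}$, which your monodromy sketch supports). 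The problem is that you then stop: the identities are asserted to follow from "expanding and grouping terms using the recursion on the arcs," and you yourself flag this as the expected obstacle. But those identities \emph{are} the lemma -- they are exactly the "miraculous" rationality phenomenon -- so as written the proposal sets up the right linear-algebraic frame and leaves the only nontrivial step unproved. The secondary claims (polynomiality and bidegree of $\sfV\sfV\,\frl$, and the hyperplane vanishing) are likewise only bookkeeping sketches resting on the unproved step; in particular your argument that "every lower-order factor $\frl_{\mathtt{I}_{[k]}}(C_k)$ vanishes there" is false for $k=2$, so the compensation mechanism on the diagonal hyperplanes needs a real argument, not just this heuristic.

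For comparison, the paper closes the gap by a mechanism you do not consider: it keeps the disc-integration variable alive and \emph{decouples} it, writing the two sides of \eqref{E:fun} as $\frac1\pi\int_{\tbD_1}\widetilde P^\ell(\lambda,\overline\lambda)\,\frl(C_\ell)-\widetilde Q^\ell(\lambda,\overline\lambda)\,\mathrm{d}^2\lambda$ with $\widetilde P^\ell(\lambda,\kappa)$, $\widetilde Q^\ell(\lambda,\kappa)$ rational in two independent auxiliary variables. It then proves the much stronger pointwise statement that $\widetilde Q^\ell/\widetilde P^\ell$ is \emph{constant in} $(\lambda,\kappa)$: after clearing denominators one gets polynomials $P^\ell,Q^\ell$ of bidegree at most $(\ell-2,\ell-2)$, $P^\ell$ visibly vanishes at the $\ell(\ell-2)=(\ell-1)^2-1$ points $(\lambda,\kappa)=(w_\gamma,\overline{z_\beta})$ with $\beta\notin\{\gamma,C_\ell(\gamma)\}$, and an induction (splitting $C_\ell$ at $\beta$ and $\gamma$ into two arcs and invoking the tensorial property plus the lower-order eigenvector equations $Q^j=\frl(C_j)P^j$ on one arc) shows $Q^\ell$ vanishes there too, forcing proportionality by a dimension count. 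Evaluation at these special points replaces your term-by-term cancellation in the $h_{\alpha\beta}$ basis entirely; the same decoupled identity $\frl(C_\ell)P^\ell(\lambda,\kappa)=Q^\ell(\lambda,\kappa)$ is then specialized at $(\lambda,\kappa)=(w_\alpha,\overline{z_\alpha})$ to get polynomiality after multiplying by Vandermondes, and its $|\lambda|^{2\ell-4}$-coefficient yields the recursion \eqref{eq:proving_stuff_1} that delivers both the degree $\binom{\ell-1}{2}$ and the vanishing on the hyperplanes $(z_\alpha,w_\alpha)=(z_\beta,w_\beta)$. If you want to salvage your route, you would need to actually establish the cancellation identities for the $g_{\alpha\beta}$; the paper's two-variable interpolation argument is the concrete device that does this work.
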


We postpone the proof of Lemma \ref{T:Comp1} to Subsection \ref{subsec:proof_of_lemma_comp1}. 

\begin{remark}
	Let $\FM^{\nu}$ denote the matrix defined by disintegrating $\frN$. That is $\FM$ is the matrix defined by 
\[
	\FM^\nu_{\si,\tau} = \frac{1}{\pi}\prod_{\alpha\in \widehat{\SV}_{nf}(\si;\tau)} \frac{1}{(\overline{\nu}-\overline{u_{\alpha}})} \frac{1}{(\nu-v_{\sigma^{-1}(\alpha)})}\qquad \forall \si\prec \tau\,,
\]
\[
	\FM^\nu_{\si,\si} = \sum_{\alpha\in \SV(\si)}\frac{1}{(\overline{\nu}-\overline{z_\alpha})(\nu-w_{\si^{-1}(\alpha)})}\,,
\]
and $0$ otherwise. An immediate corollary from Lemma \ref{T:Comp1} is that for every choice of $\nu, \nu'\in \C$, it holds that $[\FM^{\nu}, \FM^{\nu'}]:=\FM^\nu\FM^{\nu'}-\FM^{\nu'}\FM^{\nu}=0$.

The statement is not as miraculous as it might first appear. Since the left-hand side of \eqref{eq:cond_expec_for_F_N} is a conditional expectation over the sequence of eigenvalues $(\lambda_i)$ and the sequence of eigenvalues is exchangeable the right-hand side must be symmetric in $(\lambda_i)$. This strongly suggests that already at the finite level of $N$ by $N$ matrices $[A^{\lambda}, A^{\lambda'}]=0$. In a forthcoming paper we will take a more careful look at this and use it to connect local correlations  to the representation theory of the symmetric group $\CS_{[\ell]}$.
\end{remark} 

Before turning to the proof of Lemma \ref{T:Comp1} and the main results, we provide a summary of the properties the left eigenvectors satisfy and state the analogue result for right eigenvectors. 

\paragraph{Left eigenvectors - Summary} Let $\si,\tau\in\CS$ such that $\si\trieq \tau$ and $\tau=\{\CL_k\}_{k=1}^{|\tau|}$. Then 
\begin{center}
\begin{tabular}{c|c|c}
Tensorial property$\quad$  & $\frl_{\si}(\tau; \bfz, \bfw)=\prod_{k=1}^{|\tau|} \frl_{\si|_{\SV(\CL_k)}}(\CL_k; \tbz|_{\SV(\CL_k)}, \tbw|_{\SV(\CL_k)})$ & - \\
&&\\
Recursive property & $\frl_{\si}(\tau; \bfz, \bfw)=\frl_{\texttt{I}_{\SV(\si)}}(\tau\circ \si_\tau^{-1}; \bfz, \si_\tau^{-1}(\bfw))$ & - \\
&&\\
Lifting property & $\frl_\si(\tau) = (-1)^{|\SV(\tau)|-|\SV(\pi)|}\frl_\si(\pi)$ & $\tau = U_{\SV(\pi)}^{\SV(\tau)}(\pi)$\\
& $\frl_\si(\tau)=0$ & otherwise\\
& &\\

\end{tabular}
\end{center}
Furthermore, $\frl_\si(\tau;\tbz,\tbw)$ is a rational function in $\overline{\tbz}$ and $\tbw$ such that $\mathsf{V}_{\SV(\si)}(\overline{\tbz})\mathsf{V}_{\SV(\si)}(\tbw)\frl_\si(\tau;\tbz,\tbw)$ is a homogeneous polynomial in $\overline{\tbz}$ and $\tbw$ of degree $\binom{|\SV(\si)|-1}{2}$. 

\paragraph{Right eigenvectors - Summary} Let $\si,\tau\in\CS$ such that $\si\trieq \tau$ and $\tau=\{\CL_k\}_{k=1}^{|\tau|}$. Then 
\begin{center}
\begin{tabular}{c|c|c}
Tensorial property$\quad$  & $\frr_{\tau}(\si; \bfz, \bfw)=\prod_{k=1}^{|\tau|} \frr_{\CL_k}(\si|_{\SV(\CL_k)}; \tbz|_{\SV(\CL_k)}, \tbw|_{\SV(\CL_k)})$ & - \\
&&\\
Recursive property & $\frr_{\si}(\tau; \bfz, \bfw)=\frr_{\tau\circ \si_\tau^{-1}}(\texttt{I}_{\SV(\si)}; \bfz, \si_\tau^{-1}(\bfw))$ & - \\
&&\\
Lifting property & $\frr_\tau(\si) = \frr_\tau(\pi)$ & $\pi = U_{\SV(\sigma)}^{\SV(\pi)}(\si)$\\
& $\frr_\tau(\si)=0$ & otherwise\\
& &\\

\end{tabular}
\end{center}
Furthermore, $\frr_\tau(\si;\tbz,\tbw)$ is a rational function in $\overline{\tbz}$ and $\tbw$ such that $\mathsf{V}_{\SV(\si)}(\overline{\tbz})\mathsf{V}_{\SV(\si)}(\tbw)\frr_\tau(\si;\tbz,\tbw)$ is a homogeneous polynomial in $\overline{\tbz}$ and $\tbw$ of degree $\binom{|\SV(\si)|-1}{2}$. 
% *****************************************************************************************************************

\subsection{Proof of the main theorems}

	Let $\si\in\CS$ and $\tbu,\tbv\in \tbD_1^{\SV(\si)}$ such that $\mathrm{Dist}(\tbu,\tbv)>0$. Recall that 
\[
	\rho(\si;\tbu,\tbv) = \lim_{\eps \rightarrow 0} \lim_{N\rightarrow \infty}\E[\widehat{\rho}_{N,\eps}(\sigma; \tbu,\tbv)]
\]
and  
\begin{equation}\label{eq:Relaton_of_rho_and_F_2}
	\widehat{\rho}_{N, \eps}(\sigma; \tbu,\tbv)=\frac{1}{(2\pi i)^{2|\SV(\sigma)|}N^{|\sigma|}\varepsilon^{4|\SV(\sigma)|}}\int_{\caS_\varepsilon^{2\SV(\sigma)}(\tbu,\tbv)}  F_N(\sigma; \bfz, \bfw)\textrm{d}\overline{\bfz} \textrm{d}\bfw\,.
\end{equation}
Using Theorem \ref{thm:limit_exists} together with the fact that the convergence is uniform provided $\mathrm{Dist}(\tbu,\tbv)>0$, we conclude that 
\[
	\rho(\si;\tbu,\tbv)=\lim_{\varepsilon\to 0}\frac{1}{(2\pi i)^{2|\SV(\sigma)|}\varepsilon^{4|\SV(\sigma)|}}\int_{\caS_\varepsilon^{2\SV(\sigma)}(\tbu,\tbv)} (\exp(\frN))_{\emptyset,\si}\textrm{d}\overline{\bfz} \textrm{d}\bfw\,.
\]

Using the complex form of Green's theorem, we conclude that the limit $\varepsilon\to 0$ exists and 
\[
	\rho(\si,\tbu,\tbv) = \partial_{\tbu}\partial_{\overline{\tbv}}(\exp(\FN))_{\emptyset,\si}(\tbu,\tbv)\,.
\]
This completes the proof for the existence of $\rho(\si,\tbu,\tbv)$. 

In order prove the factorization property stated in Theorem \ref{T:1} as well as its representation given in Theorem \ref{T:2}, we will use the properties of the basis eigevectors $(\tbr_\pi)_{\pi\in\CS}$ and $(\tbl_\pi)_{\pi\in\CS}$. By Lemma \ref{lem:warum} and the spectral decomposition, for every $\si,\tau\in\CS$ and $t>0$ we can write
\[
	(e^{t\frN})_{\si,\tau}(\tbu,\tbv)=\sum_{\si\trieq \pi\trieq \tau} \frr_{\pi}(\si)e^{t \frh_{\pi}} \frl_{\pi}(\tau)
	=\sum_{\si\trieq \pi\trieq \tau} \frr_{\pi}(\si)\prod_{\alpha\in \SV(\pi)}\bigg(\frac{1-\overline{u_\alpha}v_{\pi^{-1}(\alpha)}}{|u_\alpha-v_{\pi^{-1}(\alpha)}|^2}\bigg)^t \frl_{\pi}(\tau)\,.
\]
and in particular
\[
	(e^{\frN})_{\emptyset,\si}(\tbu,\tbv)
	=\sum_{\pi\trieq \si} \frr_{\pi}(\emptyset)\frl_{\pi}(\si)\prod_{\alpha\in \SV(\pi)}\frac{1-\overline{u_\alpha}v_{\pi^{-1}(\alpha)}}{|u_\alpha-v_{\pi^{-1}(\alpha)}|^2} \,.
\]

From Lemma \ref{T:Comp1}, we know that $\tbr_\pi(\emptyset)$ and $\tbl_\pi(\si)$ (as functions of $\tbu$ and $\tbv$) are rational functions in $\overline{\tbu}$ and $\tbv$ alone. This points now becomes crucial, as it allows us to calculate the partial derivatives of $(e^{\frN})_{\emptyset,\si}(\tbu,\tbv)$ 
\begin{align}\label{eq:Imsotired}
	\rho(\si,\tbu,\tbv) & = 
	 \partial_{\tbu}\partial_{\overline{\tbv}}(e^{\frN})_{\emptyset,\si}(\tbu,\tbv)\nonumber \\
	& =\sum_{\pi\trieq \si} \frr_{\pi}(\emptyset)\frl_{\pi}(\si)\partial_{\tbu}\partial_{\overline{\tbv}}\bigg(\prod_{\alpha\in \SV(\pi)}\frac{1-v_{\pi^{-1}(\alpha)}\overline{u_\alpha}}{|u_\alpha-v_{\pi^{-1}(\alpha)}|^2}\bigg)\\
 	& \overset{(1)}{=}\sum_{\substack{\pi\trieq \si \\ \SV(\pi)=\SV(\si)}} \frr_{\pi}(\emptyset)\frl_{\pi}(\si)\prod_{\alpha\in \SV(\pi)}\rho_2(v_{\pi^{-1}(\alpha)},u_\alpha)\nonumber\\
 	 	& \overset{(2)}{=}\sum_{\substack{\pi\trieq \si \\ \SV(\pi)=\SV(\si)}} \frr_{\pi}(\emptyset)\frl_{\texttt{I}_{\SV(\si\circ \pi^{-1})}}(\si\circ \pi^{-1})\prod_{\alpha\in \SV(\pi)}\rho_2(v_{\pi^{-1}(\alpha)},u_\alpha)\nonumber\,,
\end{align}
where in $(1)$ we used the fact that the differentiation equals zero\footnote{the differentiation equals zero, since we differentiate with respect to at least one coordinate on which the function does not depend} whenever $\SV(\pi)\neq \SV(\si)$ and in $(2)$ we used Lemma \ref{lem:Recurrence}. Note that in second and third line one should understand $\frr_\pi(\emptyset)$ and $\frl_\pi(\si)$ as a function of $\overline{\tbu}|_{\SV(\pi)}$ and $\overline{\tbv}|_{\SV(\pi)}$ while in the last line of \eqref{eq:Imsotired}, one should understand $\frr_\pi(\emptyset)$ as a function of $\overline{\tbu}|_{\SV(\pi)}$ and $\tbv|_{\SV(\pi)}$, while $\frl_{\texttt{I}_{\SV(\si\circ \pi^{-1})}}(\si\circ \pi^{-1})$ is a function of $\overline{\tbu}|_{\SV(\pi)}$ and $\pi^{-1}(\tbv|_{\SV(\pi)})$. 

Setting 
\[
	\frQ_\pi :=\mathsf{V}_{\SV(\pi)}(\overline{\tbu})\mathsf{V}_{\SV(\pi)}({\tbv})\frl_{\texttt{I}_{\SV(\pi)}}(\pi)\qquad \text{and}\qquad\frp_\pi :=\mathsf{V}_{\SV(\pi)}(\overline{\tbu})\mathsf{V}_{\SV(\pi)}({\tbv})\frr_\pi(\emptyset)\,,
\] 
we obtain
\[
	\rho(\si;\tbu,\tbv) =\frac{1}{\mathsf{V}_{\SV(\si)}(\overline{\tbu})^2\mathsf{V}_{\SV(\si)}({\tbv})^2}\sum_{\substack{\pi\trieq \si \\ \SV(\pi)=\SV(\si)}} \frp_{\pi}\frQ_{\si\circ \pi^{-1}}\prod_{\alpha\in \SV(\pi)}\rho_2(u_\alpha,v_{\pi^{-1}(\alpha)})\,.
\]
Lemma \ref{T:Comp1} now completes the proof of Theorem \ref{T:2}.

The factorization stated in Theorem \ref{T:1} follows from \eqref{eq:Imsotired} and the tensorial property of the left and right eigenvectors, see Lemma \ref{L:Tens}. Hence the proof of Theorem \ref{T:1} is complete.

Finally, Theorem \ref{thm:T3} follows immediately from the definitions of $\frQ_\pi$ and $\frp_\pi$ together with Lemma \ref{T:2}.\qed

% ***************************************************************************************************************

\subsection{Proof of Lemma \ref{T:Comp1}}
\label{subsec:proof_of_lemma_comp1}

We prove the statement via induction on $\ell$, starting with the statement that $\frl_{\texttt{I}_{[\ell]}}(C_\ell,\tbz,\tbw)$ is a rational function in $\overline{\tbz}$ and $\tbw$. Due to \eqref{eq:well_ok}, proving the statement for $\frl_{\texttt{I}_{[m]}}(C_m)$ for $m<\ell$, implies that $\frl_{\si}(\tau)$ is a rational function in $\overline{\tbz}$ and $\tbw$ for every $\si,\tau\in\CS_\ell$ such that $\si\trieq \tau\prec C_\ell$. Note that for pairs $\si,\tau\in\CS_\ell$ for which $\si\trieq\tau$ does not hold we have $\frl_\si(\tau)=0$ and hence the result holds trivially. 

Using the abbreviation $\frl(\si) = \frl_{\texttt{I}_{[\ell]}}(\si;\tbz,\tbw)$ and denoting $a(w)=(\lambda-w)^{-1}$, we can rewrite \eqref{E:fun} as
\[
\begin{aligned}
	&\frac{1}{\pi}\int_{\tbD_1}  \sum_{\alpha\in [\ell]} [a(w_\alpha)\overline{a(z_\alpha)} - a(w_{C_\ell^{-1}(\alpha)})\overline{a(z_\alpha)}]\text{d}^2\lambda\cdot \frl(C_\ell) = 	[\frh_{\mathtt{I}_{[\ell]}}-\frh_{C_\ell}]\frl(C_\ell)\\
		=& \sum_{\mathtt{I}_{[\ell]}\trieq \sigma\prec C_\ell} \frl(\si) \frN_{\sigma, C_\ell}
		=\frac{1}{\pi}\int_{\tbD_1} \sum_{\mathtt{I}_{[\ell]}\trieq \sigma\prec C_\ell} \frl(\si)\prod_{\alpha\in \SV_{nf}(\si)}a(w_{\si^{-1}(\alpha)})\overline{a(z_\alpha)}\text{d}^2\lambda.
\end{aligned}
\]

Denote $b(z) = (\kappa-\overline{z})^{-1}$ and define the rational functions $\widetilde{P}^\ell$ and $\widetilde{Q}^\ell$ in $\lambda,\kappa\in\BC$ by 
\[
	\widetilde{P}^\ell(\lambda,\kappa) = \sum_{\alpha\in [\ell]} [a(w_\alpha)b(z_\alpha) - a(w_{C_\ell^{-1}(\alpha)})b(z_\alpha)]
\]	
and
\[
	\widetilde{Q}^\ell(\lambda,\kappa) = \sum_{\mathtt{I}_{[\ell]}\trieq \sigma\prec C_\ell} \frl(\si)\prod_{\alpha\in \SV_{nf}(\si)}a(w_{\si^{-1}(\alpha)})b(z_\alpha)\,.
\]
In terms of $\widetilde{P}^\ell$ and $\widetilde{Q}^\ell$ the eigenvector equation reads
\[
	\frac{1}{\pi}\int_{\tbD_1} \widetilde{P}^\ell(\lambda,\overline{\lambda}) \frf(C_\ell) - \widetilde{Q}(\lambda,\overline{\lambda}) \text{d}^2\lambda=0\,.
\]

Assume we can prove that $\widetilde{T}^\ell:=\widetilde{Q}^\ell/\widetilde{P}^\ell$ is independent of $\lambda$ and $\kappa$. Then, $\frl(C_\ell) = \widetilde{T}^\ell(\lambda,\overline{\lambda})$ is a solution to \eqref{E:fun}, which is also the unique solution as shown in Lemma \ref{lem:exist_uniq}. Furthermore, due to the induction assumption, in this case we obtain that $\frl(C_\ell)$ is a rational function (as a quotient of two such functions) in $\overline{\tbz}$ and $\tbw$. 

We thus turn to prove that $\widetilde{T}^\ell$ is independent of $\lambda$ and $\kappa$. To this end, let us introduce some additional notation. For $A\subset [\ell]$, denote $\Fp^{\tbw}_A(\lambda) = \prod_{\alpha\in A}(\lambda- w_\alpha)$ and $\Fq^\tbz_A(\kappa) = \prod_{\alpha\in A}(\kappa - \overline{z_\alpha})$ and let
\[
	P^\ell(\lambda,\kappa) = \mathsf{V}_{[\ell]}(\overline{\tbz})\mathsf{V}_{[\ell]}(\tbw)\widetilde{P}^\ell(\lambda,\kappa)\qquad ,\qquad Q^\ell(\lambda,\kappa) = \mathsf{V}_{[\ell]}(\overline{\tbz})\mathsf{V}_{[\ell]}(\tbw)\widetilde{Q}^\ell(\lambda,\kappa)\,.
\]
Since $\widetilde{Q^\ell}/\widetilde{P^\ell} = Q^\ell/P^\ell$, it suffices to show that $Q^\ell/P^\ell$ is independent of $\lambda$ and $\kappa$. 

Note that 
\[
	P^\ell(\lambda,\kappa) = \sum_{\alpha\in [\ell]} (\Fp^\tbw_{[\ell]\setminus \alpha}(\lambda)\Fq^\tbz_{[\ell]\setminus \alpha}(\kappa) - \Fp^\tbw_{[\ell]\setminus C_\ell^{-1}(\alpha)}(\lambda)\Fq^\tbz_{[\ell]\setminus \alpha}(\kappa))
\]
is a polynomial in $\lambda$ and $\kappa$ whose degree in each of the variables is $\leq \ell -1$. In fact, from the cyclic behavior of $C_\ell$, it follows that the coefficient of $\lambda^{\ell-1}\kappa^j$ and $\lambda^j\kappa^{\ell-1}$ for $0\leq j\leq \ell-1$ all vanish, and thus that $P(\lambda,\kappa)$ is in fact polynomial in $\lambda$ and $\kappa$ whose degree in each of the variables is at most $\ell -2$. 

For $Q^\ell$, we have
\[
	Q^\ell(\lambda,\kappa) = \sum_{\texttt{I}_{[\ell]}\trieq \si\prec C_\ell} \frl(\si) \Fp^\tbw_{[\ell] \setminus \si^{-1}(\SV_{nf}(C_\ell \circ \si^{-1}))}(\lambda)\Fq^\tbz_{[\ell] \setminus \SV_{nf}(C_\ell \circ \si^{-1})}(\kappa)\,,
\]
and since the sum is over permutations satisfying $\si\prec C_\ell$, which implies $|\SV_{nf}(C_\ell \circ \si^{-1})|\geq 2$, it follows that $Q(\lambda,\kappa)$ is also a polynomial in $\lambda$ and $\kappa$, whose degree in each of the coefficients is at most $\ell-2$. 

Using the fact that $P^\ell$ and $Q^\ell$ are both polynomials of degree $\leq (\ell-2)^2$ in the variables $\lambda$ and $\kappa$, it follows that one needs to specify $(\ell-1)^2$ coefficients in order to determine $Q^\ell$ and $P^\ell$. Similarly, one need to specify $(\ell-1)^2-1=\ell(\ell-2)$ coefficients in order to determine $Q^\ell$ and $P^\ell$ up to a multiplicative constant. Thus, the claim will follow if we can show that both $P^\ell$ and $Q^\ell$ vanish on common $(\ell-1)^2-1 = \ell(\ell-2)$ points in $\BC^2$.

One can check by a direct computation that for any generic choice of $\tbz$ and $\tbw$, the polynomial $P^\ell$ vanishes on all points of the form $(\lambda,\kappa) = (w_\gamma,\overline{z_\beta})$ for $\gamma\in [\ell]$ and $\beta\in [\ell]\setminus \{\gamma,C_\ell(\gamma)\}$, which are precisely $\ell(\ell-2)$ points in $\BC^2$. We now turn to show that $Q^\ell$ also vanishes on these points as well. The proof follows an induction on $\ell$. 

For $\ell = 1,2$ there is nothing to prove and for $\ell=3$, one can verify that 
\[
\begin{aligned}
	Q^3(\lambda,\kappa) &= 1 + \sum_{\alpha\in [3]} \Fp_{\alpha}^\tbw\Fq^\tbz_{C_3^{-1}(\alpha)}\cdot \frl((\alpha,C_3(\alpha))\\
	& = 1 - \sum_{\alpha\in [3]} \frac{(\lambda-w_\alpha)(\kappa-\overline{z_{C_3(\alpha)}})}{(w_\alpha - w_{C_3(\alpha)})(\overline{z_\alpha}-\overline{z_{C_3(\alpha)}})}\,
\end{aligned}
\]
which vanishes for $\lambda=w_\gamma$ and $\kappa=\overline{z_\beta}$ with $\gamma,\beta\in [3]$ such that $\beta\notin {\gamma,C_\ell(\gamma)}$. 

Assume next that the result holds for all integers strictly smaller than $\ell$. Note that for $\mathtt{I}_{[\ell]}\trieq \si\prec C_\ell$, it holds that $\Fp^\tbw_{[\ell] \setminus \si^{-1}(\SV_{nf}(C_\ell \circ \si^{-1}))}(w_\gamma)\neq 0$ if and only if $\si^{-1}\circ C_\ell(\gamma)\neq \gamma$. Similarly, $\Fq^\tbz_{[\ell] \setminus \SV_{nf}(C_\ell \circ \si^{-1})}(z_\beta)\neq 0$ if and only if $C_\ell \circ \si^{-1}(\beta)=\beta$. 

Consequently
\[
	Q^\ell(w_\gamma,\overline{z_\beta}) = \hspace{-0.2cm}
	\sum_{\substack{\texttt{I}_{[\ell]}\trieq \si\prec C_\ell \\ \si^{-1}\circ C_\ell (\gamma)\neq \gamma,\, C_\ell\circ \si^{-1}(\beta)\neq \beta}} \hspace{-0.6cm}\frl(\si) \Fq^\tbz_{[\ell] \setminus \SV_{nf}(C_\ell \circ \si^{-1})}(\overline{z_\beta})\Fp^\tbw_{[\ell] \setminus \si^{-1}(\SV_{nf}(C_\ell \circ \si^{-1}))}(w_\gamma)\,.
\]

Denote by $J=J_{\beta,\gamma}^\ell$ the subinterval of $C_\ell$ starting in $\beta$ and ending with $\gamma$ with respect to the ordering in $C_\ell$ and denote by $C_J$ the cycle on $J_{\beta,\gamma}^\ell$ induced from $C_\ell$. Similarly, denote by $I=I_{\beta,\gamma}^\ell$ the subinterval of $C_\ell$ starting in $C_\ell(\gamma)$ and ending with $C_\ell^{-1}(\beta)$ and let $C_I$ the cycle on $I_{\beta,\gamma}^\ell$ induced from $C_\ell$. Using the tensorial property of $\frl(\si)$, see Lemma \ref{L:Tens}, and the definition of the polynomials $\Fp$ and $\Fq$, it follows that\footnote{Observe that we have $\preceq$ in both sums and not $\prec$ as in the eigenvector equation.} 
\[
\begin{aligned}
& Q^\ell(w_\gamma,\overline{z_\beta}) = \bigg(
	\sum_{\texttt{I}_{J}\trieq \tau\preceq C_J } \frl(\tau) \Fq^\tbz_{J \setminus (\SV_{nf}(C_J \circ \tau^{-1})\cup \beta)}(\overline{z_\beta})\Fp^\tbw_{J \setminus (\tau^{-1}(\SV_{nf}(C_J \circ \tau^{-1}))\cup \gamma)}(w_\gamma)\bigg)\times\\
	&\qquad \Bigg(\hspace{-0.2cm}
	\sum_{\substack{\texttt{I}_{I}\trieq \pi\preceq C_I \\ C_J\circ\pi^{-1}\circ C_\ell(\gamma)\neq C_\ell(\gamma)\\ \pi^{-1}\circ C_J\circ C_\ell^{-1}(\beta)\neq C_\ell^{-1}(\beta)}} \hspace{-0.6cm}\frl(\si) \Fq^\tbz_{I \setminus \SV_{nf}(C_I \circ \pi^{-1})\cup \{C_\ell^{-1}(\beta)\}}(\overline{z_\beta})\Fp^\tbw_{I \setminus \pi^{-1}(\SV_{nf}(C_I \circ \pi^{-1}))\cup \{C_\ell(\gamma)\}}(w_\gamma)\Bigg)\,.
\end{aligned}
\]

Denoting $|J|=j$, we observe that the first sum equals, up to a permutation on the indexes taking the interval $J$ to the interval $\{1,2,\ldots,|J|\}$, to
\[
	Q^j(w_j,\overline{z_1}) + \frl(C_j)\Fq^\tbz_{[j]}(\overline{z_1})\Fp^\tbw_{[j]}(w_j) = Q^j(w_j,\overline{z_1}) - \frl(C_j)P^j(w_j,\overline{z_1})=0,
\]
where in the last two equalities we used the fact that $2\leq j\leq \ell - 1$ due to the assumption that $\si\prec C_\ell$ and the induction assumption. 

This completes the proof of the fact that $\frl_{\texttt{I}_{[\ell]}}(C_\ell)$ is a rational function of $\overline{\tbz}$ and $\tbw$ for every $\ell\in\BN$, and hence, so are $\frl_\si(\tau)$ for every $\si,\tau\in\CS$ are rational functions in $\overline{\tbz}$ and $\tbw$. 

Next we turn to show that $\mathsf{V}_{\si}(\bar{\bfz})\mathsf{V}_{\si}(\bfw) \frl_{\textrm{I}_{[\ell]}}(\si; \bfz, \bfw)$ are  homogeneous polynomial of degree ${|\SV(\si)|-1 \choose  2}$ in $\bar{\bfz}$ and $\bfw$.

We start by introducing some necessary notation. Recall that for a permutation $\si\in\CS_{[\ell]}$ and $\tbz\in\BC^\ell$, we define $\mathsf{V}_{\si}(\tbz) = \prod_{\CL\in \si}\mathsf{V}_{\SV(\CL)}(\tbz)$, where the product is taken over all cycles in $\si$ and we used the convention that $\mathsf{V}_{i}(\tbz)=1$ for $i\in [\ell]$. Furthermore, for $\si\in \CS_{[\ell]}$, let 
\[
	\CW_\si\equiv \CW_\si(\overline{\tbz},\tbw) = \frac{\sfV_{[\ell]}(\overline{\tbz})\sfV_{[\ell]}(\tbw)}{\sfV_{\si}(\overline{\tbz})\sfV_{\si}(\tbw)}\,
\]
and
\[
	\Fe_\si\equiv \Fe_\si(\tbz,\tbw) = \sfV_\si(\overline{\tbz})\sfV_\si(\tbw) \frl_{\texttt{I}_{[\ell]}}(\si)\,.
\]

Fix $\alpha\in [\ell]$. Recalling that $\frl_{\texttt{I}_{[\ell]}}(C_\ell) = Q^\ell(\lambda,\kappa)/P^\ell(\lambda,\kappa)$ for every $\lambda,\kappa\in\BC$, by substituting $\kappa = w_\alpha$ and $\kappa=\overline{z_\alpha}$ and using the notation above, we obtain
\begin{equation}\label{eq:Recurssive_poly}
\begin{aligned}
	&\Fe_{C_\ell}\cdot \prod_{\beta\in [\ell]\setminus \alpha}(w_\alpha-w_\beta)(\overline{z_\alpha}-\overline{z_\beta}) = 
	\Fe_{C_\ell}(\tbz,\tbw) P^\ell(w_\alpha,\overline{z_\alpha}) = Q^\ell(w_\alpha,\overline{z_\alpha}) \\
	&\qquad = \hspace{-0.5cm}\sum_{\substack{\texttt{I}_{[\ell]}\trieq \si\prec C_\ell \\ C_\ell\circ \si^{-1}(\alpha)\neq \alpha,\, \si^{-1}\circ C_\ell(\alpha)\neq \alpha}} \hspace{-0.5cm}\CW_\si\Fe_\si\cdot  \Fq^\tbz_{[\ell] \setminus \SV_{nf}(C_\ell \circ \si^{-1})}(\overline{z_\alpha})\Fp^\tbw_{[\ell] \setminus \si^{-1}(\SV_{nf}(C_\ell \circ \si^{-1}))}(w_\alpha)\,,
\end{aligned}
\end{equation}
where in the last equality we restricted the sum only to those permutation for which $\alpha$ is not a fixed point of $C_\ell\circ \si^{-1}$ nor $\si^{-1}\circ C_\ell$, since for the remaining permutations either $\Fq^\tbz_{[\ell] \setminus \SV_{nf}(C_\ell \circ \si^{-1})}(\overline{z_\alpha})=0$ or $\Fp^\tbw_{[\ell] \setminus \si^{-1}(\SV_{nf}(C_\ell \circ \si^{-1}))}(w_\alpha)=0$.

Let us start by proving that $\Fe_\si$ are all polynomials in $\overline{\tbz}$ and $\tbw$. Due to the tensorial property of the eigenvectors and the definition of $\sfV_\si$, we have that $\Fe_\si = \prod_{\CL\in \si}\Fe_\CL$. Hence, it suffices to show that $\Fe_{C_\ell}$, for $\ell\geq 1$ are all polynomials. We prove this by induction on $\ell$. For $\ell=1$, we have by definition $\Fe_{C_1} = \frl_{\texttt{I}_{[1]}}(C_1) = 1$ and the result follows. Next, assume the result holds for $(\Fe_{C_m})_{m\leq \ell-1}$. Due to \eqref{eq:Recurssive_poly} and the induction assumption, it suffices to show that each summand on the right hand side is divisible by $\prod_{\beta\in [\ell]\setminus\alpha}(w_\alpha-w_\beta)(\overline{z_\alpha}-\overline{z_\beta})$. Fix $\beta\in [\ell]\setminus \alpha$ and let $\texttt{I}_{[\ell]}\trieq \si\prec C_\ell$ such that $C_\ell\circ \si^{-1}(\alpha)\neq \alpha$ and $\si^{-1}\circ C_\ell(\alpha)\neq \alpha$ . If $\beta$ is not in the same cycle of $\si$ as $\alpha$, then the numerator of $\CW_\si$ contains in its product the term $(w_\alpha-w_\beta)(\overline{z_\alpha}-\overline{z_\beta})$, while the denominator does not, hence $\CW_\si$ is divisible by $(w_\alpha-w_\beta)(\overline{z_\alpha}-\overline{z_\beta})$ and so is the summand related to $\si$. On the other hand, since $\si\prec C_\ell$, all cycles of $\si$ are of type $1$ with respect to $C_\ell$ and hence the unique vertex in the cycle of $\alpha$ in $\si$ which is not a fixed point is $\alpha$ itself. In particular, if $\beta$ and $\alpha$ are in the same cycle in $\si$, then $\beta\notin \SV_{nf}(\si,C_\ell)$ and therefore the product $\Fq^\tbz_{[\ell] \setminus \SV_{nf}(C_\ell \circ \si^{-1})}(\overline{z_\alpha})\Fp^\tbw_{[\ell] \setminus \si^{-1}(\SV_{nf}(C_\ell \circ \si^{-1}))}(w_\alpha)$ is divisible by $(w_\alpha-w_\beta)(\overline{z_\alpha}-\overline{z_\beta})$. This complete the proof that $\Fe_\si$ are all polynomials. 

Next, we turn to discuss the homogeneity of the polynomials. Going back to the representation $\frl_{\texttt{I}_{[\ell]}}(C_\ell) P^\ell(\lambda,\kappa) = Q^\ell(\lambda,\kappa)$. Taking $\kappa=\overline{\lambda}$ and equating the coefficients of $|\lambda|^{2\ell-4}$ in both sides (recall that $P^\ell$ and $Q^\ell$ are polynomials of degree $\ell-2$ is $\lambda$ and $\kappa$) we obtain
\[
	\frl_{\texttt{I}_{[\ell]}}(C_\ell)\cdot \bigg(-\sum_{\alpha\in[\ell]} \overline{z_\alpha}(w_{C_\ell^{-1}(\alpha)}-w_\alpha)\bigg) = \sum_{\substack{\texttt{I}_{[\ell]}\trieq \si\prec C_\ell \\ |\si| = 2}}\frl_{\texttt{I}_{[\ell]}}(\si)\,,
\] 
and therefore
\begin{equation}\label{eq:proving_stuff_1}
	\Fe_{C_\ell} \cdot \bigg(-\sum_{\alpha\in[\ell]} \overline{z_\alpha}(w_{C_\ell^{-1}(\alpha)}-w_\alpha)\bigg) = \sum_{\substack{\texttt{I}_{[\ell]}\trieq \si\prec C_\ell \\ |\si| = 2}}\CW_\si \Fe_\si\,.
\end{equation}

As in previous claim, the proof follows by induction on $\ell$ and the tensorial property which implies it suffices to prove the statement for the cyclic permutations $(C_\ell)_{\ell\geq 1}$. For $\ell=1$ we have $\Fe_{C_1} = 1$, which is a homogeneous of degree $0=\binom{1-1}{2}$. Assume the statement for $\Fe_{C_m}$ with $m\leq \ell-1$. Noting that each $\si$ in the sum on the right hand side of \eqref{eq:proving_stuff_1} is composed of two cycles whose total length is $\ell$, and that their length is $i$ and $\ell-i$ for $1\leq i\leq \ell-1$, respectively, it follows from the induction assumption that the corresponding term on the right hand side of \eqref{eq:proving_stuff_1} is a homogeneous polynomials in $\overline{\tbz}$ and $\tbw$ of degree $\binom{\ell}{2} - \binom{i}{2} - \binom{\ell-i}{2} + \binom{i-1}{2} + \binom{\ell-i-1}{2} = \binom{\ell-1}{2}+1$. Hence $\Fe_{C_\ell}$ is a homogeneous polynomial in $\overline{\tbz}$ and $\tbw$ of degree $\binom{\ell-1}{2}$, as required. 

Finally, we turn to prove that for $\ell\geq 3$, the polynomial $\Fe_{C_\ell}$ vanish on the hyperplanes 
\[
	\{(\bfz, \bfw)\in(\BC^{[\ell]})^2 ~:~ (z_\alpha, w_{\alpha})= (z_\beta, w_{\beta})\},\qquad \forall \alpha,\beta\in [\ell] ~\text{ such that }\alpha\neq \beta\,.
\]

To this end, fix $\alpha\neq \beta$. For $\ell=3$ we have by  a direct computation (using for example \eqref{eq:proving_stuff_1}) that the result holds. We use one last induction over $\ell$. Assuming the result holds for $\Fe_{C_m}$ with $3\leq m\leq \ell-1$. We start by proving that the sum on the right hand side of \eqref{eq:proving_stuff_1} vanishes on the hyperplane $\{(\bfz, \bfw)\in(\BC^{[\ell]})^2 ~:~ (z_\alpha, w_{\alpha})= (z_\beta, w_{\beta})\}$. Indeed, if  $\alpha, \beta\in \sigma$, then $\fre^{(|\sigma|)}_{\sigma}=0$ vanished on the hyperplane by induction. Otherwise $\alpha$ and $\beta$ are not in the same cycle, and therefore $\caW_\sigma=0$ by inspection. Finally, noting that the polynomial multiplying $\Fe_{C_\ell}$ on the left hand side of \eqref{eq:proving_stuff_1} does not vanish in a typical points in the hyperplane $\{(\bfz, \bfw)\in(\BC^{[\ell]})^2 ~:~ (z_\alpha, w_{\alpha})= (z_\beta, w_{\beta})\}$, it follows that $\fre^{(\ell)}(\bfz, \bfw)$ must vanish on the entire set $(z_\alpha, w_{\alpha})= (z_\beta, w_{\beta})$.    
\qed

\appendix

\section{Proof of Lemma \ref{L:pfe}}\label{appendix}

\begin{proof}[Proof of Lemma \ref{L:pfe}(1)] We start by proving \eqref{eq:k1}. Changing variables, one can rewrite $\frK_{i,j}$ as 
\begin{equation}\label{eq:rewriting_Frk}
	\frK_{i,j}(w,z)=\frac{1}{\pi}\int_\C  \frac{\lambda^{i-1}\overline{\lambda}^{j-1}}{\sqrt{(i-1)!(j-1)!}} (\lambda-\sqrt{N}w)^{-1}(\overline{\lambda}-\sqrt{N}\overline{z})^{-1} e^{-|\lambda|^{2}}\textrm{d}^2 \lambda \,.
\end{equation}
Denote by $B_r(z)$ the open ball of radius $r>0$ around $z\in\BC$. We split the integration over $\BC$ into three regions: $R_1:=B_{|z-w|\sqrt{N}/2}(\sqrt{N}z)$, $R_2:=B_{|z-w|\sqrt{N}/2}(\sqrt{N}w)$ and $R_3:=\BC\setminus (R_1\cup R_2)$ and turn to estimate each of the integrals separately.

In the region $R_3$, $|(\lambda-\sqrt{N}w)^{-1}(\overline{\lambda}-\sqrt{N}\overline{z})^{-1}|\leq 4/(|z-w|^2N)$ and therefore 
\begin{align*}
	&\bigg|\frac{1}{\pi}\int_{R_3}  \frac{\lambda^{i-1}\overline{\lambda}^{j-1}}{\sqrt{(i-1)!(j-1)!}} (\lambda-\sqrt{N}w)^{-1}(\overline{\lambda}-\sqrt{N}\overline{z})^{-1} e^{-|\lambda|^{2}}\textrm{d}^2 \lambda\bigg|\\
	\leq &\frac{4}{\pi|z-w|^2 N\sqrt{(i-1)!(j-1)!}}\int_{R_3}  |\lambda|^{i+j-2} e^{-|\lambda|^{2}}\textrm{d}^2 \lambda\\
	= & \frac{4}{|z-w|^2 N\sqrt{(i-1)!(j-1)!}}\int_0^{\infty}  u^{(i+j-2)/2} e^{-u}\textrm{d}u\,,
\end{align*}
where in the last step we replaced the integration over $R_3$ by integration over $\BC$ and made two changes of coordinates, first to polar coordinates and second replacing the radial coordinate $r$ by $u=r^2$. Using the Cauchy-Schwarz inequality and the fact that $\int_0^\infty u^{j-1}e^{-u} \textrm{d}u= (j-1)!$ we conclude that
\[
	\bigg|\frac{1}{\pi}\int_{R_3}  \frac{\lambda^{i-1}\overline{\lambda}^{j-1}}{\sqrt{(i-1)!(j-1)!}} (\lambda-\sqrt{N}w)^{-1}(\overline{\lambda}-\sqrt{N}\overline{z})^{-1} e^{-|\lambda|^{2}}\textrm{d}^2 \lambda\bigg| \leq \frac{4}{|z-w|^2 N}\,.
\]

Next we turn to estimate the integration over the regions $R_1$ and $R_2$. Since the estimations for both regions are similar we only consider the integration over $R_1$. Using the bound $k!\geq (k/e)^k$ one can verify that 
\[
	\bigg|\frac{\lambda^{j-1}}{\sqrt{(j-1)!}} \frac{\lambda^{l-1}}{\sqrt{(l-1)!}}e^{-|\lambda|^2}\bigg|\leq 1\,.
\]
Furthermore, in the region $R_1$ we have $|(\lambda-\sqrt{N}w)^{-1}|\leq 2/(|z-w| \sqrt{N})$ and therefore 
\begin{align*}
	& \bigg|\frac{1}{\pi}\int_{R_1}  \frac{\lambda^{i-1}\overline{\lambda}^{j-1}}{\sqrt{(i-1)!(j-1)!}} (\lambda-\sqrt{N}w)^{-1}(\overline{\lambda}-\sqrt{N}\overline{z})^{-1} e^{-|\lambda|^{2}}\textrm{d}^2 \lambda\bigg| \\
	 &	\qquad \leq\frac{2}{\pi|z-w|\sqrt{N}}\int_{R_1}  |(\overline{\lambda}-\sqrt{N}\overline{z})|^{-1} \textrm{d}^2 \lambda = \frac{2}{|z-w|\sqrt{N}} \int_0^{|z-w|\sqrt{N}/2}du=1\,,
\end{align*}
proving inequality \eqref{eq:k1}.
\end{proof}

\vspace{0.3cm}
Before turning to the proof of \eqref{eq:k2} let us introduce an auxiliary kernel and provide some preliminary estimations. Let
\[
	K(a,b)= e^{-(|a|^2+|b|^2)/2}\sum_{j=0}^{N-1}\frac{(\overline{a}b)^j}{j!}\,. 
\]

\begin{claim}\label{clm:bounds_on_K} $~$
\begin{enumerate}
\item We have
	\begin{equation}\label{eq:bound_on_K}
		|K(a,b)| \leq \begin{cases}
		e^{-(|a|-|b|)^2/2} & \text{for all }a,b\in\BC \\
		e^{-(|a|-|b|)^2/2 - |a||b|/2} & \text{for all }a,b\in\BC \text{ such that }|a|,|b|>2\sqrt{N}
		\end{cases}\,.
	\end{equation}
\item There exists a universal constant $C\in (0,\infty)$ such that for every $\tbz,\tbw\in \tbD_1$ and every $b\in\BC$
\begin{equation}\label{eq:bound_on_K_2}
	\bigg| \frac{1}{\pi}\int_{\BC} \frac{K(a,b)}{(a-\sqrt{N}w)(\overline{a}-\sqrt{N}\overline{z})}d^2a\bigg| \leq \frac{C\log N}{|z-w|^2\sqrt{N}}\,.
\end{equation}
\end{enumerate}
\end{claim}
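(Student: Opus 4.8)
The plan is to handle the two parts separately; part (1) is elementary, while part (2) requires a careful splitting of the domain of integration. Write $x=|\overline a b|=|a|\,|b|$, so that $|K(a,b)|\le e^{-(|a|^2+|b|^2)/2}\sum_{j=0}^{N-1}\tfrac{x^{j}}{j!}$. For the first bound I would simply extend the sum to infinity, $\sum_{j=0}^{N-1}\tfrac{x^j}{j!}\le e^{x}$, giving $|K(a,b)|\le e^{-(|a|^2+|b|^2)/2+|a||b|}=e^{-(|a|-|b|)^2/2}$. For the second bound one must use the truncation: when $|a|,|b|>2\sqrt N$ we have $x>4N$, so the successive terms of $\sum_{j=0}^{N-1}\tfrac{x^j}{j!}$ have ratio $x/(j+1)\ge x/N>4$ and are increasing, whence the sum is at most a bounded multiple of its last term $x^{N-1}/(N-1)!$. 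Combining this with Stirling's bound $k!\ge\sqrt{2\pi k}(k/e)^k$ — equivalently, writing the truncated sum as $e^{x}\,\mathbb P(\mathrm{Pois}(x)\le N-1)$ and invoking a Chernoff bound for the left tail of a Poisson variable — yields $\sum_{j=0}^{N-1}\tfrac{x^j}{j!}\le e^{x/2}$, hence $|K(a,b)|\le e^{-(|a|^2+|b|^2)/2+x/2}=e^{-(|a|-|b|)^2/2-|a||b|/2}$. (The precise constant $1/2$ plays no role below; any fixed positive constant would do, so the exact threshold is immaterial.)

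For part (2), put $p=\sqrt N\,w$, $q=\sqrt N\,z$, so $|p|,|q|<\sqrt N$ and $d:=|p-q|=\sqrt N\,|z-w|>0$. I would split $\BC=D_p\cup D_q\cup E$, where $D_p=\{|a-p|<d/2\}$, $D_q=\{|a-q|<d/2\}$ (disjoint) and $E$ is the complement, and estimate the three pieces. On $E$ both $|a-p|$ and $|a-q|$ are $\ge d/2$, so the $E$-contribution is at most $\tfrac{4}{d^{2}}\int_{\BC}|K(a,b)|\,d^2a$. The key auxiliary fact is the \emph{uniform} bound $\int_{\BC}|K(a,b)|\,d^2a\le C\sqrt N$: for $|b|\le 2\sqrt N$ this follows from $|K|\le e^{-(|a|-|b|)^2/2}$ and $\int_0^\infty e^{-(r-|b|)^2/2}\,2\pi r\,dr\le C(1+|b|)$; for $|b|>2\sqrt N$ one splits at $|a|=2\sqrt N$, using the first bound of part (1) for $|a|\le2\sqrt N$ and the second bound $|K|\le e^{-(|a|-|b|)^2/2}e^{-|a|\sqrt N}$ for $|a|>2\sqrt N$, the latter region contributing only $O(N^{-1})$. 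Hence the $E$-contribution is $\le \tfrac{C\sqrt N}{d^2}=\tfrac{C}{\sqrt N\,|z-w|^2}$. (When $d<2$ the same crude estimates already give the whole integral $\le \tfrac{C\sqrt N}{d^2}\le\tfrac{C\log N}{\sqrt N|z-w|^2}$, so one may assume $d\ge 2$.)

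The delicate region is $D_p$ (and, symmetrically, $D_q$). There $|a-q|\ge d/2$, so it suffices to bound $\int_{D_p}\tfrac{|K(a,b)|}{|a-p|}\,d^2a$. Using $|K(a,b)|\le e^{-(|a|-|b|)^2/2}$, which confines essentially all the mass to an $O(1)$-wide annulus around the circle $\{|a|=|b|\}$, I would parametrise $D_p$ by a coordinate transverse to the circles $\{|a|=\mathrm{const}\}$ (where the Gaussian provides $O(1)$ width) and a tangential coordinate along them (whose range has size at most $d/2\lesssim\sqrt N$, and along which $|a-p|^{-1}$ integrates to $O(\log N)$ after isolating the ball $|a-p|<1$); when $|p|\lesssim d$ one instead uses polar coordinates about the origin, where $\int_0^{2\pi}|re^{i\theta}-p|^{-1}\,d\theta$ has only an integrable logarithmic singularity and the Gaussian in $r$ renders the whole integral $O(1)$. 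Either way $\int_{D_p}\tfrac{|K(a,b)|}{|a-p|}\,d^2a\le C\log N$ uniformly in $b$, so the $D_p$-contribution is $\le\tfrac{2C\log N}{d}=\tfrac{2C\log N}{\sqrt N\,|z-w|}\le\tfrac{4C\log N}{\sqrt N\,|z-w|^2}$, using $|z-w|\le2$. Adding the three contributions yields \eqref{eq:bound_on_K_2}. I expect the near-pole estimate to be the main obstacle: bounding $|K|$ crudely by $1$ there gives only $O(1)$, and one genuinely needs the Gaussian concentration of $K$ together with the geometry of the level circles of $|a|$ — this is exactly where both the gain $N^{-1/2}$ and the logarithmic loss originate.
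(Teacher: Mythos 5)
Your proposal is correct and follows essentially the same route as the paper: part (1) via the Poisson/left-tail large-deviation representation of the truncated exponential sum (with the same harmless imprecision in the constant $1/2$, which you rightly flag as immaterial), and part (2) by isolating disks of radius $|p-q|/2$ around the two poles, bounding the complement crudely using $\int_{\BC}|K(\cdot,b)|\lesssim\sqrt N$, and extracting the $\log N$ from the near-pole integral of $|K(a,b)|/|a-p|$ via the interplay between the Gaussian annulus $\{|a|\approx|b|\}$ and the pole. The only cosmetic difference is in that last step, where the paper uses polar coordinates centered at the pole (so $d^2a/|a-p|=ds\,d\phi$) and estimates the angular Gaussian integral via intersecting circles, while you integrate in the opposite Fubini order along the level circles of $|a|$; both yield the same $O(\log N)$.
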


\begin{proof}[Proof of Claim \ref{clm:bounds_on_K}] Denoting by $X_u$ a random variable distributed as a Poisson$(u)$ and observing that $\tbP(X_u\leq N-1)=e^{-u}\sum_{i=0}^{N-1}\frac{u^{i-1}}{(i-1)!}$ we can rewrite $K$ as
\[
	|K(a,b)| \leq  e^{-(|a|-|b|)^2/2} e^{-|a||b|}\sum_{j=0}^{N-1} \frac{|ab|^j}{j!} = e^{-(|a|-|b|)^2/2}\tbP(X_{|ab|}\leq N-1)\,.
\]	
Standard large deviation estimates imply $\tbP(X_{|ab|}\leq N-1)< e^{-|ab|/2}$ for sufficiently large $N$, provided $|ab|\geq 4N$.  The bound \eqref{eq:bound_on_K} follows from this estimate immediately.	

Next, we turn to the proof of \eqref{eq:bound_on_K_2}. We split the integral into two regions $|a|\geq  2\sqrt{N}$ and $|a|< 2\sqrt{N}$. Starting with the former, using \eqref{eq:bound_on_K} and the fact that $|(a-\sqrt{N}w)(\overline{a}-\sqrt{N}\overline{z})|\geq N$ for $|a|\geq 2\sqrt{N}$ we obtain 
\begin{align*}
	& \quad \bigg| \frac{1}{\pi}\int_{|a|\geq 2\sqrt{N}} \frac{K(a,b)}{(a-\sqrt{N}w)(\overline{a}-\sqrt{N}\overline{z})}d^2a\bigg|\leq \frac{1}{\pi N}\int_{|a|\geq 2\sqrt{N}}|K(a,b)|d^2a \\
	& \qquad\quad\leq \begin{cases} \frac{1}{\pi N}\int_{|a|\geq 2\sqrt{N}}e^{-(|a|-|b|)^2/2}d^2a & |b|< 2\sqrt{N} \\
	\frac{1}{\pi N}\int_{|a|\geq 2\sqrt{N}}e^{-\frac{(|a|-|b|)^2}{2}-\frac{1}{2}|a||b|}d^2a & |b|\geq 2\sqrt{N}\end{cases}
	 \leq \begin{cases} \frac{C}{\sqrt{N}} & |b|< 2\sqrt{N} \\ 
	\frac{C}{N} & |b|\geq 2\sqrt{N}\end{cases}\,
\end{align*}
for some universal constant $C$.   In the last inequality we moved to polar coordinates and used the fact that the indefinite integral of both integrands can be written explicitly. 
%for $|b|<2\sqrt{N}$
%\begin{align*}
%	&\frac{1}{\pi N}\int_{|a|\geq 2\sqrt{N}}e^{-(|a|-|b|)^2/2}d^2a = \frac{2}{N}\int_{2\sqrt{N}}^\infty e^{-(r-|b|)^2/2}rdr \\
%	=& \frac{2}{N}\bigg[e^{-2N+2\sqrt{N}|b|-\frac{1}{2}|b|^2}+\sqrt{\frac{\pi}{2}}|b|\Big(1+\mathrm{erf}\Big(\frac{1}{\sqrt{2}}(|b|-2\sqrt{N}\Big)\Big)\bigg]\leq \frac{2}{N}\big[1+\sqrt{2\pi}|b|\big]= \frac{2}{N}+\sqrt{\frac{32\pi}{N}}\,.
%\end{align*}
%Similarly, for $|b|\geq 2\sqrt{N}$
%\begin{align*}
%	&\frac{1}{\pi N}\int_{|a|\geq 2\sqrt{N}}e^{-(|a|-|b|)^2/2-\frac{1}{2}|a||b|}d^2a = \frac{2}{N}\int_{2\sqrt{N}}^\infty e^{-(r-|b|)^2/2-r|b|/2}rdr \\
%	\leq & \frac{2}{N}\int_{2\sqrt{N}}^\infty e^{-r^2/4}rdr = \frac{4}{N}e^{-N} \,.
%\end{align*}

In order to estimate the integral over $|a|<2\sqrt{N}$ we split the integration further into three regions $R_1:=B_{|z-w|\sqrt{N}/2}(\sqrt{N}z)$, $R_2:=B_{|z-w|\sqrt{N}/2}(\sqrt{N}w)$ and $R_3:=B_{2\sqrt{N}}(0)\setminus (R_1\cup R_2)$ and estimate each part separately. For $R_3$ we have $|(a-\sqrt{N}w)(\overline{a}-\sqrt{N}\overline{z})|\geq N|z-w|^2/4$ and therefore 
\begin{align}
	 \bigg| \frac{1}{\pi}\int_{R_3} \frac{K(a,b)}{(a-\sqrt{N}w)(\overline{a}-\sqrt{N}\overline{z})}d^2a\bigg| 
	&\leq  \frac{4}{N|z-w|^2\pi}\int_{R_3} |K(a,b)|d^2a \nonumber\\
	&\label{E:name}  \leq \frac{4}{N|z-w|^2\pi}\int_{|a|<2\sqrt{N}}e^{-(|a|-|b|)^2/2}d^2a %\leq \frac{C}{\sqrt{N}|z-w|^2}\,
\end{align}
Since
\begin{align*}
	&\frac{1}{\pi N}\int_{|a|< 2\sqrt{N}}e^{-(|a|-|b|)^2/2}d^2a = \frac{2}{N}\int_0^{2\sqrt{N}} e^{-(r-|b|)^2/2}rdr \\
	=& \frac{2}{N}\bigg[e^{-|b|^2/2}-e^{-(2\sqrt{N}-|b|)^2/2}+\sqrt{\frac{\pi}{2}}|b|\Big(\mathrm{erf}(|b|/\sqrt{2})-\mathrm{erf}\Big(\frac{1}{\sqrt{2}}(|b|-2\sqrt{N}\Big)\Big)\bigg]\\
	\leq & \frac{2}{N} + \frac{\sqrt{2\pi}|b|}{N}\Big(\mathrm{erf}(|b|/\sqrt{2})-\mathrm{erf}\Big(\frac{1}{\sqrt{2}}(|b|-2\sqrt{N}\Big)\Big)\,,
\end{align*}
and the last term is bounded by $C/\sqrt{N}$ for some universal constant $C\in(0,\infty)$, provided $|b|\leq 3\sqrt{N}$ and decays exponentially for $|b|\geq 3\sqrt{N}$, \eqref{E:name} follows.

Since the regions $R_1$ and $R_2$ are dealt similarly, we only provide the details for $R_1$. Note that $|a-\sqrt{N}w|\geq |z-w|\sqrt{N}/2$ for $a\in R_1$ and therefore 
\begin{align*}
	& \bigg| \frac{1}{\pi}\int_{R_1} \frac{K(a,b)}{(a-\sqrt{N}w)(\overline{a}-\sqrt{N}\overline{z})}d^2a\bigg| 
	\leq  \frac{2}{\sqrt{N}|z-w|\pi}\int_{R_1} \frac{|K(a,b)|}{|a-\sqrt{N}z|}d^2a\\
	&\qquad \leq  \frac{2}{\sqrt{N}|z-w|\pi}\int_{|\nu|<|z-w|\sqrt{N}/2} \frac{e^{-(|\nu+\sqrt{N}\overline{z}|-|b|)^2/2}}{|\nu|}d^2\nu\\
	&\qquad \leq  \frac{2}{\sqrt{N}|z-w|\pi}\int_0^{|z-w|\sqrt{N}/2}\int_0^{2\pi} e^{-(\left|r+|z|\sqrt{N}e^{i\theta}\right|-|b|)^2/2}drd\theta\,.
\end{align*}
For $|b|\geq 3\sqrt{N}$, the integrand is bounded by $e^{-N}$ which implies the result. Hence, for the rest of the proof assume that $|b|<3\sqrt{N}$. 

For $0\leq r\leq |z-w|\sqrt{N}/2$, let $S_1=\{w\in\BC ~:~|w|=|b|\}$ and $S_2(r)=\{w\in\BC ~:~ w=r+|z| \sqrt{N} e^{i\theta} \text{ for some }\theta\in[0,2\pi)\}$ and let 
\[
	R_0=\{r\in [0, |z-w|\sqrt{N}/2]: S_1 \text{ and } S_2(r) \text{ do not intersect transversally}\}.
\]
One can now verify that 
\[
	\frac{1}{2\pi}\int_{R_0} \int_0^{2\pi}  e^{-(\left|r+|z| \sqrt{N} e^{i\theta}\right|-|b|)^2/2}\textrm{d} \theta\textrm{d} r\leq \sqrt{2\pi}\,.
\]

Let $R_1=[0, |z-w|\sqrt{N}/2]\backslash R_0$.  To estimate the integral over this region, we need to use the geometry of our problem more carefully. Fix $r\in R_1$ and let $\{\theta_0(r), -\theta_0(r)\}$ denote the two intersections between $S_1$ and $S_2(r)$. By symmetry
\[
\int_0^{2\pi} e^{-(\left|r+ |z|\sqrt{N} e^{i\theta}\right|-|b|)^2/2}\textrm{d} \theta= 2 \int_0^{\pi}  e^{-(\left|r+ |z|\sqrt{N} e^{i\theta}\right|-|b|)^2/2}\textrm{d} \theta\,.
\]

For convenience, let us assume that $\sqrt{N}|z|\leq |b|$, the other case being handled similarly. Under the last assumption $\theta_0\in [-\pi/2, \pi/2]$. 
We split the integral over $[0, \pi]$ into the pair of integrals over the regions $A_1= [0, \theta_0]$ and $A_2=[\theta_0, \pi]$.  Using linear interpolation, we see that
for $\theta\in A_1$,
\[
\left||r+ |z|\sqrt{N} e^{i\theta}|-|b|\right|\geq (r+|z|\sqrt{N}-|b|)(\theta_0-\theta)/\theta_0
\]
while for $\theta\in A_2$
\[
\left||r+|z|\sqrt{N} e^{i\theta}|-|b|\right|\geq (|b|-||z|\sqrt{N}-r|)(\theta-\theta_0)/(\pi-\theta_0).
\] 
We have the uniform bound
\beq
\label{E:uni1}
\frac{1}{2\pi}\int_0^{2\pi} \textrm{d} \theta e^{-(\left|r+|z|\sqrt{N} e^{i\theta}\right|-|b|)^2/2}\leq 1
\eeq
which, when combined to the above,  leads to the bound
\[
\frac{1}{2\pi}\int_0^{2\pi} \textrm{d} \theta e^{-(\left|r+|z|\sqrt{N} e^{i\theta}\right|-|b|)^2/2}\leq \underbrace{\sqrt{2\pi}^{-1}\bigg(\frac{\theta_0(r)}{r+|z|\sqrt{N}-|b|}+\frac{\pi-\theta_0(r)}{|b|-|z|\sqrt{N}-r|} \bigg)}_{f(r)}.
\]
Using the bounds above and 
\[
\int_{R_1}  \frac{1}{2\pi}\int_0^{2\pi} e^{-(\left|r+|z|\sqrt{N} e^{i\theta}\right|-|b|)^2/2}\textrm{d} \theta \textrm{d} r
\leq \int_{R_1}  \min\{1, f(r)\} \textrm{d} r\leq C\log N
\]
Combined with the prior estimate on the integral over $R_0$, the proof is complete.
\end{proof}

\begin{claim}\label{clm:one_more_integral_bound} There exists a universal constant $C\in (0,\infty)$ such that for  every $\tbz,\tbw\in \tbD_1$
\begin{equation}\label{eq:bound_on_K_3}
	\frac{1}{\pi}\int_{|a|\leq 2\sqrt{N}} \bigg| \frac{1}{(a-\sqrt{N}w)(\overline{a}-\sqrt{N}\overline{z})} \bigg| d^2a \leq \frac{C}{|z-w|^2}\,.
\end{equation}
\end{claim}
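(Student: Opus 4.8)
The plan is to reduce this to the standard estimate for the integral of a product of two inverse‑distance singularities over a bounded planar region. First I would normalize by setting $p:=\sqrt{N}w$ and $q:=\sqrt{N}z$, so that $\bigl|\tfrac{1}{(a-\sqrt N w)(\overline a-\sqrt N\overline z)}\bigr|=\tfrac{1}{|a-p|\,|a-q|}$ and the domain becomes the disc $\{|a|\le 2\sqrt N\}$; write $d:=|p-q|=\sqrt N\,|w-z|$ and assume $w\neq z$ (otherwise both sides are $+\infty$ and there is nothing to prove). Since $w,z\in\tbD_1$ we have $|w-z|<2$, hence $d<2\sqrt N$.

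Next I would split the domain of integration into three pieces, exactly in the spirit of the region decomposition already used in the proof of Lemma~\ref{L:pfe}(1) and Claim~\ref{clm:bounds_on_K}: set $A_1=\{|a|\le 2\sqrt N\}\cap B_{d/2}(p)$, $A_2=\bigl(\{|a|\le 2\sqrt N\}\cap B_{d/2}(q)\bigr)\setminus A_1$, and $A_3$ the remaining part. On $A_1$ the triangle inequality gives $|a-q|\ge d-d/2=d/2$, so the integrand is at most $\tfrac{2}{d\,|a-p|}$; invoking the elementary identity $\tfrac1\pi\int_{B_\rho(p)}\tfrac{d^2a}{|a-p|}=2\rho$ with $\rho=d/2$, the contribution of $A_1$ is at most $\tfrac{2}{d}\cdot d=2$, and by symmetry the contribution of $A_2$ is at most $2$ as well. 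On $A_3$ both $|a-p|$ and $|a-q|$ exceed $d/2$, so the integrand is at most $4/d^2$; integrating a bounded function over a set of area at most $4\pi N$ yields the bound $\tfrac1\pi\cdot\tfrac{4}{d^2}\cdot 4\pi N=\tfrac{16N}{d^2}=\tfrac{16}{|w-z|^2}$, using $d^2=N|w-z|^2$.

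Finally I would add the three contributions to get a bound of $4+\tfrac{16}{|w-z|^2}$ and use $|w-z|<2$ (so that $4\le \tfrac{16}{|w-z|^2}$) to conclude the claimed inequality with, for instance, $C=32$. The argument is entirely elementary; the only point requiring a little care is bookkeeping — making sure that on each of $A_1,A_2,A_3$ one correctly identifies which inverse‑distance factor is bounded below by $d/2$, so that every sub‑integral reduces cleanly either to the radial integral $\int_0^\rho dr=\rho$ or to "bounded integrand times the area of a disc of radius $2\sqrt N$." I do not anticipate a genuine obstacle here.
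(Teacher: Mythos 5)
Your proposal is correct and follows essentially the same route as the paper: the same decomposition into the two balls $B_{|z-w|\sqrt N/2}(\sqrt N w)$, $B_{|z-w|\sqrt N/2}(\sqrt N z)$ and their complement in $\{|a|\le 2\sqrt N\}$, with the radial integral $\frac1\pi\int_{B_\rho(p)}|a-p|^{-1}\,d^2a=2\rho$ on the balls and the trivial area bound on the remainder. The bookkeeping (contributions $2+2+16/|z-w|^2$, absorbed into $C/|z-w|^2$ via $|z-w|<2$) matches the paper's computation.
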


\begin{proof}
	As in previous cases we split the integral into the regions $R_1:=B_{|z-w|\sqrt{N}/2}(\sqrt{N}z)$, $R_2:=B_{|z-w|\sqrt{N}/2}(\sqrt{N}w)$ and $R_3:=B_{2\sqrt{N}}(0)\setminus (R_1\cup R_2)$ and estimate each part separately. For $R_3$ we have 
\[
	 \frac{1}{\pi}\int_{R_3} \bigg| \frac{1}{(a-\sqrt{N}w)(\overline{a}-\sqrt{N}\overline{z})}\bigg| d^2a	\leq \frac{4}{\pi N|z-w|^2}\int_{R_3}d^2a \leq \frac{16}{|z-w|^2}\,,
\]
where in the last step we bounded the integral over $R_3$ by the integral over $|a|<2\sqrt{N}$. 

Turning to deal with the integrations over $R_1$ and $R_2$, by symmetry it suffices to prove the result only for $R_1$. 
\begin{align*}
	& \frac{1}{\pi}\int_{R_1} \bigg| \frac{1}{(a-\sqrt{N}w)(\overline{a}-\sqrt{N}\overline{z})}\bigg| d^2a \leq \frac{2}{\pi|z-w| \sqrt{N}}\int_{|a-\sqrt{N}z|<|z-w|\sqrt{N}/2}\frac{1}{|a-\sqrt{N}z|}d^2a\\
	& = \frac{4}{|z-w| \sqrt{N}}\int_0^{|z-w|\sqrt{N}/2}dr = 2\,.
\end{align*}
\end{proof}

\begin{proof}[Proof of Lemma \ref{lem:pfe}(2)] Notice that due to \eqref{eq:rewriting_Frk}
\[
	\mathrm{Tr}(\frK(w_1,z_1)\frK(w_2,z_2)\cdots \frK(w_k,z_k)) = \frac{1}{\pi^k} \int_{\BC^k} \prod_{j=1}^k \frac{K(\lambda_j,\lambda_{j+1})}{(\lambda_j-\sqrt{N}w_j)(\overline{\lambda_j}-\sqrt{N}\overline{z_j})}\prod_{j=1}^k \mathrm{d}^2\lambda_j\,.
\]
where we use cyclic indexing $k+1=1$. 

For $1\leq j\leq k$, let $R_1(j):= \{\lambda_j: |\lambda_j|\leq 2\sqrt{N}\}$ and let $R_2(j)=\C\setminus R_1(j)$. We split the integration over $\BC^k$ into $2^k$ regions by defining for $\tba\in \{1, 2\}^{k}$ the region $R^{\tba} = R_{a_1}(1)\times R_{a_2}(2)\times \ldots \times R_{a_k}(k)$. 
Then
\[
	\mathrm{Tr}(\frK(w_1,z_1)\frK(w_2,z_2)\cdots \frK(w_k,z_k)) =\hspace{-0.3cm} \sum_{\tba\in \{1,2\}^k} \frac{1}{\pi^k} \int_{R^{\tba}} \prod_{j=1}^k \frac{K(\lambda_j,\lambda_{j+1})}{(\lambda_j-\sqrt{N}w_j)(\overline{\lambda_j}-\sqrt{N}\overline{z_j})}\prod_{j=1}^k \mathrm{d}^2\lambda_j\,.
\] 

We distinguish between two types of regions $R^{\tba}$: Either $a_j=2$ for all $1\leq j \leq k$ or $a_j=1$ for some $1\leq j\leq k$. Starting with the former, note that in this case $|(\lambda_j-\sqrt{N}w_j)(\overline{\lambda_j}-\sqrt{N}\overline{z_j})|\geq N$. Furthermore, it follows from \eqref{eq:bound_on_K} that $|K(a,b)|\leq e^{-(|a|^2+|b|^2)/4}$ for $a,b\in\BC$ such that $|a|,|b|>2\sqrt{N}$, and therefore 
\begin{equation}\label{eq:bound_on_K_1}
\begin{aligned}
	&\bigg|\frac{1}{\pi^k} \int_{R^{\textbf{2}}} \prod_{j=1}^k \frac{K(\lambda_j,\lambda_{j+1})}{(\lambda_j-\sqrt{N}w_j)(\overline{\lambda_j}-\sqrt{N}\overline{z_j})}\prod_{j=1}^k \mathrm{d}^2\lambda_j\bigg|\\
	&\qquad \qquad \leq \bigg(\frac{1}{\pi N}\int_{|a|\geq 2\sqrt{N}} e^{-|a|^2/2}d^2a\bigg)^k =\bigg(\frac{4e^{-2N}}{N}\bigg)^k\,.
\end{aligned}
\end{equation}

Next, we estimate the second type of region, namely, ones in which $a_j=1$ for some $1\leq j\leq k$. Without loss of generality assume that $a_k=1$. Using \eqref{eq:bound_on_K_2}, \eqref{eq:bound_on_K_3} and the bound $|K(\lambda_k,\lambda_1)|\leq 1$ which follows from \eqref{eq:bound_on_K}, we can integrate the variables $\lambda_1,\lambda_2,\ldots,\lambda_k$ one by one. Each of the first $k-1$ integrals is bounded by $C\log(N)/(\mathrm{dist}(\tbz,\tbw)\sqrt{N})$ by \eqref{eq:bound_on_K_2}, while the last integral (over $\lambda_k$) is bounded by $C/\mathrm{dist}(\tbz,\tbw)$ due to \eqref{eq:bound_on_K_3}. Combining all of the above we conclude that in each region as above 
\[
	\bigg|\frac{1}{\pi^k} \int_{R^{\tba}} \prod_{j=1}^k \frac{K(\lambda_j,\lambda_{j+1})}{(\lambda_j-\sqrt{N}w_j)(\overline{\lambda_j}-\sqrt{N}\overline{z_j})}\prod_{j=1}^k \mathrm{d}^2\lambda_j\bigg|\leq \frac{C}{\mathrm{dist}(\tbz,\tbw)^2}\bigg(\frac{C\log N}{\mathrm{dist}(\tbz,\tbw)^2\sqrt{N}}\bigg)^{k-1}\,.
\]
Using \eqref{eq:bound_on_K_1} to bound the integral over $R^{\textbf{2}}$ and the last estimation to bound all other regions, we conclude that 
\[
	|\mathrm{Tr}(\frK(w_1,z_1)\frK(w_2,z_2)\cdots \frK(w_k,z_k))| \leq \bigg(\frac{C}{\mathrm{dist}(\tbz,\tbw)^2}\bigg)^k\cdot \bigg(\frac{\log N}{\sqrt{N}}\bigg)^{k-1}\,,
\]
as required. 
\end{proof}

Finally, we turn to prove \eqref{eq:k3}. 

\begin{proof}[Proof of Lemma \ref{lem:pfe}(3)]
From \eqref{eq:rewriting_Frk}
\[
	\mathrm{Tr}(\frK(w,z))=\frac{1}{\pi}\int_\C  \sum_{i=0}^{N-1}\frac{|\lambda|^{2i-2}}{(i-1)!} (\lambda-\sqrt{N}w)^{-1}(\overline{\lambda}-\sqrt{N}\overline{z})^{-1} e^{-|\lambda|^{2}}\textrm{d}^2 \lambda \,.
\]
Similarly to the proof of \eqref{eq:k1} we split the integral into regions by considering $R_1:=B_{\sqrt{N-N^{1/2+\delta}} }(0)$ and $R_2:=\BC\setminus R_1$.

Starting with $R_2$ and using the fact that $z$ and $w$ are in the open disc $\tbD_1$, we note that $|(\lambda-\sqrt{N}w)^{-1}(\overline{\lambda}-\sqrt{N}\overline{z})^{-1}|\leq 4(1-|w|)^{-1}(1-|z|)^{-1}N^{-1}$ for all $\lambda\in R_2$ and sufficiently large $N$. Hence
\begin{align}\label{eq:proof_of_pfe_2}
	& \bigg|\frac{1}{\pi}\int_{R_2} \sum_{i=0}^{N-1}\frac{|\lambda|^{2i-2}}{(i-1)!} (\lambda-\sqrt{N}w)^{-1}(\overline{\lambda}-\sqrt{N}\overline{z})^{-1} e^{-|\lambda|^{2}}\textrm{d}^2 \lambda\bigg|\nonumber\\
	 & \qquad \leq\frac{4}{\pi N(1-|z|)(1-|w|)}\int_{R_2}  \sum_{i=0}^{N-1}\frac{|\lambda|^{2i-2}}{(i-1)!}  e^{-|\lambda|^{2}}\textrm{d}^2 \lambda \nonumber\\
	 & \qquad = \frac{4}{N(1-|z|)(1-|w|)}\int_{N-N^{1/2+\delta}}^\infty  \sum_{i=0}^{N-1}\frac{u^{i-1}}{(i-1)!}  e^{-u}\textrm{d}u\,. 
\end{align}
If  $X_u$ denotes  a random variable distributed as a Poisson$(u)$ we can rewrite the last bound as 
\begin{align*}
	&\frac{4}{N(1-|z|)(1-|w|)}\int_{N-N^{1/2+\delta}}^\infty \tbP(X_u\leq N-1)du \\
	&\qquad = \frac{4}{N(1-|z|)(1-|w|)}\bigg[\int_{N-N^{1/2+\delta}}^{N+N^{1/2+\delta}} \tbP(X_u\leq N-1)du
	+\int_{N+N^{1/2+\delta}}^\infty \tbP(X_u\leq N-1)du\bigg]\\
	&\qquad \leq \frac{4}{N(1-|z|)(1-|w|)}\cdot \bigg[2N^{1/2+\delta} + \int_{N+N^{1/2+\delta}}^\infty e^{N-u+N\log(u/N)}du\bigg]\leq \frac{10}{N^{1/2-\delta}}\,,
\end{align*}
where in the one before last inequality we bounded the probability by $1$ in the first integral and used Chernoff's inequality to bound the probability in the second. 

We now estimate the integral over $R_1$, which can also be expressed in terms of $X_u$ as 
\[
	\bigg|\frac{1}{\pi}\int_{R_1} \frac{\tbP(X_{|\lambda|^2}\leq N-1)}{(\lambda-\sqrt{N}w)(\overline{\lambda}-\sqrt{N}\overline{z})} \textrm{d}^2 \lambda\bigg|
\]
By Chernoff's inequality once again, $\forall\lambda\in R_1$,
\[
	|1-\tbP(X_{|\lambda|^2	}\leq N-1)| = \tbP(X_{|\lambda|^2	}\geq N) \leq e^{N-|\lambda|^2-N\log N+N\log(|\lambda|^2)} \leq e^{-N^{2\delta}}.
\]
Using the last estimation together with a similar argument to the one in Claim \ref{clm:computing_expectation_2}, we conclude that it suffices to prove
\begin{equation}\label{eq:contour1}
	\bigg|\frac{1}{\pi}\int_{R_1} \frac{1}{(\lambda-\sqrt{N}w)(\overline{\lambda}-\sqrt{N}\overline{z})} \textrm{d}^2 \lambda-h(w,z)\bigg|\leq \frac{C}{N^{1/2-\delta}}\,,
\end{equation}
for some universal constant $C\in (0,\infty)$. Using the change of coordinate $ \lambda= \nu \cdot \sqrt{N-N^{1/2+\delta}}$ we can rewrite the integral on the left hand side of \eqref{eq:contour1} as
\begin{equation}\label{eq:contour2}
	\frac{1}{\pi}\int_{|\nu|\leq 1} \frac{1}{(\nu-w_N)(\overline{\nu}-\overline{z_N})} \textrm{d}^2 \nu\,,
\end{equation}
where we denote $w_N=w/\sqrt{1-N^{-1/2+\delta}}$ and $z_N=z/\sqrt{1-N^{-1/2+\delta}}$.

To evaluate this integral, we use the complex form of Green's theorem. Let $\Gamma_\varepsilon$ be the contour along the $\partial \tbD_1$ with two punctures of width $\varepsilon$ surrounding $z_N$ and $w_N$ respectively. More precisely, let $\Gamma_{\epsilon}$ be the contour as schematically represented in Figure \ref{F:contour}. This contour has $5$ distinct parts. $\gamma_1$ denotes the part of the contour on the unit circle, $\gamma_3$ is the circle of radius $\epsilon$ around $z_N$ and $\gamma_5$ is the circle of radius $\epsilon$ around $w_N$.  Finally $\gamma_2$, respectively $\gamma_4$, is a pair of parallel straight line segments at distance $\epsilon^2$ of each other, connecting $\gamma_1$ to $\gamma_3$, respectively to $\gamma_5$.  

Then 
\[
	\eqref{eq:contour2}=\lim_{\epsilon \rightarrow 0} \frac{1}{2\pi\textrm{i}}\oint_{\Gamma_\epsilon} \omega\,,
\]
where $\omega=\omega_1(\lambda, \overline{\lambda}) \textrm{d} \lambda-\omega_2(\lambda, \overline{\lambda}) \textrm{d} \overline{\lambda}$
and $\omega_1, \omega _2 $ are the multi-valued functions 
\begin{equation}
\omega_1(\lambda, \overline{\lambda})= \frac 12 \frac{\log(\overline{\lambda}-\overline{z})}{\lambda-w_N}\qquad \text{and}\qquad 
\omega_2(\lambda, \overline{\lambda})=\frac 12 \frac{\log({\lambda}- {w})}{\overline{\lambda}-\overline{z_N}}\,.
\end{equation}

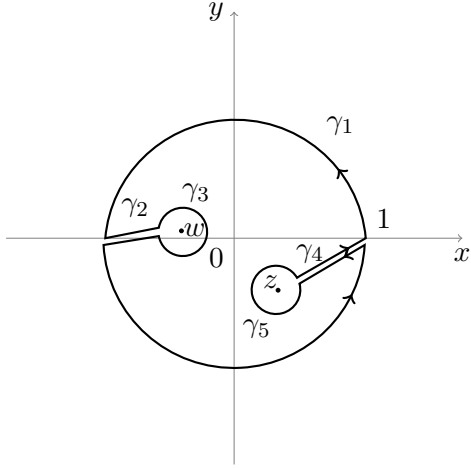
\begin{figure}
\centering
\begin{tikzpicture}
[decoration={markings,
mark=at position 0.75cm with {\arrow[line width=1pt]{>}},
mark=at position 2cm with {\arrow[line width=1pt]{>}},
mark=at position 14cm with {\arrow[line width=1pt]{>}},
mark=at position 15cm with {\arrow[line width=1pt]{>}}
}, dot/.style={draw,circle,minimum size=.5mm,inner sep=0pt,outer sep=0pt,fill=black}
]
% The axes
\draw[help lines,->] (-3,0) -- (3,0) coordinate (xaxis);
\draw[help lines,->] (0,-3) -- (0,3) coordinate (yaxis);

% The path
\path[draw,line width=0.8pt,postaction=decorate] (-30:1)  -- +(.866,.5) node[above right] {$1$} arc (5:175:1.72) -- (172:1) arc (170:-170:.32)-- (183:1.72) arc (183:357:1.72)--(-35:1.05) arc (15:-330: .32)--cycle;

% The labels
\node[below] at (xaxis) {$x$};
\node[left] at (yaxis) {$y$};
\node[below left] {$0$};
\node at (-1.3,.4) {$\gamma_{2}$};
\node at (-.5,.6) {$\gamma_{3}$};
\node at (1.4,1.5) {$\gamma_{1}$};
\node at (1,-.2) {$\gamma_{4}$};
\node at (.3,-1.2) {$\gamma_{5}$};
\node[dot] at  (172:.7) {};
\node[dot] at  (-50:.9) {};
\node at (-50:.75) {$z$};
\node at   (168:.53) {$w$} ;
\end{tikzpicture}
\caption{The contour $\Gamma_{\epsilon}$.}
\label{F:contour}
\end{figure}

Let us consider the integration of $\omega_1$ over the various parts of $\Gamma_\epsilon$.
First, we have 
\[
\lim_{\epsilon \rightarrow 0} \int_{\gamma_2} \omega_1(\lambda, \overline{\lambda}) \textrm{d} \lambda= \lim_{\epsilon \rightarrow 0} \int_{\gamma_5} \omega_1(\lambda, \overline{\lambda}) \textrm{d} \lambda=0
\]
and 
\[
\lim_{\epsilon \rightarrow 0} \frac{1}{2\pi\textrm{i}}\int_{\gamma_3} \omega_1(\lambda, \overline{\lambda}) \textrm{d} \lambda=-\frac{1}{2}\log(\overline{w_N}-\overline{z_N})\,.
\]
Also since $\log(\overline{\lambda}-\overline{z})$ increases by $2\pi \textrm{i}$ as we go around $\gamma_5$,
\[
\lim_{\epsilon \rightarrow 0}  \frac{1}{2\pi\textrm{i}}\int_{\gamma_4} \omega_1(\lambda, \overline{\lambda}) \textrm{d} \lambda=\frac 12 \log(1-w_N)-\frac 12\log(z_N-w_N)\,.
\]

Finally, 
\[\lim_{\epsilon \rightarrow 0}  \frac{1}{2\pi\textrm{i}}\int_{\gamma_4} \omega_1(\lambda, \overline{\lambda}) \textrm{d} \lambda=\frac{1}{2 \pi}\int_{0}^{2\pi} \frac{e^{\textrm{i}\theta} \log(e^{-\textrm{i} \theta}-\overline{z})}{e^{\textrm{i} \theta}-w_N} \textrm{d} \theta=\frac 12 \log(1-\overline{z_N}{w_N})-\frac 12\log(1-w_N)\,.
\]
Collecting the various pieces together gives
\[
\lim_{\epsilon\rightarrow 0}\frac{1}{2\pi\textrm{i}}\oint_{\Gamma_\epsilon} \omega_1 \textrm{d} \lambda= \frac 12\log\bigg(\frac{1-w_N\overline{z_N}}{|z_N-w_N|^2}\bigg)\,.
\]
By complex conjugation symmetry
\[
-\lim_{\epsilon\rightarrow 0}\frac{1}{2\pi\textrm{i}}\oint_{\Gamma_\epsilon} \omega_2\overline{ \textrm{d} \lambda}= \frac 12\log\bigg(\frac{1-w_N\overline{z_N}}{|z_N-w_N|^2}\bigg)\,,
\]
and therefore
\[
	\eqref{eq:contour2} = \log\bigg(\frac{1-w_N\overline{z_N}}{|z_N-w_N|^2}\bigg)\,.
\]
The result now follows since 
\[
\begin{aligned}
	\bigg|\log\bigg(\frac{1-w_N\overline{z_N}}{|z_N-w_N|^2}\bigg)-\log\bigg(\frac{1-w\overline{z}}{|z-w|^2}\bigg)\bigg| \\
	&\qquad = \log\bigg(1+ \frac{N^{-1/2+\delta}}{(1-w\overline{z})(1-N^{-1/2+\delta})}\bigg) \\
	\leq \frac{C}{N^{1/2-\delta}|1-w\overline{z}|}\,,
\end{aligned}
\]
for some universal constant $C$. Since $|1-w\overline{z}|\geq \max\{1-|w|,1-|z|\}$ the result follows. 
\end{proof}

\bibliography{Biblio}

\newcommand{\etalchar}[1]{$^{#1}$}
\begin{thebibliography}{BGN{\etalchar{+}}14}

\bibitem[AEK{\etalchar{+}}18]{AEK18}
Johannes Alt, L{\'a}szl{\'o} Erd{\H{o}}s, Torben Kr{\"u}ger, et~al.
\newblock Local inhomogeneous circular law.
\newblock {\em The Annals of Applied Probability}, 28(1):148--203, 2018.

\bibitem[Bai97]{bai1997}
Zhidong.~D. Bai.
\newblock Circular law.
\newblock {\em Ann. Probab.}, 25(1):494--529, 1997.

\bibitem[BD18]{BD}
Paul Bourgade and Guillaume Dubach.
\newblock The distribution of overlaps between eigenvectors of {G}inibre
  matrices.
\newblock {\em arXiv preprint arXiv:1801.01219}, 2018.

\bibitem[BGN{\etalchar{+}}14]{DBM}
Zdzislaw Burda, Jacek Grela, Maciej~A. Nowak, Wojciech Tarnowski, and Piotr
  Warcho\l{}.
\newblock Dysonian dynamics of the {G}inibre ensemble.
\newblock {\em Phys. Rev. Lett.}, 113:104102, Sep 2014.

\bibitem[BYY14a]{BYY14}
Paul Bourgade, Horng-Tzer Yau, and Jun Yin.
\newblock Local circular law for random matrices.
\newblock {\em Probability Theory and Related Fields}, 159(3-4):545--595, 2014.

\bibitem[BYY14b]{BYY14b}
Paul Bourgade, Horng-Tzer Yau, and Jun Yin.
\newblock The local circular law {II}: the edge case.
\newblock {\em Probab. Theory Related Fields}, 159(3-4):619--660, 2014.

\bibitem[CM98]{CM1}
John~T. Chalker and Bernhard Mehlig.
\newblock Eigenvector statistics in non-hermitian random matrix ensembles.
\newblock {\em Phys. Rev. Lett.}, 81:3367--3370, Oct 1998.

\bibitem[CR18]{CR2}
Nick Crawford and Ron Rosenthal.
\newblock Local eigenvector correlations in the complex {G}inibre ensemble.
\newblock {\em In preparation}, 2018.

\bibitem[FS12]{FS12}
Yan~V Fyodorov and Dmitry~V Savin.
\newblock Statistics of resonance width shifts as a signature of eigenfunction
  nonorthogonality.
\newblock {\em Physical review letters}, 108(18):184101, 2012.

\bibitem[Fyo17]{F}
Yan~V. Fyodorov.
\newblock On statistics of bi-orthogonal eigenvectors in real and complex
  {G}inibre ensembles: combining partial schur decomposition with
  supersymmetry.
\newblock {\em arXiv preprint arXiv:1710.04699}, 2017.

\bibitem[Gin65]{Gin}
Jean Ginibre.
\newblock Statistical ensembles of complex, quaternion, and real matrices.
\newblock {\em J. Mathematical Phys.}, 6:440--449, 1965.

\bibitem[Gir84]{Gir}
Vyacheslav~L. Girko.
\newblock The circular law.
\newblock {\em Teor. Veroyatnost. i Primenen.}, 29(4):669--679, 1984.

\bibitem[Gir94]{Gir94}
Vyacheslav~L. Girko.
\newblock The circular law: ten years later.
\newblock {\em Random Oper. Stochastic Equations}, 2(3):235--276, 1994.

\bibitem[GK98]{Goldshield}
Ilya~Ya. Goldsheid and Boris~A. Khoruzhenko.
\newblock Distribution of eigenvalues in non-{H}ermitian {A}nderson models.
\newblock {\em Phys. Rev. Lett.}, 80:2897--2900, Mar 1998.

\bibitem[GKL{\etalchar{+}}14]{GKL14}
J.-B. Gros, U.~Kuhl, O.~Legrand, F.~Mortessagne, E.~Richalot, and D.~V. Savin.
\newblock Experimental width shift distribution: A test of nonorthogonality for
  local and global perturbations.
\newblock {\em Phys. Rev. Lett.}, 113:224101, Nov 2014.

\bibitem[Grc11]{Grcar}
Joseph~F. Grcar.
\newblock John von {N}eumann's analysis of {G}aussian elimination and the
  origins of modern numerical analysis.
\newblock {\em SIAM Rev.}, 53(4):607--682, 2011.

\bibitem[GT10]{gotze}
Friedrich G\"otze and Alexander Tikhomirov.
\newblock The circular law for random matrices.
\newblock {\em Ann. Probab.}, 38(4):1444--1491, 2010.

\bibitem[HN96]{HN}
Naomichi Hatano and David~R. Nelson.
\newblock Localization transitions in non-{H}ermitian quantum mechanics.
\newblock {\em Phys. Rev. Lett.}, 77:570--573, Jul 1996.

\bibitem[JNN{\etalchar{+}}99]{Poles1}
Romuald~A. Janik, Wolfgang N\"orenberg, Maciej~A. Nowak, G\'abor Papp, and
  Ismail Zahed.
\newblock Correlations of eigenvectors for non-hermitian random-matrix models.
\newblock {\em Phys. Rev. E}, 60:2699--2705, Sep 1999.

\bibitem[MC98]{CM3}
Bernhard Mehlig and John~T. Chalker.
\newblock Eigenvector correlations in non-{H}ermitian random matrix ensembles.
\newblock {\em Ann. Phys.}, 7(5-6):427--436, 1998.

\bibitem[MC00]{CM2}
Bernhard Mehlig and John~T. Chalker.
\newblock Statistical properties of eigenvectors in non-{H}ermitian {G}aussian
  random matrix ensembles.
\newblock {\em J. Math. Phys.}, 41(5):3233--3256, 2000.

\bibitem[Meh04]{Mehta}
Madan~Lal Mehta.
\newblock {\em Random matrices}, volume 142 of {\em Pure and Applied
  Mathematics (Amsterdam)}.
\newblock Elsevier/Academic Press, Amsterdam, third edition, 2004.

\bibitem[Sta97]{Sta97}
Richard~P. Stanley.
\newblock {\em Enumerative combinatorics. {V}ol. 1}, volume~49 of {\em
  Cambridge Studies in Advanced Mathematics}.
\newblock Cambridge University Press, Cambridge, 1997.
\newblock With a foreword by Gian-Carlo Rota, Corrected reprint of the 1986
  original.

\bibitem[TE05]{Tref3}
Lloyd~N. Trefethen and Mark Embree.
\newblock {\em Spectra and pseudospectra}.
\newblock Princeton University Press, Princeton, NJ, 2005.
\newblock The behavior of nonnormal matrices and operators.

\bibitem[Tre97]{Tref2}
Lloyd~N. Trefethen.
\newblock Pseudospectra of linear operators.
\newblock {\em SIAM Rev.}, 39(3):383--406, 1997.

\bibitem[TTRD93]{Tref1}
Lloyd~N. Trefethen, Anne~E. Trefethen, Satish~C. Reddy, and Tobin~A. Driscoll.
\newblock Hydrodynamic stability without eigenvalues.
\newblock {\em Science}, 261(5121):578--584, 1993.

\bibitem[TV08]{TV}
Terence Tao and Van Vu.
\newblock Random matrices: the circular law.
\newblock {\em Commun. Contemp. Math.}, 10(2):261--307, 2008.

\bibitem[TV10]{TV10}
Terence Tao and Van Vu.
\newblock Random matrices: universality of {ESD}s and the circular law.
\newblock {\em Ann. Probab.}, 38(5):2023--2065, 2010.
\newblock With an appendix by Manjunath Krishnapur.

\bibitem[Wil89]{Wilk}
Michael Wilkinson.
\newblock Statistics of multiple avoided crossings.
\newblock {\em Journal of Physics A: Mathematical and General}, 22(14):2795,
  1989.

\bibitem[WS15]{WS}
Meg Walters and Shannon Starr.
\newblock A note on mixed matrix moments for the complex {G}inibre ensemble.
\newblock {\em J. Math. Phys.}, 56(1):013301, 20, 2015.

\bibitem[Yin14]{Y14}
Jun Yin.
\newblock The local circular law {III}: general case.
\newblock {\em Probab. Theory Related Fields}, 160(3-4):679--732, 2014.

\end{thebibliography}
\bibliographystyle{alpha}

\medskip{}

$~$\\
Department of mathematics,\\
Technion, Haifa,\\
Israel.\\
E-mail: nickc@technion.ac.il\\
E-mail: ron.ro@technion.ac.il

\end{document}